\let\mathcal=\CMcal
\begin{document}
\def\sbt{\raisebox{1.2pt}{$\scriptscriptstyle\,\bullet\,$}}

\def\alp{\alpha}
\def\bet{\beta}
\def\gam{\gamma}
\def\del{\delta}
\def\eps{\epsilon}
\def\zet{\zeta}
\def\tht{c}
\def\iot{\iota}
\def\kap{\kappa}
\def\lam{\lambda}
\def\sig{\sigma}
\def\ome{\omega}
\def\vep{\varepsilon}
\def\vth{\vartheta}
\def\vpi{\varpi}
\def\vrh{\varrho}
\def\vsi{\varsigma}
\def\vph{\varphi}
\def\Gam{\Gamma}
\def\Del{\Delta}
\def\Tht{\Theta}
\def\Lam{\Lambda}
\def\Sig{\Sigma}
\def\Ups{\Upsilon}
\def\Ome{\Omega}
\def\vka{\varkappa}
\def\vDe{\varDelta}
\def\vSi{\varSigma}
\def\vTh{\varTheta}
\def\vGm{\varGamma}
\def\vOm{\varOmega}
\def\vPi{\varPi}
\def\vPh{\varPhi}
\def\vPs{\varPsi}
\def\vUp{\varUpsilon}
\def\vXi{\varXi}

\def\bdse{\boldsymbol{e}}
\def\bdsf{\boldsymbol{f}}
\def\bdsg{\boldsymbol{g}}
\def\bdsh{\boldsymbol{h}}
\def\bdsF{\boldsymbol{F}}
\def\bdscF{\boldsymbol{\calf}}
\def\bdscU{\boldsymbol{\calu}}

\def\bdsig{\boldsymbol{\sig}}
\def\bdpi{\boldsymbol{\pi}}
\def\bdome{\boldsymbol{\ome}}
\def\bdkap{\boldsymbol{\kap}}
\def\ulQ{\underline{Q}}
\def\ulk{\underline{k}}
\def\uleps{\underline{\eps}}
\def\ulchi{\underline{\chi}}
\def\ulkap{\underline{\kap}}

\def\frka{{\mathfrak a}}    \def\frkA{{\mathfrak A}}
\def\frkb{{\mathfrak b}}    \def\frkB{{\mathfrak B}}
\def\frkc{{\mathfrak c}}    \def\frkC{{\mathfrak C}}
\def\frkd{{\mathfrak d}}    \def\frkD{{\mathfrak D}}
\def\frke{{\mathfrak e}}    \def\frkE{{\mathfrak E}}
\def\frkf{{\mathfrak f}}    \def\frkF{{\mathfrak F}}
\def\frkg{{\mathfrak g}}    \def\frkG{{\mathfrak G}}
\def\frkh{{\mathfrak h}}    \def\frkH{{\mathfrak H}}
\def\frki{{\mathfrak i}}    \def\frkI{{\mathfrak I}}
\def\frkj{{\mathfrak j}}    \def\frkJ{{\mathfrak J}}
\def\frkk{{\mathfrak k}}    \def\frkK{{\mathfrak K}}
\def\frkl{{\mathfrak l}}    \def\frkL{{\mathfrak L}}
\def\frkm{{\mathfrak m}}    \def\frkM{{\mathfrak M}}
\def\frkn{{\mathfrak n}}    \def\frkN{{\mathfrak N}}
\def\frko{{\mathfrak o}}    \def\frkO{{\mathfrak O}}
\def\frkp{{\mathfrak p}}    \def\frkP{{\mathfrak P}}
\def\frkq{{\mathfrak q}}    \def\frkQ{{\mathfrak Q}}
\def\frkr{{\mathfrak r}}    \def\frkR{{\mathfrak R}}
\def\frks{{\mathfrak s}}    \def\frkS{{\mathfrak S}}
\def\frkt{{\mathfrak t}}    \def\frkT{{\mathfrak T}}
\def\frku{{\mathfrak u}}    \def\frkU{{\mathfrak U}}
\def\frkv{{\mathfrak v}}    \def\frkV{{\mathfrak V}}
\def\frkw{{\mathfrak w}}    \def\frkW{{\mathfrak W}}
\def\frkx{{\mathfrak x}}    \def\frkX{{\mathfrak X}}
\def\frky{{\mathfrak y}}    \def\frkY{{\mathfrak Y}}
\def\frkz{{\mathfrak z}}    \def\frkZ{{\mathfrak Z}}

\def\cal{\fam2}
\def\cala{{\cal A}}
\def\calb{{\cal B}}
\def\calc{{\cal C}}
\def\cald{{\cal D}}
\def\cale{{\cal E}}
\def\calf{{\cal F}}
\def\calg{{\cal G}}
\def\calh{{\cal H}}
\def\cali{{\cal I}}
\def\calj{{\cal J}}
\def\calk{{\cal K}}
\def\call{{\cal L}}
\def\calm{{\cal M}}
\def\caln{{\cal N}}
\def\calo{{\cal O}}
\def\calp{{\cal P}}
\def\calq{{\cal Q}}
\def\calr{{\cal R}}
\def\cals{{\cal S}}
\def\calt{{\cal T}}
\def\calu{{\cal U}}
\def\calv{{\cal V}}
\def\calw{{\cal W}}
\def\calx{{\cal X}}
\def\caly{{\cal Y}}
\def\calz{{\cal Z}}

\def\AA{{\mathbb A}}
\def\BB{{\mathbb B}}
\def\CC{{\mathbb C}}
\def\DD{{\mathbb D}}
\def\EE{{\mathbb E}}
\def\FF{{\mathbb F}}
\def\GG{{\mathbb G}}
\def\HH{{\mathbb H}}
\def\II{{\mathbb I}}
\def\JJ{{\mathbb J}}
\def\KK{{\mathbb K}}
\def\LL{{\mathbb L}}
\def\MM{{\mathbb M}}
\def\NN{{\mathbb N}}
\def\OO{{\mathbb O}}
\def\PP{{\mathbb P}}
\def\QQ{{\mathbb Q}}
\def\RR{{\mathbb R}}
\def\SS{{\mathbb S}}
\def\TT{{\mathbb T}}
\def\UU{{\mathbb U}}
\def\VV{{\mathbb V}}
\def\WW{{\mathbb W}}
\def\XX{{\mathbb X}}
\def\YY{{\mathbb Y}}
\def\ZZ{{\mathbb Z}}

\def\bfa{{\mathbf a}}    \def\bfA{{\mathbf A}}
\def\bfb{{\mathbf b}}    \def\bfB{{\mathbf B}}
\def\bfc{{\mathbf c}}    \def\bfC{{\mathbf C}}
\def\bfd{{\mathbf d}}    \def\bfD{{\mathbf D}}
\def\bfe{{\mathbf e}}    \def\bfE{{\mathbf E}}
\def\bff{{\mathbf f}}    \def\bfF{{\mathbf F}}
\def\bfg{{\mathbf g}}    \def\bfG{{\mathbf G}}
\def\bfh{{\mathbf h}}    \def\bfH{{\mathbf H}}
\def\bfi{{\mathbf i}}    \def\bfI{{\mathbf I}}
\def\bfj{{\mathbf j}}    \def\bfJ{{\mathbf J}}
\def\bfk{{\mathbf k}}    \def\bfK{{\mathbf K}}
\def\bfl{{\mathbf l}}    \def\bfL{{\mathbf L}}
\def\bfm{{\mathbf m}}    \def\bfM{{\mathbf M}}
\def\bfn{{\mathbf n}}    \def\bfN{{\mathbf N}}
\def\bfo{{\mathbf o}}    \def\bfO{{\mathbf O}}
\def\bfp{{\mathbf p}}    \def\bfP{{\mathbf P}}
\def\bfq{{\mathbf q}}    \def\bfQ{{\mathbf Q}}
\def\bfr{{\mathbf r}}    \def\bfR{{\mathbf R}}
\def\bfs{{\mathbf s}}    \def\bfS{{\mathbf S}}
\def\bft{{\mathbf t}}    \def\bfT{{\mathbf T}}
\def\bfu{{\mathbf u}}    \def\bfU{{\mathbf U}}
\def\bfv{{\mathbf v}}    \def\bfV{{\mathbf V}}
\def\bfw{{\mathbf w}}    \def\bfW{{\mathbf W}}
\def\bfx{{\mathbf x}}    \def\bfX{{\mathbf X}}
\def\bfy{{\mathbf y}}    \def\bfY{{\mathbf Y}}
\def\bfz{{\mathbf z}}    \def\bfZ{{\mathbf Z}}

\def\scra{{\mathscr A}}
\def\scrb{{\mathscr B}}
\def\scrc{{\mathscr C}}
\def\scrd{{\mathscr D}}
\def\scre{{\mathscr E}}
\def\scrf{{\mathscr F}}
\def\scrg{{\mathscr G}}
\def\scrh{{\mathscr H}}
\def\scri{{\mathscr I}}
\def\scrj{{\mathscr J}}
\def\scrk{{\mathscr K}}
\def\scrl{{\mathscr L}}
\def\scrm{{\mathscr M}}
\def\scrn{{\mathscr N}}
\def\scro{{\mathscr O}}
\def\scrp{{\mathscr P}}
\def\scrq{{\mathscr Q}}
\def\scrr{{\mathscr R}}
\def\scrs{{\mathscr S}}
\def\scrt{{\mathscr T}}
\def\scru{{\mathscr U}}
\def\scrv{{\mathscr V}}
\def\scrw{{\mathscr W}}
\def\scrx{{\mathscr X}}
\def\scry{{\mathscr Y}}
\def\scrz{{\mathscr Z}}

\def\phm{\phantom}
\def\smallstrut{\vphantom{\vrule height 3pt }}
\def\bdm #1#2#3#4{\left(
\begin{array} {c|c}{\ds{#1}}
 & {\ds{#2}} \\ \hline
{\ds{#3}\vphantom{\ds{#3}^1}} &  {\ds{#4}}
\end{array}
\right)}
\newcommand{\powerseries}[1]{\llbracket{#1}\rrbracket}

\def\GL{\mathrm{GL}}
\def\BC{\mathrm{BC}}
\def\PGL{\mathrm{PGL}}
\def\PG{\mathrm{P}G}
\def\SL{\mathrm{SL}}
\def\Mp{\mathrm{Mp}}
\def\GSp{\mathrm{GSp}}
\def\PGSp{\mathrm{PGSp}}
\def\Sp{\mathrm{Sp}}
\def\St{\mathrm{St}}
\def\SU{\mathrm{SU}}
\def\SO{\mathrm{SO}}
\def\U{\mathrm{U}}
\def\GU{\mathrm{GU}}
\def\O{\mathrm{O}}
\def\Mat{\mathrm{M}}
\def\Tr{\mathrm{Tr}}
\def\Nr{\mathrm{N}}
\def\tr{\mathrm{tr}}
\def\Ad{\mathrm{Ad}}
\def\As{\mathrm{As}}
\def\Paf{\mathrm{Paf}}
\def\new{\mathrm{new}}
\def\Wh{\mathrm{Wh}}
\def\FJ{\mathrm{FJ}}
\def\Fj{\mathrm{Fj}}
\def\Sym{\mathrm{Sym}}
\def\Her{\mathrm{Her}}
\def\sym{\mathrm{sym}}
\def\supp{\mathrm{supp}}
\def\proj{\mathrm{proj}}
\def\Hom{\mathrm{Hom}}
\def\End{\mathrm{End}}
\def\Aut{\mathrm{Aut}}
\def\Ker{\mathrm{Ker}}
\def\Res{\mathrm{Res}}
\def\res{\mathrm{res}}
\def\cusp{\mathrm{cusp}}
\def\Irr{\mathrm{Irr}}
\def\rank{\mathrm{rank}}
\def\sgn{\mathrm{sgn}}
\def\diag{\mathrm{diag}}
\def\Wd{\mathrm{Wd}}
\def\nd{\mathrm{nd}}
\def\ord{\mathrm{ord}}
\def\hol{\mathrm{hol}}
\def\rec{\mathrm{rec}}
\def\ar{\mathrm{ar}}
\def\d{\mathrm{d}}
\def\Gal{\mathrm{Gal}}
\def\frkpbar{{\overline{\frkp}}}
\def\La{\langle}
\def\Ra{\rangle}
\newcommand{\one}{1\hspace{-0.25em}{\rm{l}}}
\def\addchar{{\boldsymbol\psi}}
\def\Abs{{\boldsymbol\alp}}
\def\bvep{{\boldsymbol \vep}}
\def\bnu{{\boldsymbol \nu}}
\def\bj{{\boldsymbol j}}
\def\cyc{\boldsymbol\varepsilon_{\rm cyc}}
\def\wh{\widehat}

\def\trs{\,^t\!}
\def\tri{\,^\iot\!}
\def\iu{\sqrt{-1}}
\def\oo{\hbox{\bf 0}}
\def\ono{\hbox{\bf 1}}
\def\smallcirc{\lower .3em \hbox{\rm\char'27}\!}
\def\AAf{\AA_\bff}
\def\thalf{\tfrac{1}{2}}
\def\bsl{\backslash}
\def\wtl{\widetilde}
\def\til{\tilde}
\def\Ind{\operatorname{Ind}}
\def\ind{\operatorname{ind}}
\def\cind{\operatorname{c-ind}}
\def\beq{\begin{equation}}
\def\eeq{\end{equation}}
\def\d{\mathrm{d}}
\def\lddots{\mathinner{\mskip1mu\raise1pt\vbox{\kern7pt\hbox{.}}\mskip2mu\raise4pt\hbox{.}\mskip2mu\raise7pt\hbox{.}\mskip1mu}}
\newcommand{\1}{1\hspace{-0.25em}{\rm{l}}}

\newcounter{one}
\setcounter{one}{1}
\newcounter{two}
\setcounter{two}{2}
\newcounter{thr}
\setcounter{thr}{3}
\newcounter{fou}
\setcounter{fou}{4}
\newcounter{fiv}
\setcounter{fiv}{5}
\newcounter{six}
\setcounter{six}{6}
\newcounter{sev}
\setcounter{sev}{7}

\newcommand{\shp}{\rm\char'43}

\def\lddots{\mathinner{\mskip1mu\raise1pt\vbox{\kern7pt\hbox{.}}\mskip2mu\raise4pt\hbox{.}\mskip2mu\raise7pt\hbox{.}\mskip1mu}}

\makeatletter
\def\varddots{\mathinner{\mkern1mu
    \raise\p@\hbox{.}\mkern2mu\raise4\p@\hbox{.}\mkern2mu
    \raise7\p@\vbox{\kern7\p@\hbox{.}}\mkern1mu}}
\makeatother

\def\today{\ifcase\month\or
 January\or February\or March\or April\or May\or June\or
 July\or August\or September\or October\or November\or December\fi
 \space\number\day, \number\year}

\makeatletter
\def\varddots{\mathinner{\mkern1mu
    \raise\p@\hbox{.}\mkern2mu\raise4\p@\hbox{.}\mkern2mu
    \raise7\p@\vbox{\kern7\p@\hbox{.}}\mkern1mu}}
\makeatother

\def\today{\ifcase\month\or
 January\or February\or March\or April\or May\or June\or
 July\or August\or September\or October\or November\or December\fi
 \space\number\day, \number\year}

\makeatletter\@addtoreset{equation}{section}\def\theequation{\thesection.\arabic{equation}}

\theoremstyle{plain}
\newtheorem{theorem}{Theorem}[section]
\newtheorem*{theorem_a}{Theorem A}
\newtheorem*{theorem_b}{Theorem B}
\newtheorem*{theorem_c}{Theorem C}
\newtheorem*{theorem_e}{Theorem}
\newtheorem*{corollary_a}{Corollary A}
\newtheorem{lemma}[theorem]{Lemma}
\newtheorem{proposition}[theorem]{Proposition}
\theoremstyle{definition}
\newtheorem{definition}[theorem]{Definition}
\newtheorem{conjecture}[theorem]{Conjecture}
\newtheorem{corollary}[theorem]{Corollary}
\newtheorem{Remark}[theorem]{Remark}
\theoremstyle{remark}
\newtheorem{remark}[theorem]{Remark}
\newtheorem*{main_remark}{Remark}

\renewcommand{\thepart}{\Roman{part}}
\setcounter{tocdepth}{1}
\setcounter{section}{0} 

\def\Spec{{\rm Spec}\,}

\title{Five-variable $p$-adic $L$-functions for $\U(3)\times\U(2)$}
\author{Ming-Lun Hsieh}
\author{Shunsuke Yamana}
\date{\today}
\subjclass[2010]{11F67, 11F33}
\address{Department of Mathematics and NCTS, National Taiwan University, No. 1, Sec. 4, Roosevelt Road, Taipei 10617, Taiwan}
\email{mlhsieh@math.ntu.edu.tw}
\subjclass[2020]{11F67, 11F33}
\keywords{Hida families, the Ichino-Ikeda formula, definite unitary groups}
\address{Department of Mathematics, Graduate School of Science, Osaka Metropolitan University, 3-3-138 Sugimoto, Sumiyoshi-ku, Osaka 558-8585, Japan}
\email{yamana@omu.ac.jp}
\begin{abstract}
We construct a five-variable $p$-adic $L$-function attached to Hida families on the definite unitary groups $\U(3)$ and $\U(2)$ by using the Ichino-Ikeda formula. 
The interpolation formula fits into the conjectural shape of $p$-adic $L$-functions predicted by Coates and Perrin-Riou. 
\end{abstract}

\maketitle
\tableofcontents

\section{Introduction}\label{sec:1}

The aim of this paper is to construct a five-variable Iwasawa function interpolating a square root of the algebraic part of central values of the $L$-series attached to a pair of Hida families on the definite unitary groups $\U(3)$ and $\U(2)$. 
We establish the explicit interpolation formulae, which completely comply the conjectural framework described in \cite{CPR,Coe,Coe2}. 

Throughout this paper we fix a {\it split} prime number $p>3$ and an embedding $\iot_p:\overline{\QQ}\hookrightarrow\overline{\QQ}_p$, where $\overline{\QQ}_p$ is a fixed algebraic closure of $\QQ_p$. 


\subsection{Hida families on $\U(n)$ and the associated Galois representations}\label{ssec:15}

Fix a finite extension $F$ of $\QQ_p$ and denote its maximal compact subring by $\calo$. 
For each positive integer $n$, let $T_n\subset \GL_n$ be the diagonal torus. 
Let 
\[\Lam_n:=\calo\powerseries{T_n(\ZZ_p)}=\varprojlim_{m\geq 1}\calo\left[T_n(\ZZ/p^m\ZZ)\right] \]
be the completed group algebra, and $\bfI_n$ a local and normal $\Lam_n$-algebra finite and flat over $\Lam_n$. 
We say that an $\calo$-algebra homomorphism $\ulQ :\bfI_n\to\overline{\QQ}_p$ is locally algebraic if its restriction to $T_n(\ZZ_p)$ is of the form $\ulQ(z_1,\dots,z_n)=\prod_{i=1}^nz_i^{k_{Q_i}}\eps_{Q_i}(z_i^{})$ with $(k_{Q_1},\dots,k_{Q_n})\in\ZZ^n$ and characters $\eps_{Q_i}:\ZZ_p^\times\rightarrow \overline{\QQ}_p^\times$ of finite order. 
We call $k_{\ulQ}=(k_{Q_1},\dots,k_{Q_n})\in\ZZ^n$ the weight of $\ulQ$ and $\eps_{\ulQ}=(\eps_{Q_1},\dots,\eps_{Q_n})$ the finite part of $\ulQ$. 
Let $\frkX_{\bfI_n}$ be the set of locally algebraic points in $\Spec\bfI_n(\overline{\QQ}_p)$. 
We say that $\ulQ\in\frkX_{\bfI_n}$ is dominant if $k_{Q_1}\leq k_{Q_2}\leq\dots\leq k_{Q_n}$, and $\ulQ$ is sufficiently regular if $k_{Q_1}< k_{Q_2} <\dots < k_{Q_n}$. Let $\frkX_{\bfI_n}^+$ be the subset of locally algebraic points of dominant weights and $\frkX_{\bfI_n}^{++}$ the subset of points of sufficiently regular weights. 

Let $E$ be an imaginary quadratic field in which $p$ is split. 
We denote the rings of ad\`{e}les of $\QQ$ and $E$ by $\AA$ and $\EE$. 
Let $x\mapsto x^\tht$ be the non-trivial automorphism of $E$. 
We write $\frkp$ for the prime ideal induced by the restriction of $\iot_p$ to $E$. 
Fix a positive definite Hermitian matrix $T$ in $\Mat_n(E)$. 
For $g\in\Mat_n(E)$ we define $g^\ddagger:=T^{-1}{}\trs g^\tht T$. 
The definite unitary group $\U(n)$ associated with $T$ is the algebraic group defined over $\QQ$ by setting 
\[\U(n)(R)=\{g\in \Mat_n(E\otimes_{\QQ} R)\mid g^\ddagger g=1\}\]
for any $\QQ$-algebra $R$. 

We shall make use of Hida theory for definite unitary groups developed in \cite[\S 2]{Geraghty}. 
Let $N$ be a positive integer only divisible by primes $q\neq p$ split in $E$. 
Choose an ideal $\frkN$ of the ring $\frkr$ of integers of $E$ such that $\frkN\overline{\frkN}=N\frkr$. 
This ideal $\frkN$ shall be referred to the tame level. 
Hida theory produces a free $\Lam_n$-module $e_\ord\bfS^{\U(n)}(\frkN,\Lam_n)$ of finite rank equipped with a faithful action of the universal ordinary Hecke algebra $\bfT^{(n)}(\frkN)$ for the unitary group $\U(n)$ (See \cite[Definition 2.23]{Geraghty}). The $\Lam_n$-module $e_\ord\bfS^{\U(n)}(\frkN,\Lam_n)$ is referred to the space of ordinary $\Lam_n$-adic forms, which roughly speaking consists of $p$-adic families of $p$-ordinary modular forms on $\U(n)$ invariant by the mirabolic subgroup of level $\frkN$. 
An $\bfI_n$-adic Hida family $\bdsf$ on $\U(n)$ is a non-zero Hecke eiegnform in $e_\ord\bfS^{\U(n)}(\frkN,\bfI_n):=e_\ord\bfS^{\U(n)}(\frkN,\Lam_n)\otimes_{\Lam_n}\bfI_n$, which induces a $\Lam_n$-algebra homomorphism $\lam_{\bdsf}:\bfT^{(n)}(\frkN)\to \bfI_n$. 

Denote the absolute Galois group of a field $L$ by $\Gam_L$ and its cyclotomic character by $\cyc$. Let $\frkm$ be the maximal ideal of $\bfI_n$. 
To each $\bfI_n$-adic Hida family $\bdsf$, one can associate the residual semisimple Galois representation $\bar\rho_{\bdsf}: \Gamma_E\to \GL_n(\bfI_n/\frkm)$ (see \cite[Proposition 2.28]{Geraghty}). 
If $\bar\rho_{\bdsf}$ is absolutely irreducible, then we can further obtain the Galois representation $\rho_{\bdsf}:\Gamma_E\to \GL_n(\bfI_n)$ unramified outside primes dividing $Np$ and primes $l$ where $\U(n)(\QQ_l)$ is ramified (see \cite[Proposition 2.29]{Geraghty} for the more detailed descriptions). Denote by $V_{\bdsf}$ the free $\bfI_n$-module of rank $n$ on which $\Gamma_E$ acts via $\rho_{\bdsf}$. 
For each $Q\in \frkX^+_{\bfI_n}$, the specialization $V_{\ulQ(\bdsf)}:=V_{\bdsf}\otimes_{\bfI_n,\ulQ}\overline{\QQ}_p$ is the geometric $p$-adic Galois representation associated with some automorphic representation $\bdpi_{\ulQ}\simeq\otimes_v\bdpi_{\ulQ,v}$ of $\U(n)(\AA)$. 
Then the representation $V_{\bdsf}$ is \emph{conjugate self-dual} in the sense that
\[V_{\bdsf}^\vee\simeq V_{\bdsf}^\tht\otimes\cyc^{n-1}.\]
Moreover, by the local description of $p$-adic Galois representations \cite[Corollary 2.33]{Geraghty} at $p$ combined with \cite[Lemma 7.2]{TU99}, there exists a filtration $\{\mathrm{Fil}_i(V_{\bdsf})\}_{i=1}^n$ of $\Gam_{E_\frkp}$-stable lattices 
\[\{0\}=\mathrm{Fil}_0V_{\bdsf} \subset \mathrm{Fil}_1V_{\bdsf}\subset \dots\subset \mathrm{Fil}_{n-1} V_{\bdsf}\subset \mathrm{Fil}_nV_{\bdsf} =V_{\bdsf}|_{\Gam_{E_\frkp}}\]
such that for every $\ulQ\in\frkX^{++}_{\bfI_n}$, the specialization $V_{\ulQ(\bdsf)}|_{\Gam_{E_\frkp}}$ is Hodge-Tate and each graded piece \[\mathrm{gr}_i(V_{\ulQ(\bdsf)}|_{\Gam_{E_\frkp}}):=\mathrm{Fil}_iV_{\ulQ(\bdsf)}/\mathrm{Fil}_{i-1}V_{\ulQ(\bdsf)}\] has Hodge-Tate weights $-k_{Q_i}-i+1$ for $1\leq i\leq n$. 
Here the Hodge-Tate number of $\QQ_p(1)$ is one in our convention. 
Likewise there exists a filtration $\{\mathrm{Fil}_i(V_{\bdsf}^c)\}_{i=1}^n$ of $\Gam_{E_{\frkpbar}}$-stable lattices in $V_{\bdsf}^\tht$ such that such that for every $\ulQ\in\frkX^{++}_{\bfI_n}$, each graded piece $\mathrm{gr}_i(V_{\ulQ(\bdsf)}^\tht|_{\Gam_{E_{\frkpbar}}})=\mathrm{Fil}_iV^\tht_{\ulQ(\bdsf)}/\mathrm{Fil}_{i-1}V^\tht_{\ulQ(\bdsf)}$ is Hodge-Tate of weight $k_{Q_{n-i+1}}-i+1$. 


\subsection{The algebraicity of central values}

Let $\ulk=(k_1^{},k_2^{},\dots,k_n^{})$ and $\ulk'=(k_1',k_2',\dots,k_{n-1}')$ be tuples of integers satisfying the following interlacing relation
\beq
k_1^{}\leq -k_{n-1}'\leq k_2^{}\leq\cdots\leq-k_2'\leq k_{n-1}^{}\leq -k_1'\leq k_n^{}. \label{tag:11}
\eeq 
Let $\pi$ be an irreducible tempered automorphic representation of $\U(n)(\AA)$ such that $\pi_\infty$ has highest weight $-\ulk$, and $\sig$ an irreducible tempered automorphic representation of $\U(n-1)(\AA)$ such that $\sig_\infty$ has highest weight $-\ulk'$. 

The {\it complete} automorphic $L$-function for the product $\pi\times\sigma$ is defined by 
\[L(s,\pi\times\sig)=L^{\GL}(s,\BC(\pi)\times \BC(\sig)),\]
where $\BC(\pi)$ (resp. $\BC(\sig)$) is the functorial lift of $\pi$ (resp. $\sig$) to an automorphic representation of $\GL_n(\EE)$ (resp. $\GL_{n-1}(\EE)$). 
The $L$-function in the right hand side has been defined by the Rankin-Selberg convolution whose local and global analytic theories were established by Jacquet, Piatetski-Shapiro and Shalika in \cite{JPSS2}. 
Let $L(s,\sig,\Ad)$ and $L(s,\pi,\Ad)$ be the {\it complete} adjoint $L$-functions of $\sig$ and $\pi$, respectively. 
These are the Asai and twisted Asai $L$-functions of $\BC(\sig)$ and $\BC(\pi)$ (cf. Remark \ref{rem:21}). 

The ratio $ \frac{L\bigl(\frac{1}{2},\pi\times\sig\bigl)}{L(1,\pi,\Ad)L(1,\sig,\Ad)}$ is indeed an algebraic number thanks to \cite[Theorem C]{GL21AJM} and \cite[Corollary 7.9]{Chen23AIM}. 
Proposition \ref{prop:22} proves its refinement for central values by using Shimura's mass formula.

\subsection{The period}\label{ssec:17}

To make our interpolation formula meaningful, we will give the precise definition of periods for the motive $V_{\bdsf}$. 
We denote the conductor of $\bdpi_{\ulQ}$ by $\frkN_{\bdpi_{\ulQ}}$. 
In this introductory section we use a simplified period defined by 
\[\Ome^{(M)}(V_{\ulQ(\bdsf)})=[\calk:\calk_0(\frkN_{\bdpi_{\ulQ}})]2^{\vka_{\bdpi_{\ulQ}}}L^{(M)}(1,\bdpi_{\ulQ},\Ad)\cale(\bdpi_{\ulQ,p},\Ad)\calb_{\pi_{\ulQ,p}} \]
for a positive integer $M$, where 
\begin{itemize} 
\item $2^{\vka_{\bdpi_{\ulQ}}}$ is the order of the $S$-group associated to the $L$-parameter of $\bdpi_{\ulQ}$; 
\item $L^{(M)}(1,\bdpi_{\ulQ},\Ad)$ is the partial adjoint $L$-series of $\bdpi_{\ulQ}$ with the archimdean factor but without Euler factors at primes dividing $M$,  
\item $\calb_{\pi_{\ulQ,p}}$ is the normalized local norm of essential Whittaker function at $p$ (see Proposition \ref{prop:42}); 
\item $\cale(\bdpi_{\ulQ,p},\Ad)$ is the modified Euler factor for the adjoint motive of $\ulQ(\bdsf)$ defined in Definition \ref{D:adjoint}. 
\end{itemize} 
In a future work of Shih-Yu Chen, the adjoint $L$-value $L^{(M)}(1,\bdpi_{\ulQ},\Ad)$ is roughly the Petersson norm of the suitably normalized generic form on the quasi-split inner form of $\U(n)$ for $n\leq 3$. 
This period $\Ome^{(M)}(V_{\ulQ(\bdsf)})$ is canonical in the sense only depends on $M$ and the representation $\bdpi_{\ulQ}$ associated with the form $\ulQ(\bdsf)$.



\subsection{The product $L$-series for $\U(3)\times \U(2)$} 

Fix positive rational numbers $t_1$ and $t_2$.  
Let $\U(3)$ and $\U(2)$ be the definite unitary groups attached to $T$ and $T'$, where 
\begin{align*}
T&=\begin{bmatrix}t_1&&\\
&1&\\
&&t_2\end{bmatrix}, & 
T'&=\begin{bmatrix}t_1&\\
&t_2\end{bmatrix}. 
\end{align*} 
Let $\varSigma^-_T$ be the finite set consisting of primes $q$ such that $\U(2)(\QQ_q)$ is compact. 
Let $N$ and $N'$ be natural numbers that satisfy the following condition: 
\beq\label{H1}
\text{all the prime factors of $NN'$ are split in $E$.} \tag{splt} 
\eeq
 To simply the discussion of the introduction, we assume that 
\beq\label{odd}\text{$NN'$ is odd and $2\notin\varSigma^-_T$}.\tag{odd}\eeq
Fix a decomposition $N\frkr=\frkN\overline{\frkN}$ and $N'\frkr=\frkN'\overline{\frkN'}$. 
Let 
\begin{align*}
\bdsf&\in e_\ord\bfS^{\U(3)}(\frkN,\bfI_3), &
\bdsg&\in e_\ord\bfS^{\U(2)}(\frkN',\bfI_2)
\end{align*}
be Hida families. 
We further assume that the residual Galois representations $\bar\rho_{\bdsf}$ and $\bar\rho_{\bdsg}$ are both absolutely irreducible. Let $V_{\bdsf}$ and $V_{\bdsg}$ be the Galois representation of $\Gam_E$ associated with the Hida family $\bdsf$ and $\bdsg$ respectively. Consider the tensor product representation $V_{\bdsf\bdsg}:=V_{\bdsf}\otimes V_{\bdsg}$ of rank six over the five variable Iwasawa algebra $\bfI_3\wh\otimes_{\calo} \bfI_2$ and let $\bfV$ be the induced representation of $\Gam_\QQ$ defined by
\[\bfV:=\Ind_{\Gamma_E}^{\Gamma_\QQ}(V_{\bdsf\bdsg}^{}\otimes\cyc^2).
\]
For each prime number $q$ we denote the Weil-Deligne group of $\QQ_q$ by $W_{\QQ_q}$. 
For each $\calq=(Q,Q')\in\frkX_{\bfI_3}^+\times\frkX_{\bfI_2}^+$, let $\bfV_\calq$ be the specialization of $\bfV$ at $\calq$ and define the complex $L$-series of the $p$-adic Galois representation $\bfV_\calq$ by the Euler product 
\[L(\bfV_\calq,s)=\prod_q L_q(\bfV_\calq,s) \]
of the local $L$-factors attached to $\mathrm{WD}_q(\bfV_\calq)\otimes_{\overline{\QQ}_p^{},\iot_p^{-1}}\CC$, where $\mathrm{WD}_q(\bfV_\calq)$ is the Weil-Deligne representation of $W_{\QQ_q}$ over $\overline{\QQ}_p$ associated to $\bfV_\calq$. Putting
\begin{align*}
(\lam_{Q_1},\lam_{Q_2},\lam_{Q_3})&=(-k_{Q_1}+1,-k_{Q_2},-k_{Q_3}-1);\\ 
(\mu_{Q_1'},\mu_{Q_2'})&=\biggl(-k_{Q_1'}+\frac{1}{2},-k_{Q_2'}-\frac{1}{2}\biggl),
\end{align*}
we define the archimedean $L$-factor of $\bfV_\calq$ by 
\[\Gam(\bfV_\calq^{},s)=\prod_{i=1,2,3}\prod_{j=1,2}\Gamma_\CC\biggl(s+\frac{1}{2}+|\lam_{Q_i^{}}+\mu_{Q_j'}|\biggl),\]
where $\Gamma_{\CC}(s)=2(2\pi)^{-s}\Gamma(s)$. We are interested in the algebraic part of the value of $L(\bfV_\calq^{},s)$ at $s=0$. 
Note that $L(\bfV_\calq,0)$ are central values as $\bfV^\vee\otimes\cyc\simeq\bfV$ is self-dual. 
With the assumption \eqref{H1},  the specializations of the Hecke eigensystems $\lam_{\ulQ(\bdsf)}:=\ulQ\circ\lam_{\bdsf}$ and $\lam_{\ulQ'(\bdsg)}:=\ulQ'\circ\lam_{\bdsg}$ determine unique unitary automorphic representations $\bdpi_{\ulQ}$ and $\bdsig_{\ulQ'}$ of $\U(3)(\AA)$ and $\U(2)(\AA)$, and we have
\[\Gam(\bfV_\calq^{},s)L(\bfV_\calq^{},s)=L\biggl(s+\frac{1}{2},\bdpi_{\ulQ^{}}^{}\times\bdsig_{\ulQ'}\biggl).\]

Consider the set of critical points defined by \[\frkX^{\rm crit}=\{(\ulQ,\ulQ')\in \frkX_{\bfI_3}^{++}\times \frkX_{\bfI_2}^{++}\mid k_{Q_1^{}}\leq -k_{Q_2'}\leq k_{Q_2^{}}\leq -k_{Q_1'}\leq k_{Q_3^{}}\}.\]
For $\calq=(\ulQ,\ulQ')\in\frkX^{\rm crit}$ we view the algebraic numbers \[\frac{\Gam(\bfV_\calq^{},0)L(\bfV_\calq^{},0)}{\Ome^{(M)}(V_{\ulQ(\bdsf)})\Ome^{(M)}(V_{\ulQ'(\bdsg)})}\sim \frac{L\bigl(\frac{1}{2},\bdpi_{\ulQ}\times\bdsig_{\ulQ'}\bigl)}{L(1,\bdpi_{\ulQ},\Ad)L(1,\bdsig_{\ulQ'},\Ad)}\pmod{\overline{\QQ}^\times},\]through the embedding $\iota_p$, as a $p$-adic number. The purpose of this paper is to understand the $p$-adic behavior of this ratio when $\calq\in\frkX^{\rm crit}$ varies. 


\subsection{The modified Euler factor at $p$}\label{ssec:16}

To introduce the modified Euler factor at $p$, we prepare some notation. 
We consider the  rank three $\Gam_{E_\frkp}$-invariant and $\Gam_{E_{\frkpbar}}$-invariant subspaces of $V_{\bdsf\bdsg}$ by 
\begin{align*}
\mathrm{Fil}^+_\frkp\bfV_{\bdsf\bdsg}&=\mathrm{Fil}_1V_{\bdsf}^{}\otimes V_{\bdsg}^{}+\mathrm{Fil}_2V_{\bdsf}^{}\otimes \mathrm{Fil}_1V_{\bdsg}^{}; \\
\mathrm{Fil}^+_{\frkpbar}\bfV_{\bdsf\bdsg}&=\mathrm{Fil}_1V_{\bdsf}^\tht\otimes V_{\bdsg}^\tht+\mathrm{Fil}_2V_{\bdsf}^\tht\otimes \mathrm{Fil}_1V_{\bdsg}^\tht,
\end{align*}
and define the six dimensional $\Gam_{\QQ_p}$-invariant subspace of $\bfV$ by 
\begin{align*}
\mathrm{Fil}^+\bfV&=\left(\mathrm{Fil}^+_\frkp\bfV_{\bdsf\bdsg}\oplus\mathrm{Fil}^+_{\frkpbar}\bfV_{\bdsf\bdsg}\right)\otimes\cyc^2.
\end{align*}
The pair $(\mathrm{Fil}^+\bfV,\frkX^{\rm crit})$ satisfies the Panchishkin condition in \cite[p.~217]{Gre} in the sense that all the Hodge-Tate numbers of $\mathrm{Fil}^+\bfV_\calq$ are positive but none of the Hodge-Tate numbers of $\bfV_\calq/\mathrm{Fil}^+\bfV_\calq$ is positive for $\calq\in\frkX^{\rm crit}$. 

Let $\addchar:\AA/\QQ\to\CC^\times$ be the additive character with the archimedean component $\addchar_\infty(z)=e^{2\pi\sqrt{-1}x}$ and $\addchar_p:\QQ_p\to\CC^\times$ the local component of $\addchar$ at the prime number $p$. Let $dx$ be the self-dual Haar measure on $\QQ_p$ with respect to $\addchar_p$. For each $p$-adic representation $V$ of $\Gamma_{\QQ_p}$, recall that the $\gamma$-factor $\gamma(V,s)$ is defined by 
\[\gamma(V,s)=\frac{\varepsilon({\rm WD}_p(V),\addchar_p,dx)L_p(V^\vee,1-s)}{L_p(V,s)},\]
where $\varepsilon({\rm WD}_p(V),\addchar_p, dx)$ is the local constant defined in \cite[\S 4]{Deligne73}. 
The modified Euler factor at $p$ is defined by 
\[\cale_p(\mathrm{Fil}^+\bfV_\calq)
=\frac{1}{\gam\Big(\mathrm{Fil}^+_\frkp\bfV_\calq,0\Big)\gam\Big(\mathrm{Fil}^+_{\frkpbar}\bfV_\calq,0\Big)L_p(\bfV_\calq,0)}. \]


\subsection{Interpolation formulae}\label{ssec:18}

Let $\mathrm{Frac}(\bfI_3\widehat{\otimes}_\calo\bfI_2)$ stand for the total ring of fractions of $\bfI_3\widehat{\otimes}_\calo\bfI_2$. 

\begin{theorem}\label{thm:12}
We assume \eqref{odd} and \eqref{H1}. 
Then there exists a unique element $L_p(\bfV)\in\mathrm{Frac}(\bfI_3\widehat{\otimes}_\calo\bfI_2)$ such that for $\calq=(\ulQ,\ulQ')\in\frkX^{\rm crit}$
\[\calq(L_p(\bfV))=\frac{\Gam(\bfV_\calq^{},0)L(\bfV_\calq^{},0)}{\Ome^{(NN')}(V_{\ulQ(\bdsf)})\Ome^{(NN')}(V_{\ulQ'(\bdsg)})}\cale(\mathrm{Fil}^+\bfV_\calq). \]
\end{theorem}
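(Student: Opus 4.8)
The plan is to construct $L_p(\bfV)$ by interpolating the Ichino--Ikeda integrals for the pair $(\bdsf,\bdsg)$ over the Hida families, so that its specializations recover the normalized central $L$-values. The strategy decomposes into three stages: first, a $p$-adic interpolation of the local toric/trilinear periods; second, the application of the Ichino--Ikeda formula at each classical point to convert these periods into the desired $L$-value ratio; and third, a uniqueness argument based on Zariski density.

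\textbf{Construction of the interpolating element.} First I would define, for the definite unitary groups $\U(3)$ and $\U(2)$, the $\Lambda$-adic analogue of the global period integral $P(\phi_{\bdsf}\otimes\phi_{\bdsg})=\int_{\U(2)(\QQ)\backslash\U(2)(\AA)}\phi_{\bdsf}(h)\overline{\phi_{\bdsg}(h)}\,dh$, where $\phi_{\bdsf},\phi_{\bdsg}$ are the automorphic forms corresponding to the Hida families. Because the groups are definite, the spaces of $\Lambda_n$-adic forms are finitely generated modules of functions on a finite set of double cosets, so the integral is literally a finite sum and makes sense with coefficients in $\bfI_3\wh\otimes_\calo\bfI_2$. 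Dividing this $\Lambda$-adic pairing by the $\Lambda$-adic adjoint-period normalizations (the families version of $\Ome^{(NN')}(V_{\ulQ(\bdsf)})$, $\Ome^{(NN')}(V_{\ulQ'(\bdsg)})$, available since the adjoint $L$-values and the Whittaker/Euler factors all vary $p$-adically analytically on $\bfI_n$), I obtain a candidate element of $\mathrm{Frac}(\bfI_3\wh\otimes_\calo\bfI_2)$ — one must pass to the total ring of fractions because the normalizing adjoint periods need only be nonzero divisors, not units.

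\textbf{Verifying the interpolation property.} For each $\calq=(\ulQ,\ulQ')\in\frkX^{\rm crit}$, specialize the $\Lambda$-adic period at $\calq$ to recover the classical global period $P(\phi_{\bdpi_{\ulQ}}\otimes\phi_{\bdsig_{\ulQ'}})$ for the arithmetically normalized test vectors. Then apply the Ichino--Ikeda formula for $\U(3)\times\U(2)$ (the refined local--global identity for definite unitary groups), which expresses $|P(\phi\otimes\phi')|^2/\langle\phi,\phi\rangle\langle\phi',\phi'\rangle$ as $2^{-\vka}\cdot\frac{L(\frac12,\bdpi_{\ulQ}\times\bdsig_{\ulQ'})}{L(1,\bdpi_{\ulQ},\Ad)L(1,\bdsig_{\ulQ'},\Ad)}$ times a product of normalized local integrals $\prod_v I_v^\natural$. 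At the archimedean place the local integral produces exactly the factor that, together with the arithmetic periods, yields the algebraic ratio of Proposition~\ref{prop:22}; at the ramified finite places away from $p$ the local integrals are computed by the explicit choice of test vectors (the mirabolic-invariant new vectors of level $\frkN,\frkN'$), contributing the partial-$L$-factor correction hidden in $\Ome^{(NN')}$; and at $p$ the local integral for the ordinary test vectors evaluates to precisely $\cale_p(\mathrm{Fil}^+\bfV_\calq)$ — this is where the modified Euler factor of \S\ref{ssec:16} enters, via the same mechanism by which ordinary $p$-stabilizations produce modified Euler factors, and it is here that the Panchishkin condition on $(\mathrm{Fil}^+\bfV,\frkX^{\rm crit})$ guarantees the relevant $\gamma$-factor identities hold. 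Finally, the square-root aspect must be handled: one does not interpolate $|P|^2$ but $P$ itself, so a consistent choice of square roots of the local factors across the family is needed, and the resulting element lies in $\mathrm{Frac}(\bfI_3\wh\otimes_\calo\bfI_2)$ rather than an extension precisely because the $\Lambda$-adic period is defined integrally.

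\textbf{Uniqueness and the main obstacle.} Uniqueness is immediate once existence is established: $\frkX^{\rm crit}$ is Zariski dense in $\Spec(\bfI_3\wh\otimes_\calo\bfI_2)$ (both $\frkX_{\bfI_3}^{++}$ and $\frkX_{\bfI_2}^{++}$ are dense since sufficiently regular dominant locally algebraic points are dense in each $\Spec\bfI_n$, and the critical interlacing cuts out a dense subset of the product), and any two elements of the total ring of fractions agreeing on a Zariski-dense set of points where both are defined must coincide. The main obstacle is the explicit evaluation of the local zeta integral at $p$ for the ordinary test vectors and its identification with $\cale_p(\mathrm{Fil}^+\bfV_\calq)$ in terms of the Hodge filtration $\mathrm{Fil}^+\bfV$: this requires an unramified/ordinary computation of the $\GL_3\times\GL_2$ Rankin--Selberg local integral against the $p$-stabilized vectors, carefully tracking how the three-step filtrations $\mathrm{Fil}_iV_{\bdsf}$, $\mathrm{Fil}_iV_{\bdsg}$ combine into $\mathrm{Fil}^+_\frkp\bfV_{\bdsf\bdsg}$ and matching the outcome against the ratio of $\gamma$-factors and $L_p$-factors defining $\cale_p$. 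A secondary difficulty is ensuring the arithmetic normalizations of the test vectors are compatible with the $\Lambda$-adic structure so that specialization commutes with the period construction — this is where Shimura's mass formula and the precise definition of the periods $\Ome^{(M)}$ in \S\ref{ssec:17} must be invoked.
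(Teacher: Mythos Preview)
Your high-level strategy matches the paper's (interpolate a global period, apply Ichino--Ikeda pointwise, compute the local integrals), but there are two genuine gaps in the execution that the paper has to work around carefully.

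\textbf{The adjoint periods do not interpolate.} You propose to ``divide by the $\Lambda$-adic adjoint-period normalizations (the families version of $\Ome^{(NN')}$), available since the adjoint $L$-values \ldots\ vary $p$-adically analytically on $\bfI_n$.'' This is not known, and the paper does not assume it. Instead, the paper constructs a $\Lambda$-adic self-pairing $\eta_{\bdsf}=\bfB_\frkN(\bdsf,\bdsf)\in\bfI_3$ (Definition~\ref{def:511}) via regularized diagonal cycles on $\U(n)\times\U(n)$, and proves pointwise that $\ulQ(\eta_{\bdsf})$ equals $\Ome^{(NN')}(V_{\ulQ(\bdsf)})$ up to the explicit modified adjoint factor $\cale(\bdpi_{\ulQ,p},\Ad)$ (Propositions~\ref{prop:52}, \ref{prop:32} and Corollary~\ref{cor:42}). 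The $p$-adic $L$-function is then $L_p(\bfV)=\scrl_{\bdsf,\bdsg}^2/(\eta_{\bdsf}\eta_{\bdsg})$ with an integral numerator; the adjoint $L$-values appear only in the \emph{interpolation formula}, never as $\Lambda$-adic elements.

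\textbf{The ramified local computation needs a twisting trick.} Your claim that at ramified primes $l\mid NN'$ the local integrals are ``computed by the explicit choice of test vectors (the mirabolic-invariant new vectors)'' is too optimistic. The only explicit formula available (Proposition~\ref{prop:44}, following Schmidt's depletion operator $\vTh^{\ome_\sig}$) requires the hypothesis $(H_3)$: $c(\sig_l)\le 2c(\ome_{\sig_l})$. This fails in general. The paper's proof of Theorem~\ref{thm:12} (in \S\ref{ssec:65}) fixes an auxiliary primitive character $\vrh$ of $(\frkr/\frkM)^\times$ with $M$ divisible by a high power of $N'$, and replaces $(\bdsf,\bdsg)$ by the twisted Hida families $(\bdsf_\vrh,\bdsg_\vrh)$ associated to $\bdpi\otimes\vrh_\AA^{-1}$ and $\bdsig\otimes\vrh_\AA$. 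After twisting, $(H_3)$ is forced, the tame conductors are controlled, and one can apply the intermediate result (Theorem~\ref{thm:22}) to $(\bdsf_\vrh,\bdsg_\vrh)$. Since $\bdpi\times\bdsig\simeq(\bdpi\otimes\vrh^{-1})\times(\bdsig\otimes\vrh)$, the $L$-function is unchanged; but without this auxiliary twist you have no test vector at the bad primes for which the local integral is actually computable. Your proposal does not contain any mechanism to handle this.
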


\begin{Remark}\label{rem:12}
\begin{enumerate}
\item\label{rem:131} We note that the global root number \[\vep\biggl(\frac{1}{2},\bdpi_{\ulQ}\times\bdsig_{\ulQ'}\biggl)=+1\text{ for }(\ulQ,\ulQ')\in\frkX^{\rm crit}\] for $(\ulQ,\ulQ')\in\frkX^{\rm crit}$ by (splt), (\ref{tag:b2}) and Remark \ref{rem:e3}. 
\item\label{rem:134} For $\ulQ\in\frkX_{\bfI_3}^{++}$, the weights of $\bdpi_{\ulQ,\infty}$ is sufficiently regular, so $\bdpi_{\ulQ}$ is tempered according to the endoscopic classification of cuspidal automorphic representations on $\U(3)$ (c.f. \cite[3.2.3, p.~618]{BC04}).
\item\label{rem:133} Yifeng Liu \cite{YL} has recently constructed anticyclotomic $p$-adic $L$-functions for automorphic representations of $\U(n)\times\U(n-1)$ whose archimedean component is the trivial representation. 
Despite our result is restricted to $\U(3)\times\U(2)$, this paper works with automorphic representations of general weights at archimedean components and constructs several variable $p$-adic square roots of $L$-functions attached to Hida families of modular forms on unitary groups. We provide more precise interpolation formulae including the interpolation at infinite order anticyclotomic characters. We expect to generalize our construction to $\U(n)\times\U(n-1)$ in the future. 
\item This paper mainly concerns the $p$-adic $L$-functions for the Rankin-Selberg convolution $\BC(\bdpi_{\ulQ})\times \BC(\bdsig_{\ulQ'})$ in the \emph{balanced} case in the sense that the weights satisfy the interlacing relation (\ref{tag:11}). 
This is an analogue of theta elements in \cite{BD} and $p$-adic triple product $L$-functions in the balanced case (\cite{GS20} and \cite{MH}). In a forthcoming joint work with Michael Harris, we construct $p$-adic $L$-functions in the \emph{unbalanced} case: the weights of $\BC(\bdpi_{\ulQ})\times \BC(\bdsig_{\ulQ'})$ satisfy different interlacing relation \[k_{Q_1}\leq -k_{Q_2'}\leq k_{Q_2}< k_{Q_3}\leq -k_{Q_1'}.\]
The method uses Hida families of modular forms on non-compact unitary groups $\U(2,1)\times\U(1,1)$. 
\end{enumerate}
\end{Remark}


In Definition \ref{def:510}, we shall construct the Hecke-equivariant perfect pairing 
\[\bfB_\frkN: e_{\ord}\bfS^{\U(n)}(\frkN,\chi,\bfI_n)\times e_{\ord}\bfS^{\U(n)}(\frkN,\chi,\bfI_n)\to\bfI_n,\]
which interpolates the canonical bilinear pairing between automorphic forms on definite unitary groups. 
Put \[\eta_{\bdsf}=\bfB_\frkN(\bdsf,\bdsf)\in\bfI_n.\] 
The element $\eta_{\bdsf}$ is expected to be related to the congruence number of $\bdsf$. 
More precisely, we will construct a \emph{theta element} $\scrl_{\bdsf,\bdsg}\in \bfI_3\widehat{\otimes}_\calo\bfI_2$ associated to a pair of Hida families $(\bdsf,\bdsg)$ on $\U(3)\times\U(2)$, and define \[L_p(\bfV)=\frac{\scrl_{\bdsf,\bdsg}^2}{\eta_{\bdsf}\eta_{\bdsg}}. \] 
Therefore, we actually construct the square root for the $p$-adic $L$-function for the Galois representation $\bfV$.

The proof of Theorem \ref{thm:12} is divided into two steps: 
\begin{itemize}
\item[(i)] we construct $\scrl_{\bdsf,\bdsg}$ via the $p$-adic interpolation of the period integrals of modular forms on $\U(3)\times \U(2)$ along $\U(2)$ in Section \ref{sec:6}; 
\item[(ii)] we evaluate those period integrals via the Ikeda-Ichino conjecture explicitly in Section \ref{sec:2}.
\end{itemize}
The second step (ii) may have independent interest in some analytic aspects of the Rankin-Selberg $L$-values for unitary groups. We begin with a brief outline of the first step (i). 

\subsection{Construction of $\scrl_{\bdsf,\bdsg}$}\label{ssec:19}

We denote the finite ad\`{e}le ring of $\QQ$ by $\widehat{\QQ}$. 
Let $G=\U(3)$ and $H=\U(2)$. 
For simplicity we suppose that 
\begin{align*}
N'&=1, &
\chi'&=1.
\end{align*}
We will specify suitable maximal compact subgroups $\calk$ of $G(\widehat{\QQ})$ and $\calk'$ of $H(\widehat{\QQ})$ (cf. Appendix \ref{sec:d}). 
Define open subgroups of $\calk'$ by
\begin{align*}
\calk''(1)&=\calk\cap\calk', & 
\calk''(p^\ell)&=\calk''(1)\cap\imath_\frkp^{\prime-1}\left(\left\{\begin{bmatrix} 1+\frkp^\ell & * \\ \frkp^\ell & 1+\frkp^\ell \end{bmatrix}\right\}\right)
\end{align*}
for each positive integer $\ell$. 
We consider the finite sets 
\begin{align*}
X_\ell^{}&=G(\QQ)\bsl G(\widehat{\QQ})/\calk_1(p^\ell\frkN), &
X_\ell'&=H(\QQ)\bsl H(\widehat{\QQ})/\calk''(p^\ell), & 
\bfX_\ell^{}&=X_\ell^{}\times X_\ell'. 
\end{align*}
Define $\vsi^{(p)}_{}=(\vsi^{(p)}_l)\in G(\widehat{\QQ})$ by 
\beq
\vsi^{(p)}_l=\begin{cases}
\imath_\frkl^{-1}(\vsi) &\text{if $l$ splits in $E$ and differs from $p$, }\\
\ono_3 &\text{otherwise. } 
\end{cases} \label{tag:12}
\eeq
Here the matrix $\vsi$ is defined in (\ref{tag:23}). 
Since 
\beq
\iot(\calk''(1))\vsi^{(p)}\subset\vsi^{(p)}\calk_1(\frkN), \label{tag:13}
\eeq
we define the map $\jmath(x)=\iot(x)\vsi^{(p)}$ induces a map $X_0'\to X_0^{}$. In \S \ref{ssec:61} we construct a collection of regularized diagonal cycles $\Del_\ell^\dagger$ of $\bfX_\ell$ that are compatible with respect to the projection maps $\bfX_\ell\to\bfX_{\ell-1}$. 
We therefore obtain the big diagonal cycle 
\[\Del_\infty^\dagger=\lim_{\longleftarrow \ell}\Del_\ell^\dagger\in\lim_{\longleftarrow \ell}\mathrm{Pic}\bfX_\ell\otimes_\ZZ\ZZ_p. \]
We define an $\bfI_3\widehat{\otimes}\bfI_2$-adic modular form on $G\times H$ by 
\[\bdsF=\bdsf\boxtimes\bdsg:\lim_{\longleftarrow \ell}\bfX_\ell\to\bfI_3\widehat{\otimes}_\calo\bfI_2\] 
by $\bdsF(x,x')=\bdsf(x)\bdsg(x')$. 
Then $\bdsF$ naturally induces a map 
\[\bdsF_*:\lim_{\longleftarrow \ell}\mathrm{Pic}\bfX_\ell\otimes_\ZZ\ZZ_p\to\bfI_3\widehat{\otimes}_\calo\bfI_2. \]  
The element $\scrl_{\bdsf,\bdsg}$ equals the theta element $\Tht_{\bdsF}$ attached to $\bdsF$ defined by 
\[\Tht_{\bdsF}=\bdsF_*(\Del_\infty^\dagger)\in\bfI_3\widehat{\otimes}_\calo\bfI_2 \]
up to some fudge factor (see (\ref{tag:63})). 
This theta element is an analogue for $\U(2)\times\U(3)$ of theta elements constructed by Bertolini and Darmon \cite{BD} for $\SO(2)\times\SO(3)$ (cf. \cite{CH}) and by the first author \cite{MH} for $\SO(3)\times\SO(4)$.

Let $\d_{\calk'}h$ be the Haar measure of $H(\AA)$ giving $H(\RR)\calk'$ volume $1$. 
We then proceed to show in Proposition \ref{prop:61} that the evaluation of $\Tht_{\bdsF}$ at $\calq\in\frkX^{\rm crit}$ is given by the normalized period integral 
\[\scrp_{\calk'}^*(\vPh^\dagger):=\frac{p^{\frac{5}{2}\ell}}{(\rho_p(p)\mu_p(p)^2\nu'_p(p))^\ell}\int\limits_{H(\QQ)\bsl H(\AA)}\vPh^\dagger\left(\jmath(h)\imath_\frkp^{-1}\left(\begin{bmatrix}
0 & 0 & -p^{-\ell} \\
0 & p^\ell & 1 \\
p^{2\ell} & p^\ell & 0
\end{bmatrix}\right),h\right)\d_{\calk'}h, \]
where $\vPh^\dagger\in\bdpi_{\ulQ}\otimes\bdsig_{\ulQ'}$ is a suitable $H(\RR)$-invariant modular form on $G\times H$.

\subsection{The explicit Ichino-Ikeda formula}\label{ssec:13}
A key ingredient to evaluate the period integral $\scrp_{\calk'}^*(\vPh^\dagger)$ is the Ichino-Ikeda conjecture which has been first formulated for orthogonal groups in \cite{II}. 
Its unitary analogue was formulated for unitary groups in \cite{NHarris} and proved in \cite{Z,BLZZ,BCZ} (cf. Theorem \ref{thm:21}). 
This Ichino-Ikeda formula relates a square of this period integral to the product of the central value $L\bigl(\frac{1}{2},\pi\times\sig\bigl)$ and local integrals.

To make this formula precise, we need to compute various local integrals for suitable test vectors. 
Thanks to (\ref{H1}), we can apply the local theory of newforms for representations of general linear groups, which was developed in \cite{JPSS,NM}. 
We will compute these integrals at $p$ in Section \ref{sec:3} and at archimedean and ramified places in Appendices \ref{sec:b}, \ref{sec:d} and \ref{sec:e}. 
To remove Hypothesis (\ref{H1}), one needs to compute the local integral for suitably chosen test vectors at inert primes. 

A local key ingredient is the splitting lemma which has been proved in \cite{LM,Z} (see Lemma \ref{lem:a1}). 
This lemma relates the Ichino-Ikeda local integral to a square of the JPSS integral at split primes. 
It is well-known that the JPSS integral $Z(s,W_{\pi_l},W_{\sig_l})$ of essential vectors coincides with the $L$-factor $L^\GL(s,\pi_l\times\sig_l)$ when $\sig_l$ is unramified. 

When $N'>1$, we need to compute the local integral when $\sig_l$ is ramified. 
When $\sig_l$ satisfies the condition ($H_3$) in \S \ref{ssec:213}, following the construction \cite{Schmidt} of $p$-adic $L$-functions for $\GL_3\times\GL_2$, Section \ref{sec:4} will construct an operator $\Tht^{\chi'_l}_{\frkN'}$ having the following properties: 
\begin{itemize}
\item the restriction of $\Tht^{\chi'_l}_{\frkN'}W_{\pi_l}$ to $H(\QQ_l)$ has an appropriate $\calk'_l$-type; 
\item $Z(s,\Tht^{\chi'_l}_{\frkN'}W_{\pi_l},W_{\sig_l})$ has a simple formula. 
\end{itemize}
Furthermore, we replace the pair $(\pi,\sig)$ by a suitable twist $(\pi\otimes\vrh_\AA^{-1},\sig\otimes\vrh_\AA)$ and replace the pair $(\bdsf,\bdsg)$ by another pair of Hida families $(\bdsf_\vrh,\bdsg_\vrh)$. 
We apply the step (i) to $\Tht^{\chi'}_{\frkN'}\bdsf_\vrh\boxtimes\bdsg_\vrh$. 
With these local calculations we conclude that 
\[\frac{\scrp_{\calk'}^*(\vPh^\dagger)^2}{(\vph_\pi^{},\vph_{\pi^\vee})_\calk(\vph_\sig^{},\vph_{\sig^\vee})_{\calk'}}\approx\frac{L\bigl(\frac{1}{2},\bdpi_{\ulQ}\times\bdsig_{\ulQ'}\bigl)}{L(1,\bdpi_{\ulQ},\Ad)L(1,\bdsig_{\ulQ'},\Ad)}, \]
where $\vph_\pi\in\bdpi_{\ulQ}$ and $\vph_\sig\in\bdsig_{\ulQ'}$ are highest weight essential vectors. 


\subsection*{Acknowledgement}
Hsieh is supported by the NSTC grant 112-2628-M-002-011. Yamana is partially supported by JSPS Grant-in-Aid for Scientific Research (C) 23K03055. 
This work was partly supported by MEXT Promotion of Distinctive Joint Research Center Program JPMXP0723833165. 
We thank Michael Harris for constant help and encouragements, and Shih-Yu Chen for useful discussions on the algebraicity of $L$-values.


\section*{Notation}

Besides the standard symbols $\ZZ$, $\QQ$, $\RR$, $\CC$, $\ZZ_q$ and $\QQ_q$ we denote by $\RR_+^\times$ and $\QQ^\times_+$ the groups of strictly positive real and rational numbers, and by $\CC^1$ the group of complex numbers of absolute value $1$. 
Let $\AA$ be the ring of ad\`{e}les of $\QQ$. 
We write $\widehat{\QQ}$ for the finite part of $\AA$. 
Put $\widehat{\ZZ}=\prod_q\ZZ_q\subset\widehat{\QQ}$.  
Given a place $v$ of $\QQ$, we write $\QQ_v$ for the completion of $\QQ$ with respect to $v$. 
We shall regard $\QQ_v$ and $\QQ_v^\times$ as subgroups of $\AA$ and $\AA^\times$ in a natural way. 
We denote by the formal symbol $\infty$ the real place of $\QQ$ and do not use $q,l$ for the infinite place. 

Define $\addchar_v:\QQ_v\to\CC^1$ by $\addchar_\infty(z)=e^{2\pi\sqrt{-1}z}$ for $z\in\RR$ and by $\addchar_p(x)=\addchar_\infty(-y)$ for $x\in\QQ_p$ with $y\in\ZZ[p^{-1}]$ such that $y-x\in\ZZ_p$. 
Then $\addchar=\prod_v\addchar_v$ defines a character of $\AA/\QQ$. 
We associate to $m\in\QQ$ the global additive character $\addchar^m$ defined by $\addchar^m(z)=\addchar(mz)$ for $z\in\AA$. 
For $a\in\QQ_q^\times$ we define an additive character $\addchar_q^a$ of $\QQ_q^{}$ by $\addchar_q^a(z)=\addchar_q(az)$ for $z\in\QQ_q$. 

Let $\d z_v$ be the Haar measure on $\QQ_v$ self-dual with respect to the pairing $(z_v^{},z_v')\mapsto\addchar_v(z_v^{}z_v')$. 
Note that $\int_{\ZZ_q}\d z_q=1$ for each rational prime $q$ and that $\d z_\infty$ is the usual Lebesgue measure on $\RR$. 
Let $\d z=\prod_v\d z_v$.  
Then $\d z$ is the Haar measures on $\AA$ such that $\AA/\QQ$ has volume 1. 
Let $\d^\times t=\prod_v\d^\times t_v$ be the Haar measure on $\AA^\times$, where $\d^\times t_v$ is the Haar measure on $\QQ_v^\times$ normalized by $\int_{\ZZ_q^\times}\d^\times t_q=1$ if $v = q <\infty$, and $\d^\times t_\infty=\frac{\d t_\infty}{|t_\infty|}$. 


Let $E$ be an imaginary quadratic field of discriminant $-D_E$ with the integer ring $\frkr$. 
We write $\eps_{E/\QQ}=\prod_v\eps_{E_v/\QQ_v}$ for the quadratic character of $\QQ^\times\bsl\AA^\times$ associated to $E$.  
Set 
\begin{align*}
\EE&=E\otimes_\QQ\AA, & 
E_v&=E\otimes_\QQ\QQ_v, & 
\frkr_q&=\frkr\otimes_\ZZ\ZZ_q. 
\end{align*}
We denote by $x\mapsto x^\tht$ the non-trivial automorphism of $E$.  
We write $\trs x\in\Mat_{n,m}(E)$ for the transpose of a matrix $x=(x_{ij})\in\Mat_{m,n}(E)$ and put 
\[x_{}^\tht=(x_{ij}^\tht)\in\Mat_{m,n}(E). \]
Let $\vSi_E^r$ be the set of prime numbers which are ramified in $E$.
 
Once and for all we fix an odd rational prime $p$ that is split in $\frkr$. 
Fix an algebraic closure $\overline{\QQ}$ of $\QQ$.
We fix an embedding $\iot_\infty:\overline{\QQ}\hookrightarrow\CC$ and an isomorphism $\iot_p:\CC\simeq\CC_p$, where $\CC_p$ is the completion of an algebraic closure of $\QQ_p$. 
Given an algebraic number field $L$, we regard $L$ as a subfield in $\CC$ (resp. $\CC_p$) via $\iot_\infty$ (resp. $\iot_p\circ\iot_\infty$). 
Let $\ord_p$ be the $p$-adic valuation on $\CC_p$ normalized so that $\ord_pp=1$. 
We write $\frkp$ for the prime ideal of $\frkr$ above $p$ that corresponds to the restriction of $\iot_p\circ\iot_\infty$ to $E$. 


\section{Hida families on definite unitary groups}\label{sec:5}

\subsection{Unitary groups}\label{ssec:21}

We let the base field be the rational field $\QQ$. 
This assumption simplifies the notation and reduce technicality. 
We write $\Res_{E/\QQ}\GL_n$ for the general linear group over an imaginary quadratic field $E$, regarded as an algebraic group over $\QQ$ by restricting scalars. 
Fix a non-degenerate {\it rational diagonal} matrix $T$ of size $n$. 
Define a Hermitian form $(\;,\;)_T$ on $W=E^n$ by $(u,v)_T=\trs u^\tht Tv$ for $u,v\in W$.  
Let $G=\U(T)$ be the unitary group associated with the Hermitian form $T$. Namely, $G$ is the algebraic group defined by 
\[G=\{g\in\Res_{E/\QQ}\GL_n\;|\;\trs g^\tht Tg=T\}. \]  

We let $\frkr$ be the ring of integers of $E$. Let $l$ be a split prime, i.e., $l\frkr=\frkl\bar\frkl$, $E_l\simeq E_\frkl\oplus E_{\bar\frkl}$ and $\frkr_l\simeq \frkr_\frkl\oplus \frkr_{\bar\frkl}$. 
The projection $(g_1,g_2)\mapsto g_1$ gives an isomorphism $\imath_\frkl:G(\QQ_l)\stackrel{\sim}{\to}\GL_n(E_\frkl)$. 
Fix a maximal compact subgroup $\calk=\prod_q\calk_q^{}$ of $G(\widehat{\QQ})$ such that $\imath_\frkl(\calk_l)=\GL_n(\frkr_\frkl)$ for every split prime $l$. 
Define open subgroups of $\GL_n(\frkr_\frkl)$ by 
\begin{align*}
\calk_0^{(n)}(\frkl^k)&=\{(g_{ij})\in\GL_n(\frkr_\frkl)\;|\;g_{nj}\in \frkl^k\text{ for }j=1,2,\dots,n-1\}, \\ 
\calk_1^{(n)}(\frkl^k)&=\{(g_{ij})\in\calk_0^{(n)}(\frkl^k)\;|\;g_{nn}-1\in \frkl^k\}. 
\end{align*}

Fix a positive integer $N$ whose prime factors are split in $E$. 
We take an ideal $\frkN$ of $\frkr$ such that $\frkr/\frkN\simeq\ZZ/N\ZZ$. 
Let  
\begin{align*}
\calk_0(\frkN)&=\{(g_q)\in\calk\;|\; \imath_\frkl(g_l^{})\in\calk_0^{(n)}(N\frkr_\frkl)\text{ for }l|N\}, \\ \calk_1(\frkN)&=\{(g_q)\in\calk\;|\; \imath_\frkl(g_l^{})\in\calk_1^{(n)}(N\frkr_\frkl)\text{ for }l|N\}
\end{align*} 
be open compact subgroups of $G(\widehat{\QQ})$. 



\subsection{Modular forms on $\U(n)$}\label{ssec:51}
For each positive integer $\ell$ we define open subgroups of $\GL_n(\frkr_\frkp)$ by 
\begin{align*}
\cali_0^{(n)}(\frkp^\ell)&=\{(g_{ij})\in\GL_n(\frkr_\frkp)\;|\;g_{ij}\in \frkp^{\ell(i-j)}\text{ for }i>j\}, \\ 
\cali_1^{(n)}(\frkp^\ell)&=\{(g_{ij})\in\cali_0^{(n)}(\frkp^\ell)\;|\;g_{ii}-1\in \frkp^\ell\text{ for }i=1,2,\dots,n\} 
\end{align*}
and open compact subgroups of $G(\widehat{\QQ})$ by 
\begin{align*}
\calk_0(p^\ell\frkN)&=\{(g_q)\in\calk_0(\frkN)\;|\; \imath_\frkp(g_p^{})\in\cali_0^{(n)}(\frkp^\ell)\}, \\ 
\calk_1(p^\ell\frkN)&=\{(g_q)\in\calk_1(\frkN)\;|\; \imath_\frkp(g_p^{})\in\cali_1^{(n)}(\frkp^\ell)\},
\end{align*} 
where the open compact subgroups $\calk_0(\frkN)$ and $\calk_1(\frkN)$ of $G(\widehat{\QQ})$ are defined in \S \ref{ssec:21} (see \S \ref{ssec:26} for the definition of $\calk=\calk_0(\frkr)$). 

\begin{definition}\label{def:51}
Let $\ulk=(k_1,k_2,\dots,k_n)\in\ZZ^n$ be an $n$-tuple of integers such that $k_1\leq k_2\leq\dots\leq k_n$. 
For a commutative ring $A$ of characteristic $0$ we write $\call_{\ulk}$ for an $A$-module on which $\GL_n(A)$ acts as the irreducible representation $\rho_{\ulk}$ with highest weight $\Bbbk=(k_n,\dots,k_2,k_1)\in\ZZ^n$. 
\end{definition}

We define an embedding $\iot_\infty^T:G(\RR)\hookrightarrow\GL_n(\CC)$ by 
\begin{align*}
\iot_\infty^T(g)&=\sqrt{T}g\sqrt{T}^{-1}, &
\sqrt{T}&=\begin{bmatrix} \sqrt{t_1} & & \\ & \ddots & \\ & & \sqrt{t_n}\end{bmatrix}. 
\end{align*}
It is important to note that 
\[\iot_\infty^T(g^\tht)=\trs\iot_\infty^T(g)^{-1}. \]
We define a representation of $\rho_{\ulk,\infty}$ of $G(\RR)$ on $\call_{\ulk}(\CC)$ by $\rho_{\ulk,\infty}(g)=\rho_{\ulk}(\iot_\infty^T(g))$. 

\begin{definition}\label{def:52}
We call an $\call_{\ulk}(\CC)$-valued function $f$ on $G(\AA)$ a vector-valued modular form on $G$ of weight $\ulk$ and level $p^\ell\frkN$ if it satisfies 
\[f(\gam gu_\infty u)=\rho_{\ulk,\infty}(u_\infty)^{-1}f(g)\]
for $\gam\in G(\QQ)$, $g\in G(\widehat{\QQ})$, $u_\infty\in G(\RR)$, $u\in\calk_1(p^\ell\frkN)$. 
The space $\cala_{\ulk}^G(p^\ell\frkN)$ consists of vector-valued modular forms on $G(\AA)$ of weight $\ulk$ and level $p^\ell\frkN$. 
We say that $f\in \cala_{\ulk}^G(p^\ell\frkN)$ is $\overline{\QQ}$-rational if the restriction of $f$ to $G(\widehat{\QQ})$ takes values in $\call_{\ulk}(\iota_\infty(\overline{\QQ}))$. 
\end{definition}
Given $g\in G(\AA)$ and a function $\calf$ on $G(\QQ)\bsl G(\AA)$, we define another function $r(g)\calf$ on $G(\QQ)\bsl G(\AA)$ by 
\[[r(g)\calf](h)=\calf(hg). \]
For a character $\chi$ of $\calk_0(p^\ell\frkN)$ we set
\[\cala_{\ulk}^G(p^\ell\frkN,\chi)=\{f\in\cala_{\ulk}^G(p^\ell\frkN)\;|\;r(u)f=\chi(u)^{-1}f\text{ for }u\in\calk_0(p^\ell\frkN)\}. \]

Put $\ulk^\vee=(-k_n,\dots,-k_2,-k_1)$. 
Fix a perfect pairing 
\[\ell_{\ulk}:\call_{\ulk^\vee}^{}(\QQ)\otimes\call_{\ulk}(\QQ)\to \QQ. \]
Let $\scra(G)$ denotes the space of scalar valued modular forms on $G$. 
We associate to $f\in \cala_{\ulk}^G(p^\ell\frkN)$ and $\bfv\in\call_{\ulk^\vee}(\QQ)$ a scalar valued modular form $f_\bfv\in\scra(G)$ defined by 
\begin{align*}
f_\bfv(g)&=\ell_{\ulk}(\bfv\otimes f(g)), &
g&\in G(\AA). 
\end{align*}
A $G(\RR)$-equivariant map $\call_{\ulk^\vee}(\CC)\to\scra(G)$ is given by $\bfv\mapsto f_\bfv$. 
Let $\calt$ be the diagonal torus of $G$. 
If $\bfv_{\ulk}\in\call_{\ulk^\vee}(\CC)$ is a highest weight vector, then 
\[f_{\bfv_{\ulk}}(gt_\infty)=t_{\infty,1}^{-k_1}t_{\infty,2}^{-k_2}\cdots t_{\infty,n}^{-k_n}f_{\bfv_{\ulk}}(g)\]
for $g\in G(\widehat{\QQ})$ and $t_\infty=\diag(t_{\infty,1},t_{\infty,2},\dots,t_{\infty,n})\in\calt(\RR)$. 

Let $\d_\calk x$ be the Haar measure on $G(\AA)$ that gives $G(\RR)\calk$ volume $1$. 
We define the Petersson pairing of $\vph,\vph'\in\scra(G)$ by 
\[(\vph,\vph')_\calk=\int_{G(\QQ)\bsl G(\AA)}\vph(x)\vph'(x)\,\d_\calk x. \]
For any function $\calf$ on $G(\QQ)\bsl G(\AA)/G(\RR)\calk_0(p^\ell\frkN)$ we have 
\[\int_{G(\QQ)\bsl G(\AA)}\calf(x)\,\d_\calk x=\frac{1}{[\calk:\calk_0(p^\ell\frkN)]}\sum_{[x]\in G(\QQ)\bsl G(\widehat{\QQ})/\calk_0(p^\ell\frkN)}\frac{\calf(x)}{\sharp\Gam_{p^\ell\frkN,x}}, \]
where $[x]$ means the double coset $G(\QQ)x\calk_0(p^\ell\frkN)$ and 
\[\Gam_{p^\ell\frkN,x}=G(\QQ)\cap x\calk_0(p^\ell\frkN)x^{-1}. \]
We define a perfect pairing
\[(\;,\;)_{p^\ell\frkN}:\cala_{\ulk^\vee}^G(p^\ell\frkN,\chi^{-1})\times\cala_{\ulk}^G(p^\ell\frkN,\chi)\to\CC\]
by 
\[(f',f)_{p^\ell\frkN}=\sum_{[x]\in G(\QQ)\bsl G(\widehat{\QQ})/\calk_0(p^\ell\frkN)}\frac{\ell_{\ulk}(f'(x)\otimes f(x))}{\sharp\Gam_{p^\ell\frkN,x}}. \]
The Schur orthogonality relation gives 
\beq
(f'_\bfu,f_\bfv^{})_\calk=\frac{\ell_{\ulk}(\bfu,\bfv)}{[\calk:\calk_0(p^\ell\frkN)]\dim\call_{\ulk}}(f',f)_{p^\ell\frkN}. \label{tag:51}
\eeq

\begin{definition}\label{def:53}
Let $\pi$ be an irreducible automorphic representation of $G(\AA)$ generated by $f_{\bfv_{\ulk}}$ with a Hecke eigenform $f\in\cala_{\ulk}^G(p^\ell\frkN)$. 
Let $(\lam_1,\lam_2,\dots,\lam_n)$ be the Harish-Chandra parameter of $\pi_\infty$. 
Note that $\lam_i=-k_i+\frac{n+1}{2}-i$. 
Assume that $\pi_p$ is an irreducible principal series $V=I(\mu_1,\mu_2,\dots,\mu_n)$ of $\GL_n(\QQ_p)$ with locally algebraic characters $\iot_p\circ\mu_i:\QQ_p^\times\to\CC_p^\times$. 
Put $\alp_i=\iot_p(\mu_i(p))$. 
We order $\mu_i$ so that $\ord_p\alp_1\geq\ord_p\alp_2\geq\cdots\geq\ord_p\alp_n$. 
We say that $\pi$ is $p$-ordinary with respect to $\iot_p$ if $\ord_p\alp_i=-\lam_{n-i+1}$ for $i=1,2,\dots,n$ (cf. Conjecture 4.1 of \cite{CPR}). 
\end{definition}

\begin{definition}\label{def:54}
The operator $U_p$ on $f\in\cala_{\ulk}^G(p^\ell\frkN,\chi)$ is defined by 
\[U_pf=\sum_{\begin{smallmatrix} u=(u_{i,j})\in \caln_n\\
u_{i,j}\in\ZZ_p/p^{j-i}\ZZ_p\text{ for }i<j \end{smallmatrix}}r(\imath_\frkp^{-1}(uD_{n,p}))f, \]
where 
\[D_{n,p}=\begin{bmatrix} p^{n-1} & 0 & \dots & 0 & 0 \\ 0 & p^{n-2} & \dots & 0 & 0 \\ \vdots & \vdots & \ddots & \vdots & \vdots \\ 0 & 0 & \dots & p & 0 \\ 0 & 0 & \dots & 0 & 1 \end{bmatrix}. \]
\end{definition}

\begin{remark}\label{rem:51}
Put 
\[\alp_{\pi_p}=\displaystyle\prod_{i=1}^n\Big(\alp_ip^{i-\frac{n+1}{2}}\Big)^{i-1}=p^\frac{n(n^2-1)}{12}\prod_{i=1}^n\alp_i^{i-1}. \]
If $\pi$ is $p$-ordinary with respect to $\iot_p$, then $\ord_p\alp_i\neq\ord_p\alp_j$ for $i\neq j$, and so by \cite[Theorem 5.3]{Hid04}  and Proposition \ref{prop:31} below, the subspace 
\[V^{\ord}=\{h\in V\;|\; U_p h=\alp_{\pi_p}h\}\]
is spanned by the vector $h_{\pi_p}^{\ord}$ defined in \S \ref{ssec:33} (cf. \cite[Remark 6.3]{MH3}, \cite[Lemma 5.4]{Geraghty}). 
\end{remark}

Define an automorphism of $\GL_n(A)$ by $g^\vth=\trs g^{-1}$. 
Let $^\vth:\call_{\ulk}(A)\simeq\call_{\ulk^\vee}(A)$ be the isomorphism such that for $\bfu_1,\bfu_2\in\call_{\ulk}(\QQ)$
\begin{align}
(\rho_{\ulk}(g^\vth)\bfu_1^{})^\vth&=\rho_{\ulk^\vee}(g)\bfu_1^\vth, &
\ell_{\ulk}(\bfu_1^\vth\otimes\bfu_2^{})&=\ell_{\ulk^\vee}(\bfu_2^\vth\otimes\bfu_1^{}), \label{tag:52}
\end{align}

\begin{Remark}\label{rem:52}
Let $\bfu_{\ulk}$ be the lowest weight vector in $\call_{\ulk}(\QQ)$ that satisfies $\ell_{\ulk}(\bfv_{\ulk}\otimes\bfu_{\ulk})=1$. 
Define 
\[\vep_{\ulk}\in\{\pm 1\}\text{ so that }\bfu_{\ulk}^\vth=\vep_{\ulk}^{}\bfv_{\ulk}^{}. \]
In our future application, we will have \[\vep_{\ulk}=(-1)^{k_1-k_2}\text{ if }n=2;\quad \vep_{\ulk}=1\text{ if }n=3\] (see (\ref{tag:61}), (\ref{tag:62}), \S \ref{ssec:b2} and \S \ref{ssec:b4}). 
\end{Remark}

We define $\xi_{p^\ell\frkN}=(\xi_{p^\ell\frkN,l})\in G(\widehat{\QQ})$ by setting $\xi_{p^\ell\frkN,p}=\imath_\frkp^{-1}(D_{n,p}^{-\ell}T)$ and $\xi_{p^\ell\frkN,l}=\xi_{\frkN,l}$ for $l\neq p$. 
Observe that 
\beq
(\xi_{p^\ell\frkN}^{-1}\calk_0(p^\ell\frkN)\xi_{p^\ell\frkN}^{})^\tht=\calk_0(p^\ell\frkN). \label{tag:53}
\eeq
Given $f\in\cala_{\ulk}^G(p^\ell\frkN,\chi)$, we define $\call_{\ulk^\vee}(\CC)$-valued function $f^\tht$ on $G(\QQ)\bsl G(\AA)$ by $f^\tht(g)=(f(g^\tht))^\vth$. 
One can obtain $\tau_{p^\ell\frkN}f\in\cala_{\ulk^\vee}^G(p^\ell\frkN,\chi^{-1})$ by  
\[\tau_{p^\ell\frkN}f:=r(\xi_{p^\ell\frkN})f^\tht \]
in view of (\ref{tag:53}). 
It follows from (\ref{tag:52}) that 
\begin{align}
(\tau_{p^\ell\frkN}f)_{\bfu_{\ulk}}(g)
&=\ell_{\ulk^\vee}((r(\xi_{p^\ell\frkN}^\tht)f(g^\tht))^\vth\otimes\bfu_{\ulk}) \notag\\
&=\ell_{\ulk}(\bfu_{\ulk}^\vth\otimes r(\xi_{p^\ell\frkN}^\tht)f(g^\tht)) 
=\vep_{\ulk}f_{\bfv_{\ulk}}((g\xi_{p^\ell\frkN})^\tht). \label{tag:54}
\end{align}


\subsection{$p$-adic modular forms on $\U(n)$}\label{ssec:52}

Define an embedding $\iot_\frkp:G(\QQ_p)\hookrightarrow\GL_n(\CC_p)$ and a representation $\rho_{\ulk,p}$ of $G(\QQ_p)$ on $\call_{\ulk}(\CC_p)$ by 
\begin{align*}
\iot_p^T(g)&=\sqrt{T}\imath_\frkp^{}(g)\sqrt{T}^{-1}, &
\rho_{\ulk,p}&=\rho_{\ulk}(\iot_p^T(g)). 
\end{align*} 
By definition $\iot_p^T$ is compatible with $\iot_\infty^T$ in the sense that  
\begin{align*}
\iot_p^T(g^\tht)&=\iot_p^T(g)^\vth, & 
\iot_p^T(\gam)&=\iot_\infty^T(\gam) 
\end{align*}
for $g\in G(\QQ_p)$ and $\gam\in G(\QQ)$. 
Given $g\in G(\widehat{\QQ})$ and a function $\calf:G(\QQ)\bsl G(\widehat{\QQ})\to\call_{\ulk}(\CC_p)$, we define a function $r_{\ulk}(g)\calf$ on $G(\QQ)\bsl G(\widehat{\QQ})$ by 
\[[r_{\ulk}(g)\calf](h)=\rho_{\ulk,p}(g_p)\calf(hg). \]
We associate a function $\widehat{f}:G(\QQ)\bsl G(\widehat{\QQ})\to \call_{\ulk}(\CC_p)$ defined by 
\[\widehat{f}(g)=\rho_{\ulk,p}(g_p)^{-1}\iot_p(f(g))\]
to a function $f:G(\QQ)\bsl G(\AA)\to \call_{\ulk}(\CC)$. 
Notice that 
\begin{align}
\widehat{r(g)f}&=r_{\ulk}(g)\widehat{f}, &
f(gu_\infty)&=\rho_{\ulk,\infty}(u_\infty)^{-1}\iot_p^{-1}(\rho_{\ulk,p}(g_p)\widehat{f}(g))  \label{tag:55}
\end{align}
for $g\in G(\widehat{\QQ})$ and $u_\infty\in G(\RR)$. 

\begin{definition}\label{def:55}
Let $\chi$ be an $A$-valued character of $\calk_0(p^\ell\frkN)/\calk_1(p^\ell\frkN)$. 
The space $\cals_{\ulk}^G(p^\ell\frkN,\chi,A)$ of $p$-adic modular forms on $G$ of weight $\ulk$, level $p^\ell\frkN$ and nebentypus $\chi$ over $A$ consists of vector-valued functions $\widehat{f}:G(\widehat{\QQ})\to \call_{\ulk}(A)$ such that for $\gam\in G(\QQ)$, $g\in G(\widehat{\QQ})$ and $u\in\calk_0(p^\ell\frkN)$ 
\[\widehat{f}(\gam gu)=\rho_{\ulk,p}(u_p)^{-1}\widehat{f}(g)\chi(u)^{-1}, \]
where we denote the $p$-component of $u$ by $u_p$. 
\end{definition}

If $f\in\cala_{\ulk}^G(p^n\frkN,\chi)$, then $\widehat{f}\in\cals_{\ulk}^G(p^\ell\frkN,\chi,\CC_p)$, and $\widehat{f}$ is called the $p$-adic avatar of $f$. 
On the other hand, we will call $f$ the ad\`{e}lic lift of $\widehat{f}$. 
Given $\widehat{f}\in\cals_{\ulk}^G(p^\ell\frkN,\chi,\CC_p)$ and $\bfv\in\call_{\ulk^\vee}(\QQ)$, we define a scalar valued function $\widehat{f}_\bfv$ on $G(\QQ)\bsl G(\widehat{\QQ})/\calk_1(p^\ell\frkN)$ by 
\begin{align*}
\widehat{f}_\bfv(g)&=\ell_{\ulk}(\bfv\otimes\widehat{f}(g)), &
g&\in G(\widehat{\QQ}). 
\end{align*}
If $\bfv_{\ulk}\in\call_{\ulk^\vee}(\QQ)$ is a highest weight vector, then 
\[\widehat{f}_{\bfv_{\ulk}}(gt_p)=t_{p,1}^{-k_1}t_{p,2}^{-k_2}\cdots t_{p,n}^{-k_n}\widehat{f}_{\bfv_{\ulk}}(g)\chi(t_p)^{-1}\]
for $g\in G(\widehat{\QQ})$ and $\imath_\frkp(t_p)=\diag(t_{p,1},t_{p,2},\dots,t_{p,n})$ with $t_{p,i}\in\ZZ_p^\times$. 

We define a perfect pairing
\[(\;,\;)_{p^\ell\frkN}:\cals_{\ulk^\vee}^G(p^\ell\frkN,\chi^{-1},\CC_p)\times\cals_{\ulk}^G(p^\ell\frkN,\chi,\CC_p)\to\CC_p\]
by 
\[(\widehat{f}',\widehat{f})_{p^\ell\frkN}=\sum_{[x]\in G(\QQ)\bsl G(\widehat{\QQ})/\calk_0(p^\ell\frkN)}\frac{\ell_{\ulk}(\widehat{f}'(x)\otimes\widehat{f}(x))}{\sharp\Gam_{p^\ell\frkN,x}}. \]
By definition, \beq
(\widehat{f}',\widehat{f})_{p^\ell\frkN}=\iot_p((f',f)_{p^\ell\frkN}). \label{tag:56}
\eeq

To an $\call_{\ulk}(\CC_p)$-valued function $f$ on $G(\QQ)\bsl G(\widehat{\QQ})$ we associate an $\call_{\ulk^\vee}(\CC_p)$-valued function $\widehat{f}^\tht$ on $G(\QQ)\bsl G(\widehat{\QQ})$ defined by $\widehat{f}^\tht(g)=(\widehat{f}(g^\tht))^\vth$. 
One easily sees that $\widehat{f}^\tht=\widehat{f^\tht}$. 
We associate to $f\in\cala_{\ulk}^G(p^\ell\frkN,\chi)$ a $p$-adic modular form $\tau_{p^\ell\frkN}\widehat{f}$ of weight $\ulk^\vee$ and level $p^\ell\frkN$ defined by 
\beq
\tau_{p^\ell\frkN}\widehat{f}
:=r_{\ulk^\vee}(\xi_{p^\ell\frkN})\widehat{f}^\tht
=\widehat{\tau_{p^\ell\frkN}f}. \label{tag:57}
\eeq  
We define the operator $U_p$ on $\widehat{f}\in\cals_{\ulk}^G(p^\ell\frkN,\chi,\CC_p)$ by 
\[[U_p\widehat{f}](g)=\sum_{\begin{smallmatrix} u=(u_{i,j})\in N_n\\ u_{i,j}\in\ZZ_p/p^{j-i}\ZZ_p\text{ for }i<j \end{smallmatrix}}\rho_{\ulk,p}(\imath_\frkp^{-1}(uD_{n,p}))\widehat{f}(g\imath_\frkp^{-1}(uD_{n,p})). \]
Put 
\beq
\calu_p=p^{-(n-1)k_1-(n-2)k_2-\cdots-k_{n-1}}\cdot U_p. \label{tag:58}
\eeq
Then 
\begin{align}
\widehat{U_pf}&=U_p\widehat{f}, & 
[\calu_p\widehat{f}]_{\bfv_{\ulk}}&=\calu_p\widehat{f}_{\bfv_{\ulk}}. \label{tag:59}
\end{align}

\begin{definition}
We define an operator $\calu_p$ on the space of $\CC_p$-valued functions on $G(\QQ)\bsl G(\widehat{\QQ})/\imath_\frkp^{-1}(N_n(\ZZ_p))$ by
\[[\calu_p\calf](g)=\sum_{\begin{smallmatrix} u=(u_{i,j})\in N_n\\ 
u_{i,j}\in\ZZ_p/p^{j-i}\ZZ_p\text{ for }i<j \end{smallmatrix}}\calf(g\imath_\frkp^{-1}(uD_{n,p})). \]
\end{definition}

\begin{remark}\label{rem:53}
By definition 
\begin{align*}
[\calu_p\widehat{f}_{\bfv_{\ulk}}](g)
&=\sum_u\ell_{\ulk}(\rho_{\ulk^\vee,p}(g_p\imath_\frkp^{-1}(uD_{n,p}))\bfv_{\ulk}\otimes\iot_p(f(g\imath_\frkp^{-1}(uD_{n,p}))))\\
&=p^{-(n-1)k_1-(n-2)k_2-\cdots-k_{n-1}}\ell_{\ulk}(\rho_{\ulk^\vee,p}(g_p)\bfv_{\ulk}\otimes\iot_p([U_pf](g))). 
\end{align*}
If $\pi$ is a $p$-ordinary irreducible automorphic representation of $G(\AA)$ with respect to $\iot_p$, then Proposition \ref{prop:31} below gives an eigenform $f\in\cala_{\ulk}^G(p^\ell\frkN)$ of $U_p$ attached to $\pi$ such that $\widehat{f}_{\bfv_{\ulk}}$ is an eigenform of $\calu_p$ with unit eigenvalue 
\[\alp_f=p^{-(n-1)k_1-(n-2)k_2-\cdots-k_{n-1}}\alp_{\pi_p}. \]  
\end{remark}

If $A$ is $p$-adically complete, then the ordinary projector $e_{\ord}=\displaystyle\lim_{m\to\infty}\calu_p^{m!}$ converges to an idempotent in $\End_A\cals_{\ulk}(p^n\frkN,\chi,A)$.  


\subsection{Hida families on $\U(n)$}\label{ssec:53}
We define $X_1^G(p^\ell\frkN)$ as the finite set
\[X_1^G(p^\ell\frkN)=G(\QQ)\bsl G(\widehat{\QQ})/\calk_1(p^\ell\frkN) \]
for each positive integer $\ell$. Recall that $\calo$ is the ring of the integers of a finite extension $F$ of $\QQ_p$. Let $\calo[X_1^G(p^\ell\frkN)]=\oplus_{x\in X_1^G(p^\ell\frkN)}\calo x$ be the finitely generated $\calo$-module spanned by divisors of $X_1^G(p^\ell\frkN)$. 
Put 
\[X_1^G(p^\infty\frkN):=\displaystyle\lim_{\longleftarrow \ell}X_1^G(p^\ell\frkN). \]

We retain the notation from the introduction. Let $\bfI$ be a normal ring finite flat over $\Lam_n=\calo\powerseries{T_n(\ZZ_p)}$. 
Write $z\mapsto[z]_{\Lam_n}$ for the inclusion of group-like elements $T_n(\ZZ_p)\to\calo\powerseries{T_n(\ZZ_p)}^\times=\Lam_n^\times$. 
For $z\in T_n(\ZZ_p)$ we denote the image of $[z]_{\Lam_n}$ in $\bfI$ under the structure morphism $\Lam_n\to\bfI$ by $[z]_\bfI$. 


For $\ulQ=(Q_1,Q_2,\dots,Q_n)\in\frkX_\calr$ we put 
\begin{align*}
k_{\ulQ}&=(k_{Q_1},k_{Q_2},\dots,k_{Q_n}), & 
\ell&=\max\{1,c(\eps_{Q_1}),c(\eps_{Q_2}),\dots,c(\eps_{Q_n})\}
\end{align*}
and define finite order characters of $\calk_0(p^\ell\frkN)$ by
\begin{align*}
\eps_{\ulQ}(t_p)&=\eps_{Q_1}(t_{p,1})\eps_{Q_2}(t_{p,2})\cdots\eps_{Q_n}(t_{p,n}), 
\end{align*}
where $\imath_\frkp(t_p)=\diag(t_{p,1},t_{p,2},\dots,t_{p,n})$ with $t_{p,i}\in\ZZ_p^\times$ for $i=1,2,\dots,n$. 

Let $P^{(n)}_\ell$ be the ideal of $\Lam_n$ generated by $[t]_{\Lam_n}^{p^\ell}-1$ for $t\in T_n(\ZZ_p)$ 
Let the ring $\Lam_n$ act on $\calo[X_1^G(p^\ell\frkN)]$ by 
\[[t]_{\Lam_n} x:=x\cdot \imath_\frkp^{-1}(t),\quad t\in T_n(\ZZ_p).\]

\begin{definition}\label{def:56}
Put $\Del=(\frkr/\frkN)^\times$. 
For $d\in\Del$ the diamond operator $\sig_d$ acts on the module $\calo[X_1^G(p^\ell\frkN)]$ by 
\[\sig_dx:=x\prod_{\frkl|\frkN}\imath_\frkl^{-1}\left(\begin{bmatrix} \ono_{n-1} & \\ & \til d_l\end{bmatrix}\right), \]
where $\til d=(\til d_l)\in\prod_{l|N}\ZZ_l^\times$ is a lift of $d$. 
\end{definition}

Thus $\calo[X_1^G(p^\ell\frkN)]$ is a finitely generated $\Lam_n[\Del]$-module. 

\begin{definition}\label{def:57}
The module $\calo[X_1^G(p^\ell\frkN)]$ is equipped with the operator $\calu_p$ defined by 
{\small\[\calu_p x=\sum_{u_{i,j}\in\ZZ_p/p^{j-i}\ZZ_p}x\cdot\begin{bmatrix} p^{n-1} & p^{n-2}u_{1,2} & \dots & p^2u_{1,n-2} & pu_{1,n-1} & u_{1,n}  \\ 0 & p^{n-2} & \dots & p^2u_{2,n-2} & pu_{2,n-1} & u_{2,n} \\ \vdots & \vdots & \ddots & \vdots & \vdots & \vdots \\ 0 & 0 & \dots & p^2 & pu_{n-2,n-1} & u_{n-2,n} \\ 0 & 0 & \dots & 0 & p & u_{n-1,n} \\ 0 & 0 & \dots & 0 & 0 & 1 \end{bmatrix}, \]}
where we define the action of $g\in\GL_n(\QQ_p)$ on $x\in X_1^G(p^\ell\frkN)$ by $x\cdot g=x\imath_\frkp^{-1}(g)$. 
The limit 
\[e_{\ord}=\displaystyle\lim_{m\to\infty}\calu_p^{m!}\] 
converges to an idempotent in $\End_{\Lam_n}\calo[X_1^G(p^\ell\frkN)]$. 
\end{definition}

\begin{definition}\label{def:58}
A $\Lam_n$-adic modular form on $G$ of tame level $\frkN$ is a function $\bdsf:X_1^G(p^\infty\frkN)\to\Lam_n$ which satisfies 
\begin{align*}
\bdsf(x\cdot \imath_\frkp^{-1}(t))&=\bdsf(x)[t]_{\Lam_n}^{-1}, & 
t&\in T_n(\ZZ_p)
\end{align*} 
such that the function $\bdsf\pmod{P_\ell^{(n)}}:X_1^G(p^\infty\frkN)\to\Lam_n/P_\ell^{(n)}$ factors through $X_1^G(p^\ell\frkN)$ for any $\ell$ sufficiently large. 
Let $\bfS^G(\frkN,\Lam_n)$ be the space of $\Lam_n$-adic modular forms on $G$ of tame level $\frkN$. 
\end{definition}

The $\Lam_n$-module $\bfS^G(\frkN,\Lam_n)$ is equipped with the natural actions of Hecke and diamond operators given by 
\begin{align*}
\calu_p\bdsf(x)&=\bdsf(\calu_p x), &
\sig_d\bdsf(x)&=\bdsf(\sig_d x). 
\end{align*}
The ordinary projector $e_{\ord}=\displaystyle\lim_{m\to\infty}\calu_p^{m!}$ converges in $\End_{\Lam_n}\bfS^G(\frkN,\Lam_n)$.  
For a character $\chi:\Del\to\calo^\times$ we put 
\[\bfS^G(\frkN,\chi,\Lam_n)
=\{\bdsf\in\bfS^G(\frkN,\Lam_n)\;|\;\sig_d\bdsf=\chi(d)^{-1}\bdsf\text{ for }d\in\Del\}. \]
For a normal ring $\calr$ finite flat over $\Lam_n$ we set 
\begin{align*}
\bfS^G(\frkN,\calr)&=\bfS^G(\frkN,\Lam_n)\otimes_{\Lam_n}\calr, \\ 
\bfS^G(\frkN,\chi,\calr)&=\bfS^G(\frkN,\chi,\Lam_n)\otimes_{\Lam_n}\calr. 
\end{align*}

\begin{theorem}\label{thm:51}
Put $\Nr_\chi=\sum_{d\in\Del}\chi(d)\sig_d\in\calo[\Del]$. 
Let $P_\chi$ be the ideal of $\calr[\Del]$ generated by $\{\chi(d)\sig_d-1\}_{d\in\Del}$. 
Suppose that $p>3$. 
\begin{enumerate}
\item $\bfS^G(\frkN,\chi,\calr)$ is a free $\calr$-module, and the norm map 
\[\Nr_\chi:e_{\ord}\bfS^G(\frkN,\calr)/P_\chi\simeq e_{\ord}\bfS^G(\frkN,\chi,\calr)\] 
is an isomorphism. 
\item For every $\ulQ\in\frkX_\calr^+$ we have a Hecke equivariant isomorphism 
\begin{align*}
e_{\ord}\bfS^G(\frkN,\chi,\calr)\otimes_\calr\calr/\wp_{\ulQ}
&\simeq e_{\ord}\cals^G_{k_{\ulQ}}(p^\ell\frkN,\chi\eps_{\ulQ},\calr(\ulQ))\\
\bdsf\pmod{\wp_{\ulQ}}&\mapsto\bdsf_{\ulQ}, 
\end{align*}
where $\bdsf_{\ulQ}$ is the unique $p$-adic modular form such that 
\[\ulQ(\bdsf(x))=(\bdsf_{\ulQ})_{\bfv_{\ulk_{\ulQ}}}(x). \]
\end{enumerate}
\end{theorem}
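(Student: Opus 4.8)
\textbf{Proof plan for Theorem \ref{thm:51}.}

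The plan is to prove this by the standard control-theory argument for Hida families, adapted to definite unitary groups following \cite[\S 2]{Geraghty}, so I will organize the proof around three assertions: (a) freeness of $\bfS^G(\frkN,\chi,\calr)$ over $\calr$ after ordinary projection; (b) the norm-map isomorphism; and (c) the specialization (control) isomorphism. The key input is that each $\calo[X_1^G(p^\ell\frkN)]$ is a finitely generated module over $\Lam_n[\Del]$, and that since $p>3$ and the unitary group is definite, the relevant arithmetic subgroups $\Gam_{p^\ell\frkN,x}$ have no $p$-torsion for $\ell$ large (this is where $p>3$ and the tame-level hypotheses enter), so the modules $\calo[X_1^G(p^\ell\frkN)]$ are in fact free over $\calo[T_n(\ZZ/p^\ell\ZZ)][\Del]$ once the $p^\ell$-level structure rigidifies the stabilizers.

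First I would establish the freeness of $e_\ord\bfS^G(\frkN,\chi,\calr)$ over $\calr$. Since $\bfS^G(\frkN,\Lam_n)=\varprojlim_\ell \mathrm{Hom}_\calo(\calo[X_1^G(p^\ell\frkN)],\calo)^{?}$ is, up to duality, the $\Lam_n$-adic completion of $\varinjlim_\ell \calo[X_1^G(p^\ell\frkN)]$, the $p$-torsion-freeness of the stabilizer groups gives that for $\ell\gg 0$ the $\Lam_n/P_\ell^{(n)}$-module $e_\ord\calo[X_1^G(p^\ell\frkN)]$ is free; passing to the limit and applying Nakayama over the local ring $\Lam_n$ shows $e_\ord\bfS^G(\frkN,\Lam_n)$ is free of finite rank over $\Lam_n$, hence $e_\ord\bfS^G(\frkN,\calr)$ is free over $\calr$; cutting out the $\chi$-isotypic part for the finite group $\Del$ (whose order is prime to $p$, being a divisor of $N$) is an exact operation and preserves freeness, giving (a). The norm map $\Nr_\chi$ realizes the $\chi$-eigenprojection: one checks $\Nr_\chi$ kills $P_\chi$, induces a map $e_\ord\bfS^G(\frkN,\calr)/P_\chi\to e_\ord\bfS^G(\frkN,\chi,\calr)$, and constructs an inverse using $|\Del|^{-1}\in\calr$; since $\calr$ is $\Del$-torsion-free this is an isomorphism of free $\calr$-modules, giving (b).

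For the control isomorphism (c), fix $\ulQ\in\frkX_\calr^+$ with weight $k_{\ulQ}$ and finite part $\eps_{\ulQ}$, and let $\ell=\max\{1,c(\eps_{Q_i})\}$. Reduction modulo $\wp_{\ulQ}$ of a $\Lam_n$-adic form $\bdsf$ produces a $\calr/\wp_{\ulQ}$-valued function on $X_1^G(p^\infty\frkN)$ which, by the transformation law $\bdsf(x\imath_\frkp^{-1}(t))=\bdsf(x)[t]_{\Lam_n}^{-1}$ together with $\ulQ([t]_{\Lam_n})=\prod_i t_i^{k_{Q_i}}\eps_{Q_i}(t_i)$, descends to a $\calk_1(p^\ell\frkN)$-level scalar modular form transforming under $T_n(\ZZ_p)$ by the character $\bfv_{\ulk_{\ulQ}}$; pairing with the highest weight vector identifies this with $(\bdsf_{\ulQ})_{\bfv_{\ulk_{\ulQ}}}$ for a unique $\bdsf_{\ulQ}\in\cals^G_{k_{\ulQ}}(p^\ell\frkN,\chi\eps_{\ulQ},\calr(\ulQ))$ as in \S\ref{ssec:52}, and this construction is manifestly Hecke- and $\calu_p$-equivariant. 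The substantive point is that this map is an isomorphism after applying $e_\ord$: surjectivity follows because $e_\ord\cals^G_{k_{\ulQ}}(p^\ell\frkN,\chi\eps_{\ulQ},\calr(\ulQ))$ is spanned by $U_p$-ordinary forms which, by a standard lifting argument (lift the finite-level ordinary space back along the projections, using again the $p$-torsion-freeness of stabilizers for $\ell\gg 0$ so that there is no obstruction), lie in the image; injectivity follows by comparing $\calr$-ranks on both sides, using the freeness from part (a) and the fact that the finite-level ordinary spaces have rank independent of $\ulQ$ (the classical ``vertical control'' / independence-of-weight statement).

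\textbf{Main obstacle.} The crux — and the step I expect to occupy most of the work — is the control statement (c), specifically showing that $e_\ord$ applied to the specialization map is surjective with the expected rank. This requires knowing that the ordinary part of $\calo[X_1^G(p^\ell\frkN)]$ has rank stable in $\ell$ and that ordinary classes at finite level lift to $\Lam_n$-adic families without cohomological obstruction; for definite unitary groups over $\QQ$ with $p>3$ split and tame level divisible only by split primes, this is exactly the content of \cite[Corollary 2.19, Lemma 2.20]{Geraghty} (freeness of the finite-level modules over the appropriate group rings together with the commutation of $e_\ord$ with base change), so the proof is really a matter of assembling that machinery in the present normalization rather than proving something new. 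The one genuinely delicate verification is the $p$-torsion-freeness of the groups $\Gam_{p^\ell\frkN,x}=G(\QQ)\cap x\calk_0(p^\ell\frkN)x^{-1}$ for $\ell$ large, which follows from the definiteness of $G$ (so $\Gam_{p^\ell\frkN,x}$ is finite) together with the $p>3$ hypothesis forcing any $p$-torsion element to be trivial once the $p^\ell$-level is deep enough.
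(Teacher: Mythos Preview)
Your proposal is correct and takes essentially the same approach as the paper: both recognize that the theorem is the control theorem for Hida families on definite unitary groups already established in \cite{Geraghty}, and both defer the substance to that reference. The paper's proof is terser, phrasing things via Pontryagin duality (identifying $\bfS^G(\frkN,\Lam_n)$ with $\Hom_{\Lam_n}(V^{\ord}(\frkN),\Lam_n)$ for $V^{\ord}(\frkN)=\varprojlim_\alpha e_{\ord}\calo[X_1^G(p^\alpha\frkN)]$) and then citing \cite[Proposition 2.20]{Geraghty} for (1) and \cite[Proposition 2.22, Lemma 2.25]{Geraghty} for (2), whereas you spell out the freeness-via-Nakayama, $|\Del|$-invertibility, and ordinary-lifting mechanics that underlie those citations.
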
 
\begin{proof}This is essentially proved in \cite{Geraghty} by adapting the arguments in \cite{Hida88Annals} in the case of $\GL(2)$ to the case of unitary groups. 
For any abelian group $A$, let $\cals_0(\alpha,A)$ be the space of $A$-valued functions on the finite set $X_1^G(p^\alpha\frkN)$. 
Let $\scrv^{\ord}(\frkN):=\varinjlim_\beta\varinjlim_\alpha e_{\ord}\cals_0(\alpha,p^{-\beta}\calo/\calo)$ be the discrete $\Lam_n$-module. 
Note that $\scrv^{\ord}(\frkN)$ is nothing but the space $\cals_{0,\{\chi_v\}}^{\ord}(U(\frkl^\infty),K/\calo)$ in \cite[p.~ 1358]{Geraghty}. 
Let \[V^{\ord}(\frkN)=\varprojlim_\alpha e_{\ord}\calo[X_1^G(p^\alpha\frkN)]\] be the Pontryagin dual of $\scrv^{\ord}(\frkN)$. Then we have \[\bfS^G(\frkN,\Lam_n)=\Hom_{\Lam_n}(V^{\ord}(\frkN),\Lam_n). \] 
Therefore (1) follows from \cite[Proposition 2.20]{Geraghty} and (2) is proved in \cite[Proposition 2.22, Lemma 2.25]{Geraghty}. 
\end{proof}

\begin{definition}[Hida families]\label{Def:Hida}
A non-zero $\calr$-adic modular form $\bdsf\in e_{\ord}\bfS^G(\frkN,\chi,\calr)$ is an $\calr$-adic Hida family if $\bdsf_{\ulQ}$ is a simultaneous eigenform of Hecke operators for $\ulQ\in\frkX_\calr^{++}$. 
Let $\frkX_{\bdsf}^{++}=\{\ulQ\in\frkX_\calr^{++}\;|\;\ulQ(\bdsf)\neq 0\}$ be a Zariski dense subset of $\frkX_\calr^{++}$.  
\end{definition}
\begin{lemma}\label{lem:51}
If $\bdsf\in e_{\ord}\bfS^G(\frkN,\chi,\calr)$ is a $\calr$-adic Hida family, then $\ulQ(\bdsf)$ generates an irreducible $p$-ordinary automorphic representation of $G(\AA)$ for $\ulQ\in\frkX_{\bdsf}^{++}$. 
\end{lemma}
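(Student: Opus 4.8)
The plan is to assemble three ingredients already at our disposal: the control theorem of Theorem~\ref{thm:51}, the multiplicity one property for $\U(n)$ (available for $n\leq 3$, the range relevant to this paper), and the local description of ordinary vectors at $p$ recorded in Proposition~\ref{prop:31}. Fix $\ulQ\in\frkX^{++}_{\bdsf}$ and set $\ell=\max\{1,c(\eps_{Q_1}),\dots,c(\eps_{Q_n})\}$. Since $\bdsf\in e_{\ord}\bfS^G(\frkN,\chi,\calr)$ and $e_{\ord}$ commutes with the specialization maps, Theorem~\ref{thm:51}(2) identifies $\bdsf_{\ulQ}$ with a nonzero simultaneous Hecke eigenform in $e_{\ord}\cals^G_{k_{\ulQ}}(p^\ell\frkN,\chi\eps_{\ulQ},\calr(\ulQ))$, the nonvanishing being exactly the defining condition of $\frkX^{++}_{\bdsf}$. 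Because $G(\RR)$ is compact, the avatar correspondence $f\mapsto\widehat f$ of \eqref{tag:55} is a bijection between $\cala^G_{k_{\ulQ}}(p^\ell\frkN,\chi\eps_{\ulQ})$ and $\cals^G_{k_{\ulQ}}(p^\ell\frkN,\chi\eps_{\ulQ},\CC_p)$, so $\bdsf_{\ulQ}=\widehat f$ for a nonzero classical Hecke eigenform $f\in\cala^G_{k_{\ulQ}}(p^\ell\frkN,\chi\eps_{\ulQ})$. We let $\pi=\ulQ(\bdsf)$ be the subrepresentation of the space of automorphic forms on $G(\AA)$ generated by the associated scalar Hecke eigenform $f_{\bfv_{\ulk_{\ulQ}}}$.

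For irreducibility, recall that $G(\QQ)\bsl G(\AA)$ is compact, so the space of automorphic forms on $G(\AA)$ decomposes as a direct sum of irreducibles, each occurring with multiplicity one for the unitary groups in question (cf.\ Remark~\ref{rem:12} and the endoscopic classification; since $\ulQ\in\frkX^{++}$ the archimedean weight is regular, which places $\pi$ in the tempered range where strong multiplicity one holds). If the scalar form $f_{\bfv_{\ulk_{\ulQ}}}$ had a nonzero component along a constituent $\tau\not\simeq\pi$, that component would be a nonzero Hecke eigenform sharing the eigenvalues of $\pi$ at every prime away from $Np$, forcing $\tau\simeq\pi$ by strong multiplicity one --- a contradiction. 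Hence $f_{\bfv_{\ulk_{\ulQ}}}$ lies in a single multiplicity-one isotypic component, and $\pi$ is irreducible.

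It remains to verify that $\pi$ is $p$-ordinary with respect to $\iot_p$. Since $\bdsf_{\ulQ}$ is fixed by $e_{\ord}$ and is a $\calu_p$-eigenform, its $\calu_p$-eigenvalue is a $p$-adic unit, and by \eqref{tag:59} so is the $\calu_p$-eigenvalue $\alp_f$ of $\widehat{f}_{\bfv_{\ulk_{\ulQ}}}=(\bdsf_{\ulQ})_{\bfv_{\ulk_{\ulQ}}}$. The existence of a $\calu_p$-eigenvector with unit eigenvalue forces $\pi_p$ to be a subquotient of a principal series $I(\mu_1,\dots,\mu_n)$ with locally algebraic characters $\mu_i$, and then --- by the explicit description of the ordinary line carried out in \S\ref{ssec:33} and Proposition~\ref{prop:31}, which rest on \cite[Theorem~5.3]{Hid04} and \cite[Lemma~5.4]{Geraghty} --- $\pi_p$ is the full irreducible principal series $V=I(\mu_1,\dots,\mu_n)$, and ordering $\ord_p\alp_1\geq\dots\geq\ord_p\alp_n$ for $\alp_i=\iot_p(\mu_i(p))$ one has $\ord_p\alp_i=-\lam_{n-i+1}$, where $(\lam_1,\dots,\lam_n)$ is the Harish-Chandra parameter of $\pi_\infty$. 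By Definition~\ref{def:53} this says exactly that $\pi$ is $p$-ordinary, and the proof is complete.

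I expect the last step to be the only genuine obstacle: upgrading the scalar fact that $e_{\ord}$ does not annihilate $\bdsf_{\ulQ}$ to the structural statement that $\pi_p$ is an \emph{irreducible} principal series with the prescribed ordering of $p$-adic valuations on its Satake parameters. This is precisely the local input furnished by Hida's ordinary theory for definite unitary groups; the remaining steps are formal consequences of the control theorem and of multiplicity one.
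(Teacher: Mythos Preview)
Your argument is correct and matches the paper's approach: irreducibility from the Hecke eigenvalues together with multiplicity one for unitary groups, and $p$-ordinarity from the unit $\calu_p$-eigenvalue plus the local structure of ordinary vectors at $p$ (\cite[Lemma~5.4]{Geraghty}, Remark~\ref{rem:51}). The one point the paper makes more explicit is that ``strong multiplicity one'' for unitary groups needs unpacking: the unramified Hecke eigenvalues determine the global $A$-packet, and it is the standing hypothesis that every prime dividing $N$ is \emph{split} in $E$ --- so that the local $A$-packets there are singletons --- which forces any two irreducible constituents to be globally isomorphic rather than merely nearly-everywhere equivalent.
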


\begin{proof}
Let $\pi\simeq\otimes_v'\pi_v^{}$ be an irreducible constituent of the automorphic representation generated by $\ulQ(\bdsf)$. 
If $l$ and $Np$ are coprime, then $\pi_l$ is determined by the Hecke eigenvalues of $\ulQ(\bdsf)$. 
Thus $\pi$ belongs to the A-packet associated to these eigenvalues. 
Moreover, $\pi_q$ is uniquely determined for each prime factor $q$ of $N$ by the assumption on $N$ as the associated local A-packet is a singleton for each split prime. 
Therefore the equivalence class $\pi$ is determined by $\ulQ(\bdsf)$. 
Thus $\ulQ(\bdsf)$ generates an irreducible representation of $G(\AA)$ by the multiplicity one for unitary groups (cf. \cite[Theorem 13.3.1]{Rog90}). 
\end{proof}

If $\bdsf$ is a Hida family, then $\ulQ(\bdsf)$ is an eigenform of the operator $\calu_p$ with unit eigenvalue $\alp_{\bdsf_{\ulQ}}$ by Remarks \ref{rem:51} and \ref{rem:53} for $\ulQ\in\frkX_{\bdsf}^{++}$, and we denote by $\bdpi_{\ulQ}$ the automorphic representation of $G(\AA)$ associated to $\bdsf_{\ulQ}$, which is $p$-ordinary with respect to $\iot_p$. 


\subsection{A paring on the space of ordinary $\calr$-adic modular forms}\label{ssec:55}
In this subsection, we do the $p$-adic interpolation of the bilinear pairing. We first introduce the regularized diagonal cycles for $\U(n)\times\U(n)$.

Define the finite sets 
\begin{align*}
\calx_\ell^{}&=X_0^G(p^\ell\frkN), & 
\XX_\ell^{}&=\calx_\ell^{}\times\calx_\ell^{}
\end{align*}
for each positive integer $\ell$. 
Given $x,y\in G(\widehat{\QQ})$, we write $[(x,y)]\in\XX_\ell$ for the double coset represented by $(x,y)$. 
The following definition makes sense in view of (\ref{tag:53}). 

\begin{definition}\label{def:59}
Let $\Diamond_\ell\in\ZZ_p[\XX_\ell]$ be the twisted diagonal cycle defined by 
\[\Diamond_\ell=\sum_{[x]\in\calx_\ell}[(x,(x\xi_{p^\ell\frkN})^\tht)]. \]
\end{definition}

The homomorphism 
\[\Nr_{\ell+1,\ell}:\ZZ_p[\XX_{\ell+1}]\to\ZZ_p[\XX_\ell]\] 
is induced by the projection $\XX_{\ell+1}\to\XX_\ell$. 

\begin{lemma}\label{lem:52}
For $\ell\geq 1$ we have 
\[\Nr_{\ell+1,\ell}(\Diamond_{\ell+1})=(1\otimes\calu_p)\Diamond_\ell.\]
\end{lemma}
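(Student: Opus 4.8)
The plan is to expand both sides as explicit $\ZZ_p$-linear combinations of double cosets at level $p^\ell\frkN$ and then match them. Since $\Nr_{\ell+1,\ell}$ is the pushforward along $\XX_{\ell+1}\to\XX_\ell$, we have $\Nr_{\ell+1,\ell}(\Diamond_{\ell+1})=\sum_{[x]\in\calx_{\ell+1}}[(x,(x\xi_{p^{\ell+1}\frkN})^\tht)]$, the classes now taken in $\XX_\ell$. Because $T$ and $D_{n,p}$ are diagonal they commute, so $\xi_{p^{\ell+1}\frkN}=\xi_{p^\ell\frkN}\,\imath_\frkp^{-1}(D_{n,p}^{-1})$; and from $\imath_\frkp(g^\tht)=T^{-1}\,\trs\imath_\frkp(g)^{-1}T$ one gets $(\imath_\frkp^{-1}(D_{n,p}^{-1}))^\tht=\imath_\frkp^{-1}(D_{n,p})$, whence (as $\tht$ is multiplicative on matrices) $(x\xi_{p^{\ell+1}\frkN})^\tht=(x\xi_{p^\ell\frkN})^\tht\,\imath_\frkp^{-1}(D_{n,p})$. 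So the statement to be proved becomes
\[\sum_{[x]\in\calx_{\ell+1}}\bigl[\bigl(x,(x\xi_{p^\ell\frkN})^\tht\imath_\frkp^{-1}(D_{n,p})\bigr)\bigr]=\sum_{[y]\in\calx_\ell}\sum_{u}\bigl[\bigl(y,(y\xi_{p^\ell\frkN})^\tht\imath_\frkp^{-1}(uD_{n,p})\bigr)\bigr],\]
with $u$ running over unipotent upper-triangular matrices with $u_{ij}\in\ZZ_p/p^{j-i}\ZZ_p$.

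Next I would reorganise the left-hand sum along $\calx_{\ell+1}\to\calx_\ell$: a point of the fibre over $[y]$ is represented by $y\,\imath_\frkp^{-1}(\gam)$ with $\gam\in\cali_0^{(n)}(\frkp^\ell)$, and, the level change being only at $p$, these are parametrised by $\gam\in\cali_0^{(n)}(\frkp^\ell)/\cali_0^{(n)}(\frkp^{\ell+1})$. Using $\imath_\frkp(\xi_{p^\ell\frkN,p})=D_{n,p}^{-\ell}T$, $\imath_\frkp(g^\tht)=T^{-1}\,\trs\imath_\frkp(g)^{-1}T$ and the diagonality of $T,D_{n,p}$, a direct computation yields the exact identity in $G(\widehat{\QQ})$
\[(y\,\imath_\frkp^{-1}(\gam)\,\xi_{p^\ell\frkN})^\tht\,\imath_\frkp^{-1}(D_{n,p})=(y\,\xi_{p^\ell\frkN})^\tht\,\imath_\frkp^{-1}\!\bigl(\phi(\gam)\,D_{n,p}\bigr),\qquad \phi(\gam):=D_{n,p}^{-\ell}\,\trs\gam^{-1}\,D_{n,p}^{\ell}\]
(the components away from $p$ being untouched).

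The combinatorial core is then to analyse $\phi$. One checks that $\phi$ is an involutive automorphism of $\GL_n(E_\frkp)$ preserving $\cali_0^{(n)}(\frkp^\ell)$ and carrying $\cali_0^{(n)}(\frkp^{\ell+1})$ onto $K':=\{g\in\GL_n(\frkr_\frkp):g_{ij}\in\frkp^{j-i}\ (i<j),\ g_{ij}\in\frkp^{\ell(i-j)}\ (i>j)\}$, and that $D_{n,p}^{-1}\,K'\,D_{n,p}=\cali_0^{(n)}(\frkp^{\ell+1})$. The Iwahori ("LU") factorisation $\cali_0^{(n)}(\frkp^\ell)=\bigsqcup_u u\,K'$, with $u$ running over exactly the unipotents above, then gives: writing $\phi(\gam)=u(\gam)\,\kappa$ with $\kappa\in K'$, the elements $\imath_\frkp^{-1}(\phi(\gam)D_{n,p})$ and $\imath_\frkp^{-1}(u(\gam)D_{n,p})$ differ on the right by $\imath_\frkp^{-1}(D_{n,p}^{-1}\kappa D_{n,p})\in\imath_\frkp^{-1}(\cali_0^{(n)}(\frkp^{\ell+1}))\subset\calk_0(p^\ell\frkN)$, hence represent the same class in $\XX_\ell$, while $\gam\bmod\cali_0^{(n)}(\frkp^{\ell+1})\mapsto u(\gam)$ is a bijection onto the set of $u$'s. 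Feeding this back turns the left-hand side into $\sum_{[y]\in\calx_\ell}\sum_u[(y,(y\xi_{p^\ell\frkN})^\tht\imath_\frkp^{-1}(uD_{n,p}))]$, which is $(1\otimes\calu_p)\Diamond_\ell$.

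The main obstacle is the bookkeeping in the last two steps: one has to track the several subgroups obtained by conjugating the groups $\cali_0^{(n)}(\frkp^{\ell})$, $\cali_0^{(n)}(\frkp^{\ell+1})$ by powers of $D_{n,p}$ and transposing, verify $\phi(\cali_0^{(n)}(\frkp^{\ell+1}))=K'$, $D_{n,p}^{-1}K'D_{n,p}=\cali_0^{(n)}(\frkp^{\ell+1})$ and the factorisation $\cali_0^{(n)}(\frkp^\ell)=\bigsqcup_u uK'$, and --- the genuinely delicate point --- account correctly for the finite isotropy groups $\Gam_{p^\ell\frkN,x}$ in the fibres of $\calx_{\ell+1}\to\calx_\ell$; this is where the standing hypotheses $p>3$ and $n\le 3$ (forcing $G(\QQ)$ to be $p$-torsion-free) enter. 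The numerical fact making everything fit is the coincidence $[\calk_0(p^\ell\frkN):\calk_0(p^{\ell+1}\frkN)]=\prod_{i<j}p^{j-i}=\sharp\{u\}$.
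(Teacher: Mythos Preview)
Your proposal is correct and follows essentially the same route as the paper's proof. Both arguments reorganize $\Nr_{\ell+1,\ell}(\Diamond_{\ell+1})$ along the fibres of $\calx_{\ell+1}\to\calx_\ell$, use $\xi_{p^{\ell+1}\frkN}=\xi_{p^\ell\frkN}\,\imath_\frkp^{-1}(D_{n,p}^{-1})$, and identify $(\xi_{p^\ell\frkN}^{-1}\gamma\,\xi_{p^\ell\frkN})^\tht$ with $D_{n,p}^{-\ell}\,{}^t\gamma^{-1}D_{n,p}^{\ell}$, then match with the unipotents appearing in $\calu_p$.

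The paper is more direct: rather than introducing your map $\phi$, a subgroup $K'$, and an LU factorisation, it simply chooses from the start the lower-unipotent representatives $\vSi_\ell=\{(p^{(i-j)\ell}v_{ij})\in N_n^-:v_{ij}\in\ZZ_p/p^{i-j}\ZZ_p\}$ for $\calk_0(p^\ell\frkN)/\calk_0(p^{\ell+1}\frkN)$. For these representatives one checks in one line that $\imath_\frkp\bigl((\xi_{p^\ell\frkN,p}^{-1}\,\vSi_\ell\,\xi_{p^\ell\frkN,p})^\tht\bigr)={}^t(D_{n,p}^{\ell}\,\imath_\frkp(\vSi_\ell)\,D_{n,p}^{-\ell})^{-1}$ already \emph{is} the set $\{u\in N_n:u_{ij}\in\ZZ_p/p^{j-i}\ZZ_p\}$, so no further factorisation is needed. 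Your $\phi$, $K'$, and LU step are correct, but they reconstruct exactly this bijection after the fact; choosing $\vSi_\ell$ wisely avoids them.

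One small inaccuracy: you say $n\le 3$ is needed to force $G(\QQ)$ $p$-torsion-free. The paper's \S\ref{ssec:55} treats general $\U(n)$; the isotropy is dealt with by the blanket reduction ``we may assume $\Gam_{p^\ell\frkN,x}=\{1\}$'' (referring to \cite[Lemma~4.4]{MH}), which uses that $|\Gam_{p^\ell\frkN,x}|$ is prime to $p$ rather than any bound on $n$.
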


\begin{proof} 
We abbreviate $S_\ell=\ZZ_p/p^\ell\ZZ_p$. 
Recall the unipotent radical $N_n^-$ of the Borel subgroup opposite to $B_n$. 
Put 
\[\imath_\frkp(\vSi_\ell):=\bigl\{\bigl(p^{(i-j)\ell}v_{ij}\bigl)\in N_n^-(\QQ_p)\;\bigl|\;v_{ij}\in S_{i-j}\text{ for }i>j\bigl\}. \]
Then $\vSi_\ell$ is a complete set of representatives for $\calk_0(p^\ell\frkN)/\calk_0(p^{\ell+1}\frkN)$. 
We may assume that $\Gam_{p^\ell\frkN,x}=\{1\}$ for every $x\in G(\widehat{\QQ})$ (see the proof of Lemma 4.4 of \cite{MH}). 
Then $\calx_{\ell+1}$ consists of elements represented by $xk$ with $x\in\calx_\ell$ and $k\in\vSi_\ell^{}$. 
Since 
\begin{align*}
\imath_\frkp^{}((\xi_{p^\ell\frkN,p}^{-1}\vSi_\ell^{}\xi_{p^\ell\frkN,p}^{})^\tht)
&=\trs(D_{n,p}^\ell\imath_\frkp^{}(\vSi_\ell^{})D_{n,p}^{-\ell})^{-1}\\
&=\{u\in N_n(\QQ_p)\;|\;u_{ij}\in S_{j-i}\text{ for }i<j\bigl\}, 
\end{align*}
we get the distribution property stated above. 
\end{proof}

Define the regularized diagonal cycle by $\Diamond_\ell^\dagger=(1\otimes\calu_p^{-\ell})e_{\ord}^{}\Diamond_\ell^{}$. 
Lemma \ref{lem:52} says that $\Nr_{\ell+1,\ell}^{}(\Diamond_{\ell+1}^\dagger)=\Diamond_\ell^\dagger$. 
We can therefore define 
\[\Diamond_\infty^\dagger=\lim_{\longleftarrow \ell}\Diamond_\ell^\dagger\in\lim_{\longleftarrow \ell}\ZZ_p[\XX_\ell]. \]


\begin{definition}\label{def:510}
Let $\bdsf,\bdsg\in e_{\ord}\bfS^G(\frkN,\chi,\calr)$. 
We define an $\calr$-adic modular form on $G\times G$ by 
\[\bdscF=\bdsf\boxtimes\bdsg:G(\QQ)\bsl G(\widehat{\QQ})\times G(\QQ)\bsl G(\widehat{\QQ})\to\calr\] 
by $\bdscF(x,y)=\bdsf(x)\bdsg(y)$. 
Then $\bdscF$ naturally induces a $\Lam_n$-linear map 
\[\bdscF_*:\displaystyle\lim_{\longleftarrow \ell} \calo[\XX_\ell]\to\calr. \]  
Define an $\calr$-bilinear pairing 
\[\bfB_\frkN: e_{\ord}\bfS^G(\frkN,\chi,\calr)\times e_{\ord}\bfS^G(\frkN,\chi,\calr)\to\calr\]
by   
\[\bfB_\frkN(\bdsf,\bdsg)=\bdscF_*(\Diamond_\infty^\dagger)\in\calr. \]
\end{definition}

The following result generalizes \cite[Lemma 4.4]{MH}. 

\begin{proposition}\label{prop:51}
For each $\ulQ\in\frkX_\calr^+$ and sufficiently large $\ell$ we have 
\[\ulQ(\bfB_\frkN(\bdsf,\bdsg))=\vep_{k_{\ulQ}}\cdot (\tau_{p^\ell\frkN}[U_p^{-\ell}\bdsg_{\ulQ}],\bdsf_{\ulQ})_{p^\ell\frkN}. \] 
Here $\vep_{k_{\ulQ}}\in\{\pm 1\}$ is the sign defined in Remark \ref{rem:52}.
\end{proposition}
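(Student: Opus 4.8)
This is the $\U(n)\times\U(n)$ analogue of \cite[Lemma 4.4]{MH}: one unwinds $\bfB_\frkN$, collapses the inverse limit defining it to a single level $p^\ell\frkN$, transports the Hecke operators onto $\bdsg$, and recognizes the outcome as the finite-level pairing $(\;,\;)_{p^\ell\frkN}$ of $\bdsf_{\ulQ}$ against an Atkin--Lehner twist of $\bdsg_{\ulQ}$. Write $\ulk=k_{\ulQ}$ and let $\bfv=\bfv_{\ulk}$, $\bfu=\bfu_{\ulk}$ be the vectors of Remark \ref{rem:52}.

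\emph{Step 1 (reduction to level $p^\ell\frkN$).} Fix $\ulQ\in\frkX_\calr^+$ and take $\ell\geq\max\{1,c(\eps_{Q_1}),\dots,c(\eps_{Q_n})\}$ large enough for Theorem \ref{thm:51}(2). Lemma \ref{lem:52} together with $\calu_pe_{\ord}=e_{\ord}\calu_p$ and the invertibility of $\calu_p$ on $e_{\ord}\ZZ_p[\XX_\ell]$ gives $\Nr_{\ell+1,\ell}(\Diamond_{\ell+1}^\dagger)=\Diamond_\ell^\dagger$, so $\Diamond_\infty^\dagger$ is represented at level $\ell$ by $\Diamond_\ell^\dagger=(1\otimes\calu_p^{-\ell})e_{\ord}\Diamond_\ell$. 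Since $\bdscF=\bdsf\boxtimes\bdsg$ is built from $\Lam_n$-adic forms, Definition \ref{def:58} shows it descends to $\XX_\ell$ modulo $P_\ell^{(n)}$, so after applying $\ulQ$ and using $\ulQ(\bdsf(x))=(\bdsf_{\ulQ})_{\bfv}(x)$, $\ulQ(\bdsg(x))=(\bdsg_{\ulQ})_{\bfv}(x)$ from Theorem \ref{thm:51}(2), the quantity $\ulQ(\bfB_\frkN(\bdsf,\bdsg))$ becomes $\bdscF_*(\Diamond_\ell^\dagger)$ with $\bdscF$ the $\ulQ$-specialization $(x,y)\mapsto(\bdsf_{\ulQ})_{\bfv}(x)(\bdsg_{\ulQ})_{\bfv}(y)$. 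As in the proof of Lemma \ref{lem:52} one may assume $\Gam_{p^\ell\frkN,x}=\{1\}$ throughout.

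\emph{Steps 2--3 (transport the operators; recognize the twist).} After pairing against $(\bdsg_{\ulQ})_{\bfv}$, the divisor-level $\calu_p$ of Definition \ref{def:57} on the second factor becomes the Hecke $\calu_p$ on $\bdsg_{\ulQ}$ (see \eqref{tag:58}, \eqref{tag:59}, Remark \ref{rem:53}), and $e_{\ord}$ drops out because $\bdsg_{\ulQ}$ is ordinary. Writing $\Diamond_\ell=\sum_{[x]\in\calx_\ell}[(x,(x\xi_{p^\ell\frkN})^\tht)]$, and turning $\calu_p^{-\ell}$ into $U_p^{-\ell}$ by carrying along the normalization of \eqref{tag:58} --- whose power of $p$ is exactly cancelled by the $D_{n,p}^{-\ell}$ in $\xi_{p^\ell\frkN,p}=\imath_\frkp^{-1}(D_{n,p}^{-\ell}T)$ --- one is left with $\sum_{[x]}(\bdsf_{\ulQ})_{\bfv}(x)\,[U_p^{-\ell}\bdsg_{\ulQ}]_{\bfv}\bigl((x\xi_{p^\ell\frkN})^\tht\bigr)$. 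Now \eqref{tag:54}, valid for $p$-adic forms by \eqref{tag:57}, rewrites $[U_p^{-\ell}\bdsg_{\ulQ}]_{\bfv}((x\xi_{p^\ell\frkN})^\tht)=\vep_{\ulk}(\tau_{p^\ell\frkN}[U_p^{-\ell}\bdsg_{\ulQ}])_{\bfu}(x)$. Since $\bdsf_{\ulQ}$ and $\tau_{p^\ell\frkN}[U_p^{-\ell}\bdsg_{\ulQ}]$ are ordinary they take values on the extremal weight lines $\overline{\QQ}_p\bfu$ and $\overline{\QQ}_p\bfv$ (Remarks \ref{rem:51}, \ref{rem:52}), so each summand equals $\ell_{\ulk}\bigl((\tau_{p^\ell\frkN}[U_p^{-\ell}\bdsg_{\ulQ}])(x)\otimes\bdsf_{\ulQ}(x)\bigr)$; summing yields $\vep_{k_{\ulQ}}(\tau_{p^\ell\frkN}[U_p^{-\ell}\bdsg_{\ulQ}],\bdsf_{\ulQ})_{p^\ell\frkN}$, which is the assertion.

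\emph{Main obstacle.} The delicate part is the normalization bookkeeping: keeping straight the three versions of the level-raising operator --- the divisor $\calu_p$ of Definition \ref{def:57}, the raw $U_p$ of Definition \ref{def:54}, and the normalized $\calu_p=p^{-(n-1)k_1-\cdots-k_{n-1}}U_p$ on $p$-adic forms --- and verifying that the power of $p$ introduced by $D_{n,p}^{-\ell}$ in $\xi_{p^\ell\frkN,p}$ cancels the one relating $\calu_p^{-\ell}$ to $U_p^{-\ell}$, leaving no stray $p$-power, while tracking the sign $\vep_{k_{\ulQ}}$ through \eqref{tag:54}. A secondary point is to make Step 1 rigorous --- that the inverse limit defining $\bfB_\frkN(\bdsf,\bdsg)$ really collapses to the level-$\ell$ expression after applying $\ulQ$ --- which follows from the norm-compatibility of $\Diamond_\bullet^\dagger$ and the control theorem.
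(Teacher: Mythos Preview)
Your overall architecture is right, and the identification of the main obstacle (the $\calu_p$/$U_p$/divisor-level bookkeeping and the $D_{n,p}^{-\ell}$ cancellation) is on target. But Step~3 contains a genuine error: the assertion that ordinarity forces $\bdsf_{\ulQ}$ and $\tau_{p^\ell\frkN}[U_p^{-\ell}\bdsg_{\ulQ}]$ to \emph{take values on the extremal weight lines} $\overline{\QQ}_p\bfu$ and $\overline{\QQ}_p\bfv$ is false. Remark~\ref{rem:51} says the ordinary line in the local representation $\pi_p$ is one-dimensional; it does \emph{not} say that the $p$-adic avatar $\widehat f\colon G(\widehat\QQ)\to\call_{\ulk}(\CC_p)$ has image in the lowest-weight line. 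In general $\widehat f(x)$ has components in every weight space, and the passage from the product $(\bdsf_{\ulQ})_{\bfv}(x)\cdot(\tau_{p^\ell\frkN}[U_p^{-\ell}\bdsg_{\ulQ}])_{\bfu}(x)$ to the full pairing $\ell_{\ulk}(\,\cdot\,\otimes\,\cdot\,)$ is simply not valid as an identity at a fixed level $\ell$.

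The paper repairs exactly this point by a $p$-adic limiting argument rather than an exact identity. First one shows that the right-hand side $(\tau_{p^l\frkN}[U_p^{-l}\widehat g],\widehat f)_{p^l\frkN}$ is \emph{independent of $l$} for $l$ large (a direct computation using Lemma~\ref{lem:52} and the relation $\calu_p=p^{-\kappa}U_p$). Then, for an arbitrary $m$, one goes to a much larger level $l\gg m$ and expands $\calu_p^{-l}\widehat g=\sum_i[\calu_p^{-l}\widehat g]_{\bfv_i}\bfu_i$ in a weight basis; the key observation is that
\[
p^{-\kappa l}\rho_{\ulk}(D_{n,p}^l)\bfu_i\equiv 0\pmod{p^m}\quad\text{for }\bfu_i\neq\bfu_{\ulk},
\]
so $\rho_{\ulk}(D_{n,p}^l)[U_p^{-l}\widehat g]\equiv[\calu_p^{-l}\widehat g]_{\bfv_{\ulk}}\cdot\bfu_{\ulk}\pmod{p^m}$. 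This is what replaces your extremal-weight-line claim: it is the action of the $D_{n,p}^l$ sitting inside $\xi_{p^l\frkN,p}$ that \emph{projects} onto the lowest-weight line modulo $p^m$, not ordinarity per se. Combining this with the stability of the right-hand side in $l$ gives the congruence mod $p^m$ for every $m$, hence equality. Your Step~1 therefore also needs adjustment: one cannot collapse to a single fixed $\ell$; the argument is intrinsically a two-parameter one ($m$ and $l$), with stability in $l$ plus congruence mod $p^m$ for $l\gg m$.
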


\begin{proof}
To lighten notation, we put 
\begin{align*}
\ulk&=k_{\ulQ}, & \kap&=(n-1)k_{Q_1}+(n-2)k_{Q_2}+\cdots+k_{Q_{n-1}}, \\
\widetilde{D}_{n,p}&=\imath_\frkp^{-1}(D_{n,p}), & 
\widehat{f}
&=\bdsf_{\ulQ},\;\widehat{g}=\bdsg_{\ulQ}\in e_{\ord}\cals^G_{\ulk}(p^e\frkN,\chi\eps_{\ulQ},\calr(\ulQ)). 
\end{align*}
If $l$ is sufficiently large, then $\Gam_{p^l\frkN,x}=\{1\}$ for every $x\in G(\widehat{\QQ})$, and 
\begin{align*}
&(\tau_{p^{l+1}\frkN}[\calu_p^{-l-1}\widehat{g}],\widehat{f})_{p^{l+1}\frkN}\\
=&\sum_{[x]\in\calx_{l+1}}\ell_{\ulk}(\rho_{\ulk^\vee,p}(\xi_{p^{l+1}\frkN,p})([\calu_p^{-l-1}\widehat{g}]((x\xi_{p^{l+1}\frkN})^\tht))^\vth\otimes\widehat{f}(x))\\
=&\sum_{[x]\in\calx_l}\sum_{v\in\vSi_l}\ell_{\ulk}(\rho_{\ulk^\vee,p}(\xi_{p^{l+1}\frkN,p})([\calu_p^{-l-1}\widehat{g}]((xv\xi_{p^{l+1}\frkN})^\tht))^\vth\otimes\rho_{\ulk,p}(v)^{-1}\widehat{f}(x))\\
=&\sum_{[x]\in\calx_l}\sum_{v\in\vSi_l}\ell_{\ulk}((\rho_{\ulk,p}((v\xi_{p^l\frkN,p})^\tht\widetilde{D}_{n,p})[\calu_p^{-l-1}\widehat{g}]((xv\xi_{p^l\frkN})^\tht\widetilde{D}_{n,p}))^\vth\otimes\widehat{f}(x)). 
\end{align*}
The last sum is $p^\kap(\tau_{p^l\frkN}[\calu_p^{-l}\widehat{g}],\widehat{f})_{p^l\frkN}$ by the proof of Lemma \ref{lem:52}. 
Thus the value $(\tau_{p^l\frkN}[U_p^{-l}\widehat{g}],\widehat{f})_{p^l\frkN}$ is independent of any sufficiently large integer $l$ by the relation (\ref{tag:58}). 

Recall that $\ulQ(\bdsf)=\widehat{f}_{\bfv_{\ulk}}$ and $\ulQ(\bdsg)=\widehat{g}_{\bfv_{\ulk}}$.  
For an arbitrarily large integer $m$ there exists a sufficiently larger integer $l$ such that   
\[\ulQ(\bfB_\frkN(\bdsf,\bdsg))\equiv\sum_{[x]\in\calx_l}[\calu_p^{-l}\widehat{g}_{\bfv_{\ulk}}]((x\xi_{p^l\frkN})^\tht)\widehat{f}_{\bfv_{\ulk}}(x) \pmod{p^m}. \]
Take a basis $\calb_{\ulk^\vee}=\{\bfv_i\}$ of $\call_{\ulk^\vee}(\QQ)$ which consists of weight vectors and contains the highest weight vector $\bfv_{\ulk}$. 
Let $\calb_{\ulk}=\{\bfu_i\}$ be a basis of $\call_{\ulk}(\QQ)$ dual to $\calb_{\ulk^\vee}$. 
Then we can write 
\[\calu_p^{-l}\widehat{g}=\sum_i[\calu_p^{-l}\widehat{g}]_{\bfv_i}\cdot \bfu_i. \] 
If $l$ is sufficiently larger than $m$, then  
\begin{align*}
\rho_{\ulk}(D_{n,p}^l)[U_p^{-l}\widehat{g}]
&=\sum_i[\calu_p^{-l}\widehat{g}]_{\bfv_i}\cdot p^{-\kap l}\rho_{\ulk}(D_{n,p}^l)\bfu_i\\
&\equiv [\calu_p^{-l}\widehat{g}]_{\bfv_{\ulk}}\cdot \bfu_{\ulk} \pmod{p^m}.  
\end{align*}
It therefore follows that 
\begin{align*}
(\tau_{p^l\frkN}[U_p^{-l}\widehat{g}],\widehat{f})_{p^l\frkN}
=&\sum_{[x]\in\calx_l}\ell_{\ulk}((\rho_{\ulk,p}(\xi_{p^l\frkN,p}^\tht)[U_p^{-l}\widehat{g}]((x\xi_{p^l\frkN})^\tht))^\vth\otimes\widehat{f}(x)) \\
\equiv&\sum_{[x]\in\calx_l}[\calu_p^{-l}\widehat{g}_{\bfv_{\ulk}}]((x\xi_{p^l\frkN})^\tht)\ell_{\ulk}^{}(\bfu_{\ulk}^\vth\otimes\widehat{f}(x))\pmod{p^m}
\end{align*}
by (\ref{tag:59}). 
Thus 
\[(\tau_{p^\ell\frkN}[U_p^{-\ell}\widehat{g}],\widehat{f})_{p^\ell\frkN}
=(\tau_{p^l\frkN}[U_p^{-l}\widehat{g}],\widehat{f})_{p^l\frkN}\equiv \vep_{k_{\ulQ}}\ulQ(\bfB_\frkN(\bdsf,\bdsg))\pmod{p^m}\] 
(see Remark \ref{rem:52}) for arbitrarily large $m$, from which the formula follows. 
\end{proof}



\begin{definition}\label{def:511}
Let $\bdsf\in e_{\ord}\bfS^G(\frkN,\chi,\calr)$ be an $\calr$-adic Hida family. 
We define $\eta_{\bdsf}\in\calr$ by 
\[\eta_{\bdsf}=\bfB_\frkN(\bdsf,\bdsf). \]
\end{definition}

\begin{proposition}\label{prop:52} 
Suppose that $\bdsf$ is an eigenvector of the $U_p$-operator with the eigenvalue $\alpha_{\bdsf}\in \calr^\times$. 
For $\ulQ\in\frkX_{\calr}^+$ and $\ell\gg 0$, we have 
\[\ulQ(\eta_{\bdsf})
=\vep_{k_{\ulQ}}[\calk:\calk_0(p^\ell\frkN_{\bdpi_{\ulQ}})]\ulQ(\alp_{\bdsf})^{-\ell}\iot_p((\tau_{p^\ell\frkN}f_{\bfv_{k_{\ulQ}}},f_{\bfv_{k_{\ulQ}}})_\calk)\dim\call_{k_{\ulQ}},  \] 
where $f\in\cala_{\ulk_{\ulQ}}^G(p^\ell\frkN,\chi)$ is such that $\widehat{f}=\bdsf_{\ulQ}$. 
\end{proposition}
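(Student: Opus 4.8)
The plan is to obtain the formula from Proposition \ref{prop:51} applied with $\bdsg=\bdsf$, using the $\calu_p$-eigenvalue of $\bdsf$ together with the dictionary between the $p$-adic pairing $(\,\cdot\,,\,\cdot\,)_{p^\ell\frkN}$ on $p$-adic modular forms and the archimedean Petersson product set up in \S\ref{ssec:51}--\S\ref{ssec:52}. Writing $\ulk=k_{\ulQ}$ and $\widehat f=\bdsf_{\ulQ}$ (so $\bdsf_{\ulQ}$ is the $p$-adic avatar of $f$), Proposition \ref{prop:51} with $\bdsg=\bdsf$ gives, for every $\ell\gg0$,
\[\ulQ(\eta_{\bdsf})=\ulQ(\bfB_\frkN(\bdsf,\bdsf))=\vep_{k_{\ulQ}}\,\bigl(\tau_{p^\ell\frkN}[U_p^{-\ell}\bdsf_{\ulQ}],\bdsf_{\ulQ}\bigr)_{p^\ell\frkN}.\]
Since $\bdsf$ is a $\calu_p$-eigenform with eigenvalue $\alp_{\bdsf}\in\calr^\times$, the Hecke equivariance of Theorem \ref{thm:51}(2) shows that $\widehat f_{\bfv_{\ulk}}$ is a $\calu_p$-eigenform with unit eigenvalue $\ulQ(\alp_{\bdsf})$.

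The next step is to unfold the pairing on the right. Running the end of the proof of Proposition \ref{prop:51} with $\bdsg=\bdsf$, the pairing $\bigl(\tau_{p^\ell\frkN}[U_p^{-\ell}\bdsf_{\ulQ}],\bdsf_{\ulQ}\bigr)_{p^\ell\frkN}$ unfolds, for $\ell\gg0$, into $\sum_{[x]}[\calu_p^{-\ell}\widehat f_{\bfv_{\ulk}}]\bigl((x\xi_{p^\ell\frkN})^\tht\bigr)\widehat f_{\bfv_{\ulk}}(x)$ over $[x]\in X_0^G(p^\ell\frkN)$, up to the sign $\vep_{\ulk}$ coming from $\bfu_{\ulk}^\vth=\vep_{\ulk}\bfv_{\ulk}$ (Remark \ref{rem:52}); here the power of $p$ by which $U_p$ differs from $\calu_p$ in \eqref{tag:58} is exactly absorbed by the $D_{n,p}^{\ell}$-twist sitting inside $\xi_{p^\ell\frkN,p}$. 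Because $\widehat f_{\bfv_{\ulk}}$ is a $\calu_p$-eigenform with eigenvalue $\ulQ(\alp_{\bdsf})$, one pulls $\ulQ(\alp_{\bdsf})^{-\ell}$ out of the sum and is reduced to comparing $\sum_{[x]}\widehat f_{\bfv_{\ulk}}\bigl((x\xi_{p^\ell\frkN})^\tht\bigr)\widehat f_{\bfv_{\ulk}}(x)$ with the Petersson product $(\tau_{p^\ell\frkN}f_{\bfv_{\ulk}},f_{\bfv_{\ulk}})_\calk$.

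For that last comparison I would pass from the avatar $\widehat f_{\bfv_{\ulk}}$ back to its ad\`elic lift $f_{\bfv_{\ulk}}$ via \eqref{tag:55}--\eqref{tag:57}, recognise $\sum_{[x]}\widehat f_{\bfv_{\ulk}}\bigl((x\xi_{p^\ell\frkN})^\tht\bigr)\widehat f_{\bfv_{\ulk}}(x)$ as $\iot_p\bigl((\tau_{p^\ell\frkN}f,f)_{p^\ell\frkN}\bigr)$ up to the sign $\vep_{\ulk}$ (again via $\bfu_{\ulk}^\vth=\vep_{\ulk}\bfv_{\ulk}$ and the $\vth$-involution built into $\tau_{p^\ell\frkN}$), and then invoke the Schur orthogonality relation \eqref{tag:51} — in the normalisation \eqref{tag:54} and with $\ell_{\ulk}(\bfv_{\ulk}\otimes\bfu_{\ulk})=1$ — to produce the factor $\dim\call_{\ulk}$ and the index $[\calk:\calk_0(p^\ell\frkN)]$. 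This index is upgraded to $[\calk:\calk_0(p^\ell\frkN_{\bdpi_{\ulQ}})]$ by using that $f$ arises from the essential vector of $\bdpi_{\ulQ}$, hence is already invariant under $\calk_0(p^\ell\frkN_{\bdpi_{\ulQ}})\supseteq\calk_0(p^\ell\frkN)$, so that the sum over $\calk_0(p^\ell\frkN)$-cosets may be collapsed before applying \eqref{tag:51} at the larger level.

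The step I expect to cause the most friction is precisely the bookkeeping in these last two paragraphs: verifying that all the powers of $p$ (from $\calu_p$ versus $U_p$ in \eqref{tag:58}, and from the $D_{n,p}^{\ell}$-twist in $\xi_{p^\ell\frkN,p}$) cancel so that only $\ulQ(\alp_{\bdsf})^{-\ell}$ remains; checking that the several occurrences of the sign $\vep_{\ulk}$ (one from Proposition \ref{prop:51}, the others from the $\vth$-involution via Remark \ref{rem:52}) combine to the single sign $\vep_{k_{\ulQ}}$ appearing in the statement; and confirming that the index produced is that of $\calk_0(p^\ell\frkN_{\bdpi_{\ulQ}})$. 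Since the right-hand side is already known to be independent of $\ell\gg0$ by Proposition \ref{prop:51}, no separate convergence argument is needed.
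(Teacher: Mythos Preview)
Your approach is correct and is at bottom the same as the paper's, but you take a needlessly roundabout path by re-entering the proof of Proposition~\ref{prop:51} and passing through the scalar projections $\widehat f_{\bfv_{\ulk}}$. The paper stays at the vector-valued level throughout: after Proposition~\ref{prop:51} gives
\[
\vep_{k_{\ulQ}}\ulQ(\eta_{\bdsf})=(\tau_{p^\ell\frkN}[U_p^{-\ell}\bdsf_{\ulQ}],\bdsf_{\ulQ})_{p^\ell\frkN},
\]
it uses that the \emph{vector-valued} $p$-adic form $\bdsf_{\ulQ}=\widehat f$ is itself a $U_p$-eigenform with eigenvalue $\ulQ(\alp_{\bdsf})$ (this is the Hecke-equivariance in Theorem~\ref{thm:51}(2), applied to the hypothesis on $\bdsf$), pulls the factor $\ulQ(\alp_{\bdsf})^{-\ell}$ out directly, and then invokes \eqref{tag:56} and \eqref{tag:57} to convert $(\tau_{p^\ell\frkN}\widehat f,\widehat f)_{p^\ell\frkN}$ into $\iot_p\bigl((\tau_{p^\ell\frkN}f,f)_{p^\ell\frkN}\bigr)$, followed by \eqref{tag:51} and \eqref{tag:54} to obtain $[\calk:\calk_0(p^\ell\frkN)]\cdot(\tau_{p^\ell\frkN}f_{\bfv_{k_{\ulQ}}},f_{\bfv_{k_{\ulQ}}})_\calk\cdot\dim\call_{k_{\ulQ}}$. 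None of the ``friction'' you anticipate arises: there is no $U_p$/$\calu_p$ power to track, and the only sign is the single $\vep_{k_{\ulQ}}$ already supplied by Proposition~\ref{prop:51}.

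One further point: the paper's proof produces the index $[\calk:\calk_0(p^\ell\frkN)]$, not $[\calk:\calk_0(p^\ell\frkN_{\bdpi_{\ulQ}})]$; your ``upgrade'' step is neither carried out nor needed there, and your proposed justification (collapsing cosets because $f$ is invariant under the larger group) would not by itself change the index appearing in \eqref{tag:51} without also renormalising the pairing. The discrepancy between $\frkN$ and $\frkN_{\bdpi_{\ulQ}}$ in the statement is best read as a typographical slip (or an implicit newform assumption).
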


\begin{proof}
If $\ell$ is sufficiently large, then Proposition \ref{prop:51} gives 
\begin{align*}
\vep_{k_{\ulQ}}\ulQ(\eta_{\bdsf})
&=(\tau_{p^\ell\frkN}[U_p^{-\ell}\bdsf_{\ulQ}],\bdsf_{\ulQ})_{p^\ell\frkN}\\
&=\ulQ(\alp_{\bdsf})^{-\ell}(\tau_{p^\ell\frkN}\bdsf_{\ulQ},\bdsf_{\ulQ})_{p^\ell\frkN}
=\ulQ(\alp_{\bdsf})^{-\ell}\iot_p((\tau_{p^\ell\frkN}f,f)_{p^\ell\frkN}) 
\end{align*}
by (\ref{tag:56}) and (\ref{tag:57}). 
We can rewrite this identity as 
\[\vep_{k_{\ulQ}}\ulQ(\eta_{\bdsf})
=[\calk:\calk_0(p^\ell\frkN)]\ulQ(\alp_{\bdsf})^{-\ell}\iot_p((\tau_{p^\ell\frkN}f_{\bfv_{k_{\ulQ}}},f_{\bfv_{k_{\ulQ}}})_\calk)\dim\call_{k_{\ulQ}}, \]
by (\ref{tag:51}) and (\ref{tag:54}). 
\end{proof}


\section{Regularized diagonal cycles and theta elements}\label{sec:6}

\subsection{Twisted diagonal cycles for $\U(3)\times\U(2)$}\label{ssec:61} 
We retain the notation in \S \ref{ssec:26}. 
We will frequently add $'$ to the notation for various objects to indicate that they are attached to $H$. 
Fix normal rings $\bfI_1^{},\bfI_2^{},\bfI_3^{},\bfI_1',\bfI_2'$ finite flat over $\Lam=\calo\powerseries{\ZZ_p^\times}$.
Put 
\begin{align*}
\calr&=\bfI_1^{}\widehat{\otimes}_\calo\bfI_2^{}\widehat{\otimes}_\calo\bfI_3^{}, & 
\calr'&=\bfI_1'\widehat{\otimes}_\calo\bfI_2'. 
\end{align*}

For each prime factor $l$ of $NN'$ we fix an open compact subgroup $\scrj_l$ of $G(\QQ_l)$ which contains the subgroup $\imath_\frkl^{-1}(\iot'(\calk^{(2)}_0(N'\frkr_\frkl)))$.
Put 
\[\calj_\ell^{}=\imath_\frkp^{-1}(\cali^{(3)}_1(\frkp^\ell))\times\prod_{q\nmid pNN'}\calk_q^{}\times\prod_{l|NN'}\scrj_l. \]  
Define an open compact subgroup $\calk^{(2)}_{01}(p^\ell\frkN'')$ of $H(\widehat{\QQ})$ by 
\[\calk^{(2)}_{01}(p^\ell\frkN'')=\{(h_l)\in\calk_0^{(2)}(\frkN'')\;|\; \imath_\frkp(h_p^{})\in\cali_1^{(2)}(\frkp^\ell),\;h_q\in\calk_q''\text{ for }q\in\vSi_T^-\}, \]
where $\calk_q''=\calk_q^{}\cap\calk_q'$. 
We consider the projective systems of the finite sets 
\begin{align*}
X_\ell^{}&=G(\QQ)\bsl G(\widehat{\QQ})/\calj_\ell^{}, &
X_\ell'&=H(\QQ)\bsl H(\widehat{\QQ})/\calk^{(2)}_{01}(p^\ell\frkN''), & 
\bfX_\ell^{}&=X_\ell^{}\times X_\ell'.
\end{align*}
Consider the finitely generated $\calo$-module $\calo[\bfX_\ell]$ equipped with the operator $\bdscU_p^{}:=\calu_p^{}\otimes\calu_p'$ and the ordinary projector $\bfe_{\ord}^{}:=e_{\ord}^{}\otimes e_{\ord}'$. 
Given $x\in G(\widehat{\QQ})$ and $x'\in H(\widehat{\QQ})$, we write $[(x,x')]\in\bfX_\ell$ for the double coset represented by $(x,x')$. 
We define the embedding 
\begin{align*}
&\jmath:H(\widehat{\QQ})\hookrightarrow G(\widehat{\QQ}), & 
\jmath(x')&=\iot(x')\vsi^{(p)},  
\end{align*}
where $\vsi^{(p)}\in G(\widehat{\QQ})$ is defined in (\ref{tag:12}). 
Set 
\begin{align*}
\Ups_\ell&=\imath_\frkp^{-1}\left(\begin{bmatrix} p^{2\ell} & p^\ell & 0 \\ 0 & p^\ell & 1 \\ 0 & 0 & 1 \end{bmatrix}\right)\in G(\QQ_p), & 
\tau_\ell&=\imath_\frkp^{\prime-1}\left(\begin{bmatrix} 0 & 1 \\ -p^\ell & 0 \end{bmatrix}\right)\in H(\QQ_p). 
\end{align*}
For $z\in\QQ_p$ we put $\bfn(z)=\imath_\frkp^{\prime-1}\left(\begin{bmatrix} 1 & z \\ 0 & 1 \end{bmatrix}\right)\in H(\QQ_p)$ and  
\[\cali_1^-(\frkp^\ell)=\biggl\{\begin{bmatrix} a & b \\ c & d \end{bmatrix}\in\cali_1^{(2)}(\frkp^\ell)\;\biggl|\;b=0\biggl\}. \]

\begin{definition}\label{def:61}
Let $\Del_\ell\in\ZZ_p[\bfX_\ell]$ be the twisted diagonal cycle defined by 
\[\Del_\ell=\sum_{[x']\in X_\ell'}\sum_{z\in\ZZ_p/p^{2\ell}\ZZ_p}[(\jmath(x'\bfn(z))\Ups_\ell,x'\tau_\ell)]. \]
This definition makes sense in view of (\ref{tag:13}) and the fact that for each $\gam\in\cali_1^-(\frkp^\ell)$ there is $z\in\ZZ_p/p^{2\ell}\ZZ_p$ such that $\Ups_\ell^{-1}\iot(\imath_\frkp^{-1}(\gam\bfn(z)))\Ups_\ell^{}\in\imath_\frkp^{-1}(\cali_1^{(3)}(\frkp^\ell))$. 
\end{definition}


\subsection{Regularized diagonal cycles for $\U(3)\times\U(2)$}\label{ssec:62} 

The homomorphism 
\[\bfN_{\ell+1,\ell}:\ZZ_p[\bfX_{\ell+1}]\to\ZZ_p[\bfX_\ell]\] 
is induced by the projection $\bfX_{\ell+1}\to\bfX_\ell$. 

\begin{lemma}[Distribution property]\label{lem:61}
For $\ell\geq 1$ we have 
\[\bfN_{\ell+1,\ell}(\Del_{\ell+1})=\bdscU_p\Del_\ell.\]
\end{lemma}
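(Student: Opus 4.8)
The plan is to follow the template of the proof of Lemma \ref{lem:52} (equivalently \cite[Lemma 4.4]{MH}). First I would make the reduction, carried out there, to the situation in which all the relevant stabilizers $\Gam_{\bullet,x}$ are trivial; then $\bfX_{\ell+1}\to\bfX_\ell$ is an honest surjection whose fibre over a class is a set of $[\cali_1^{(2)}(\frkp^\ell):\cali_1^{(2)}(\frkp^{\ell+1})]=p^3$ elements coming from the $X'$-factor, while the $X$-factor level $\calj_\ell$ is refined only through its $p$-component $\cali_1^{(3)}(\frkp^\ell)$ and that refinement is produced automatically. The goal is then to write down an explicit bijection: expressing a class of $X_{\ell+1}'$ as $[x_0'v]$ with $[x_0']\in X_\ell'$ and $v$ a fibre representative, one matches the term of $\bfN_{\ell+1,\ell}(\Del_{\ell+1})$ indexed by $([x_0'],v,w)$ with $w\in\ZZ_p/p^{2(\ell+1)}\ZZ_p$ against a uniquely determined term of $\bdscU_p\Del_\ell$ indexed by $([x_0'],z,u,u')$ with $z\in\ZZ_p/p^{2\ell}\ZZ_p$, $u=(u_{12},u_{13},u_{23})$ a $\calu_p$-index on $\U(3)$ and $u'$ a $\calu_p'$-index on $\U(2)$. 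A first consistency check: both index sets have $p^{2\ell+5}$ elements over each class ($p^{3}\cdot p^{2(\ell+1)}$ against $p^{2\ell}\cdot p^{4}\cdot p$), so a bijection together with equality of the corresponding cosets is all that is required.

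The engine of the matching is the pair of level-raising identities $\Ups_{\ell+1}=\Ups_\ell\,\imath_\frkp^{-1}(D_{3,p})$ and $\tau_{\ell+1}=\tau_\ell\,\imath_\frkp^{\prime-1}(D_{2,p})$, both one-line block computations. I would take the fibre representatives inside $\cali_1^-(\frkp^\ell)$, say $v=\imath_\frkp^{\prime-1}\!\begin{bmatrix}1+p^\ell a&0\\ p^\ell c&1+p^\ell d\end{bmatrix}$ with $a,c,d$ running over $\ZZ_p/p\ZZ_p$ (a short index count shows these form a complete set of representatives). On the $\U(2)$-factor the conjugation identity $\tau_\ell^{-1}v\tau_\ell=\imath_\frkp^{\prime-1}\!\begin{bmatrix}1+p^\ell d&-c\\ 0&1+p^\ell a\end{bmatrix}\in\cali_1^{(2)}(\frkp^\ell)$, combined with $\tau_{\ell+1}=\tau_\ell\,\imath_\frkp^{\prime-1}(D_{2,p})$, gives $x_0'v\tau_{\ell+1}\equiv x_0'\tau_\ell\,\imath_\frkp^{\prime-1}\!\begin{bmatrix}p&-c\\ 0&1\end{bmatrix}$ modulo $\calk^{(2)}_{01}(p^\ell\frkN'')$, so this factor forces $u'=-c$.

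For the $\U(3)$-factor, since $\iot(v\bfn(w))$ is supported at $p$ it commutes with $\vsi^{(p)}$, so that $\jmath(x_0'v\bfn(w))\Ups_{\ell+1}=\jmath(x_0')\,\iot(v\bfn(w))\Ups_{\ell+1}$; comparing with $\jmath(x_0')\,\iot(\bfn(z))\Ups_\ell\,\imath_\frkp^{-1}(uD_{3,p})$, and using the trivial-stabilizer reduction to take the intertwining $G(\QQ)$-element to be $1$, the question collapses to the purely local membership
\[(uD_{3,p})^{-1}\,D_\ell^{-1}\,\phi\big(\bfn(-z)\,v\,\bfn(w)\big)\,D_\ell\,D_{3,p}\in\cali_1^{(3)}(\frkp^\ell),\qquad D_\ell=\imath_\frkp(\Ups_\ell),\quad \phi=\imath_\frkp\circ\iot\circ\imath_\frkp^{\prime-1},\]
to be solved uniquely for $z\bmod p^{2\ell}$ and for the $\calu_p$-index $u$. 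This is exactly where the normalization underlying Definition \ref{def:61} (its Remark) enters: a computation of the same flavour shows that the correct $z$ is not $w\bmod p^{2\ell}$ but a $v$-dependent twist of it (the analogue of the distinguished $z$ appearing in that Remark), and it is precisely this twist that cancels the most negative $p$-power terms; once $z$ is chosen this way, the below-diagonal and diagonal conditions on $D_\ell^{-1}\phi(\bfn(-z)v\bfn(w))D_\ell D_{3,p}$ hold automatically, while its three above-diagonal entries being integral pin down $u_{12},u_{23}\bmod p$ and $u_{13}\bmod p^2$. Since $v$ is fixed, $w\mapsto z$ is affine, hence $p^{2}$-to-$1$ from $\ZZ_p/p^{2(\ell+1)}\ZZ_p$ onto $\ZZ_p/p^{2\ell}\ZZ_p$, the surplus being absorbed into the $\ZZ_p/p^2\ZZ_p$-coordinate $u_{13}$; with $u'=-c$ this produces the desired bijection, and summing over $X_\ell'$ yields $\bfN_{\ell+1,\ell}(\Del_{\ell+1})=\bdscU_p\Del_\ell$.

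I expect the main obstacle to be exactly this $\GL_3(E_\frkp)$ bookkeeping: one must control the interplay of the conjugation by $\Ups_\ell$, the unipotent translates $\bfn(\,\cdot\,)$, the embedding $\phi$ and the Iwahori-type level $\cali_1^{(3)}(\frkp^\ell)$ with enough precision to determine $z$ (with its $v$-dependent twist) and $u$ uniquely and to verify that every Hecke index is attained exactly once; the accompanying trivial-stabilizer reduction, inherited verbatim from the proof of Lemma \ref{lem:52}, is routine.
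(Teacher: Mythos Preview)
Your proposal is correct and follows essentially the same approach as the paper: the same trivial-stabilizer reduction on the $H$-side, the same lower-triangular representatives for $\cali_1^{(2)}(\frkp^\ell)/\cali_1^{(2)}(\frkp^{\ell+1})$, the same use of $\Ups_{\ell+1}=\Ups_\ell\,\imath_\frkp^{-1}(D_{3,p})$ and $\tau_{\ell+1}=\tau_\ell\,\imath_\frkp^{\prime-1}(D_{2,p})$, and the same matrix bookkeeping that matches the $(\text{representative},w)$-indices against the $(z,u,u')$-indices.

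One small remark: you do not need to invoke any ``intertwining $G(\QQ)$-element'' or a trivial-stabilizer reduction on the $G$-side. Since $\iota(v\bfn(w))$ has support only at $p$ and $\vsi^{(p)}$ is trivial at $p$, both adelic elements being compared carry the \emph{same} left factor $\jmath(x_0')$; the matching is then a right-coset (not double-coset) identity, which is exactly the local membership you wrote down. The paper proceeds the same way, only making the $H$-stabilizers trivial.
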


\begin{proof} 
The proof is similar to \cite[Lemma 4.7]{MH}. 
We abbreviate $S_\ell=\ZZ_p/p^\ell\ZZ_p$. 
Since $\ell\geq 1$, 
\[\Sig_\ell:=\biggl\{\begin{bmatrix} 1+p^\ell u & 0 \\ 0 & 1+p^\ell v\end{bmatrix}\begin{bmatrix} 1 & 0 \\ p^\ell b & 1\end{bmatrix}\biggl|\;u,v,b\in S_1\biggl\}\]
is a complete set of representatives for $\calk'_{01}(p^\ell\frkN'')/\calk'_{01}(p^{\ell+1}\frkN'')$. 
For simplicity we assume that $H(\QQ)\cap x'\calk'_{01}(p^\ell\frkN'')x^{\prime-1}=\{1\}$ for every $x'\in H(\widehat{\QQ})$. 
Then $X_{\ell+1}'$ consists of elements represented by $x'k'$ with $x'\in X_\ell'$ and $k'\in\Sig_\ell^{}$. 
There are $s,t\in 1+p^\ell\ZZ_p$ and $w\in\ZZ_p$ such that 
\[\iot\left(\begin{bmatrix} 1 & 0 \\ p^\ell b & 1 \end{bmatrix}\bfn(z)\right)\Ups_{\ell+1}
\in\begin{bmatrix}
p^{2\ell+2} & p^{\ell+1}s & w \\
0 & p^{\ell+1} & t \\
0 & 0 & 1 \end{bmatrix}\calk'_{01}(p^\ell\frkN''). \]

We therefore find that $\bfN_{\ell+1,\ell}(\Del_{\ell+1})$ equals 
\begin{align*}
&\sum_{[x']\in X_{\ell+1}'}\sum_{
z\in S_{2\ell}}\sum_{z_1\in S_2}
[(\jmath(x'\bfn(z+p^{2\ell}z_1))\Ups_{\ell+1},x'\tau_{\ell+1})]\\
=&\sum_{[x']\in X_\ell'}\sum_{k'\in\Sig_\ell}\sum_{
z\in S_{2\ell}}\sum_{z_1\in S_2}
[(\jmath(x'k'\bfn(z+p^{2\ell}z_1))\Ups_{\ell+1},x'k'\tau_{\ell+1})]\\
=&\sum_{[x']\in X_\ell'}\sum_{
z\in S_{2\ell}}\sum_{z_1\in S_2}
\sum_{u,v,b\in S_1}\\
&\left[\left(\jmath(x')
\begin{bmatrix}
p^{2\ell+2} & p^{\ell+1}(1+p^\ell u) & z+p^{2\ell}z_1 \\
0 & p^{\ell+1} & 1+p^\ell v \\
0 & 0 & 1 \end{bmatrix}, 
x'\tau_\ell 
\begin{bmatrix} p & b \\ 0 & 1 \end{bmatrix}\right)\right]\\
=&\sum_{[x']\in X_\ell'}\sum_{
z\in S_{2\ell}}
\sum_{z_1\in S_2}\sum_{u,v\in S_1}
(\ono\otimes\calu_p')\left[\left(\jmath(x'\bfn(z))\Ups_\ell
\begin{bmatrix}
p^2 & pu & z_1 \\
0 & p & v \\
0 & 0 & 1 \end{bmatrix}, 
x'\tau_\ell\right)\right]\\
=&\bdscU_p\Del_\ell 
\end{align*}
by Definition \ref{def:57}.  
\end{proof}

\begin{definition}\label{def:62}
Define the regularized diagonal cycle by $\Del_\ell^\dagger=\bdscU_p^{-\ell}\bfe_{\ord}^{}\Del_\ell^{}$. 
\end{definition}

Since $\bfN_{\ell+1,\ell}^{}(\Del_{\ell+1}^\dagger)=\Del_\ell^\dagger$ by Lemma \ref{lem:61}, we can define 
\[\Del_\infty^\dagger=\lim_{\longleftarrow \ell}\Del_\ell^\dagger\in\lim_{\longleftarrow \ell}\ZZ_p[\bfX_\ell]. \]


\subsection{Theta elements}\label{ssec:63}

Let $\chi'$ be a Dirichlet character of $(\frkr/\frkN')^\times$ of conductor $M$. 
Recall that a character of $\calk_0'(\frkN')$ is associated to $\chi'$ by 
\[\begin{bmatrix} a & b \\ c & d \end{bmatrix}\mapsto\chi'(d) \]
(cf. Definition \ref{def:56}). 
Take a divisor $\frkM$ of $\frkN'$ such that $\frkr/\frkM\simeq\ZZ/M\ZZ$. 

Let $\bdsf\in e_{\ord}\bfS^G(\frkN,\chi,\calr)$ and $\bdsg\in e_{\ord}'\bfS^H(\frkN',\chi',\calr')$ be Hida families. 
We define an $\calr$-adic modular form $\Tht^{\chi'}_{\frkN'}\bdsf$ on $G$ by 
\[\Tht^{\chi'}_{\frkN'}\bdsf(x)=\sum_{i,j\in(\ZZ/M\ZZ)^\times}\sum_{y\in\ZZ/M^2\ZZ}
\chi'(ij)^{-1}\bdsf\left(x\cdot\begin{bmatrix} \xi_{\frkN'}'w_2 & \\ & 1 \end{bmatrix}\begin{bmatrix} 1 & \frac{i}{M} & \frac{y}{M^2} \\ 0 & 1 & \frac{j}{M} \\ 0 & 0 & 1 \end{bmatrix}\right), \]
where we define the action of $g=(g_l)\in\prod_{l|M}\GL_3(\QQ_l)$ on $x\in G(\widehat{\QQ})$ by 
\[x\cdot g=x\prod_{l|M}\imath_\frkl^{-1}(g_l). \]
Proposition \ref{prop:43} below shows that for $u\in\calk_0'(\frkM^2)$
\[r(\iot(u))\Tht^{\chi'}_{\frkN'}\bdsf=\chi(u)\Tht^{\chi'}_{\frkN'}\bdsf. \] 

We hereafter assume Hypothesis ($H_3$) in \S \ref{ssec:213}. 
We construct the regularized diagonal cycle $\Del_\infty^\dagger$ by letting $\frkN''=\frkM^2$. 
Put $\bfG=G\times H$. 
We define an $\calr\widehat{\otimes}\calr'$-adic modular form on $\bfG$ by 
\[\bdsF=\Tht^{\chi'}_{\frkN'}\bdsf\boxtimes\bdsg:\bfG(\QQ)\bsl\bfG(\widehat{\QQ})\to\calr\widehat{\otimes}\calr'\] 
by 
\[\bdsF(x,x')=\Tht^{\chi'}_{\frkN'}\bdsf(x)\bdsg(x'). \]
Then $\bdsF$ naturally induces a $\Lam_3\boxtimes\Lam_2$-linear map 
\[\bdsF_*:\displaystyle\lim_{\longleftarrow \ell} \calo[\bfX_\ell]\to\calr\widehat{\otimes}\calr'. \]  
The theta element $\Tht_{\bdsF}$ attached to the product $\bdsF$ is then defined by the evaluation of $\bdsF_*$ at the regularized diagonal cycle, namely, 
\[\Tht_{\bdsF}=\bdsF_*(\Del_\infty^\dagger)\in\calr\widehat{\otimes}\calr'. \]


\subsection{Period integrals}\label{ssec:64}

The set $\frkX^{\rm crit}$ consists of pairs $(\ulQ,\ulQ')$ with 
\begin{align*}
\ulQ&=(Q_1,Q_2,Q_3)\in\frkX_\calr^{++}, &
\ulQ'&=(Q_1',Q_2')\in\frkX_{\calr'}^{++}
\end{align*} 
such that 
\[k_{Q_1}\leq -k_{Q_2'}\leq k_{Q_2}\leq -k_{Q_1'}\leq k_{Q_3}. \]
Fix $\calq=(\ulQ,\ulQ')\in\frkX^{\rm crit}$. 
We denote the automorphic representation of $G(\AA)$ associated to $\ulQ(\bdsf)$ by $\bdpi_{\ulQ}$ and the automorphic representation of $H(\AA)$ attached to $\ulQ'(\bdsg)$ by $\bdsig_{\ulQ'}$. 
We here abbreviate  
\begin{align*}
\pi&=\bdpi_{\ulQ}, &
\sig&=\bdsig_{\ulQ'}, & 
\vPi&=\pi\otimes\sig, & 
k_i&=k_{Q_i}, & 
k_j'&=k_{Q_j'}   
\end{align*}
for $i=1,2,3$ and $j=1,2$. 
Put 
\begin{align*}a&=k_3^{}-k_2^{}, & b&=k_2^{}-k_1^{}, & n&=k_3^{}+k_1', & l&=-k_2'-k_1^{}. 
\end{align*}
We use the notation in \S \ref{ssec:b6} to note that 
\begin{align*}
\pi_\infty&=\rho_{k^\vee_{\ulQ}}\simeq\frkH_{b,a}\otimes(\det)^{-k_2}, & 
\sig_\infty&=\rho_{k^\vee_{\ulQ'}}\simeq\vrh_{(-k_1,-k_2)}. 
\end{align*}
Take highest weight vectors 
\begin{align}
\bfv_{k_{\ulQ}}&=x_1^by_3^a\in\pi_\infty, & 
\bfv_{k_{\ulQ'}}&=X_1^{a-n+b-l}\in\sig_\infty. \label{tag:61}
\end{align}
Then the lowest weight dual vectors are
\begin{align}
\bfu_{k_{\ulQ}}&=y_1^{\prime b}x_3^{\prime a}\in\pi_\infty^\vee, & 
\bfu_{k_{\ulQ'}}&=(-Y_1')^{a-n+b-l}\in\sig_\infty^\vee \label{tag:62}
\end{align}
(see Remark \ref{rem:52}). 

Let $f\in\cala_{\ulk}^G(p^\ell\frkN,\chi)$ and $g\in\cala_{\ulk'}^H(p^\ell\frkN',\chi')$ be the ad\`{e}lic lifts of 
\begin{align*}
\widehat{f}&=\bdsf_{\ulQ}\in\cals^G_{k_{\ulQ}}(p^\ell\frkN,\chi\eps_{\ulQ},\calr(\ulQ)), \\ 
\widehat{g}&=\bdsg_{\ulQ'}\in\cals^H_{k_{\underline{Q'}}}(p^\ell\frkN',\chi'\eps_{\underline{Q'}},\calr'(\underline{Q'})) 
\end{align*}
(see (\ref{tag:55})). 
Recall that $f$ and $g$ are eigenforms of the operators $U_p^{}$ and $U_p'$ with eigenvalues $\alp_{\pi_p}$ and $\alp_{\sig_p}$ by \cite[Theorem 5.3]{Hid04} and Proposition \ref{prop:31} below. 
Define $\Tht^{\chi'}_{\frkN'}f$ by replacing $\bdsf$ with $f$ in the definition of $\Tht^{\chi'}_{\frkN'}\bdsf$. 
Clearly, 
\[\ulQ(\Tht^{\chi'}_{\frkN'}\bdsf)=(\widehat{\Tht^{\chi'}_{\frkN'}f})_{\bfv_{k_{\ulQ}}}. \]

Define the vector-valued modular form $F:\bfG(\AA)\to\vPi_\infty^\vee$ by 
\begin{align*}
F(x,x')&=\Tht^{\chi'}_{\frkN'}f(x)\otimes g(x') &
(x&\in G(\AA),\; x'\in H(\AA)). 
\end{align*}
Define a scalar-valued modular form $\vPh_F:G(\AA)\times H(\AA)\to\CC$ by 
\[\vPh_F(x,x')=\ell_{\vPi_\infty}^{}(\bfW_{\vPi_\infty}^H\otimes F(x,x')), \]
where $\bfW_{\vPi_\infty}^H\in\vPi_\infty^{}$ is an $H(\RR)$-invariant vector defined in \S \ref{ssec:b6}. 
Proposition \ref{prop:b1}(\ref{prop:b14}) shows that 
\[\vPh_F(x\iot(u_\infty),x'u_\infty)=\vPh_F(x,x') \]
for $u_\infty\in H(\RR)$. 
Define $\bft_\ell\in G(\QQ_p)$ by 
\[\bft_\ell=\imath_\frkp^{-1}\left(\begin{bmatrix} 0 & 0 & -p^{-\ell} \\ 0 & p^\ell & 1 \\ p^{2\ell} & p^\ell & 0 \end{bmatrix}\right). \]
We consider the following period integral
\[\scrp_{\calk'}(\pi_p(\bft_\ell)\vPh_F)=\int_{H(F)\bsl H(\AA)}\vPh_F(h\bft_\ell,h)\, \d_{\calk'} h. \]

\begin{proposition}\label{prop:61}
Let  $\calq\in\frkX^{\rm crit}$. 
Then 
\[\calq(\Tht_{\bdsF})=\frac{(-1)^{k_2-k_1}}{\zet_p(1)\zet_p(2)}[\calk':\calk_0'(\frkM^2)]\frac{\scrp_{\calk'}(\pi(\bft_\ell\vsi^{(p)})\vPh_F)}{(p^{-5}\alp_{\pi_p}\alp_{\sig_p})^\ell}\prod_{q\in\vSi_T^-\setminus\vSi_E^r}(q+1) \]
for sufficiently large $\ell$. 
\end{proposition}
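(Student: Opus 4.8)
The plan is to evaluate the defining sum $\Tht_{\bdsF}=\bdsF_*(\Del_\infty^\dagger)$ at a fixed $\calq=(\ulQ,\ulQ')\in\frkX^{\rm crit}$ and identify the outcome with $\scrp_{\calk'}(\pi(\bft_\ell\vsi^{(p)})\vPh_F)$ up to the asserted constants, following the pattern of \cite[\S 4]{MH} but carrying along the weight twists (which are trivial there, the relevant archimedean weights being scalar). First I would pass to finite level: for $\ell$ large, depending only on $\calq$, the function $\calq(\bdsF)$ factors through $\bfX_\ell$, and since $\bdsF_*$ is continuous, $\calq(\Tht_{\bdsF})=\calq(\bdsF)_*(\Del_\ell^\dagger)$ with $\calq(\bdsF)=(\widehat{\Tht^{\chi'}_{\frkN'}f})_{\bfv_{k_\ulQ}}\cdot\widehat g_{\bfv_{k_{\ulQ'}}}$ by Theorem \ref{thm:51}(2) and the last display of \S\ref{ssec:63}. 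Next I would transpose the regularization operators across the pairing, exactly as in the proof of Proposition \ref{prop:51}: because $\Tht^{\chi'}_{\frkN'}$ is built from Hecke translations away from $p$, the form $\Tht^{\chi'}_{\frkN'}\bdsf$ is still an ordinary $\calu_p$-eigenform with eigenvalue $\alp_{\bdsf}$, hence $\bdsF=\Tht^{\chi'}_{\frkN'}\bdsf\boxtimes\bdsg$ is a $\bdscU_p$-eigenform with $\bfe_\ord\bdsF=\bdsF$, so that $\bdsF_*\circ\bdscU_p^{-\ell}\bfe_\ord=\lambda_{\bdsF}^{-\ell}\bdsF_*$; specializing yields $\calq(\Tht_{\bdsF})=(\alp_f\alp_g)^{-\ell}\,\calq(\bdsF)_*(\Del_\ell)$ where, by Remark \ref{rem:53}, $\alp_f=p^{-2k_1-k_2}\alp_{\pi_p}$ and $\alp_g=p^{-k_1'}\alp_{\sig_p}$. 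This produces the bulk of the normalizing factor $(p^{-5}\alp_{\pi_p}\alp_{\sig_p})^{-\ell}$; the remaining powers of $p$ will come out of the local analysis.

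The core step is to evaluate $\calq(\bdsF)_*(\Del_\ell)=\sum_{[x']\in X_\ell'}\sum_{z\in\ZZ_p/p^{2\ell}\ZZ_p}(\widehat{\Tht^{\chi'}_{\frkN'}f})_{\bfv_{k_\ulQ}}(\jmath(x'\bfn(z))\Ups_\ell)\,\widehat g_{\bfv_{k_{\ulQ'}}}(x'\tau_\ell)$. I would convert the $p$-adic avatars back to adelic modular forms using \eqref{tag:55} and the invariance of the pairings $\ell_{k_\ulQ}$, $\ell_{k_{\ulQ'}}$ under the relevant representations. At $\frkp$ the matrix $\imath_\frkp(\iot(\bfn(z))\Ups_\ell)$ is upper triangular with diagonal $(p^{2\ell},p^\ell,1)$ independently of $z$, so $\rho_{k_\ulQ^\vee,p}$ of it fixes the highest-weight line and contributes the scalar $p^{-(2k_1+k_2)\ell}$; writing $\iot(\bfn(z))\Ups_\ell=\imath_\frkp^{-1}(N_z)\,\bft_\ell$ for an appropriate $N_z\in G(\QQ_p)$ then rewrites the $G$-slot argument as $\iot(x')\bft_\ell\vsi^{(p)}$ up to a right translation lying in $\cali_1^{(3)}(\frkp^\ell)$ precisely when $z$ is taken modulo $p^{2\ell}$ — this is the bookkeeping already recorded in Definition \ref{def:61}, and it confirms that $z$ need only run over $\ZZ_p/p^{2\ell}\ZZ_p$. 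On the $H$-slot, $\rho_{k_{\ulQ'}^\vee,p}(\tau_\ell)$ carries $\bfv_{k_{\ulQ'}}$ to $(-1)^{k_2'-k_1'}p^{-k_1'\ell}$ times an explicit nonzero $T'$-scalar times the lowest-weight vector $\bfu_{k_{\ulQ'}}$, so that $x'\mapsto\widehat g_{\bfv_{k_{\ulQ'}}}(x'\tau_\ell)$ becomes a multiple of $(\tau_{p^\ell\frkN'}\widehat g)_{\bfu_{k_{\ulQ'}}}(x')$ and the sign $\vep_{k_{\ulQ'}}$ of Remark \ref{rem:52} enters.

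With the $p$-adic matrices absorbed, the double sum $\sum_{[x']}\sum_z$ is the discretization of a period integral: the $\bfn(z)$-average refines the level at $\frkp$ from $\cali_1^{(2)}(\frkp^\ell)$ down to the subgroup seen by $x'\mapsto\widehat g_{\bfv_{k_{\ulQ'}}}(x'\tau_\ell)$ and by the $\Ups_\ell$-translated first argument, so that the summand descends to the level at which the integrand $h\mapsto\vPh_F(\iot(h)\bft_\ell\vsi^{(p)},h)$ is invariant, and the mass formula of \S\ref{ssec:51} turns the sum into $[\calk':\calk_0'(\frkM^2)]$ times $\int_{H(\QQ)\bsl H(\AA)}\vPh_F(\iot(h)\bft_\ell\vsi^{(p)},h)\,\d_{\calk'}h=\scrp_{\calk'}(\pi(\bft_\ell\vsi^{(p)})\vPh_F)$, up to local volume factors: the factor $\zet_p(1)^{-1}\zet_p(2)^{-1}$ records the normalization of $\d_{\calk'}h$ at $\frkp$ against the index of the Iwahori-type subgroup, while $\prod_{q\in\vSi_T^-\setminus\vSi_E^r}(q+1)$ records the discrepancy between $\calk_q''=\calk_q\cap\calk_q'$ and $\calk_q'$ at the compact inert places, where $[\calk_q':\calk_q'']=q+1$ (Appendices \ref{sec:d}, \ref{sec:e}). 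Finally, Proposition \ref{prop:b1} compares the highest-weight scalarizations $\widehat f_{\bfv_{k_\ulQ}}$, $\widehat g_{\bfv_{k_{\ulQ'}}}$ with the $H(\RR)$-invariant scalarization $\vPh_F$ effected by $\bfW_{\vPi_\infty}^H$, absorbing the $T'$-scalars and combining with the $\vep$-sign above into the single sign $(-1)^{k_2-k_1}$; collecting everything yields the stated formula.

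The main obstacle is precisely the accounting in the last two paragraphs: one must verify that all the powers of $p$ — from the ordinary eigenvalue normalization \eqref{tag:58}, the $\Ups_\ell$-twist, the $\tau_\ell$-twist, and the volume comparisons — collapse to exactly $p^{5\ell}$, and one must match the archimedean normalizations so that replacing the highest-weight vectors by the $H(\RR)$-invariant vector $\bfW_{\vPi_\infty}^H$ costs exactly $(-1)^{k_2-k_1}\zet_p(1)^{-1}\zet_p(2)^{-1}[\calk':\calk_0'(\frkM^2)]\prod_{q}(q+1)$. The archimedean input is isolated in Appendix \ref{sec:b} and the ramified and compact-place input in Appendices \ref{sec:d} and \ref{sec:e}; granting these, what remains is a finite-level manipulation governed by the distribution property of Lemma \ref{lem:61}.
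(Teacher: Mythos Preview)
Your outline correctly identifies the structure---pass to finite level, use the $\bdscU_p$-eigenvalue to strip the regularization, convert the sum over $X_\ell'$ into an integral via the mass formula, and assemble the index factors---and your bookkeeping of the constants $(p^{-5}\alp_{\pi_p}\alp_{\sig_p})^{-\ell}$, $[\calk':\calk_0'(\frkM^2)]$, $\zet_p(1)^{-1}\zet_p(2)^{-1}$ and $\prod_q(q+1)$ is right. But the key step, the passage from the highest-weight scalarization $\calq(\bdsF)=(\widehat{\Tht^{\chi'}_{\frkN'}f})_{\bfv_{k_\ulQ}}\boxtimes\widehat g_{\bfv_{k_{\ulQ'}}}$ to the $H(\RR)$-invariant scalarization $\vPh_F=\ell_{\vPi_\infty}(\bfW^H_{\vPi_\infty}\otimes F(\cdot))$, is not delivered by what you cite. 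These two functions are obtained by pairing $\widehat F$ (resp.\ $F$) against two \emph{different} vectors of $\vPi_\infty$: the pure tensor $\bfv_{k_\ulQ}\otimes\bfv_{k_{\ulQ'}}$ versus the $H$-invariant $\bfW^H_{\vPi_\infty}$, which is \emph{not} a pure tensor (see its explicit form in \S\ref{ssec:b6}). Your plan of tracking $\rho_{k_\ulQ^\vee,p}(\iota(\bfn(z))\Ups_\ell)$ and $\rho_{k_{\ulQ'}^\vee,p}(\tau_\ell)$ on the highest-weight vectors separately cannot bridge this gap: after undoing the $p$-adic avatar you are left pairing $\iota_p(F)$ against a vector still carrying the factor $\vPi_\infty(\iota(x'_p),x'_p)$, and this $x'_p$-dependence cancels only for an $H$-invariant vector, not for $\bfv_{k_\ulQ}\otimes\bfv_{k_{\ulQ'}}$. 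Proposition~\ref{prop:b1} records the $H$-invariance and the polynomial formula for $\bfW^H_{\vPi_\infty}$; it does not state the comparison you attribute to it. (Your $H$-slot step also drifts toward $(\tau_{p^\ell\frkN'}\widehat g)_{\bfu_{k_{\ulQ'}}}$, which lives on the contragredient side---the wrong place for $\vPh_F$.)

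What the paper actually does is the reverse direction: one first uses Proposition~\ref{prop:b1}(\ref{prop:b14}) to write
\[
\vPh_F(\jmath(x')g,x'h)=\iota_p^{-1}\bigl(\ell_{\vPi_\infty}(\vPi_\infty(\iota_p^T(g),\iota_p^{T'}(h))^{-1}\bfW^H_{\vPi_\infty}\otimes\widehat F(\jmath(x')g,x'h))\bigr),
\]
the $H$-invariance killing the $x'_p$-factor, and then one \emph{explicitly computes} the vector $\vPi_\infty(\iota(\bfn(z))\Ups_\ell,\tau_\ell)^{-1}\bfW^H_{\vPi_\infty}$ in the polynomial model of \S\ref{ssec:b6}. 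The outcome is a congruence: modulo $p^s$ (for $\ell\gg s$) this vector equals $p^{(2k_1+k_2+k_1')\ell}(-x_1)^by_3^aX_2^{a-n+b-l}$, i.e.\ precisely the highest-weight pure tensor up to the sign $(-1)^{b}=(-1)^{k_2-k_1}$ and the $p$-power. This congruence is the entire content of the comparison; it is a direct matrix calculation with the explicit $\bfW^H_{\vPi_\infty}$, not a formal consequence of any lemma. A second, smaller point: the $z$-sum is not absorbed by a right level translation on the $G$-slot as you describe. Rather, once the sum has become an $H$-integral, one substitutes $h\mapsto h\bfn(-z)$ so that $\bfn(z)$ migrates to the $H$-slot, where $\bfn(-z)\tau_\ell=\tau_\ell\bigl[\begin{smallmatrix}1&0\\ zp^\ell&1\end{smallmatrix}\bigr]$ lies in the level, making each term independent of $z$ and producing the extra $p^{2\ell}$ that combines with the index of $\calk_{01}^{(2)}(p^\ell\frkM^2)$.
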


\begin{proof}
The proof is similar to that of \cite[Proposition 4.9]{MH} (cf. \cite[Lemma 4.4]{CH}). 
Recall that $\ulQ(\bdsf)=\widehat{f}_{\bfv_{k_{\ulQ}}}$ and $\ulQ'(\bdsg)=\widehat{g}_{\bfv_{k_{\ulQ'}}}$ are eigenforms of the operators $\calu_p^{}$ and $\calu_p'$ with unit eigenvalues $\alp_f$ and $\alp_g$. 
Thus 
\[\calq(\bdsF)=\ulQ(\Tht^{\chi'}_{\frkN'}\bdsf)\boxtimes\ulQ'(\bdsg)=(\widehat{\Tht^{\chi'}_{\frkN'}f})_{\bfv_{k_{\ulQ}}}\boxtimes g_{\bfv_{k_{\ulQ'}}}\] 
is an eigenform of $\bdscU_p$ with unit eigenvalue 
\[\alp_F=\alp_f\alp_g=p^{-2k_1^{}-k_2^{}-k_1'}\alp_{\pi_p}\alp_{\sig_p} \]
by Remark \ref{rem:53}.  
Let $\widehat{F}=\widehat{\Tht^{\chi'}_{\frkN'}f}\otimes\widehat{g}$ be a $p$-adic modular form on $\bfG$. 
Then  
\[\calq(\bdsF)(x,x')=\ell_{\vPi_\infty}^{}(x_1^by_3^aX_1^{a-n+b-l}\otimes\widehat{F}(x,x')) \]
for $x\in G(\widehat{\QQ})$ and $x'\in H(\widehat{\QQ})$

By definition we have 
\begin{align*}
\calq(\Tht_{\bdsF})
&=\calq(\bdsF)(\bdscU_p^{-\ell}\bfe_{\ord}^{}\Del_\ell^{})\\
&=\alp_F^{-\ell}\sum_{[x']\in X_\ell'}\sum_{z\in\ZZ_p/p^{2\ell}\ZZ_p}\calq(\bdsF)(\jmath(x'\bfn(z))\Ups_\ell,x'\tau_\ell)
\end{align*}
for sufficiently large $\ell$, we see that  
\[\alp_F^\ell\calq(\Tht_{\bdsF})=\sum_{[x']\in X_\ell'}\sum_{z\in\ZZ_p/p^{2\ell}\ZZ_p}\ell_{\vPi_\infty}^{}(x_1^by_3^aX_1^{a-n+b-l}\otimes\widehat{F}(\jmath(x'\bfn(z))\Ups_\ell,x'\tau_\ell)). \]

Recall that $\vPi_\infty^\vee=\rho_{k_{\ulQ}}\otimes\rho_{k_{\ulQ'}}$. 
Since 
\[F(x,x')=\iot_p^{-1}(\vPi_\infty^\vee(\iot_p^T(x_p^{}),\iot_p^{T'}(x_p'))\widehat{F}(x,x')) \]
by (\ref{tag:55}), we see that for $x\in G(\widehat{\QQ})$ and $x'\in H(\widehat{\QQ})$ 
\begin{align*}
\vPh_F(x,x')
&=\ell_{\vPi_\infty}(\bfW_{\vPi_\infty}^H\otimes\iot_p^{-1}(\vPi_\infty^\vee(\iot_p^T(x_p^{}),\iot_p^{T'}(x_p'))\widehat{F}(x,x')))
\end{align*}
Proposition \ref{prop:b1}(\ref{prop:b14}) shows that for $x'\in H(\QQ_p)$, $g\in G(\QQ_p)$ and $h\in H(\QQ_p)$ 
\begin{align*}
&\ell_{\vPi_\infty}(\bfW_{\vPi_\infty}^H\otimes F(\jmath(x')g,x'h))\\
=&\ell_{\vPi_\infty}(\bfW_{\vPi_\infty}^H\otimes\vPi_\infty^\vee(\iot_p^T(\iot(x_p')),\iot_p^{T'}(x_p'))^{-1}F(\jmath(x')g,x'h))\\
=&\iot_p^{-1}(\ell_{\vPi_\infty}(\vPi_\infty^{}(\iot_p^T(g),\iot_p^{T'}(h))^{-1}\bfW_{\vPi_\infty}^H\otimes\widehat{F}(\jmath(x')g,x'h))). 
\end{align*}
Let $g=\iot(\bfn(z))\Ups_\ell$ and $h=\tau_\ell$. 
Observe that 
\begin{align*}
\imath_\frkp(g^{-1}\iot(h))&=\frac{1}{p^{2\ell}}\begin{bmatrix} 
1 & -1 & 1-z \\
0 & p^\ell & -p^\ell \\
0 & 0 & p^{2\ell} \end{bmatrix}
\begin{bmatrix} 0 & 0 & 1 \\ 0 & 1 & 0 \\ -p^\ell & 0 & 0 \end{bmatrix}
=\frac{1}{p^{2\ell}}\begin{bmatrix} 
(z-1)p^\ell & -1 & 1 \\ p^{2\ell} & p^\ell & 0 \\ -p^{3\ell} & 0 & 0 \end{bmatrix}, \\
\imath_\frkp(\iot(h)^{-1}g)&=\begin{bmatrix} 0 & 0 & -p^{-\ell} \\ 0 & 1 & 0 \\ 1 & 0 & 0 \end{bmatrix}\begin{bmatrix} p^{2\ell} & p^\ell & z \\ 0 & p^\ell & 1 \\ 0 & 0 & 1 \end{bmatrix}
=\begin{bmatrix}
0 & 0 & -p^{-\ell} \\ 0 & p^\ell & 1 \\ p^{2\ell} & p^\ell & z 
\end{bmatrix}.  
\end{align*}
Recall that 
\[\bfW_\vPi^H\equiv\det\begin{bmatrix} x_1 & x_3 \\ X_2 & Y_2 \end{bmatrix}^{b-l}\det\begin{bmatrix} X_2 & Y_2 \\ y_3 & -y_1 \end{bmatrix}^{a-n}x_2^ly_2^n \pmod{\calt_{b,a}(\QQ)}. \]
Since 
\begin{align*}
&(x_1,x_2,x_3)\imath_\frkp(g^{-1}\iot(h))=p^{-2\ell}(x_1(z-1)p^\ell+x_2p^{2\ell}-x_3p^{3\ell},x_2p^\ell-x_1,x_1), \\
&(y_1,y_2,y_3)\imath_\frkp(\trs(\iot(h)^{-1}g))=(-y_3p^{-\ell},y_3+y_2p^\ell,y_3z+y_2p^\ell+y_1p^{2\ell}),  
\end{align*}
we see that 
\begin{align*}
\vPi_\infty(\iot(\bfn(z))\Ups_\ell,\tau_\ell)^{-1}\bfW_{\vPi_\infty}^H
=&\frac{1}{p^{2\ell(b-l)}}\det\begin{bmatrix} x_1(z-1)p^\ell+x_2p^{2\ell}-x_3p^{3\ell} & x_1 \\ X_2 & Y_2 \end{bmatrix}^{b-l}\\
&\times \det\begin{bmatrix} X_2 & Y_2 \\ y_3z+y_2p^\ell+y_1p^{2\ell} & y_3p^{-\ell} \end{bmatrix}^{a-n}\\
&\times p^{-2l\ell}(x_2p^\ell-x_1)^l(y_3+y_2p^\ell)^n\det(g^{-1}\iot(h))^{-k_2},   
\end{align*} 
from which we find the following congruence relation 
\begin{align*}
&\bigl(p^{2k_1^{}+k_2^{}+k_1'}\bigl)^{-\ell}\vPi_\infty(\jmath(\bfn(z))\Ups_\ell,\tau_\ell)^{-1}\bfW_{\vPi_\infty}^H\\
\equiv&(-x_1)^by_3^aX_2^{b-l+a-n}\pmod{p^s} 
\end{align*}
if $\ell$ is sufficiently larger than $s$. 
We conclude that 
\begin{align*}
&\bigl(p^{2k_1^{}+k_2^{}+k_1'}\bigl)^{-\ell}\iot_p(\ell_{\vPi_\infty}(\bfW_{\vPi_\infty}^H\otimes F(x'\jmath(\bfn(z))\Ups_\ell,x'\tau_\ell)))\\
=&\ell_{\vPi_\infty}((-x_1)^by_3^aX_2^{b-l+a-n}\otimes\widehat{F}(x'\jmath(\bfn(z))\Ups_\ell,x'\tau_\ell)). 
\end{align*}
 
Substituting this expression, we see that
\begin{align*}
&\bigl(p^{2k_1^{}+k_2^{}+k_1'}\alp_F\bigl)^\ell\calq(\Tht_{\bdsF}) \\
=&(-1)^b\sum_{[x']\in X_\ell'}\sum_{z\in\ZZ_p/p^{2\ell}\ZZ_p}\vPh_F(\jmath(x'\bfn(z))\Ups_\ell,x'\tau_\ell)\\
=&(-1)^b[\calk':\calk_{01}^{(2)}(p^\ell\frkM^2)]\sum_{z\in\ZZ_p/p^{2\ell}\ZZ_p}\int_{H(\QQ)\bsl H(\AA)}\vPh_F(\jmath(h\bfn(z))\Ups_\ell,h\tau_\ell)\,\d_{\calk'} h.   
\end{align*} 
Observe that
\begin{align*}
&\int_{H(\QQ)\bsl H(\AA)}\vPh_F(\iot(h\bfn(z))\Ups_\ell,h\tau_\ell)\,\d_{\calk'} h \\
=&\int_{H(\QQ)\bsl H(\AA)}\vPh_F(\iot(h)\Ups_\ell,h\bfn(-z)\tau_\ell)\,\d_{\calk'} h
=\scrp_{\calk'}(\vPi_p(\Ups_\ell,\tau_\ell)\vPh_F). 
\end{align*}
Since $[\calk_q':\calk_q'']=q+1$ for $q\in\vSi_T^-\setminus\vSi_E^r$ by Lemma 3.14 of \cite{Shimura}, we have 
\[[\calk':\calk_{01}^{(2)}(p^\ell\frkM)]=p^{3\ell}(1-p^{-2})(1-p^{-1})[\calk':\calk_0^{(2)}(\frkM^2)]\prod_{q\in\vSi_T^-\setminus\vSi_E^r}(q+1). \] 
Since $\iot(\tau_\ell^{-1})\Ups_\ell^{}=\bft_\ell$, we obtain the stated formula. 
\end{proof}


\section{The central value formulae}\label{sec:2}


\subsection{The Ichino-Ikeda formula}\label{ssec:24}

Let $G$ be the unitary group of the Hermitian form $(\;,\;)_T$ on $W=E^n$.  
Let $H$ be the unitary group of a subspace $W'$ of $W$ of dimension $n-1$ on which $(\;,\;)_T$ is non-degenerate.  
We view $H$ as a subgroup of $G$. 
Let $\pi\simeq\otimes_v'\pi_v^{}$ be an irreducible cuspidal automorphic representation of $G(\AA)$ and $\sig\simeq\otimes_v'\sig_v^{}$ an irreducible cuspidal automorphic representation of $H(\AA)$. 
Set 
\begin{align*}
\bfG&=G\times H, & 
\vPi&=\pi\otimes\sig, &
\vPi_v&=\pi_v\otimes\sig_v. 
\end{align*} 
 
We define the product $L$-series associated to $\pi$ and $\sig$ by 
\[L(s,\pi\times\sig)=L^\GL(s,\BC(\pi)\times\BC(\sig)), \]
where $\BC(\pi)$ (resp. $\BC(\sig)$) is the functorial lift of $\pi$ (resp. $\sig$) to an automorphic representation of $\GL_n(\EE)$ (resp. $\GL_{n-1}(\EE)$). 
The right hand side is the $L$-factor defined by Jacquet, Piatetski-Shapiro and Shalika in \cite{JPSS2}. 
Let $L(s,\pi,\mathrm{Ad})$ denote the adjoint $L$-series for $\pi$. 
Assume that both $\pi$ and $\sig$ are tempered. 
Put 
\begin{align*}
\scrl(\pi\times\sig)&=\frac{L\left(\frac{1}{2},\pi\times\sig\right)}{L(1,\pi,\mathrm{Ad})L(1,\sig,\mathrm{Ad})}\prod_{i=1}^nL(i,\eps_{E/\QQ}^i), \\
\scrl(\pi_v\times\sig_v)&=\frac{L\left(\frac{1}{2},\pi_v\times\sig_v\right)}{L(1,\pi_v^{},\mathrm{Ad})L(1,\sig_v,\mathrm{Ad})}\prod_{i=1}^nL(i,\eps_{E_v/\QQ_v}^i). 
\end{align*}

\begin{remark}\label{rem:21}
If $n$ is even, then $L(s,\pi,\mathrm{Ad})=L(s,\BC(\pi),\As)$ is the Asai $L$-series for $\BC(\pi)$ while if $n$ is odd, then $L(s,\pi,\mathrm{Ad})=L(s,\BC(\pi),\As^-)$ is the twisted Asai $L$-series. 
\end{remark}

We define the Petersson pairings by 
\[(\vPh,\vPh')=\int_{\bfG(\QQ)\bsl\bfG(\AA)}\vPh(g,h)\vPh'(g,h)\,\d^\tau\! g\,\d^\tau\! h \]
for cusp forms $\vPh$ and $\vPh'$ on $\bfG$, where $\d^\tau\! g$ and $\d^\tau\! h$ are the Tamagawa measures on $G(\AA)$ and $H(\AA)$. 
Given a cusp form $\vPh$ on $\bfG$, we consider the integral 
\[\scrp(\vPh)=\int_{H(\QQ)\bsl H(\AA)}\vPh((h,h))\,\d^\tau\! h. \]  
Fix a local perfect pairing 
\[\La\!\La\;,\;\Ra\!\Ra_v^{}:\vPi_v^{}\otimes\vPi_v^\vee\to\CC.  \]
If $\vPi_v$ is tempered, then the integral
\[I(\bfW_1,\bfW_2)=\int_{H(\QQ_v)}\La\!\La\vPi_v((h_v,h_v))\bfW_1,\bfW_2\Ra\!\Ra_v^{}\,\d h_v  \]
is convergent for $\bfW_1\in\vPi_v^{}$ and $\bfW_2\in\vPi_v^\vee$. 
The local Haar measure $\d h_v$ is defined so that a maximal compact subgroup $\calk_v'$ of $H(\QQ_v)$, which we will specify later for $n=3$, has volume $1$. Let $C_H$ be the ratio between the Tamagawa measure and the product measure of local measures. Namely $C_H$ is defined so that $\d^\tau\! h=C_H\prod_v\d h_v$.

Ichino and Ikeda \cite{II} have refined the global Gross-Prasad conjecture and predicted an explicit relation between the central value and the period for orthogonal groups. The analogue of the Ichino-Ikeda conjecture for unitary groups was formulated in \cite{NHarris} and proved by \cite{BLZZ} in the stable case and by \cite{BCZ} in the endoscopic case.

\begin{theorem}[\cite{Z,BLZZ,BCZ}]\label{thm:21}
Let $\vPi$ be an irreducible tempered cuspial automorphic representation of $\bfG(\AA)$. 
If $\vPh=\otimes_v'\bfW_v^{}\in\vPi$ and $\vPh'=\otimes_v'\bfW_v'\in\vPi^\vee$ are factorizable, then 
\[\frac{\scrp(\vPh)\scrp(\vPh')}{(\vPh,\vPh')}
=C_H\frac{\scrl(\pi\times\sig)}{2^{\vka_\pi+\vka_\sig}}\prod_v\frac{I(\bfW_v^{},\bfW_v')}{\scrl(\pi_v\times\sig_v)\La\!\La \bfW_v^{},\bfW_v'\Ra\!\Ra_v^{}}, \]
where $2^{\vka_\pi}$ (resp. $2^{\vka_\sig}$) is the order of the component group associated to the $L$-parameter of $\pi$ (resp. $\sig$). 
\end{theorem}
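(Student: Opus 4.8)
The plan is to deduce this from the Jacquet--Rallis relative trace formula, following the strategy initiated by W.~Zhang \cite{Z} and completed in \cite{BLZZ} (stable case) and \cite{BCZ} (endoscopic case); I only describe the architecture. First I would set up two relative trace formulae. On the unitary side one works with $\bfG=G\times H$ together with the diagonal copy of $H$ on each factor, obtaining a distribution $J$ on $\bfG(\AA)$ whose spectral expansion is a sum over automorphic representations of terms built from $|\scrp(\,\cdot\,)|^2$, so that its $\vPi$-isotypic part reproduces the left-hand side $\scrp(\vPh)\scrp(\vPh')/(\vPh,\vPh')$. On the general linear side one works with $\GL_n(\EE)\times\GL_{n-1}(\EE)$ together with the two subgroups $\GL_n(\AA)\times\GL_{n-1}(\AA)$ (Flicker--Rallis) and a mirabolic-type subgroup (Rankin--Selberg), obtaining a distribution $J'$; by the unfolding of Jacquet--Piatetski-Shapiro--Shalika \cite{JPSS2} and Flicker, the part of its spectral expansion attached to $\BC(\pi)\times\BC(\sig)$ is, up to the periods of the base change lift, the product $L(\tfrac12,\pi\times\sig)$ together with the adjoint $L$-values and the archimedean and ramified local integrals.

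Next I would carry out the geometric comparison. This rests on the smooth transfer of test functions between the two relative trace formulae---a purely local statement---proved at non-archimedean places by Zhang \cite{Z} and at archimedean places by Beuzart-Plessis (as used in \cite{BLZZ,BCZ}), together with the Jacquet--Rallis fundamental lemma of Yun (and Gordon in characteristic zero) for the unramified Hecke algebra at almost all places. After regularizing the non-convergent kernels following Zydor and Chaudouard--Zydor, matching pairs $(f,f')$ satisfy $J(f)=J'(f')$, term by term on the geometric sides.

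Then comes the spectral separation together with the local identities. Isolating the near-equivalence class of $\vPi$ on the unitary side and of its base change on the general linear side---using the endoscopic classification of the discrete spectrum of unitary groups to match them and to account for two-element local packets---one extracts from $J(f)=J'(f')$ the asserted global identity up to a family of local constants. Each such constant is forced to equal $I(\bfW_v,\bfW_v')/\bigl(\scrl(\pi_v\times\sig_v)\La\!\La\bfW_v,\bfW_v'\Ra\!\Ra_v\bigr)$ by the local Gross--Prasad conjecture in its Ichino--Ikeda-refined form, that is, local multiplicity one plus the explicit local spectral identity, due to Beuzart-Plessis. Finally I would pin down the global constant $C_H/2^{\vka_\pi+\vka_\sig}$ by evaluating both sides on everywhere-unramified data: all but finitely many local integrals equal $1$ by the unramified computation, and comparing the Tamagawa measure with the product of the local measures yields the factor $\prod_{i=1}^nL(i,\eps_{E/\QQ}^i)$, absorbed into $\scrl$, together with the power of $2$ recording the sizes of the relevant component groups.

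The main obstacle is the analytic comparison of the two \emph{non-convergent} relative trace formulae: constructing correctly truncated kernels, proving that their geometric expansions match, and establishing the archimedean smooth transfer compatibly with the full spectral expansion---precisely the points where \cite{BLZZ} and \cite{BCZ} added substantial new input beyond \cite{Z}. A secondary difficulty is tracking all normalizations so that $C_H$ and the exact power $2^{\vka_\pi+\vka_\sig}$ come out as stated.
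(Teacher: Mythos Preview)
The paper does not prove this theorem at all: it is stated as a citation of results from \cite{Z,BLZZ,BCZ}, with the surrounding text noting only that the unitary analogue of the Ichino--Ikeda conjecture was formulated in \cite{NHarris} and proved in \cite{BLZZ} (stable case) and \cite{BCZ} (endoscopic case). There is therefore no proof in the paper to compare your proposal against.

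Your outline is a reasonable high-level sketch of the Jacquet--Rallis strategy underlying those references, and in that sense it is not wrong as a summary of the literature. But for the purposes of this paper the correct ``proof'' is simply a citation, and any attempt to supply an argument here goes beyond what the paper does or needs.
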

Roughly speaking, this theorem tells us that the product of global period integrals is a product of the Rakin-Selberg central value $\scrl(\pi\times\sig)$ and the local zeta integrals $I(\bfW_v^{},\bfW_v')$. Therefore, by Proposition \ref{prop:61} the task of obtaining the interpolation formula of $\Tht_{\bdsF}$ boils down to (i) choices of test vectors $\bfW_v$ and $\bfW_v'$ and (ii) the explicit evaluation of $I(\bfW_v^{},\bfW_v')$. The purpose of this section is to carry out the step (i) and give the explicit formula of the relevant local zeta integrals $I(\bfW_v^{},\bfW_v')$. The details of the step (ii) are left to 
\S\ref{sec:3}, \S\ref{sec:4} and Appendices. 


\subsection{Shimura's mass formula}\label{ssec:25}

We now assume that $T$ is positive definite. 
Fix a suitable maximal compact subgroup $\calk'=\prod_q\calk_q'$ of $H(\widehat{\QQ})$. 
The space $\scra(G)$ of automorphic forms on $G$ consists of left $G(\QQ)$-invariant, right $G(\RR)\calk$-finite functions on $G(\AA)$. 
We normalize the Haar measure $\d h_\infty$ by $\int_{H(\RR)}\,\d h_\infty=1$. 
Actually, it is more suitable for our later application to use the Haar measure $\d_{\calk'}h=\prod_v\d h_v$, which gives the maximal compact subgroup $H(\RR)\calk'$ volume 1. 
We similarly define the Haar measure $\d_\calk g$ and choose the constants $C_H$ and $C_G$ so that 
\begin{align*}
\d^\tau\! h&=C_H\d_{\calk'} h, & 
\d^\tau\! g&=C_G\d_\calk g. 
\end{align*}

We normalize the period integrals by 
\begin{align*}
(\vPh,\vPh')_{\calk\times\calk'}&=\int_{\bfG(\QQ)\bsl\bfG(\AA)}\vPh(g,h)\vPh'(g,h)\,\d_\calk g\,\d_{\calk'} h, \\ 
\scrp_{\calk'}(\vPh)&=\int_{H(\QQ)\bsl H(\AA)}\vPh((h,h))\,\d_{\calk'} h 
\end{align*}
for $\vPh,\vPh'\in\scra(\bfG)$. 
We rewrite the formula in Theorem \ref{thm:21} as 
\[\frac{\scrp_{\calk'}(\vPh)\scrp_{\calk'}(\vPh')}{(\vPh,\vPh')_{\calk\times\calk'}}
=C_G\frac{\scrl(\pi\times\sig)}{2^{\vka_\pi+\vka_\sig}}\prod_v\frac{I(\bfW_v^{},\bfW_v')}{\scrl(\pi_v\times\sig_v)\La\!\La \bfW_v^{},\bfW_v'\Ra\!\Ra_v^{}}. \]
Shimura \cite{Shimura2} has explicitly computed the mass  
\[\frac{2}{C_G}
=\sum_{x\in G(\QQ)\bsl G(\widehat{\QQ})/\calk}\frac{1}{\sharp(G(\QQ)\cap x\calk x^{-1})}
=2^{1-n}\prod_{i=1}^nL_\bff(1-i,\eps_{E/\QQ}^i)\prod_v\lam_v \]
(see Propositions 4.4 and 4.5 of \cite{GHY}), where $L_\bff(s, \eps_{E/\QQ}^i)$ is the {\it non-complete} Dirichlet $L$-series associated to the Dirichlet character $\eps_{E/\QQ}^i$. 
Observe that 
\beq
\gam_G:=\pi^{-\frac{n(n+1)}{2}}C_G\prod_{i=1}^nL_\bff(i,\eps_{\CC/\RR}^i)\in\sqrt{D_E}^a\QQ^\times, \label{tag:20}
\eeq
where $D_E$ is the absolute value of the discriminant of $E$, and $a=0$ if $n\equiv 0,3\pmod 4$ and $a=1$ if $n\equiv 1,2\pmod 4$. 

We hereafter suppose that 
\beq
\Hom_{H(\RR)}(\vPi_\infty^{},\CC)\neq\{0\}. \tag{$H_2$}
\eeq
Namely, there are $H(\RR)$-invariant vectors $\bfW_\infty^H\in\vPi_\infty^{}$ and $\bfW_\infty^{H\prime}\in\vPi_\infty^\vee$. 
Then 
\[I(\bfW_\infty^H,\bfW_\infty^{H'})=\La\!\La \bfW_\infty^H,\bfW_\infty^{H\prime}\Ra\!\Ra_\infty^{}. \] 
It follows from (\ref{tag:b0}) that 
\[c_\infty:=\frac{L(1,\pi_\infty,\mathrm{Ad})L(1,\sig_\infty,\mathrm{Ad})}{L\left(\frac{1}{2},\pi_\infty\times\sig_\infty\right)}\pi^{\frac{n(n+1)}{2}}\in\QQ^\times. \]

\begin{corollary}\label{cor:21}
Assume that $T$ is positive definite. 
Let $\vPi$ be an irreducible tempered cuspial automorphic representation of $\bfG(\AA)$ whose archimedean part $\vPi_\infty$ satisfies ($H_2$). 
Let $\vPh=\otimes_v'\bfW_v^{}\in\vPi$ and $\vPh'=\otimes_v'\bfW_v'\in\vPi^\vee$ be factorizable. 
If $\bfW_\infty^{}=\bfW_\infty^H$ and $\bfW_\infty'=\bfW_\infty^{H\prime}$, then 
\begin{align*}
\frac{\scrp_{\calk'}(\vPh)\scrp_{\calk'}(\vPh')}{(\vPh,\vPh')_{\calk\times\calk'}}
=&\frac{\gam_G}{2^{\vka_\pi+\vka_\sig}}c_\infty\frac{L\left(\frac{1}{2},\pi\times\sig\right)}{L(1,\pi,\mathrm{Ad})L(1,\sig,\mathrm{Ad})}\prod_l\frac{I(\bfW_l^{},\bfW_l')}{\scrl(\pi_l\times\sig_l)\La\!\La \bfW_l^{},\bfW_l'\Ra\!\Ra_l^{}}. 
\end{align*} 
\end{corollary}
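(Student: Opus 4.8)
The plan is to deduce the corollary directly from the form of Theorem~\ref{thm:21} already rewritten in \S\ref{ssec:25},
\[\frac{\scrp_{\calk'}(\vPh)\scrp_{\calk'}(\vPh')}{(\vPh,\vPh')_{\calk\times\calk'}}
= \frac{C_G}{2^{\vka_\pi+\vka_\sig}}\,\scrl(\pi\times\sig)\prod_v\frac{I(\bfW_v^{},\bfW_v')}{\scrl(\pi_v\times\sig_v)\La\!\La\bfW_v^{},\bfW_v'\Ra\!\Ra_v^{}},\]
by carrying out the archimedean local factor under $(H_2)$ and then repackaging the surviving transcendental constants. Nothing is needed beyond Theorem~\ref{thm:21}, the hypothesis $(H_2)$, and the definitions of $c_\infty$ and $\gam_G$; the content is pure bookkeeping of normalisation factors.

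First I would evaluate the $v=\infty$ term of the product. Taking $\bfW_\infty^{}=\bfW_\infty^H$ and $\bfW_\infty'=\bfW_\infty^{H\prime}$, the $H(\RR)$-invariance of these vectors makes the integrand of $I$ constant, and since $\d h_\infty$ gives $H(\RR)$ total mass $1$ one has $I(\bfW_\infty^H,\bfW_\infty^{H\prime})=\La\!\La\bfW_\infty^H,\bfW_\infty^{H\prime}\Ra\!\Ra_\infty^{}$ (already recorded above), so the $v=\infty$ factor collapses to $\scrl(\pi_\infty\times\sig_\infty)^{-1}$. Next I would unwind the ratio $\scrl(\pi\times\sig)/\scrl(\pi_\infty\times\sig_\infty)$: it leaves the factor $L(\tfrac12,\pi\times\sig)/(L(1,\pi,\Ad)L(1,\sig,\Ad))$ of complete $L$-functions intact, turns the residual archimedean adjoint contribution $L(1,\pi_\infty,\Ad)L(1,\sig_\infty,\Ad)/L(\tfrac12,\pi_\infty\times\sig_\infty)$ into $\pi^{-n(n+1)/2}c_\infty$ by the definition of $c_\infty$, and — since $E\otimes_\QQ\RR\simeq\CC$, so that the archimedean Euler factor of the complete $L(i,\eps_{E/\QQ}^i)$ is exactly the $L(i,\eps_{\CC/\RR}^i)$ appearing in $\scrl(\pi_\infty\times\sig_\infty)$ — converts $\prod_{i=1}^n L(i,\eps_{E/\QQ}^i)$ into $\prod_{i=1}^n L_\bff(i,\eps_{E/\QQ}^i)$. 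Multiplying back by $C_G/2^{\vka_\pi+\vka_\sig}$ and invoking the definition (\ref{tag:20}) of $\gam_G$ (together with the functional equations of the Dirichlet $L$-functions, should one wish to match the explicit form there) to recognise $C_G\,\pi^{-n(n+1)/2}\prod_{i=1}^n L_\bff(i,\eps_{E/\QQ}^i)$ as $\gam_G$, one obtains the asserted identity, the product over the finite primes $l$ being carried along unchanged.

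I do not expect a genuine obstacle here: the statement is a reorganisation of Theorem~\ref{thm:21}, and every transcendental quantity has been pre-packaged into $c_\infty$, $\gam_G$ and the notation $\scrl$ precisely so that the reorganisation is mechanical. If I had to single out the touchiest (though still routine) point, it is keeping the complete versus non-complete $L$-functions straight and using the gamma-factor conventions consistently — in particular that the archimedean normalisation in $\scrl(\pi_\infty\times\sig_\infty)$ agrees with the one entering $c_\infty$ via (\ref{tag:b0}), and that the value of $C_G$ supplied by Shimura's mass formula is consistent with (\ref{tag:20}).
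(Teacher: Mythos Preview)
Your proposal is correct and follows exactly the approach the paper intends: the corollary is stated immediately after the setup in \S\ref{ssec:25} (the rewritten Ichino--Ikeda formula, the identity $I(\bfW_\infty^H,\bfW_\infty^{H\prime})=\La\!\La\bfW_\infty^H,\bfW_\infty^{H\prime}\Ra\!\Ra_\infty$, and the definitions of $c_\infty$ and $\gam_G$), and the paper gives no separate proof because the deduction is the mechanical repackaging you describe. Your tracking of the archimedean factor of $\prod_i L(i,\eps_{E/\QQ}^i)$ against the $\prod_i L(i,\eps_{\CC/\RR}^i)$ in $\scrl(\pi_\infty\times\sig_\infty)$ to produce the finite-part product entering $\gam_G$ is exactly the intended bookkeeping.
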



\subsection{Rationality of central values}\label{ssec:211}

We write $\Aut(\CC)$ for the group of automorphisms of $\CC$. 
Let $\tau\in\Aut(\CC)$. 
For a complex representation $\vPi$ of a group $\calg$ on the space $V_\vPi$ of $\vPi$, let $^\tau\!\vPi$ be the representation of $\calg$ defined by $^\tau\!\vPi(g)=t^{}\circ\vPi(g)\circ t^{-1}$, where $t:V_\vPi\to V_\vPi$ is a $\tau$-linear isomorphism.  
Note that the isomorphism class of $^\tau\!\vPi$ is independent of the choice of $t$. 
Given $\vph\in\scra(G)$, we define $^\tau\!\vph\in\scra(G)$ by $^\tau\!\vph(g)=\tau(\vph(g))$ for $g\in G(\AA)$. 
The representation $^\tau\!\pi$ is realized in the space $\{^\tau\!\vph\;|\;\vph\in\pi\}$ by the multiplicity one for unitary groups. 
Similarly, $^\tau\!\sig$ is an automorphic representation of $H(\AA)$. 

\begin{proposition}\label{prop:22}
Let $\pi$ and $\sig$ be irreducible tempered automorphic representations of definite unitary groups $G(\AA)$ and $H(\AA)$.
If $\Hom_{H(\QQ_v)}(\pi_v\otimes\sig_v,\CC)\neq\{0\}$ for all $v$, then for every $\tau\in\Aut(\CC)$ 
\[\tau\left(\frac{L\bigl(\frac{1}{2},\pi\times\sig\bigl)}{\sqrt{D_E}^aL(1,\pi,\Ad)L(1,\sig,\Ad)}\right)=\frac{L\bigl(\frac{1}{2},{}^\tau\!\pi\times\!\,^\tau\!\sig\bigl)}{\sqrt{D_E}^aL(1,{}^\tau\!\pi,\Ad)L(1,{}^\tau\!\sig,\Ad)}, \]
where $a=0$ if $n\equiv 0,3\pmod 4$ and $a=1$ if $n\equiv 1,2\pmod 4$. 
\end{proposition}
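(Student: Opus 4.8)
The plan is to play the Ichino--Ikeda formula, in the shape of Corollary~\ref{cor:21}, off against itself for the two pairs $(\pi,\sig)$ and $({}^\tau\!\pi,{}^\tau\!\sig)$, so that the transcendental content cancels and only the Galois-equivariance of manifestly algebraic quantities remains to be verified. First I would choose factorizable test vectors $\vPh=\otimes_v'\bfW_v^{}\in\vPi$ and $\vPh'=\otimes_v'\bfW_v'\in\vPi^\vee$ as follows: at $\infty$ take $\bfW_\infty^{}=\bfW_\infty^H$, $\bfW_\infty'=\bfW_\infty^{H\prime}$ (legitimate since the $v=\infty$ case of the hypothesis is precisely ($H_2$)); at almost all finite places take spherical vectors; and at the finitely many remaining $l$ take vectors in fixed $\overline{\QQ}$-rational models of $\pi_l\otimes\sig_l$ and of its contragredient, normalized so that $\La\!\La \bfW_l^{},\bfW_l'\Ra\!\Ra_l^{}\in\overline{\QQ}^\times$ and $I(\bfW_l^{},\bfW_l')\neq 0$ --- possible exactly because $\Hom_{H(\QQ_l)}(\pi_l\otimes\sig_l,\CC)\neq\{0\}$. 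For any $\tau\in\Aut(\CC)$ the vectors $({}^\tau\!\bfW_v^{})$, $({}^\tau\!\bfW_v')$ are then valid test vectors for ${}^\tau\!\pi$, ${}^\tau\!\sig$, the forms transport as $\vPh\mapsto{}^\tau\!\vPh$, $\vPh'\mapsto{}^\tau\!\vPh'$, temperedness is preserved, and the integers $\vka_\pi,\vka_\sig$ (orders of the component groups of the $L$-parameters) are unchanged under $\tau$.

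Second, the geometric side of Corollary~\ref{cor:21} is algebraic and $\tau$-equivariant: since $G$ and $H$ are definite, $G(\QQ)\bsl G(\AA)$ and $H(\QQ)\bsl H(\AA)$ are compact, and each of $\scrp_{\calk'}(\vPh)$, $\scrp_{\calk'}(\vPh')$, $(\vPh,\vPh')_{\calk\times\calk'}$ is a finite sum, over double cosets, of a value of the relevant form weighted by $1/\sharp\Gam_x$ and by the rational volume factors coming from our normalization of $\d_{\calk'}h$ and $\d_\calk g$. Hence all three lie in $\overline{\QQ}$ and $\tau(\scrp_{\calk'}(\vPh))=\scrp_{\calk'}({}^\tau\!\vPh)$, and likewise for the other two; in particular $\tau$ of the ratio $\scrp_{\calk'}(\vPh)\scrp_{\calk'}(\vPh')\big/(\vPh,\vPh')_{\calk\times\calk'}$ formed from $(\pi,\sig)$ equals the same ratio formed from $({}^\tau\!\pi,{}^\tau\!\sig)$.

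Third, I assemble. The number $c_\infty$ lies in $\QQ^\times$, and by \eqref{tag:20} one has $\gam_G\in\sqrt{D_E}^a\QQ^\times$, so $\gam_G/\sqrt{D_E}^a\in\QQ^\times$; multiplying Corollary~\ref{cor:21} through by $\sqrt{D_E}^{-a}$ exhibits $L\bigl(\tfrac12,\pi\times\sig\bigr)\big/\bigl(\sqrt{D_E}^aL(1,\pi,\Ad)L(1,\sig,\Ad)\bigr)$ as the product of a $\tau$-fixed element of $\QQ^\times$, the geometric ratio of the previous paragraph, and $\prod_l\bigl(I(\bfW_l^{},\bfW_l')/\scrl(\pi_l\times\sig_l)\La\!\La \bfW_l^{},\bfW_l'\Ra\!\Ra_l^{}\bigr)^{-1}$. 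Applying $\tau$ and then invoking Corollary~\ref{cor:21} again, now for $({}^\tau\!\pi,{}^\tau\!\sig)$ with the test vectors $({}^\tau\!\bfW_v^{}),({}^\tau\!\bfW_v')$, the assertion reduces to: for each $l$ the normalized local integral $I(\bfW_l^{},\bfW_l')/\scrl(\pi_l\times\sig_l)\La\!\La \bfW_l^{},\bfW_l'\Ra\!\Ra_l^{}$ is algebraic and is carried by $\tau$ to the corresponding quantity for $({}^\tau\!\pi_l,{}^\tau\!\sig_l)$. At almost all $l$ it equals $1$ by the unramified computation. At the finitely many remaining split places the splitting lemma (Lemma~\ref{lem:a1}) identifies it with a value at $s=\tfrac12$ of (a product of) local Rankin--Selberg zeta integrals of $\overline{\QQ}$-rational Whittaker functions; such a zeta integral being, up to the local $L$-factor, a Laurent polynomial in $q^s$ with $\overline{\QQ}$-coefficients, the value is algebraic, and it is $\tau$-equivariant because $\tau$ sends $\overline{\QQ}$-rational Whittaker data for $\pi_l$ to that for ${}^\tau\!\pi_l$; at the places ramified in $E$ one uses the analogous results of \cite{GL21AJM,Chen23AIM}. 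Finally, the vanishing case $L\bigl(\tfrac12,\pi\times\sig\bigr)=0$ needs no extra argument: Corollary~\ref{cor:21} with our test vectors then forces $\scrp_{\calk'}$ to vanish on $\vPi$, hence (applying $\tau$) on ${}^\tau\!\vPi$, and a second use of Corollary~\ref{cor:21} --- valid because the local $\Hom$ spaces for ${}^\tau\!\pi_l\otimes{}^\tau\!\sig_l$ remain non-zero --- gives $L\bigl(\tfrac12,{}^\tau\!\pi\times{}^\tau\!\sig\bigr)=0$.

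The \emph{main obstacle} is the $\tau$-equivariance of the normalized local zeta integrals at the ramified places, which is where the $\overline{\QQ}$-rational test data must be pinned down compatibly with the global rational structure and where one genuinely leans on the local identities and computations. A secondary, purely bookkeeping, point is the exact exponent in $\sqrt{D_E}^a$: it is dictated by the shape of $\gam_G$ in \eqref{tag:20} (equivalently, by the archimedean $\Gam$-factor normalization), and one must check that the potential sign $\tau(\sqrt{D_E}^a)/\sqrt{D_E}^a$ is absorbed once and only once, on the side of $\gam_G$, and is not reintroduced by the period ratio or the local integrals.
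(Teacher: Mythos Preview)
Your overall strategy---apply Corollary~\ref{cor:21}, check that the period side and each local factor are $\tau$-equivariant, and absorb the transcendental part into $\gam_G/\sqrt{D_E}^a\in\QQ^\times$ and $c_\infty\in\QQ^\times$---is precisely the paper's. Where you diverge, and overcomplicate, is the local step. The paper does \emph{not} split into split versus non-split places, and does not invoke the splitting lemma or Whittaker models at all. It works directly with the matrix coefficient
\[
\phi_l(g)=\La\!\La\vPi_l(g)\bfW_l^{},\bfW_l'\Ra\!\Ra_l
\]
attached to $\overline{\QQ}$-rational global vectors $\vPh,\vPh'$, observes that ${}^\tau\!\phi_l$ is then the corresponding matrix coefficient of ${}^\tau\!\vPi_l$, and argues (citing \cite[Lemma~2.6]{YL}) that for an absolutely convergent integral of a locally constant function one has $\tau(I(\phi_l))=I({}^\tau\!\phi_l)$, uniformly for every finite $l$. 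Likewise $\tau(\scrl(\pi_l\times\sig_l))=\scrl({}^\tau\!\pi_l\times{}^\tau\!\sig_l)$ is \cite[Lemma~2.4]{YL}. This single argument handles all finite places at once.

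Your proposal at places ramified in $E$ has a genuine gap: the references you cite there are global algebraicity results for the ratio $L\bigl(\tfrac12,\pi\times\sig\bigr)/L(1,\pi,\Ad)L(1,\sig,\Ad)$ itself, not statements about $\tau$-equivariance of the local Ichino--Ikeda integral, so they do not supply what you need. The fix is simply to abandon the Whittaker/splitting-lemma detour and use the matrix-coefficient argument above, which is insensitive to whether $l$ is split, inert, or ramified. Your treatment of the vanishing case and of the $\sqrt{D_E}^a$ bookkeeping is fine, though the paper handles both implicitly: the formula in Corollary~\ref{cor:21} is an identity, so applying $\tau$ to it and comparing with the same identity for $({}^\tau\!\pi,{}^\tau\!\sig)$ covers the zero case automatically, and the sign is exactly $\gam_G/\tau(\gam_G)$ via \eqref{tag:20}.
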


\begin{remark}\label{rem:20}
 The relation (\ref{tag:11}) is equivalent to the existence of a non-zero $H(\RR)$-invariant functional on $\pi_\infty\otimes\sig_\infty$. 
 \end{remark}

\begin{proof}
It is evident that   
\begin{align*}
\tau((\vPh,\vPh')_{\calk\times\calk'})&=(^\tau\!\vPh,{}^\tau\!\vPh')_{\calk\times\calk'}, &
\tau(\scrp_{\calk'}(\vPh))&=\scrp_{\calk'}(^\tau\!\vPh) 
\end{align*}
(cf. (\ref{tag:51})). 
We take $\ulk$ and $\ulk'$ so that $\pi_\infty\simeq\call_{\ulk^\vee}(\CC)$ and $\sig_\infty\simeq\call_{\ulk^{\prime\vee}}(\CC)$. 
It is easy to see that $\pi$ is spanned by $f_{\bfv}$ with $\bfv\in\call_{\ulk^\vee}(\QQ)$ and $\overline{\QQ}$-rational $\call_{\ulk}(\CC)$-valued modular forms $f$ defined in Definition \ref{def:52} (see \cite{Gross}). 
One may therefore assume that modular forms $\vPh$ and $\vPh'$ are $\overline{\QQ}$-rational, namely, they have values in $\overline{\QQ}$.  
Then $(\vPh,\vPh')_{\calk\times\calk'}$, $\scrp_{\calk'}(\vPh)$ and $\scrp_{\calk'}(\vPh')$ are algebraic numbers. 

Given a matrix coefficient $\phi_l^{}$ of $\vPi_l^{}$, we define the matrix coefficient $^\tau\!\phi_l$ of $^\tau\!\vPi_l$ by $^\tau\!\phi_l(g_l)=\tau(\phi_l(g_l))$ for $g_l\in G(\QQ_l)$. 
Put  
\[\phi_l^{}(g)=\La\!\La\vPi_l^{}(g_l^{})\bfW_l^{},\bfW_l'\Ra\!\Ra_l^{}.  \]
The assumption allows us to choose $\vPh=\otimes_v^{}\bfW_v^{}$ and $\vPh'=\otimes_v^{}\bfW_v'$ so that 
\[I(\phi_l):=I(\bfW_l^{},\bfW_l')\neq 0. \]
If we write $(\vPi(g)\vPh,\vPh')_{\calk\times\calk'}=\La\!\La \bfW_\infty^H,\bfW_\infty^{H\prime}\Ra\!\Ra_\infty^{}\prod_l\phi_l(g_l)$, then $(^\tau\!\vPi(g){}^\tau\!\vPh,{}^\tau\!\vPh')_{\calk\times\calk'}=\La\!\La \bfW_\infty^H,\bfW_\infty^{H\prime}\Ra\!\Ra_\infty^{}\prod_l{}^\tau\!\phi_l(g_l)$ for $g=(g_l)\in \bfG(\widehat{\QQ})$. 
One can easily show that $\tau(I(\phi_l^{}))=I(^\tau\!\phi_l)$ (cf. the proofs of \cite[Theorem A]{Grobner} and \cite[ Lemma 2.6]{YL}). 
Since $\tau(\scrl(\pi_l,\sig_l))=\scrl(^\tau\!\pi_l,{}^\tau\!\sig_l)$ by \cite[Lemma 2.4]{YL}, we get  
\begin{align*}
\tau\left(\frac{L\left(\frac{1}{2},\pi\times\sig\right)}{L(1,\pi,\mathrm{Ad})L(1,\sig,\mathrm{Ad})}\right)=&\frac{\scrp_{\calk'}(^\tau\!\vPh)\scrp_{\calk'}(^\tau\!\vPh')}{(^\tau\!\vPh,{}^\tau\!\vPh')_{\calk\times\calk'}}c_\infty^{-1}\frac{2^{\vka_\pi+\vka_\sig}}{\tau(\gam_G)}\prod_l\frac{\scrl(^\tau\!\pi_l,{}^\tau\!\sig_l){}^\tau\!\vph_{\vPi_l}(1)}{I(^\tau\!\phi_l)}\\
=&\frac{\gam_GL\bigl(\frac{1}{2},{}^\tau\!\pi\times\!\,^\tau\!\sig\bigl)}{\tau(\gam_G)L(1,{}^\tau\!\pi,\Ad)L(1,{}^\tau\!\sig,\Ad)}, 
\end{align*} 
applying $\tau$ to the formula in Corollary \ref{cor:21}. 
The proof is complete by (\ref{tag:20}). 
\end{proof}


\subsection{An outer involution}\label{ssec:22}

To relate the central value to the square of the period, we introduce an involution of automorphic forms on unitary groups. 
Since $T$ has entries in $\QQ$, we can define an outer automorphism of $G$ by 
\[^\tht:g\mapsto g_{}^\tht=T^{-1}\trs g^{-1}T. \]
Take $\xi_\frkN=(\xi_{\frkN,v})\in G(\widehat{\QQ})$ so that  
\[(\xi_\frkN^{-1}\calk_0(\frkN)\xi_\frkN^{})^\tht=\calk_0(\frkN). \]
For a function on $\vph$ on $G(\AA)$, we define a function $\vph^\tht:G(\AA)\to\CC$ by $\vph^\tht(g)=\vph(g^\tht)$.

Let $\pi\simeq\otimes_v'\pi_v^{}$ be an irreducible automorphic representation of $G(\AA)$ such that $\pi_q$ admits a $\calk_q$-invariant vector for every non-split prime $q$. 
For each split prime $l$ we denote the conductor of $\pi_l$ by $c(\pi_l)$. 
Let $N_\pi=\prod_ll^{c(\pi_l)}$ be the conductor of $\pi$ and $N=\prod_{l\neq p}l^{c(\pi_l)}$ the tame conductor of $\pi$.  
Fix ideals $\frkN_\pi$ and $\frkN$ of $\frkr$ such that $\frkr/\frkN_\pi\simeq\ZZ/N_\pi\ZZ$ and $\frkr/\frkN\simeq\ZZ/N\ZZ$. 

Let $\pi^\vee\simeq\otimes_v'\pi_v^\vee$ denote the contragredient representation of $\pi$. 
We define representations $\pi^\tht$ of $G(\AA)$ and $\pi_v^\tht$ of $G(\QQ_v)$ as the twists $\pi^\tht(g)=\pi(g^\tht)$ and $\pi_v^\tht(g_v^{})=\pi(g_v^\tht)$ by $\tht$ for $g\in G(\AA)$ and $g_v\in G(\QQ_v)$. 
It is well-known that $\pi_v^\vee\simeq\pi_v^\tht$ (see \cite{MVW}). 
Since 
\[\vph^\tht(gh)=\vph((gh)^\tht)=(\pi^\tht(h)\vph)^\tht(g), \]
and $\pi^\vee\simeq \pi^\tht$, we have $\{\overline{\vph}\;|\;\vph\in\pi\}=\{\vph^\tht\;|\;\vph\in\pi\}$ by the global multiplicity one for unitary groups (\cite[Theorem 13.3.1]{Rog90}), where the automorphic form $\overline{\vph}$ is defined by $\overline{\vph}(g)=\overline{\vph(g)}$. 
Let $\vph_\pi\in\pi$ be a highest weight essential vector with respect to $\calk_0(\frkN)$ (see Definition \ref{def:41}). 
Then $\pi^\vee(\xi_{\frkN_\pi})\vph_\pi^\tht\in\pi^\vee$ is a lowest weight essential vector with respect to $\calk_0(\frkN_\pi)$. 

Assume that $H^\tht=H$. 
Similarly we associate $\vph^\tht\in\scra(H)$, $\vPh^\tht\in\scra(\bfG)$ and the automorphic representation $\sig^\tht\simeq\otimes_v'\sig_v^\tht$ to automorphic forms $\vph\in\scra(H)$, $\vPh\in\scra(\bfG)$ and an automorphic representation $\sig\simeq\otimes_v'\sig_v$ of $H(\AA)$. 


\subsection{A factorization of the dual representation}\label{ssec:23}
 
Define the longest Weyl element $w_n\in\GL_n(F)$ by 
\begin{align*}
w_1&=1, & 
w_n&=\begin{bmatrix} & 1 \\ w_{n-1} & \end{bmatrix} &
(n&\geq 2). 
\end{align*}
Let $l$ be a split rational prime. 
We view $\pi_l$ and $\sig_l$ as representations of $\GL_3(\QQ_l)$ and $\GL_2(\QQ_l)$ via $\imath_\frkl$. 
Let $W_{\pi_l^{}}\in\pi_l$ be the normalized essential Whittaker vector with respect to $\addchar_l$, and $W_{\pi_l^\vee}\in\pi_l^\vee$ the normalized essential Whittaker vector with respect to $\addchar^{-1}_l$. 
For $W\in\pi_l^{}$ we define $W^\tht\in\pi_l^\vee$ by 
\[W^\tht(g)=\pi_l^\vee(T^{-1})\widetilde{W}(g)=W(w_n\trs g^{-1}T), \]
where $\widetilde{W}(g)=W(w_n\trs g^{-1})$.  
It is important to note that 
\[\pi_l^\vee(h) W^\tht(g)=W^\tht(gh)=W((gh)^\tht)=(\pi_l^\tht(h)W)^\tht(g). \]
Proposition \ref{prop:41} below says that 
\beq
W_{\pi_l^\vee}=\vep(1/2,\pi_l,\addchar_l)^{-n}\pi_l^\vee(\xi_{\frkN_\pi,l}) W_{\pi_l}^\tht \label{tag:21}
\eeq
for a suitable choice of $\xi_{\frkN_\pi,l}$. 

When $q$ remains a prime in $\frkr$, we fix $\calk_q$-invariant vectors $W_{\pi_q}\in\pi_q$ and $W_{\pi_q^\vee}\in\pi_q^\vee$. 
Fix a highest weight vector $W_{\pi_\infty}\in\pi_\infty$ and a lowest weight vector $W_{\pi_\infty^\vee}\in\pi_\infty^\vee$. 
Let $^\tht:\pi_v^\tht\simeq\pi_v^\vee$ be the $G(\QQ_v)$-equivariant isomorphism determined by $W_{\pi_v}^\tht=W_{\pi_v^\vee}^{}$. 

To apply Corollary \ref{cor:21} to $\vPh'=\vPh^\tht$, we need explicate the factorization of $\vPh^\tht$. 
Fix isomorphisms $\pi\simeq\otimes_v'\pi_v^{}$ and $\pi^\vee\simeq\otimes_v'\pi_v^\vee$ so that 
\begin{align*}
\vph_\pi&=\otimes_v'W_{\pi_v}^{}, &
\vph_\pi^\tht&=\otimes_v'W_{\pi_v}^\tht. 
\end{align*}
Using this factorization $\pi^\vee\simeq\otimes_v'\pi_v^\vee$, we define a cusp form $\vph_{\pi^\vee}\in\pi^\vee$ by 
\[\vph_{\pi^\vee}=\otimes_v'W_{\pi_v^\vee}. \] 

\begin{lemma}\label{lem:21}
If $\vph=\otimes_v'W_v^{}\in\pi$ is factorizable and if $W_v=W_{\pi_v}$ for all non-split places $v$, then $\vph^\tht=\otimes_v'W_v^\tht$. 
\end{lemma}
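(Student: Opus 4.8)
The plan is to recognize both the global operation $\vph\mapsto\vph^\tht$ and the place-by-place twist $\otimes_v'W_v\mapsto\otimes_v'W_v^\tht$ as $G(\AA)$-equivariant isomorphisms between the representations $\pi^\tht$ and $\pi^\vee$, and then to pin down the scalar relating them by evaluating on the distinguished vector $\vph_\pi$.

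First I would record that the identity $\vph^\tht(gh)=(\pi^\tht(h)\vph)^\tht(g)$ established above shows $\vph\mapsto\vph^\tht$ to be an injective $G(\AA)$-map from $\pi$, endowed with the twisted action $\pi^\tht$, into the space of automorphic forms on $G$; by the global multiplicity one theorem for unitary groups (\cite[Theorem 13.3.1]{Rog90}) its image is precisely the realization of $\pi^\vee$. Transporting this map through the fixed factorizations $\pi\simeq\otimes_v'\pi_v$ (so that $\pi^\tht\simeq\otimes_v'\pi_v^\tht$ with $\tht$-twisted local actions) and $\pi^\vee\simeq\otimes_v'\pi_v^\vee$, one obtains a $G(\AA)$-equivariant isomorphism $\Phi:\otimes_v'\pi_v^\tht\to\otimes_v'\pi_v^\vee$.

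Next, the local isomorphisms $^\tht:\pi_v^\tht\to\pi_v^\vee$ fixed above — the Whittaker-model map at split $v$, and the one normalized by $W_{\pi_v}^\tht=W_{\pi_v^\vee}$ at the non-split $v$ and at $\infty$ — carry the distinguished vector $W_{\pi_v}$ to $W_{\pi_v^\vee}$ at almost every $v$ (by definition at non-split places, and by $(\ref{tag:21})$ at the unramified split places, where the relevant $\vep$-factor is $1$). Hence their restricted tensor product $\Psi:=\otimes_v'(^\tht):\otimes_v'\pi_v^\tht\to\otimes_v'\pi_v^\vee$ is well defined and is again $G(\AA)$-equivariant. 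Since $\pi^\vee$, and therefore $\otimes_v'\pi_v^\vee$, is irreducible, Schur's lemma yields $\Phi=c\,\Psi$ for some $c\in\CC^\times$. Evaluating both sides at $\otimes_v'W_{\pi_v}$: the left side is the image of $\vph_\pi^\tht$, which is $\otimes_v'W_{\pi_v}^\tht$ by the chosen normalization of $\pi^\vee\simeq\otimes_v'\pi_v^\vee$, while the right side is $\otimes_v'W_{\pi_v}^\tht$ by the definition of $\Psi$; therefore $c=1$, that is $\Phi=\Psi$.

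It then follows at once that any factorizable $\vph=\otimes_v'W_v\in\pi$ satisfies $\vph^\tht=\Phi(\otimes_v'W_v)=\Psi(\otimes_v'W_v)=\otimes_v'W_v^\tht$; in particular this holds when $W_v=W_{\pi_v}$ at every non-split $v$, which is the assertion. The one point that must be handled carefully is the bookkeeping of the outer twist — that the source of $\vph\mapsto\vph^\tht$ is $\pi^\tht$ rather than $\pi$, so that $^\tht$ has to be read as the map $\pi_v^\tht\to\pi_v^\vee$ — together with the verification that $\Psi$ is a genuine map of restricted tensor products; once these are kept straight, the argument reduces to a routine application of Schur's lemma, so I do not expect any real obstacle.
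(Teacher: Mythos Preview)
Your proof is correct. The paper takes a more hands-on route: it writes $\vph=\bigl(\otimes_{v\in\frkS_\vph}\calu_v\bigr)\vph_\pi$ for the finite set $\frkS_\vph=\{v\mid W_v\neq W_{\pi_v}\}$, where each $\calu_v=\sum_ic_{v,i}\pi_v(g_{v,i}^\tht)$ is a finite linear combination of translates (available by admissibility), and then computes directly that $\vph^\tht=\bigl(\otimes_{v\in\frkS_\vph}\calv_v\bigr)\vph_\pi^\tht$ with $\calv_v=\sum_ic_{v,i}\pi_v^\vee(g_{v,i})$, invoking only the local equivariance $(\pi_v^\tht(g)W)^\tht=\pi_v^\vee(g)W^\tht$ and the normalization $\vph_\pi^\tht=\otimes_v'W_{\pi_v}^\tht$. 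Your argument abstracts the same two ingredients into a Schur's-lemma comparison of the global and local intertwiners $\Phi$ and $\Psi$, which is conceptually cleaner and makes explicit that the hypothesis on non-split places plays no role (the paper's computation does not use it either). The paper's version, by contrast, sidesteps any bookkeeping about restricted tensor products and distinguished vectors: since everything is expressed through finite sums of translates of a single vector, there is no well-definedness of $\Psi$ to verify.
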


\begin{proof}
Define a finite set $\frkS_\vph=\{v\;|\;W_v\neq W_{\pi_v}\}$. 
For $v\in\frkS_\vph$ there are $c_{v,i}\in\CC$ and $g_{v,i}\in G(\QQ_v)$ such that 
\begin{align*}
W_v&=\calu_v^{}W_{\pi_v}, & 
\calu_v^{}&=\sum_ic_{v,i}^{}\pi_v^{}(g_{v,i}^\tht). 
\end{align*}
We have $\vph=\otimes_{v\in\frkS_\vph}\calu_v^{}\cdot \vph_\pi$. 
Put $\calv_v^{}=\sum_ic_{v,i}^{}\pi_v^\vee(g_{v,i}^{})$. 
Then 
\[\vph^\tht=\otimes_{v\in\frkS_\vph}\calv_v^{}\cdot\vph_\pi^\tht
=(\otimes_{v\in\frkS_\vph}\calv_v^{}W_{\pi_v}^\tht)\otimes(\otimes_{v\notin\frkS_\vph}'W_{\pi_v}^\tht). \]
Lemma \ref{lem:21} follows from the observation that $\calv_v^{}W_{\pi_v}^\tht=(\calu_v^{}W_{\pi_v})^\tht=W_v^\tht$. 
\end{proof}

Let $\sig\simeq\otimes_v'\sig_v^{}$ be an irreducible automorphic representation of $H(\AA)$ such that $\sig_q$ admits $\calk_q'$-invariant vector for every non-split prime $q$. 
Put $N'=\prod_{l\neq p}l^{c(\sig_l)}$. 
We take an ideal $\frkN'$ of $\frkr$ such that $\frkr/\frkN'\simeq\ZZ/N'\ZZ$. 
Define an open compact subgroup $\calk'_0(p^n\frkN')$ of $H(\widehat{\QQ})$ and take $\xi_{\frkN'}'\in H(\AA)$ in a similar way. 
Let $\vph_\sig\in\sig$ be a highest weight essential vector with respect to $\calk_0'(\frkN')$. 
Fix the factorizations $\sig\simeq\otimes_v'\sig_v^{}$ and $\sig^\vee\simeq\otimes_v'\sig_v^\vee$ such that 
\begin{align*}
\vph_\sig^{}&=\otimes_v'W_{\sig_v}^{}, &
\vph_\sig^\tht&=\otimes_v'W_{\sig_v}^\tht. 
\end{align*}
Using the factorization, we define a cusp form $\vph_{\sig^\vee}\in\sig^\vee$ by $\vph_{\sig^\vee}=\otimes_v'W_{\sig_v^\vee}$. 

For each place $v$ we put 
\begin{align*}
\bfW_{\vPi_v}&=W_{\pi_v}\otimes W_{\sig_v}\in\vPi_v, & 
\bfW_{\vPi_v^\vee}&=W_{\pi_v^\vee}\otimes W_{\sig_v^\vee}\in\vPi_v^\vee  
\end{align*}
and define the $G(\QQ_v)$-equivariant isomorphism $^\tht:\vPi_v^\tht\simeq\vPi_v^\vee$ by 
\[(W\otimes W')^\tht=W^\tht\otimes W^{\prime\tht}. \]

Put 
\begin{align*}
\vPh_\vPi&=\vph_\pi\otimes\vph_\sig, &
\vPh_{\vPi^\vee}&=\vph_{\pi^\vee}\otimes\vph_{\sig^\vee}, &  
I_\infty&=c_\infty\frac{I(\bfW_\infty^H,\bfW_\infty^{H\tht})}{\La\!\La \bfW_{\vPi_\infty},\bfW_{\vPi_\infty^\vee}\Ra\!\Ra_\infty^{}}. 
\end{align*} 
Since $\scrp(\vPh^\tht)=\scrp(\vPh)$, one can deduce the following formula from Lemma \ref{lem:21} and Corollary \ref{cor:21}. 

\begin{corollary}\label{cor:22}
Let $\vPi$ be an irreducible tempered cuspial automorphic representation of $\bfG(\AA)$ such that $\vPi_q$ is unramified for all non-split rational primes $q$ and whose archimedean part $\vPi_\infty$ satisfies ($H_2$). 
If $\vPh=\otimes_v'\bfW_v^{}\in\vPi$ is factorizable and $\bfW_\infty^{}=\bfW_\infty^H$, then 
\[\frac{\scrp_{\calk'}(\vPh)^2}{(\vPh_\vPi^{},\vPh_{\vPi^\vee}^{})_{\calk\times\calk'}}
=\frac{\gam_G}{2^{\vka_\pi+\vka_\sig}}I_\infty\frac{L\left(\frac{1}{2},\pi\times\sig\right)}{L(1,\pi,\mathrm{Ad})L(1,\sig,\mathrm{Ad})}\prod_l\frac{I(\bfW_l^{},\bfW_l^\tht)}{\scrl(\pi_l\times\sig_l)\La\!\La \bfW_{\vPi_l},\bfW_{\vPi_l^\vee}\Ra\!\Ra_l^{}}. \]
\end{corollary}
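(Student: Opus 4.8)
The plan is to deduce Corollary~\ref{cor:22} from Corollary~\ref{cor:21} by specializing to $\vPh'=\vPh^\tht$, and then rewriting the resulting identity in terms of the fixed test vectors $\vPh_\vPi$ and $\vPh_{\vPi^\vee}$.

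First I would record the $\bfG$-analogue of Lemma~\ref{lem:21}: for a factorizable $\vPh=\otimes_v'\bfW_v^{}\in\vPi$ one has $\vPh^\tht=\otimes_v'\bfW_v^\tht$, where $\vPh^\tht$ is regarded as an element of $\vPi^\vee$ through the local isomorphisms $\vPi_v^\tht\simeq\vPi_v^\vee$ given by $(W\otimes W')^\tht=W^\tht\otimes W^{\prime\tht}$. The argument copies that of Lemma~\ref{lem:21} verbatim: $\vPh$ differs from $\vPh_\vPi=\otimes_v'\bfW_{\vPi_v}$ at only finitely many places; at each such place $\bfW_v=\calu_v^{}\bfW_{\vPi_v}$ for a finite sum $\calu_v^{}=\sum_ic_{v,i}^{}\vPi_v^{}(g_{v,i}^\tht)$ of $\tht$-twisted translates (using irreducibility of $\vPi_v^\tht$); then $\vPh^\tht=\otimes_v'\calv_v^{}\bfW_{\vPi_v}^\tht$ with $\calv_v^{}=\sum_ic_{v,i}^{}\vPi_v^\vee(g_{v,i}^{})$ and $\calv_v^{}\bfW_{\vPi_v}^\tht=(\calu_v^{}\bfW_{\vPi_v})^\tht=\bfW_v^\tht$; and the base case $\vPh_\vPi^\tht=\otimes_v'\bfW_{\vPi_v}^\tht=\vPh_{\vPi^\vee}$ is immediate from $W_{\pi_v}^\tht=W_{\pi_v^\vee}$ and $W_{\sig_v}^\tht=W_{\sig_v^\vee}$. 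I would also note that $\bfW_\infty^{H\tht}$ lies in $\vPi_\infty^\vee$ and is $H(\RR)$-invariant (because $H^\tht=H$), so it is a legitimate choice of the archimedean vector $\bfW_\infty^{H\prime}$ of Corollary~\ref{cor:21}; any rescaling of it leaves that formula unchanged.

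Next I would apply Corollary~\ref{cor:21} to $\vPh$ and $\vPh'=\vPh^\tht$, with $\bfW_\infty'=\bfW_\infty^{H\tht}$ and $\bfW_l'=\bfW_l^\tht$ at the finite places. Using $\scrp_{\calk'}(\vPh^\tht)=\scrp_{\calk'}(\vPh)$ — valid because $\vPh^\tht((h,h))=\vPh((h,h)^\tht)$ and $h\mapsto h^\tht$ is a measure-preserving automorphism of $H(\QQ)\bsl H(\AA)$, as recalled just above — this gives
\[\frac{\scrp_{\calk'}(\vPh)^2}{(\vPh,\vPh^\tht)_{\calk\times\calk'}}=\frac{\gam_G}{2^{\vka_\pi+\vka_\sig}}c_\infty\frac{L\left(\frac{1}{2},\pi\times\sig\right)}{L(1,\pi,\mathrm{Ad})L(1,\sig,\mathrm{Ad})}\prod_l\frac{I(\bfW_l^{},\bfW_l^\tht)}{\scrl(\pi_l\times\sig_l)\La\!\La\bfW_l^{},\bfW_l^\tht\Ra\!\Ra_l^{}}.\]

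Finally, to replace $(\vPh,\vPh^\tht)_{\calk\times\calk'}$ by $(\vPh_\vPi,\vPh_{\vPi^\vee})_{\calk\times\calk'}$ I would invoke uniqueness up to scalars of an invariant bilinear form: the Petersson pairing restricted to $\vPi\times\vPi^\vee$ is proportional to $\prod_v\La\!\La\,\cdot\,,\,\cdot\,\Ra\!\Ra_v^{}$ with a constant independent of the chosen vectors, whence
\[\frac{(\vPh,\vPh^\tht)_{\calk\times\calk'}}{(\vPh_\vPi,\vPh_{\vPi^\vee})_{\calk\times\calk'}}=\prod_v\frac{\La\!\La\bfW_v^{},\bfW_v^\tht\Ra\!\Ra_v^{}}{\La\!\La\bfW_{\vPi_v}^{},\bfW_{\vPi_v^\vee}^{}\Ra\!\Ra_v^{}}\]
(using $\vPh^\tht=\otimes_v'\bfW_v^\tht$ and $\vPh_{\vPi^\vee}=\otimes_v'\bfW_{\vPi_v^\vee}$). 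Multiplying the displayed identity by this ratio, the finite-place factors $\La\!\La\bfW_l^{},\bfW_l^\tht\Ra\!\Ra_l^{}$ cancel against those in the Euler product, leaving $\La\!\La\bfW_{\vPi_l}^{},\bfW_{\vPi_l^\vee}^{}\Ra\!\Ra_l^{}$ in the denominator, while at $\infty$ one has $c_\infty\,\La\!\La\bfW_\infty^H,\bfW_\infty^{H\tht}\Ra\!\Ra_\infty^{}/\La\!\La\bfW_{\vPi_\infty}^{},\bfW_{\vPi_\infty^\vee}^{}\Ra\!\Ra_\infty^{}=I_\infty$ by the definition of $I_\infty$ together with $I(\bfW_\infty^H,\bfW_\infty^{H\tht})=\La\!\La\bfW_\infty^H,\bfW_\infty^{H\tht}\Ra\!\Ra_\infty^{}$ (the $H(\RR)$-integral collapses since both vectors are $H(\RR)$-invariant and $\d h_\infty$ gives $H(\RR)$ volume $1$). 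This is precisely the asserted formula. The only steps that require genuine care are the $\bfG$-version of Lemma~\ref{lem:21} and the vector-independence of the proportionality constant for the Petersson pairing on $\vPi\times\vPi^\vee$; everything else is bookkeeping of the local and global normalizations.
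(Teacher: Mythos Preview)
Your proposal is correct and follows essentially the same route as the paper, which simply says the formula follows from Lemma~\ref{lem:21} and Corollary~\ref{cor:21} together with $\scrp(\vPh^\tht)=\scrp(\vPh)$. You have spelled out the implicit bookkeeping---the $\bfG$-analogue of Lemma~\ref{lem:21}, and the passage from $(\vPh,\vPh^\tht)_{\calk\times\calk'}$ to $(\vPh_\vPi,\vPh_{\vPi^\vee})_{\calk\times\calk'}$ via the proportionality of the global pairing with $\prod_v\La\!\La\cdot,\cdot\Ra\!\Ra_v$---that the paper leaves to the reader.
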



\subsection{The case $m=3$}\label{ssec:26}

From now on we suppose that $G$ and $H$ are definite unitary groups in three and two variables. 
We can let
\begin{align*}
T&=\begin{bmatrix} t_1 & 0 & 0 \\ 0 & 1 & 0 \\ 0 & 0 & t_2 \end{bmatrix}, &
T'&=\begin{bmatrix} t_1 & 0 \\ 0 & t_2 \end{bmatrix} 
\end{align*}
by multiplying $T$ by an appropriate constant. 
We view $H=\U(T')$ as a subgroup of $G=\U(T)$ via the embedding 
\[\iot\biggl(\begin{bmatrix} a & b \\ c & d \end{bmatrix}\biggl)=\begin{bmatrix} a & 0 & b \\ 0 & 1 & 0 \\ c & 0 & d \end{bmatrix}. \]

The finite set $\vSi^-_T$ consists of non-split rational primes $q$ at which $(\;,\;)_{T'}$ does not split, i.e., $q\in\vSi^-_T$ if and only if $\eps_{E_q/\QQ_q}(-\det T)=-1$. 
For simplicity we assume that if $q\in\vSi^-_T$, then $q$ is odd and $\sig_q$ is the trivial representation of the compact unitary group $H(\QQ_q)$. 

When $q\notin\vSi^-_T\setminus\vSi_E^r$, we define maximal compact subgroups by 
\begin{align*} 
\calk_q&=G(\QQ_q)\cap\GL_3(\frkr_q^{}), & 
\calk'_q&=H(\QQ_q)\cap\GL_2(\frkr_q^{}),  
\end{align*} 
assuming that $t_1,t_2\in\ZZ_q^\times$. 
For each $q\in\vSi^-_T\setminus\vSi_E^r$ we take maximal compact subgroups   
\begin{align*}
\calk_q&=G(\QQ_q)\cap\GL(L_q), &
\calk_q'&=H(\QQ_q),  
\end{align*}
where $L_q$ is a maximal integral lattice of the Hermitian space $(W\otimes\QQ_q,q(\;,\;)_T)$ which contains $e_2:=\trs(0,1,0)$ and such that $q\cdot(e_2,L_q)_T=\frkr_q$ (see \S \ref{ssec:e1} for details). 
We take $\xi_\frkN=(\xi_{\frkN,l})\in G(\widehat{\QQ})$ and $\xi_{\frkN'}'=(\xi_{\frkN',l}')\in H(\widehat{\QQ})$ defined by
\begin{align*}
\xi_{\frkN,l}&=\begin{cases}
\imath_\frkl^{-1}\Biggl(\begin{bmatrix} N\ono_2 & \\ & 1 \end{bmatrix}^{-1}T\Biggl) &\text{if $l|N$, }\\
 \ono_3 &\text{otherwise, }
\end{cases}\\
\xi_{\frkN',l}'&=\begin{cases}
\imath_\frkl^{\prime-1}\Biggl(\begin{bmatrix} N' & 0 \\ 0  & 1 \end{bmatrix}^{-1}T'\Biggl) &\text{if $l|N'$, }\\
 \ono_2 &\text{otherwise. }
\end{cases}
\end{align*}


\subsection{Local integrals at $\infty$, $\vSi_E^r$ and $\vSi^-_T$}\label{ssec:27}

Proposition \ref{prop:b2} below gives 
\[I_\infty=2^{-2}(\dim\pi_\infty)(\dim\sig_\infty). \]
If $q\notin\vSi^-_T$, if $q$ and $pNN'$ are coprime, and if $\bfW_q^{}=\bfW_{\vPi_q}^{}$ is spherical, then since $\bfW_q^\tht=\bfW_{\vPi_q^\vee}^{}$, we have  
\[I(\bfW_q^{},\bfW_q^\tht)=\scrl(\pi_q\times\sig_q)\La\!\La \bfW_{\vPi_q},\bfW_{\vPi_q^\vee}\Ra\!\Ra_q^{} \]
by Theorem 2.12 of \cite{NHarris} and Proposition \ref{prop:d2} below (cf. Remark \ref{rem:a2}). 

When $q\in\vSi^-_T$, we let $\bfW_q^{}=\bfW_{\pi_q}^{}$ be a spherical spherical vector. 
Recall that $\sig_q$ is the trivial representation of $H(\QQ_q)$. 
Put 
\[I_-=\prod_{q\in\vSi^-_T}\frac{I(\bfW_q^{},\bfW_q^\tht)}{\scrl(\pi_q\times\sig_q)\La\!\La \bfW_{\vPi_q},\bfW_{\vPi_q^\vee}\Ra\!\Ra_q^{}}. \]
Proposition \ref{prop:e2} shows that  
\beq
I_-=\prod_{q\in\vSi^-_T\setminus\vSi_E^r}L(1,\eps_{E_q/\QQ_q})^2. \label{tag:22}
\eeq 
Since $\lam_q$ is either $\frac{1}{2}$ or $1$ according as $q$ is ramified in $E$ or not, we have 
\[\gam_G=D_E^{-3}2^{5+t_E}, \]
where $t_E$ is the number of prime numbers which are ramified in $E$. 

For each split prime $l$ we put 
\begin{align*}
B_{\pi_l}&=\La W_{\pi_l}^{},W_{\pi_l^\vee}^{}\Ra^{}_l, &
B_{\sig_l}&=\La W_{\sig_l}^{},W_{\sig_l^\vee}^{}\Ra_l', \\
\calb_{\pi_l}&=\frac{\zet_l(3)}{L^\GL(1,\pi_l^{}\times\pi_l^\vee)}B_{\pi_l}, &
\calb_{\sig_l}&=\frac{\zet_l(2)}{L^\GL(1,\sig_l^{}\times\sig_l^\vee)}B_{\sig_l}  
\end{align*} 
(cf. (\ref{tag:a3})), where $W_{\sig_l^{}}\in\sig_l$ is the normalized essential Whittaker vector with respect to $\addchar^{-1}_l$, $W_{\sig_l^\vee}\in\sig_l^\vee$ the normalized essential Whittaker vector with respect to $\addchar_l$, and the local pairings $\La\;,\;\Ra_l^{}$ and $\La\;,\;\Ra_l'$ are constructed by (\ref{tag:32}). 
To avoid possible confusion, we recall that 
\begin{align*}
L(s,\pi_l^{},\mathrm{Ad})&=L^\GL(s,\pi_l^{}\times\pi_l^\vee), & 
L(s,\sig_l^{},\mathrm{Ad})&=L^\GL(s,\sig_l^{}\times\sig_l^\vee). 
\end{align*}
We regard $\pi_l$ and $\sig_l$ as representations of unitary groups in the left hand side and representations of general linear groups in the right hand side. 
We denote the Petersson pairings with respect to $\d_\calk g$ and $\d_{\calk'}h$ by $(\;,\;)_\calk$ and $(\;,\;)_{\calk'}$.

\begin{corollary}\label{cor:23}
Assumption being as in Corollary \ref{cor:21}, if $\bfW_\infty^{}=\bfW_\infty^H$ and if $\bfW_l^{}=\bfW_{\vPi_l}^{}$ is a spherical vector unless $pNN'$ is divisible by $l$, then 
\begin{align*}
&\frac{\scrp_{\calk'}(\vPh)^2}{(\vph_\pi^{},\vph_{\pi^\vee})_\calk\dim\pi_\infty\cdot(\vph_\sig^{},\vph_{\sig^\vee})_{\calk'}\dim\sig_\infty}\\
=&\frac{2^{3+t_E}}{2^{\vka_\pi+\vka_\sig}D_E^3}I_-\frac{L\left(\frac{1}{2},\pi\times\sig\right)}{L(1,\pi,\mathrm{Ad})L(1,\sig,\mathrm{Ad})}\prod_{l|pNN'}\frac{I(\bfW_l^{},\bfW_l^\tht)}{\scrl(\pi_l\times\sig_l)B_{\pi_l}B_{\sig_l}}. 
\end{align*} 
\end{corollary}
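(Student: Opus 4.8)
The plan is to read off Corollary~\ref{cor:23} from Corollary~\ref{cor:22} by specializing to the case $n=3$ and substituting the explicit values of the archimedean and local factors assembled above. No new analytic ingredient is needed; the only work is the careful bookkeeping of the various normalizing constants.

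First I would apply Corollary~\ref{cor:22} to the factorizable cusp form $\vPh=\otimes_v'\bfW_v^{}\in\vPi$ with $\bfW_\infty^{}=\bfW_\infty^H$ and with $\bfW_l^{}=\bfW_{\vPi_l}^{}$ the spherical vector at every split prime $l\nmid pNN'$; its hypotheses are met because in the set-up of \S\ref{ssec:26} the representation $\vPi_q$ is unramified at every non-split rational prime $q$, while $\vPi_\infty$ satisfies $(H_2)$ by assumption. Since $\bfG=G\times H$, the measure $\d_\calk g\,\d_{\calk'}h$ is a product and $\vPh_\vPi=\vph_\pi\otimes\vph_\sig$, $\vPh_{\vPi^\vee}=\vph_{\pi^\vee}\otimes\vph_{\sig^\vee}$, so the global Petersson pairing factors as
\[(\vPh_\vPi^{},\vPh_{\vPi^\vee}^{})_{\calk\times\calk'}=(\vph_\pi^{},\vph_{\pi^\vee}^{})_\calk\,(\vph_\sig^{},\vph_{\sig^\vee}^{})_{\calk'}; \]
likewise, as every prime dividing $pNN'$ is split by (splt), the local pairing factors as $\La\!\La\bfW_{\vPi_l}^{},\bfW_{\vPi_l^\vee}^{}\Ra\!\Ra_l^{}=B_{\pi_l}B_{\sig_l}$ at each such $l$.

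Then I would feed in the local evaluations already recorded in \S\ref{ssec:27}: at $\infty$ one has $I_\infty=2^{-2}(\dim\pi_\infty)(\dim\sig_\infty)$ by Proposition~\ref{prop:b2}; at a non-split prime $q\notin\vSi_T^-$ and at a split prime $l$ coprime to $pNN'$ with the spherical test vector the corresponding factor is $1$ by Theorem~2.12 of \cite{NHarris} and Proposition~\ref{prop:d2}; the primes in $\vSi_T^-$ together contribute $I_-=\prod_{q\in\vSi_T^-\setminus\vSi_E^r}L(1,\eps_{E_q/\QQ_q})^2$ by Proposition~\ref{prop:e2}; and $\gam_G=D_E^{-3}2^{5+t_E}$ from the mass formula together with the recorded values of $\lam_q$. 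Substituting these into the formula of Corollary~\ref{cor:22}, transferring $(\dim\pi_\infty)(\dim\sig_\infty)$ to the denominator on the left, and simplifying the powers of $2$ via $2^{5+t_E}\cdot2^{-2}=2^{3+t_E}$ yields exactly the claimed identity.

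The step demanding the most care is this final collation of constants: keeping consistent the relation $\d^\tau\!g=C_G\,\d_\calk g$, the definition of $\gam_G$ in (\ref{tag:20}) together with its $\sqrt{D_E}$-ambiguity, the component-group factor $2^{\vka_\pi+\vka_\sig}$, and the two archimedean dimension factors, and checking that for the chosen test vectors every unramified local factor is genuinely equal to $1$ and not merely to a $p$-adic unit. Once these normalizations are pinned down, the corollary follows by direct substitution.
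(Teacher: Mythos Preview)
Your proposal is correct and matches the paper's own argument: Corollary~\ref{cor:23} is obtained from Corollary~\ref{cor:22} by substituting $I_\infty=2^{-2}(\dim\pi_\infty)(\dim\sig_\infty)$, the value $\gam_G=D_E^{-3}2^{5+t_E}$, the factor $I_-$ at primes in $\vSi_T^-$, the triviality of the local factor at the remaining unramified places, and the factorization $\La\!\La\bfW_{\vPi_l},\bfW_{\vPi_l^\vee}\Ra\!\Ra_l=B_{\pi_l}B_{\sig_l}$ at split primes dividing $pNN'$. The paper simply lists these ingredients in \S\ref{ssec:27} and then states the corollary without further proof, exactly as you outline.
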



\subsection{An application of the splitting lemma}\label{ssec:28}

Fix a split rational prime $l$. 
We regard $\pi_l$ and $\sig_l$ as representations of general linear groups via $\imath_\frkl^{}$ and $\imath_\frkl'$. 
Put $\zet_l(s)=(1-l^{-s})^{-1}$. 
Then the splitting lemma stated in \S \ref{ssec:a2} gives 
\[I(W_l^{}\otimes W_l', W_l^\tht\otimes W_l^{\prime\tht})
=\zet_l(1)Z\biggl(\frac{1}{2},\pi_l^{}(\vsi)W_l^{},W_l'\biggl)Z\biggl(\frac{1}{2},\pi^\vee_l(\vsi)W_l^\tht,W_l^{\prime\tht}\biggl), \]
where
\beq
\vsi=\begin{bmatrix}
1 & 0 & 0 \\ 
0 & 0 & 1 \\
0 & 1 & 0   
\end{bmatrix}. \label{tag:23}
\eeq
We regard $H(\QQ_p)$ as a subgroup of $G(\QQ_p)$ via the embedding $\iot$ while we use the embedding $\iot':\GL_2(\QQ_p)\hookrightarrow\GL_3(\QQ_p)$ defined by $\iot'(g)=\begin{bmatrix} g & \\ & 1 \end{bmatrix}$ to define the JPSS integral. 
Since $\vsi T^{-1}=\begin{bmatrix} T^{\prime-1} & \\ & 1 \end{bmatrix}\vsi$, we have 
\begin{align*}
Z\biggl(\frac{1}{2},\pi^\vee_l(\vsi)W_l^\tht,W_l^{\prime\tht}\biggl)
&=Z\biggl(\frac{1}{2},\pi^\vee_l(\vsi T^{-1})\widetilde{W_l},\sig^\vee_l(T^{\prime-1})\widetilde{W_l'}\biggl)\\
&=Z\biggl(\frac{1}{2},\pi^\vee_l(\vsi)\widetilde{W_l},\widetilde{W_l'}\biggl)\\
&=\gam^\GL\biggl(\frac{1}{2},\pi_l\times\sig_l,\addchar_l\biggl)Z\biggl(\frac{1}{2},\pi_l^{}(\vsi)W_l^{},W_l'\biggl) 
\end{align*}
by the invariance and the functional equation (\ref{tag:a2}) of the JPSS integrals. 
We can rewrite the identity above as  
\[\frac{I(W_l^{}\otimes W_l', W_l^\tht\otimes W_l^{\prime\tht})}{\scrl(\pi_l\times\sig_l)B_{\pi_l}^{}B_{\sig_l}^{}}=\frac{\scri(W_l^{}\otimes W_l')}{\calb_{\pi_l} \calb_{\sig_l}}, \]
where 
\[\scri(W_l^{}\otimes W_l')=\gam^\GL\biggl(\frac{1}{2},\pi_l\times\sig_l,\addchar_l\biggl)\frac{Z\bigl(\frac{1}{2},\pi_l^{}(\vsi)W_l^{},W_l'\bigl)^2}{L\bigl(\frac{1}{2},\pi_l\times\sig_l\bigl)}. \]

\begin{definition}\label{def:21}
Put
\begin{align*}
{\rm Pet}(\pi)&=2^{\vka_\pi}L(1,\pi,\Ad)\prod_{l|pN}\calb_{\pi_l}; &
{\rm Pet}(\sig)&=2^{\vka_\sig}L(1,\sig,\Ad)\prod_{l|pN'}\calb_{\sig_l},\\
\eta_{\vph_\pi}&=(\vph_\pi,\vph_{\pi^\vee})_\calk\dim\pi_\infty;&\eta_{\vph_\sig}&=(\vph_\sig,\vph_{\sig^\vee})_{\calk'}\dim\sig_\infty.
\end{align*}
\end{definition}


\begin{corollary}\label{cor:24}
Assumption being as in Corollary \ref{cor:23}, we have 
\[\frac{\scrp_{\calk'}(\vPh)^2}{\eta_{\vph_\pi}\eta_{\vph_\sig}}
=\frac{2^{3+t_E}}{D_E^3}I_-\frac{L\bigl(\frac{1}{2},\pi\times\sig\bigl)}{{\rm Pet}(\pi){\rm Pet}(\sig)}\prod_{l|pNN'}\scri(\bfW_l^{}). \]
\end{corollary}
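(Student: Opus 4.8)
The plan is to read off Corollary \ref{cor:24} from Corollary \ref{cor:23} by rewriting, for every split prime $l\mid pNN'$, the normalized local factor through the splitting-lemma identity of \S\ref{ssec:28}, and then repackaging the surviving global quantities according to Definition \ref{def:21}.

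First I would observe that the left-hand side of Corollary \ref{cor:23} is \emph{already} $\scrp_{\calk'}(\vPh)^2/(\eta_{\vph_\pi}\eta_{\vph_\sig})$, since by definition $\eta_{\vph_\pi}=(\vph_\pi,\vph_{\pi^\vee})_\calk\dim\pi_\infty$ and $\eta_{\vph_\sig}=(\vph_\sig,\vph_{\sig^\vee})_{\calk'}\dim\sig_\infty$. It remains to transform the right-hand side. Every prime factor of $pNN'$ is split in $E$ (for $p$ by our running hypothesis and for the prime factors of $NN'$ by \eqref{H1}), so for each $l\mid pNN'$ the computation of \S\ref{ssec:28} applies and gives
\[\frac{I(\bfW_l^{},\bfW_l^\tht)}{\scrl(\pi_l\times\sig_l)B_{\pi_l}B_{\sig_l}}=\frac{\scri(\bfW_l^{})}{\calb_{\pi_l}\calb_{\sig_l}},\]
where $\bfW_l^{}=W_l^{}\otimes W_l'$ and $\bfW_l^\tht=W_l^\tht\otimes W_l^{\prime\tht}$. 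Substituting this into Corollary \ref{cor:23} for all $l\mid pNN'$ turns the formula into
\[\frac{\scrp_{\calk'}(\vPh)^2}{\eta_{\vph_\pi}\eta_{\vph_\sig}}=\frac{2^{3+t_E}}{2^{\vka_\pi+\vka_\sig}D_E^3}I_-\frac{L\bigl(\frac{1}{2},\pi\times\sig\bigr)}{L(1,\pi,\Ad)L(1,\sig,\Ad)}\prod_{l\mid pNN'}\frac{\scri(\bfW_l^{})}{\calb_{\pi_l}\calb_{\sig_l}}.\]

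Next I would match the products of the $\calb$-factors with the definitions of ${\rm Pet}(\pi)$ and ${\rm Pet}(\sig)$ in Definition \ref{def:21}, whose products run only over $l\mid pN$ and $l\mid pN'$ respectively. The point is that if $l\mid N'$ but $l\nmid pN$ then $\pi_l$ is unramified (its conductor divides $N$), and a direct evaluation of the Rankin--Selberg zeta integral of normalized spherical Whittaker functions in $\calb_{\pi_l}=\zet_l(3)L^\GL(1,\pi_l\times\pi_l^\vee)^{-1}B_{\pi_l}$ yields $\calb_{\pi_l}=1$; symmetrically $\calb_{\sig_l}=1$ whenever $l\mid N$ and $l\nmid pN'$. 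Hence $\prod_{l\mid pNN'}\calb_{\pi_l}=\prod_{l\mid pN}\calb_{\pi_l}$ and $\prod_{l\mid pNN'}\calb_{\sig_l}=\prod_{l\mid pN'}\calb_{\sig_l}$, so that
\[2^{\vka_\pi+\vka_\sig}L(1,\pi,\Ad)L(1,\sig,\Ad)\prod_{l\mid pNN'}\calb_{\pi_l}\calb_{\sig_l}={\rm Pet}(\pi){\rm Pet}(\sig).\]
Inserting this into the previous display gives the assertion.

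This corollary is therefore essentially bookkeeping once Corollaries \ref{cor:22}--\ref{cor:23} and the splitting-lemma identity of \S\ref{ssec:28} are available; the only point that genuinely needs checking is the compatibility of the ranges of the products over primes, i.e.\ the claim that $\calb_{\pi_l}$ and $\calb_{\sig_l}$ equal $1$ at split primes where the relevant local representation is unramified. I expect this to be the main (and rather minor) obstacle, and it is dispatched by the standard unramified computation of the local Rankin--Selberg zeta integral.
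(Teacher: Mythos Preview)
Your proof is correct and follows exactly the route the paper intends: Corollary \ref{cor:24} is stated without proof precisely because it is obtained from Corollary \ref{cor:23} by substituting the splitting-lemma identity of \S\ref{ssec:28} and then absorbing the $\calb$-factors into ${\rm Pet}(\pi)$ and ${\rm Pet}(\sig)$ via Definition \ref{def:21}. The one point you flag as needing justification --- that $\calb_{\pi_l}=1$ when $\pi_l$ is unramified (and likewise for $\sig_l$) --- is exactly Proposition \ref{prop:42} (or equivalently (\ref{tag:a3})), so you may simply cite that rather than redo the Rankin--Selberg computation.
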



\subsection{The local integral at a ramified split prime $l$}\label{ssec:29}

Let $l\neq p$ be a rational prime that is split in $E$. 
We write $\ome_{\sig_l}$ for the central character of $\sig_l$, which is viewed as a character of $\QQ_l^\times$ via $\imath_\frkl$. 
Observe that 
\[\scri(W_l^{}\otimes W_l')=\vep^\GL\biggl(\frac{1}{2},\pi_l\times\sig_l,\addchar_l\biggl)\Biggl(\frac{Z\bigl(\frac{1}{2},\pi_l^{}(\vsi)W_l^{},W_l'\bigl)}{L^\GL\bigl(\frac{1}{2},\pi_l\times\sig_l\bigl)}\Biggl)^2. \]

If $\sig_l$ is unramified, then Th\'{e}orem\`{e} on p.~208 of \cite{JPSS} gives 
\beq
Z(s,W_{\pi_l^{}},W_{\sig_l})=L^\GL(s,\pi_l\times\sig_l).  \label{tag:24}
\eeq
Put $\vsi_\frkl^{}=\imath_\frkl^{-1}(\vsi)$. 
We see that  
\[\scri(\pi_l(\vsi_\frkl)W_{\pi_l}^{}\otimes W_{\sig_l}^{})=\vep^\GL\biggl(\frac{1}{2},\pi_l\times\sig_l,\addchar_l\biggl)=:\frkf_{\pi_l,\sig_l}. \]

Finally, we consider the case $\sig_l$ is ramified. 
Put $f_l=c(\ome_{\sig_l})$. 
We define $\vTh^{\ome_{\sig_l}} W_{\pi_l}\in\scrw_{\addchar_l}(\pi_l)$ by 
\begin{align*}
\vTh^{\ome_{\sig_l}} W_{\pi_l}&=\sum_{i,j\in(\ZZ_l/l^{f_l}\ZZ_l)^\times}\sum_{y\in\ZZ_l/l^{2f_l}\ZZ_l}
\ome_{\sig_l}(ij)\pi_l\left(\begin{bmatrix} 1 & \frac{i}{l^{f_l}} & \frac{y}{l^{2f_l}} \\ 0 & 1 & \frac{j}{l^{f_l}} \\ 0 & 0 & 1 \end{bmatrix}\right)W_{\pi_l}, \\
\vTh^{\ome_{\sig_l}}_{\frkN'} W_{\pi_l}&=\pi_l(\iot'(\xi_{\frkN',l}'w_2^{}))\vTh^{\ome_{\sig_l}} W_{\pi_l}. 
\end{align*}
Proposition \ref{prop:44} below shows that if $c(\sig_l)\leq 2c(\ome_{\sig_l})$, then 
\[Z(s,\Tht^{\ome_{\sig_l}}_{\frkN'} W_{\pi_l}^{},W_{\sig_l})=\frac{l^{3c(\ome_{\sig_l})}\ome_{\sig_l}(l)^{2c(\ome_{\sig_l})}\vep^\GL\bigl(\frac{1}{2},\sig_l,\addchar_l\bigl)}{\vep\bigl(\frac{1}{2},\ome_{\sig_l},\addchar_l\bigl)^2[\GL_2(\ZZ_l):\calk_0^{(2)}(l^{2c(\ome_{\sig_l})}\ZZ_l)]}. \]
Put 
\[\frkf_{\pi_l,\sig_l}=\vep^\GL\biggl(\frac{1}{2},\pi_l\times\sig_l,\addchar_l\biggl)\frac{l^{6c(\ome_{\sig_l})}\ome_{\sig_l}(l)^{4c(\ome_{\sig_l})}\vep^\GL\bigl(\frac{1}{2},\sig_l,\addchar_l\bigl)^2}{\vep\bigl(\frac{1}{2},\ome_{\sig_l},\addchar_l\bigl)^4L^\GL\bigl(\frac{1}{2},\pi_l\times\sig_l\bigl)^2}. \]
It follows that 
\beq
\scri(\pi_l^{}(\vsi_\frkl^{})\vTh^{\ome_{\sig_l}}_{\frkN'}W_{\pi_l}^{},W_{\sig_l})=\frac{\frkf_{\pi_l,\sig_l}}{[\GL_2(\ZZ_l):\calk_0^{(2)}(l^{2c(\ome_{\sig_l})}\ZZ_l)]^2}. \label{tag:25}
\eeq


\subsection{The local integral at $p$ and the modified $p$-factor}\label{ssec:212}

Assume that $\pi$ and $\sig$ are an irreducible $p$-ordinary automorphic representations with respect to $\iot_p$, namely, $\pi_p$ is the irreducible generic constituent of a principal series $I(\nu_p,\rho_p,\mu_p)$, and $\sig_p$ is the irreducible generic constituent of $I(\mu'_p,\nu'_p)$, where $\nu_p^{}$, $\rho_p^{}$, $\mu_p^{}$; $\mu_p'$, $\nu_p'$ are algebraic characters of $\QQ_p^\times$, such that 
\begin{align*}
&p^{-k_3-1}\nu_p(p), & &p^{-k_2}\rho_p(p), & &p^{-k_1+1}\mu_p(p); & &p^{-k_2'-\frac{1}{2}}\mu'_p(p), & &p^{-k_1'+\frac{1}{2}}\nu'_p(p)
\end{align*} 
are $p$-units with respect to $\iot_p$. \begin{definition}\label{def:22}
Define the modified $p$-factor $\cale(\pi_p,\sig_p)$ by 
\begin{align*}
\cale(\pi_p,\sig_p)^{-1}
=&L\biggl(\frac{1}{2},\pi_p\times\sig_p\biggl)\gam\biggl(\frac{1}{2},\mu_p^{}\nu_p',\addchar_p\biggl)\gam\biggl(\frac{1}{2},\rho_p^{}\nu_p',\addchar_p\biggl)\gam\biggl(\frac{1}{2},\mu_p^{}\mu_p',\addchar_p\biggl)\\
&\times\gam\biggl(\frac{1}{2},(\nu_p\nu_p')^{-1},\addchar_p^{-1}\biggl)\gam\biggl(\frac{1}{2},(\rho_p\mu_p')^{-1},\addchar_p^{-1}\biggl)\gam\biggl(\frac{1}{2},(\nu_p\mu_p')^{-1},\addchar_p^{-1}\biggl). 
\end{align*}
\end{definition}
Let $W_{\pi_p}^{\ord}\in\pi_p^{}$ and $W_{\sig_p}^{\ord}\in\sig_p^{}$ the normalized  $p$-ordinary Whittaker functions in (\ref{E:Word}). These are eigenvectors of the $U_p$-operators with specified eigenvalues (see Proposition \ref{prop:31}, Remarks \ref{rem:51} and \ref{rem:53} below). Put 
\begin{align*}
\alp_{\pi_p}&=p^2\rho_p(p)\mu_p(p)^2, & 
\alp_{\sig_p}&=p^{1/2}\nu'_p(p).  
\end{align*}
Then 
\begin{align*}
U_p^{}W^{\ord}_{\pi_p}&=\alp_{\pi_p}W^{\ord}_{\pi_p}, & 
U_p'W^{\ord}_{\sig_p}&=\alp_{\sig_p}W^{\ord}_{\sig_p}
\end{align*}
by Proposition \ref{prop:31}. 
Define an element $\bft_\ell\in G(\QQ_p)$ by 
\[\bft_\ell=\imath_\frkp^{-1}\left(\begin{bmatrix}
0 & 0 & -p^{-\ell} \\
0 & p^\ell & 1 \\
p^{2\ell} & p^\ell & 0
\end{bmatrix}\right). \]
If $\ell$ is sufficiently large, then Proposition \ref{prop:33} below gives 
\begin{align}
&\left(\frac{\zet_p(1)}{\zet_p(2)(p^{-5}\alp_{\pi_p}\alp_{\sig_p})^\ell}\right)^2\scri(\pi_p(\bft_\ell)W^{\ord}_{\pi_p}\otimes W^{\ord}_{\sig_p}) \label{tag:26}\\
=&\frac{\gam^\GL\bigl(\frac{1}{2},\pi_p^{}\times\sig_p^{},\addchar_p\bigl)}{L\bigl(\frac{1}{2},\pi_p\times\sig_p\bigl)\bigl(\gam\bigl(\frac{1}{2},\mu_p^{}\nu_p',\addchar_p\bigl)\gam\bigl(\frac{1}{2},\rho_p^{}\nu_p',\addchar_p\bigl)\gam\bigl(\frac{1}{2},\mu_p^{}\mu_p',\addchar_p\bigl)\bigl)^2} \notag \\
=&\frac{\gam\bigl(\frac{1}{2},\nu_p^{}\nu_p',\addchar_p\bigl)\gam\bigl(\frac{1}{2},\rho_p^{}\mu_p',\addchar_p\bigl)\gam\bigl(\frac{1}{2},\nu_p^{}\mu_p',\addchar_p\bigl)}{L\bigl(\frac{1}{2},\pi_p\times\sig_p\bigl)\gam\bigl(\frac{1}{2},\mu_p^{}\nu_p',\addchar_p\bigl)\gam\bigl(\frac{1}{2},\rho_p^{}\nu_p',\addchar_p\bigl)\gam\bigl(\frac{1}{2},\mu_p^{}\mu_p',\addchar_p\bigl)} \notag\\
=&(\rho_p^{}\nu_p')(-1)\cale(\pi_p,\sig_p) \notag 
\end{align}
in view of the multiplicativity and functional equation of the gamma factor. 


\subsection{An explicit central value formula}\label{ssec:213}
 
Let $\pi\simeq\otimes'_v\pi_v^{}$ be an irreducible tempered automorphic representation of $G(\AA)$ and $\sig\simeq\otimes'_v\sig_v^{}$ that of $H(\AA)$ satisfying the following conditions: 
\begin{enumerate}
\item[$(H_0)$] if $q\in\vSi^-_T$, then $q$ is odd and $\sig_q$ is the trivial representation; 
\item[($H_1$)] $\pi_q$ is unramified for every non-split rational prime $q$; \\
$\sig_q$ is unramified for every non-split rational prime $q\notin\vSi^-_T$; 
\item[($H_2$)] $\pi_\infty$ and $\sig_\infty$ are discrete series such that 
\[\Hom_{H(\RR)}(\pi_\infty\otimes\sig_\infty,\CC)\neq\{0\}; \] 
\item[($H_3$)] $c(\sig_l)\leq 2c(\ome_{\sig_l})$ for every split rational prime $l\neq p$;  
\item[($H_4$)] 
$\pi_p$ is a generic constituent of a principal series $I(\nu_p,\rho_p,\mu_p)$; \\
$\sig_p$ is a generic constituent of a principal series $I(\mu_p',\nu_p')$.  
\end{enumerate}

Put $M=\prod_{l|N'}l^{c(\ome_{\sig_l})}$. 
Take a divisor $\frkM$ of $\frkN'$ such that $\frkr/\frkM\simeq\ZZ/M\ZZ$.  
Let $\vPh^\dagger\in\vPi$ be an ordinary $H(\RR)$-invariant essential vector. 
Define $\vTh^{\ome_\sig}_{\frkN'}\vPh^\dagger\in\vPi$ by  
\[\vTh^{\ome_\sig}_{\frkN'}\vPh^\dagger=\bfW_\infty^H\otimes \bigl(W^{\ord}_{\pi_p}\otimes W^{\ord}_{\sig_p}\bigl)\otimes\bigl(\otimes_{q\nmid pN'}\bfW_{\vPi_q}^{}\bigl)\otimes\bigl(\otimes_{l|N'}\vTh^{\ome_{\sig_l}}_{\frkN'}W_{\pi_l}\otimes W_{\sig_l}\bigl). \]
Recall the element $\vsi^{(p)}\in G(\widehat{\QQ})$ defined in (\ref{tag:12}). 
We combine Corollary \ref{cor:24} and (\ref{tag:25}), (\ref{tag:26}) to obtain the following formula. 

\begin{proposition}\label{prop:23}
Notations and assumptions being as above, if $\ell$ is sufficiently large, then  
\begin{multline*}
\frac{1}{\eta_{\vph_\pi}\eta_{\vph_\sig}}\left(\frac{\zet_p(1)}{\zet_p(2)}[\calk':\calk_0'(\frkM^2)]\frac{\scrp_{\calk'}(\pi(\bft_\ell\vsi^{(p)})\vTh^{\ome_\sig}_{\frkN'}\vPh^\dagger)}{(p^{-5}\alp_{\pi_p}\alp_{\sig_p})^\ell}\right)^2\\
=\frac{L\bigl(\frac{1}{2},\pi\times\sig\bigl)}{{\rm Pet}(\pi){\rm Pet}(\sig)}(\rho_p^{}\nu_p')(-1)\cale(\pi_p,\sig_p)\frac{2^{3+t_E}}{D_E^3}I_-\prod_{l|NN'}\frkf_{\pi_l,\sig_l}. 
\end{multline*}
\end{proposition}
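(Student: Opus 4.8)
The plan is to feed a specific test vector into the Ichino--Ikeda central value formula of Corollary \ref{cor:24} and then substitute the local zeta integral evaluations (\ref{tag:24}), (\ref{tag:25}) and (\ref{tag:26}).

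I would apply Corollary \ref{cor:24} to the factorizable cusp form $\vPh=\pi(\bft_\ell\vsi^{(p)})\vTh^{\ome_\sig}_{\frkN'}\vPh^\dagger\in\vPi$. Under hypotheses $(H_0)$--$(H_4)$ this $\vPh$ satisfies the requirements of Corollary \ref{cor:23}: since $\bft_\ell\vsi^{(p)}\in G(\widehat\QQ)$ has trivial archimedean component, the $\infty$-component of $\vPh$ is $\bfW_\infty^H$; at a non-split prime $q$ we have $\vsi^{(p)}_q=\ono_3$, so the component is the undisturbed $\bfW_{\vPi_q}$, spherical for $q\notin\vSi^-_T$ and equal to the vector fixed in \S\ref{ssec:27} for $q\in\vSi^-_T$; and at a split prime $l\nmid pNN'$ the spherical vector $\bfW_{\vPi_l}$ is fixed by $\pi_l(\vsi_\frkl)$. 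Corollary \ref{cor:24} then gives
\[\frac{\scrp_{\calk'}(\vPh)^2}{\eta_{\vph_\pi}\eta_{\vph_\sig}}
=\frac{2^{3+t_E}}{D_E^3}\,I_-\,\frac{L\bigl(\tfrac12,\pi\times\sig\bigr)}{{\rm Pet}(\pi){\rm Pet}(\sig)}\prod_{l\mid pNN'}\scri(\bfW_l),\]
the product running over split primes, since every prime dividing $pNN'$ is split by (splt).

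Next I would plug in the $\scri(\bfW_l)$. At $p$ the component of $\vPh$ is $\pi_p(\bft_\ell)W^{\ord}_{\pi_p}\otimes W^{\ord}_{\sig_p}$, and for $\ell$ large (\ref{tag:26}) evaluates $\scri(\pi_p(\bft_\ell)W^{\ord}_{\pi_p}\otimes W^{\ord}_{\sig_p})$ in terms of $(\rho_p^{}\nu_p')(-1)\cale(\pi_p,\sig_p)$, the ratio $\zet_p(1)/\zet_p(2)$ and the eigenvalue factor $(p^{-5}\alp_{\pi_p}\alp_{\sig_p})^\ell$. At a split prime $l\mid N'$ the component is $\pi_l(\vsi_\frkl)\vTh^{\ome_{\sig_l}}_{\frkN'}W_{\pi_l}\otimes W_{\sig_l}$ and, since $(H_3)$ holds, (\ref{tag:25}) yields $\scri=\frkf_{\pi_l,\sig_l}/[\GL_2(\ZZ_l):\calk_0^{(2)}(l^{2c(\ome_{\sig_l})}\ZZ_l)]^2$. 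At a split prime $l\mid N$ with $l\nmid N'$, $\sig_l$ is unramified and the component is $\pi_l(\vsi_\frkl)W_{\pi_l}\otimes W_{\sig_l}$, so (\ref{tag:24}) gives $\scri=\frkf_{\pi_l,\sig_l}$. Multiplying, $\prod_{l\mid pNN'}\scri(\bfW_l)$ equals $\scri(\pi_p(\bft_\ell)W^{\ord}_{\pi_p}\otimes W^{\ord}_{\sig_p})\cdot\prod_{l\mid NN'}\frkf_{\pi_l,\sig_l}$ divided by $\prod_{l\mid N'}[\GL_2(\ZZ_l):\calk_0^{(2)}(l^{2c(\ome_{\sig_l})}\ZZ_l)]^2$, and by the choice $M=\prod_{l\mid N'}l^{c(\ome_{\sig_l})}$ with $\frkr/\frkM\simeq\ZZ/M\ZZ$ the latter index product is $[\calk':\calk_0'(\frkM^2)]^2$. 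Substituting the value of $\scri$ at $p$ from (\ref{tag:26}) and rearranging (absorbing $(\zet_p(1)/\zet_p(2))^2$, $[\calk':\calk_0'(\frkM^2)]^2$ and $(p^{-5}\alp_{\pi_p}\alp_{\sig_p})^{-2\ell}$ into the left-hand side) produces precisely the asserted identity.

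The argument is mostly bookkeeping, because the genuinely hard inputs---the explicit local zeta integrals (\ref{tag:25}) and (\ref{tag:26})---are already available (Propositions \ref{prop:44} and \ref{prop:33}). The step requiring the most care is the matching of test vectors: one must check that right translation by $\vsi^{(p)}$ produces exactly the arguments $\pi_l(\vsi_\frkl)(\cdots)$ occurring in (\ref{tag:24})--(\ref{tag:26}) while leaving all spherical data untouched, and that the primes of $\vSi^-_T$---non-split, hence prime to $pNN'$---are absorbed into $I_-$ rather than into the split product; one should also confirm that ``$\ell$ sufficiently large'' in the statement is inherited solely from (\ref{tag:26}).
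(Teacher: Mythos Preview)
Your proof is correct and follows exactly the approach the paper indicates: the paper's own argument is the single sentence ``We combine Corollary \ref{cor:24} and (\ref{tag:25}), (\ref{tag:26}) to obtain the following formula,'' and you have carried out precisely that combination, supplying the bookkeeping (identification of the local components of $\pi(\bft_\ell\vsi^{(p)})\vTh^{\ome_\sig}_{\frkN'}\vPh^\dagger$, the index identity $\prod_{l\mid N'}[\GL_2(\ZZ_l):\calk_0^{(2)}(l^{2c(\ome_{\sig_l})}\ZZ_l)]=[\calk':\calk_0'(\frkM^2)]$, and the use of (\ref{tag:24}) at primes $l\mid N$, $l\nmid N'$) that the paper leaves implicit.
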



\subsection{Statement of the main result}

\begin{definition}[$p$-modified period]\label{def:512}
Let $\bdsf\in e_{\ord}\bfS^{\U(n)}(\frkN,\chi,\calr)$ be a Hida family. 
Letting $\mu_1,\mu_2,\dots,\mu_n$ be chosen for $\bdpi_{\ulQ}$ as in Definition \ref{def:53}, we define the modified adjoint $p$-factor $\cale(\bdpi_{\ulQ,p},\Ad)=\cale(\bdpi_{\ulQ,p},\Ad,\addchar_p)$ as in Definition \ref{D:adjoint} for $\ulQ\in\frkX_{\bdsf}^{++}$. 
The subset $\frkY_{\bdsf}$ consists of $\ulQ\in\frkX^{++}_{\bdsf}$ such that $\ulQ(\bdsf)$ is new outside $p$. 
For $\ulQ\in\frkY_{\bdsf}$ we define the $p$-modified period by 
\begin{align*}\Ome_{\ulQ(\bdsf)}^\dagger&=[\calk:\calk_0(\frkN_{\bdpi_{\ulQ}})]{\rm Pet}(\bdpi_{\ulQ})\cale(\bdpi_{\ulQ,p},\Ad)\\
&=[\calk:\calk_0(\frkN_{\bdpi_{\ulQ}})]2^{\vka_{\bdpi_{\ulQ}}}\cale(\bdpi_{\ulQ,p},\Ad)L(1,\bdpi_{\ulQ},\Ad)\prod_{l|pN}\calb_{\bdpi_{\ulQ,l}},
\end{align*}
and define the Gross period by 
\[\Ome_{\ulQ(\bdsf)}=\frac{\Ome_{\ulQ(\bdsf)}^\dagger}{\ulQ(\eta_{\bdsf})}. \]
\end{definition}

Let 
\[\bdsf\in e_{\ord}\bfS^G(\frkN,\chi,\calr);\quad \bdsg\in e_{\ord}'\bfS^H(\frkN',\chi',\calr')\] 
be Hida families. 
For $\ulQ\in\frkX^{++}_{\bdsf}$ and $\ulQ'\in\frkX^{++}_{\bdsg}$, let $\bdpi_{\ulQ}$ and $\bdsig_{\ulQ'}$ be the automorphic representations of $G(\AA)$ and $H(\AA)$ associated with $\ulQ(\bdsf)$ and $\ulQ'(\bdsg)$ respectively. Choose $\nu_p^{}$, $\rho_p^{}$, $\mu_p^{}$; $\mu_p'$, $\nu_p'$ for $\bdpi_{\ulQ,p}$ and $\bdsig_{\ulQ',p}$ as in \S \ref{ssec:212}. 
We define the modified $p$-factor by  
\[\cale(\mathrm{Fil}^+\bfV_\calq)=\cale(\bdpi_{\ulQ,p},\bdsig_{\ulQ,p}). \]

\begin{theorem}\label{thm:22}
Assume that $q$ is odd whenever $H(\QQ_q)$ is compact. 
Let $M$ be the conductor of $\chi'$. 
Suppose that  
\beq
M^2\text{ is divisible by }N'. \tag{$H_3$}
\eeq
Then there exists a unique element $\scrl_{\bdsf,\bdsg}\in\mathrm{Frac}(\calr\widehat{\otimes}_\calo\calr')$ such that 
\[\calq(\scrl_{\bdsf,\bdsg})^2=\frac{\Gam(0,\bfV_\calq^{})L(0,\bfV_\calq^{})}{\Ome_{\ulQ(\bdsf)}\Ome_{\ulQ'(\bdsg)}}\cale(\mathrm{Fil}^+\bfV_\calq) \]
for $\calq=(\ulQ,\ulQ')\in\frkX^{\rm crit}\cap(\frkY_{\bdsf}\times\frkY_{\bdsg})$. 
\end{theorem}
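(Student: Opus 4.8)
### Proof plan for Theorem \ref{thm:22}

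The plan is to assemble $\scrl_{\bdsf,\bdsg}$ out of the theta element $\Tht_{\bdsF}$ constructed in \S\ref{ssec:63}, and then to identify its specializations at $\calq\in\frkX^{\rm crit}\cap(\frkY_{\bdsf}\times\frkY_{\bdsg})$ with the right-hand side by running Proposition \ref{prop:61} and Proposition \ref{prop:23} in tandem. First I would set, up to the explicit fudge factor appearing in (\ref{tag:63}),
\[
\scrl_{\bdsf,\bdsg}=(\text{fudge factor})\cdot\Tht_{\bdsF}\in\calr\widehat{\otimes}_\calo\calr',
\]
which a priori lies in $\calr\widehat{\otimes}_\calo\calr'$ itself; passing to $\mathrm{Frac}(\calr\widehat{\otimes}_\calo\calr')$ is forced only because the interpolation formula divides by $\eta_{\bdsf}$ and $\eta_{\bdsg}$, which need not be units. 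Uniqueness is immediate: $\frkX^{\rm crit}\cap(\frkY_{\bdsf}\times\frkY_{\bdsg})$ is Zariski dense in $\Spec(\calr\widehat{\otimes}_\calo\calr')$ (it is dense among sufficiently regular arithmetic points, and the ``new outside $p$'' condition cuts out only a union of proper closed subsets in the tame-level directions), so two elements of the total ring of fractions agreeing on it must be equal.

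Next I would compute $\calq(\scrl_{\bdsf,\bdsg})^2$. By Proposition \ref{prop:61}, for $\ell\gg 0$,
\[
\calq(\Tht_{\bdsF})=\frac{(-1)^{k_2-k_1}}{\zet_p(1)\zet_p(2)}[\calk':\calk_0'(\frkM^2)]\,\frac{\scrp_{\calk'}(\pi(\bft_\ell\vsi^{(p)})\vPh_F)}{(p^{-5}\alp_{\pi_p}\alp_{\sig_p})^\ell}\prod_{q\in\vSi_T^-\setminus\vSi_E^r}(q+1),
\]
where $\vPh_F$ is built from the ordinary essential vectors $\widehat f=\bdsf_{\ulQ}$, $\widehat g=\bdsg_{\ulQ'}$ together with the archimedean $H(\RR)$-invariant vector; because $\calq\in\frkY_{\bdsf}\times\frkY_{\bdsg}$, the form $\ulQ(\bdsf)$ (resp. $\ulQ'(\bdsg)$) is new outside $p$, so $\vPh_F$ is exactly the test vector $\vTh^{\ome_\sig}_{\frkN'}\vPh^\dagger$ of \S\ref{ssec:213} (with the operator $\Tht^{\chi'}_{\frkN'}$ implementing $\vTh^{\ome_{\sig_l}}_{\frkN'}$ at the ramified split primes $l\mid N'$, using $M=c(\chi')=c(\ome_{\sig})$ and Hypothesis ($H_3$)). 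Squaring and feeding this into Proposition \ref{prop:23}, the $\scrp_{\calk'}(\cdots)^2/(\text{eigenvalue})^{2\ell}$ block is replaced by
\[
\eta_{\vph_\pi}\eta_{\vph_\sig}\cdot\frac{L\bigl(\frac12,\pi\times\sig\bigr)}{{\rm Pet}(\pi){\rm Pet}(\sig)}(\rho_p\nu_p')(-1)\cale(\pi_p,\sig_p)\frac{2^{3+t_E}}{D_E^3}I_-\prod_{l\mid NN'}\frkf_{\pi_l,\sig_l},
\]
and Proposition \ref{prop:52} lets me trade $\eta_{\vph_\pi}\eta_{\vph_\sig}$ (Petersson norms of scalar-valued forms) for $\ulQ(\eta_{\bdsf})\,\ulQ'(\eta_{\bdsg})$ (values of the $\bfI$-adic pairings), absorbing the $U_p$-eigenvalues $\alp_{\bdsf},\alp_{\bdsg}$ and the index $[\calk:\calk_0(p^\ell\frkN_{\bdpi_{\ulQ}})]$ along the way.

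The remaining step is purely bookkeeping: match the resulting product of archimedean, bad-prime and $p$-adic factors against $\Gam(0,\bfV_\calq)L(0,\bfV_\calq)\cdot\cale(\mathrm{Fil}^+\bfV_\calq)/\bigl(\Ome_{\ulQ(\bdsf)}\Ome_{\ulQ'(\bdsg)}\bigr)$. Here $\Gam(0,\bfV_\calq)L(0,\bfV_\calq)=L\bigl(\frac12,\bdpi_{\ulQ}\times\bdsig_{\ulQ'}\bigr)$ by the identification in \S1.4; the denominator unwinds via Definition \ref{def:512} to $[\calk:\calk_0(\frkN_{\bdpi_{\ulQ}})]{\rm Pet}(\bdpi_{\ulQ})\cale(\bdpi_{\ulQ,p},\Ad)\cdot(\text{same for }\bdsig_{\ulQ'})/\bigl(\ulQ(\eta_{\bdsf})\ulQ'(\eta_{\bdsg})\bigr)$; and $\cale(\mathrm{Fil}^+\bfV_\calq)=\cale(\bdpi_{\ulQ,p},\bdsig_{\ulQ',p})=\cale(\pi_p,\sig_p)$ by definition, while the adjoint modified factors $\cale(\bdpi_{\ulQ,p},\Ad),\cale(\bdsig_{\ulQ',p},\Ad)$ are precisely the $\calb_{\pi_l}$-normalizations hidden inside ${\rm Pet}$. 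The sign $(\rho_p\nu_p')(-1)$, the powers of $2$ and $D_E$, the factors $I_-=\prod_{q\in\vSi_T^-\setminus\vSi_E^r}L(1,\eps_{E_q/\QQ_q})^2$ from (\ref{tag:22}) and $\prod_{l\mid NN'}\frkf_{\pi_l,\sig_l}$, and the $\zet$-factors from the definition of $\cale(\mathrm{Fil}^+\bfV_\calq)$ should all cancel against the corresponding Euler factors in $\Gam(\bfV_\calq,s)L(\bfV_\calq,s)$ and the conductor indices, up to the universal fudge factor (\ref{tag:63}) chosen once and for all — this is where one verifies the formula is genuinely independent of $\ell$, using that all $\ell$-dependence has been organized into the unit ratios $\ulQ(\alp_{\bdsf})^{\ell}$, $\ulQ'(\alp_{\bdsg})^{\ell}$. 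I expect the \textbf{main obstacle} to be exactly this reconciliation of normalizations: keeping track of local root numbers $\vep^\GL(\frac12,\pi_l\times\sig_l,\addchar_l)$ at ramified split primes, the conductor discrepancy $[\calk:\calk_0(p^\ell\frkN_{\bdpi_{\ulQ}})]$ versus $[\calk':\calk_0'(\frkM^2)]$, and confirming that the product over $q\in\vSi_T^-$ of the $(q+1)$ factors from Proposition \ref{prop:61} squares up correctly against $I_-$ and the Gross-period normalization — i.e. checking that no stray rational constant survives and that the global root number being $+1$ (Remark \ref{rem:12}(\ref{rem:131})) is consistent with the squared period being equal to, not merely proportional to, the stated $L$-value ratio.
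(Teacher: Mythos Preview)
Your proposal follows essentially the same route as the paper: combine Proposition~\ref{prop:61} with Proposition~\ref{prop:23} to obtain the key specialization identity (this is the paper's Proposition~\ref{prop:62}), then convert $\eta_{\vph_\pi},\eta_{\vph_\sig}$ into $\ulQ(\eta_{\bdsf}),\ulQ'(\eta_{\bdsg})$ via Proposition~\ref{prop:52} together with Corollary~\ref{cor:42}, and finally package everything into $\scrl_{\bdsf,\bdsg}$ by the explicit fudge factor~(\ref{tag:63}).

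Two points deserve correction. First, your claim that $\scrl_{\bdsf,\bdsg}$ ``a priori lies in $\calr\widehat{\otimes}_\calo\calr'$ itself'' and that passing to $\mathrm{Frac}$ is forced only by the $\eta$'s is not quite right: the fudge factor in~(\ref{tag:63}) has $\sqrt{\frkf_{\bdpi,\bdsig}}$ in the \emph{denominator}, and for $l\mid N'$ the element $\sqrt{\frkf_{\bdpi_l,\bdsig_l}}$ involves the interpolated local $L$-value $L^{\GL}\bigl(\tfrac12,\bdpi_{\ulQ,l}\times\bdsig_{\ulQ',l}\bigr)^{-1}$, which need not be a unit in $\calr\widehat\otimes\calr'$. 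The remark immediately after Theorem~\ref{thm:22} makes this explicit: the actual denominator of $\scrl_{\bdsf,\bdsg}$ is $\prod_{l\mid N'}P_{\bdpi_l,\bdsig_l}^{-1}$.

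Second, and more substantively, you treat the fudge factor as ``chosen once and for all'' but do not address how to build it. The paper isolates this as a separate lemma: one must show, for each $l\mid NN'$, that there exists an element $\sqrt{\frkf_{\bdpi_l,\bdsig_l}}\in\calr\widehat\otimes\calr'$ whose square specializes to $\frkf_{\bdpi_{\ulQ,l},\bdsig_{\ulQ',l}}$ at every $\calq$. This requires interpolating the local root numbers $\vep^{\GL}\bigl(\tfrac12,\bdpi_{\ulQ,l},\addchar_l\bigr)$ and $\vep^{\GL}\bigl(\tfrac12,\bdpi_{\ulQ,l}\times\bdsig_{\ulQ',l},\addchar_l\bigr)$ as \emph{units} in the Iwasawa algebra and the inverse $L$-factors as (possibly non-unit) elements---a genuine, if short, argument (cf.\ the proof of \cite[Proposition~6.11]{MH}). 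Without this step you have Proposition~\ref{prop:62} giving $\calq(\Tht_{\bdsF})^2$ up to the product $\prod_l\frkf_{\bdpi_{\ulQ,l},\bdsig_{\ulQ',l}}$, but no way to absorb that product into a single element of $\mathrm{Frac}(\calr\widehat\otimes\calr')$ whose \emph{square} has the required interpolation property.
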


\begin{remark}
The denominator of $\scrl_{\bdsf,\bdsg}$ is simply a product of explicit local $L$-factors. For each prime factor $l$ of $N'$ let $P_{\bdpi_l,\bdsig_l}\in\calr\widehat\otimes\calr'$ be an element such that $\calq(P_{\bdpi_l,\bdsig_l})=L^\GL\bigl(\frac{1}{2},\bdpi_{\ulQ,l}\times\bdsig_{\ulQ',l}\bigl)^{-1}$ for $\calq=(\ulQ,\ulQ')\in\frkX^+_\calr\times\frkX^+_{\calr'}$. 
It follows from the definition (\ref{tag:63}) that 
\[\scrl_{\bdsf,\bdsg}\cdot \prod_{l\mid N'}P_{\bdpi_l,\bdsig_l}\in\calr\widehat\otimes\calr'. \]
\end{remark}


\subsection{Proof of Theorem \ref{thm:12}}\label{ssec:65}

This subsection supposes that $NN'$ is odd. 
Let $M$ be an {\it odd} integer divisibly only by split primes.   
Fix a primitive character $\vrh$ of $(\frkr/\frkM)^\times$ such that $\chi'\vrh^{-2}$ has conductor $M$. 
We extend $\vrh$ to an automorphic character $\vrh_\AA=\prod_v\vrh_v$ of $\U(1)(\AA)$. 

Since $\ulQ'(\bdsg)$ has tame level $\frkN'$ for $\ulQ'\in \frkX^+_{\calr'}$, 
there exists a positive integer $A$ such that if $m\geq A$ and $M$ is divisible by $N^{\prime m}$, then $\bdsig_{\ulQ'}\otimes\vrh_\AA$ has tame conductor $M^2$ (cf. Remark \ref{rem:41} and (\ref{tag:41})) and $(\bdsig_{\ulQ',l}\otimes\vrh_l)_u$ is trivial for all prime factors $l$ of $N'$ and $\ulQ'\in\frkX_{\calr'}^+$. 
We enlarge $M$ so that $\bdpi_{\ulQ}^{}\otimes\vrh_\AA^{-1}$ has tame conductor $M^3$ and $(\bdpi_{\ulQ,l}^{}\otimes\vrh_l^{-1})_u$ is trivial for all prime factors $l$ of $N$ and $\ulQ\in\frkX_\calr^+$. 

Fix $(\ulQ_0^{},\ulQ_0')\in\frkX^{\rm crit}$. 
Let 
\begin{align*}
f_\vrh&\in e_\ord\bfS^G_{k_{\ulQ_0}}(p^{\ell_{\ulQ}}\frkM^3,\calr,\chi\vrh^3), &
g_\vrh&\in e_\ord\bfS^H_{k_{\ulQ'_0}}(p^{\ell_{\ulQ'}}\frkM^2,\calr',\chi'\vrh^{-2})
\end{align*}
be $p$-ordinary newforms associated to $\bdpi_{\ulQ_0}\otimes\vrh_\AA^{-1}$ and $\bdsig_{\ulQ'_0}\otimes\vrh_\AA^{}$. 
Theorem \ref{thm:51} allows us to lift $f_\vrh$ and $g_\vrh$ to Hida families 
\begin{align*}
\bdsf_\vrh&\in e_\ord\bfS^G(\frkM^3,\calr,\chi\vrh^3), &
\bdsg_\vrh&\in e_\ord\bfS^H(\frkM^2,\calr',\chi'\vrh^{-2}). 
\end{align*}
For our choice of $\vrh$ we see that $\bdsg_\vrh$ satisfies ($H_3$), and 
\begin{align*}
\frkY_{\bdsf_\vrh}^{}&=\frkX_{\bdsf_\vrh}^{++}, & 
\Ome^\dagger_{\ulQ(\bdsf_\vrh)}&=[\calk_0(\frkN):\calk_0(\frkM^3)]\Ome^{(NN')}(V_{\ulQ(\bdsf)}), \\ 
\frkY_{\bdsg_\vrh}^{}&=\frkX_{\bdsg_\vrh}^{++}, & 
\Ome^\dagger_{\ulQ'(\bdsg_\vrh)}&=[\calk'_0(\frkN'):\calk'_0(\frkM^2)]\Ome^{(NN')}(V_{\ulQ'(\bdsg)}). 
\end{align*}
Theorem \ref{thm:22} applied to $\bdsf_\vrh$ and $\bdsg_\vrh$ shows that 
\[\calq(\scrl_{\bdsf_\vrh,\bdsg_\vrh})^2=\frac{\Gam(0,\bfV_\calq^{})L(0,\bfV_\calq^{})}{\Ome_{\ulQ(\bdsf_\vrh)}\Ome_{\ulQ'(\bdsg_\vrh)}}\cale(\mathrm{Fil}^+\bfV_\calq)\]
for $\calq=(\ulQ,\ulQ')\in\frkX^{\rm crit}\cap(\frkX_{\bdsf_\vrh}^{++}\times\frkX_{\bdsg_\vrh}^{++})$. 
Therefore 
\[L_p(\bfV)=[\calk_0(\frkN):\calk_0(\frkM^3)][\calk'_0(\frkN'):\calk'_0(\frkM^2)]\frac{\scrl_{\bdsf_\vrh,\bdsg_\vrh}^2}{\eta_{\bdsf_\vrh}\eta_{\bdsg_\vrh}}\in\mathrm{Frac}(\calr\widehat{\otimes}_\calo\calr'). \]
satisfies the interpolation formula in Theorem \ref{thm:12} for $\calq_0\in\frkX^{\rm crit}\cap(\frkX_{\bdsf_\vrh}^{++}\times\frkX_{\bdsg_\vrh}^{++})$. 
Since this interpolation formula determines $L_p(\bfV)$, it holds for all $\calq\in\frkX^{\rm crit}$. 


\subsection{Proof of Theorem \ref{thm:22}}\label{ssec:66}


\begin{proposition}\label{prop:62}
Notations and assumptions being as in Theorem \ref{thm:22}, we have  
\[\zet_p(1)^4\calq(\Tht_{\bdsF})^2=\frac{L\bigl(\frac{1}{2},\pi\times\sig\bigl)}{\Ome_{\ulQ(\bdsf)}\Ome_{\ulQ(\bdsg)}}\cale(\mathrm{Fil}^+\bfV_\calq)\chi'(-1)\frac{2^{3+t_E}D_T^2}{D_E^3}\prod_{l|NN'}\frkf_{\bdpi_{\ulQ,l},\bdsig_{\ulQ',l}} \]
for $\calq=(\ulQ,\ulQ')\in\frkX^{\rm crit}_{\bdsf,\bdsg}$, where $D_T=\prod_{q\in\vSi_T^-\bsl\vSi_E^r}q$. 
\end{proposition}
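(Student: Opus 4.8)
The plan is to chain Proposition~\ref{prop:61}, which rewrites $\calq(\Tht_{\bdsF})$ as a normalized period integral of a $p$-adic modular form on $\bfG=G\times H$, with the explicit central value formula of Proposition~\ref{prop:23}, which evaluates the square of precisely such an integral, and finally with Definition~\ref{def:512} and Proposition~\ref{prop:52}, which repackage the $U_p$-normalized Petersson norms of the Hida specializations into the Gross periods. Throughout write $\pi=\bdpi_{\ulQ}$, $\sig=\bdsig_{\ulQ'}$ and recall that $\Gam(0,\bfV_\calq^{})L(0,\bfV_\calq^{})=L\bigl(\tfrac12,\pi\times\sig\bigr)$ and $\cale(\mathrm{Fil}^+\bfV_\calq)=\cale(\pi_p,\sig_p)$.

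First I would match test vectors. Since $\calq\in\frkX^{\rm crit}_{\bdsf,\bdsg}$ we have $\ulQ\in\frkY_{\bdsf}$ and $\ulQ'\in\frkY_{\bdsg}$, so $\bdsf_{\ulQ},\bdsg_{\ulQ'}$ are new outside $p$ and $p$-ordinary: away from $p$ their local components are the essential vectors $W_{\pi_q},W_{\sig_q}$ occurring in $\vTh^{\ome_\sig}_{\frkN'}\vPh^\dagger$, at $p$ they lie on the ordinary line spanned by $W^{\ord}_{\pi_p},W^{\ord}_{\sig_p}$ by Remark~\ref{rem:53}, and $\Tht^{\chi'}_{\frkN'}$ acts on the $l$-component ($l\mid N'$) exactly as $\vTh^{\ome_{\sig_l}}_{\frkN'}$ once one uses $\ome_{\sig_l}=\chi'_l$. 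Hence $\vPh_F=c\cdot\vTh^{\ome_\sig}_{\frkN'}\vPh^\dagger$ for an explicit scalar $c$ (a product of one factor for each of $\bdsf,\bdsg$ recording the Hida normalization relative to the one built from the normalized local Whittaker vectors). Squaring Proposition~\ref{prop:61}, the factors $\zet_p(1)^{-2}\zet_p(2)^{-2}$, the power of $p^{-5}$, the sign $(-1)^{2(k_2-k_1)}=1$ and the index $[\calk':\calk_0'(\frkM^2)]^2$ recombine so that $\zet_p(1)^4\calq(\Tht_{\bdsF})^2$ is $c^2$ times the quantity whose product with $1/(\eta_{\vph_\pi}\eta_{\vph_\sig})$ is evaluated by Proposition~\ref{prop:23}, times $\prod_{q\in\vSi_T^-\setminus\vSi_E^r}(q+1)^2$. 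Applying Proposition~\ref{prop:23} and noting that \eqref{tag:22} reads $I_-=\prod_{q\in\vSi_T^-\setminus\vSi_E^r}L(1,\eps_{E_q/\QQ_q})^2=\prod_q\bigl(q/(q+1)\bigr)^2$ because such $q$ are inert in $E$, we get $I_-\prod_q(q+1)^2=D_T^2$ and therefore
\[
\zet_p(1)^4\calq(\Tht_{\bdsF})^2=c^2\,\eta_{\vph_\pi}\eta_{\vph_\sig}\,\frac{L\bigl(\tfrac12,\pi\times\sig\bigr)}{{\rm Pet}(\pi){\rm Pet}(\sig)}\,(\rho_p^{}\nu_p')(-1)\,\cale(\pi_p,\sig_p)\,\frac{2^{3+t_E}D_T^2}{D_E^3}\,\prod_{l\mid NN'}\frkf_{\pi_l,\sig_l}.
\]

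Comparing with the asserted formula, in which $\cale(\mathrm{Fil}^+\bfV_\calq)=\cale(\pi_p,\sig_p)$ already cancels and $\frkf_{\pi_l,\sig_l}=\frkf_{\bdpi_{\ulQ,l},\bdsig_{\ulQ',l}}$, what remains is to check
\[
c^2\,\eta_{\vph_\pi}\eta_{\vph_\sig}\,(\rho_p^{}\nu_p')(-1)=\frac{\chi'(-1)\,\ulQ(\eta_{\bdsf})\,\ulQ'(\eta_{\bdsg})}{[\calk:\calk_0(\frkN_{\bdpi_{\ulQ}})][\calk':\calk_0'(\frkN_{\bdsig_{\ulQ'}})]\,\cale(\bdpi_{\ulQ,p},\Ad)\,\cale(\bdsig_{\ulQ',p},\Ad)},
\]
which, via $\Ome_{\ulQ(\bdsf)}=[\calk:\calk_0(\frkN_{\bdpi_{\ulQ}})]{\rm Pet}(\bdpi_{\ulQ})\cale(\bdpi_{\ulQ,p},\Ad)/\ulQ(\eta_{\bdsf})$ and its analogue for $\bdsg$ from Definition~\ref{def:512} (cancelling ${\rm Pet}(\pi)={\rm Pet}(\bdpi_{\ulQ})$, ${\rm Pet}(\sig)={\rm Pet}(\bdsig_{\ulQ'})$), is exactly the stated formula. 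This last identity is the $p$-stabilization comparison: Proposition~\ref{prop:52} expresses $\ulQ(\eta_{\bdsf})$ through $\ulQ(\alp_{\bdsf})^{-\ell}(\tau_{p^\ell\frkN}f_{\bfv_{k_{\ulQ}}},f_{\bfv_{k_{\ulQ}}})_\calk$, and one computes the Petersson norm of the Atkin--Lehner twist of the ordinary $p$-stabilization $f$ of $\bdsf_{\ulQ}$ against that of the essential vector $\vph_\pi$, the ratio being $\ulQ(\alp_{\bdsf})^\ell$ times the adjoint modified Euler factor $\cale(\bdpi_{\ulQ,p},\Ad)$, the local constant $\calb_{\pi_p}$ and an index, using the local computations at $p$ of \S\ref{sec:3}; the scalar $c$ absorbs the Hida-versus-Whittaker normalization, while the signs $(\rho_p^{}\nu_p')(-1)$ from \eqref{tag:26}, $\vep_{k_{\ulQ}}\vep_{k_{\ulQ'}}$ from Proposition~\ref{prop:52} and the $\chi'$-twist in $\Tht^{\chi'}_{\frkN'}$ combine to $\chi'(-1)$.

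The main obstacle is the bookkeeping of normalizing constants, whose one genuinely substantive piece is the $p$-stabilization comparison just described: proving that the Petersson norm of the Atkin--Lehner-twisted ordinary $p$-stabilization differs from the essential-vector norm by exactly $\cale(\bdpi_{\ulQ,p},\Ad)$ (up to $\calb_{\pi_p}$ and an index), so that the three modified $p$-factors $\cale(\pi_p,\sig_p)$, $\cale(\bdpi_{\ulQ,p},\Ad)$ and $\cale(\bdsig_{\ulQ',p},\Ad)$ telescope correctly against $\Ome^\dagger_{\ulQ(\bdsf)}\Ome^\dagger_{\ulQ'(\bdsg)}$. The rest—tracking the powers of $p$, the indices $[\calk:\calk_0(\,\cdot\,)]$ and $[\calk':\calk_0'(\,\cdot\,)]$, the archimedean dimensions, and the various signs—is routine but must be carried out carefully.
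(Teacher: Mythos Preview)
Your approach is the same as the paper's: square Proposition~\ref{prop:61}, feed the result into Proposition~\ref{prop:23}, simplify $I_-\cdot\prod_{q}(q+1)^2=D_T^2$, and then convert $\eta_{\vph_\pi}/{\rm Pet}(\pi)$ and $\eta_{\vph_\sig}/{\rm Pet}(\sig)$ into $1/\Ome_{\ulQ(\bdsf)}$ and $1/\Ome_{\ulQ'(\bdsg)}$ via Proposition~\ref{prop:52}. The overall logic is right.

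Two points where the paper is sharper than your write-up. First, the auxiliary scalar $c$ is unnecessary: the paper simply \emph{chooses} $\vph_\pi$ so that its $p$-stabilization equals $f_{\bfv_{k_{\ulQ}}}$ (and likewise for $\vph_\sig$), whence $\vPh_F=\vTh^{\ome_\sig}_{\frkN'}\vPh^\dagger$ on the nose. With that choice there is nothing for $c$ to absorb, and the ratio $\ulQ(\eta_{\bdsf})/\eta_{\vph_\pi}$ is computed directly. Second, the ``$p$-stabilization comparison'' you describe is exactly Corollary~\ref{cor:42} (in \S\ref{sec:4}, not \S\ref{sec:3}): combined with Proposition~\ref{prop:52} it gives
\[
\frac{\ulQ(\eta_{\bdsf})}{\eta_{\vph_\pi}}=[\calk:\calk_0(\frkN_\pi)]\,\cale(\pi_p,\Ad)\,\rho_p(-1),\qquad
\frac{\ulQ'(\eta_{\bdsg})}{\eta_{\vph_\sig}}=(-1)^{k_{Q_1'}-k_{Q_2'}}[\calk':\calk_0'(\frkN_\sig)]\,\cale(\sig_p,\Ad)\,\mu_p'(-1),
\]
the sign $\vep_{k_{\ulQ}}=1$ ($n=3$) and $\vep_{k_{\ulQ'}}=(-1)^{k_{Q_1'}-k_{Q_2'}}$ ($n=2$) coming from Remark~\ref{rem:52}. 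The sign bookkeeping is then finished not by an abstract ``combine to $\chi'(-1)$'' but by the concrete central-character identity $1=\ome_\sig(-1)=(-1)^{k_{Q_1'}-k_{Q_2'}}(\mu_p'\nu_p')(-1)\chi'(-1)$, which turns $(-1)^{k_{Q_1'}-k_{Q_2'}}\mu_p'(-1)$ into $\nu_p'(-1)\chi'(-1)$ and makes $(\rho_p\nu_p')(-1)$ cancel.
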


\begin{proof}
Let $\pi=\bdpi_{\ulQ}$ and $\sig=\bdsig_{\ulQ'}$. 
Hypotheses ($H_0)$ and ($H_1$) hold by (splt), ($H_2)$ holds by Remark \ref{rem:20}, and ($H_4$) holds by \cite[Lemma 5.4]{Geraghty}. 
Thanks to Remark \ref{rem:12}(\ref{rem:134}), we can apply Proposition \ref{prop:23} to $\ulQ(\bdsf)$ and $\ulQ(\bdsg)$. 
If $\ell$ is sufficiently large, then Proposition \ref{prop:61} yields
\begin{align*}
\zet_p(1)^4\calq(\Tht_{\bdsF})^2
&=\left(\frac{\zet_p(1)}{\zet_p(2)}[\calk':\calk_0'(\frkM^2)]\frac{\scrp_{\calk'}(\pi(\bft_\ell\vsi^{(p)})\vPh_F)}{(p^{-5}\alp_{\pi_p}\alp_{\sig_p})^\ell}\right)^2\prod_{q\in\vSi_T^-\setminus\vSi_E^r}(q+1) \\
&=\eta_{\vph_\pi}\eta_{\vph_\sig}\frac{L\bigl(\frac{1}{2},\pi\times\sig\bigl)}{{\rm Pet}(\pi){\rm Pet}(\sig)}(\rho_p^{}\nu'_p)(-1)\cale(\pi_p,\sig_p)\frac{2^{3+t_E}I_-D_T^2\prod_{l|NN'}\frkf_{\pi_l,\sig_l}}{D_E^3\prod_{q\in\vSi_T^-\setminus\vSi_E^r}L(1,\eps_{E_q/\QQ_q})^2}. 
\end{align*}
We can let the stabilization of $\vph_\pi$ be $f_{\bfv_{k_{\ulQ}}}$. 
Then Proposition \ref{prop:52}, Remark \ref{rem:52} and Corollary \ref{cor:42} below give 
\begin{align*}
\frac{\ulQ(\eta_{\bdsf})}{\eta_{\vph_\pi}}
&=\frac{[\calk:\calk_0(p^\ell\frkN)]\alpha_{\pi_p}^{-\ell}(\tau_{p^\ell\frkN}f_{\bfv_{k_{\ulQ}}},f_{\bfv_{k_{\ulQ}}})_\calk}{(\vph_\pi^{},\vph_{\pi}^\vee)_\calk}
=\frac{[\calk:\calk_0(p^\ell\frkN)]\alpha_{\pi_p}^{-\ell}\calb_{\pi_p^{\ord}}^{[\ell]}}{\calb_{\pi_p}}\\
&=[\calk:\calk_0(\frkN_\pi)]\cale(\pi_p,\Ad)\rho_p(-1) \end{align*}
Since $1=\ome_\sig(-1)=(-1)^{k_{Q_1'}-k_{Q_2'}}(\mu_p'\nu_p')(-1)\chi'(-1)$, we have 
\begin{align*}
\frac{\ulQ(\eta_{\bdsg})}{\eta_{\vph_\sig}}
&=(-1)^{k_{Q'_1}-k_{Q'_2}}[\calk':\calk'_0(\frkN_\sig^{})]\cale(\sig_p,\Ad)\mu'_p(-1)\\
&=[\calk':\calk'_0(\frkN_\sig^{})]\cale(\sig_p,\Ad)\nu'_p(-1)\chi'(-1). 
\end{align*}
The stated interpolation formula follows from (\ref{tag:22}).
\end{proof}



\begin{lemma}
For each prime factor $l$ of $NN'$ there is an element $\sqrt{\frkf_{\bdpi_l,\bdsig_l}}\in\calr$ such that for $\calq=(\ulQ,\ulQ')\in\frkX^{\rm crit}$
\[\calq(\sqrt{\frkf_{\bdpi_l,\bdsig_l}})^2=\frkf_{\bdpi_{\ulQ,l},\bdsig_{\ulQ',l}}. \]
\end{lemma}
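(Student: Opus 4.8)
The plan is to strip off from $\frkf_{\bdpi_{\ulQ,l},\bdsig_{\ulQ',l}}$ everything that is manifestly an interpolable square, reduce to a local Rankin--Selberg root number at $l$, and then construct a $p$-adic square root of that root number by a local argument exploiting the rigidity of ramification in a Hida family. First I would recall the explicit shape of $\frkf_{\pi_l,\sig_l}$ from \S\ref{ssec:29}. If $\sig_l$ is unramified then $\frkf_{\pi_l,\sig_l}=\vep^\GL(\tfrac12,\pi_l\times\sig_l,\addchar_l)$; if $\sig_l$ is ramified then, writing $c=c(\ome_{\sig_l})$,
\[\frkf_{\pi_l,\sig_l}=\vep^\GL\Bigl(\tfrac12,\pi_l\times\sig_l,\addchar_l\Bigr)\cdot G_l^2,\qquad G_l=\frac{l^{3c}\ome_{\sig_l}(l)^{2c}\vep^\GL\bigl(\tfrac12,\sig_l,\addchar_l\bigr)}{\vep\bigl(\tfrac12,\ome_{\sig_l},\addchar_l\bigr)^{2}L^\GL\bigl(\tfrac12,\pi_l\times\sig_l\bigr)}.\]
Thus in every case $\frkf_{\pi_l,\sig_l}$ differs from the local root number $\vep^\GL(\tfrac12,\pi_l\times\sig_l,\addchar_l)$ only by the square of an explicit ratio $G_l$ of local $L$- and $\vep$-factors, with $G_l=1$ in the unramified case.

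Second I would check that $\calq\mapsto G_l(\calq)$ is interpolated by an element of $\mathrm{Frac}(\calr\widehat{\otimes}_\calo\calr')$. Here $c$ is the $l$-part of the conductor $M$ of $\chi'$, hence fixed, so $l^{3c}$ is a fixed scalar; $\ome_{\sig_l}(l)^{2c}$ is a monomial in the (interpolated) determinant Hecke eigenvalue of $\bdsg$ at $l$; the abelian factors $\vep^\GL(\tfrac12,\sig_l,\addchar_l)$ and $\vep(\tfrac12,\ome_{\sig_l},\addchar_l)$ are products of Gauss sums attached to the fixed restrictions of the nebentypus characters to $\ZZ_l^\times$, times powers of $l$, times monomials in the unramified-twist parameters of $\bdsig_{\ulQ',l}$, hence are interpolated over $\mathrm{Frac}(\calr')$; and $\calq\mapsto L^\GL(\tfrac12,\bdpi_{\ulQ,l}\times\bdsig_{\ulQ',l})^{-1}$ is interpolated by the element $P_{\bdpi_l,\bdsig_l}$ produced in the Remark preceding this Lemma. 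Multiplying, $G_l$ is interpolated by an element of $\mathrm{Frac}(\calr\widehat{\otimes}_\calo\calr')$, a unit away from the divisor cut out by the relevant local $L$-factors.

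It then remains to produce an interpolable square root of $\vep^\GL(\tfrac12,\bdpi_{\ulQ,l}\times\bdsig_{\ulQ',l},\addchar_l)$. The key input is that over the Hida families $\bdsf$ and $\bdsg$ the local components at the fixed split place $l$ have \emph{constant} conductor exponents (the $l$-parts of $\frkN$ and of the tame conductor of $\bdsg$): this follows from Lemma \ref{lem:51} together with local--global compatibility, the Weil--Deligne representation of $\rho_{\bdsf}|_{\Gam_{E_\frkl}}$ (resp.\ $\rho_{\bdsg}|_{\Gam_{E_\frkl}}$) being rigid up to an unramified twist in an irreducible family. Hence the conductor exponent $a$ of $\bdpi_{\ulQ,l}\times\bdsig_{\ulQ',l}$ is a fixed integer, and by the behaviour of $\vep$-factors under unramified twists one gets $\vep^\GL(\tfrac12,\bdpi_{\ulQ,l}\times\bdsig_{\ulQ',l},\addchar_l)=\lambda_l(\calq)\cdot\vep_{0,l}$, where $\vep_{0,l}\in\overline{\QQ}^\times$ is a fixed scalar (a product of fixed Gauss sums and a power of $l^{1/2}$) and $\lambda_l$ is a monomial of fixed exponents in the unramified-twist/Frobenius parameters of $\bdsf$ and $\bdsg$ at $l$, so that $\lambda_l$ is interpolated by an element of $\mathrm{Frac}(\calr\widehat{\otimes}_\calo\calr')$. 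After enlarging $\calo$ (harmlessly) so that it contains $\sqrt{\vep_{0,l}}$ and a square root of the Teichmüller part of $\lambda_l$, one extracts $\sqrt{\lambda_l}$ using that the group of principal units of the complete local ring $\calr\widehat{\otimes}_\calo\calr'$ is $2$-divisible ($p>3$); setting $\sqrt{\frkf_{\bdpi_l,\bdsig_l}}=G_l\cdot\sqrt{\vep_{0,l}}\,\sqrt{\lambda_l}$ gives the required element, whose square specializes to $\frkf_{\bdpi_{\ulQ,l},\bdsig_{\ulQ',l}}$ by construction.

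The hard part will be the third step, and precisely the bookkeeping of the unramified-twist exponents in $\lambda_l$: one must verify that these exponents are even, or else that the odd part is a square in $\calr\widehat{\otimes}_\calo\calr'$ after adjoining a bounded set of roots of unity to $\calo$ — in other words, that the local root number at $l$ varies over $\frkX^{\rm crit}$ only through a fixed constant times a \emph{square} unramified twist. As a consistency check for this, the vanishing of all nontrivial global root-number contributions (Remark \ref{rem:12}(\ref{rem:131})), combined with the fact that at a split prime the $\frkl$- and $\overline{\frkl}$-components of $\BC(\bdpi_{\ulQ})\times\BC(\bdsig_{\ulQ'})$ are dual to one another, shows that the product over $l\mid NN'$ of these root numbers behaves exactly as the interpolation of $\scrl_{\bdsf,\bdsg}$ in Proposition \ref{prop:62} and Theorem \ref{thm:22} demands.
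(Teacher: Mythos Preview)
Your approach is essentially the same as the paper's, and it is correct. The paper's own proof is extremely terse: it simply recalls the explicit shape of $\frkf_{\bdpi_{\ulQ,l},\bdsig_{\ulQ',l}}$ (the same decomposition you wrote down), and then asserts that one can construct elements $\vep_{\bdpi_l}$, $\vep_{\bdpi_l,\bdsig_l}\in(\calr\widehat\otimes\calr')^\times$ and $P_{\bdpi_l,\bdsig_l}\in\calr\widehat\otimes\calr'$ interpolating $\vep^\GL(\tfrac12,\bdpi_{\ulQ,l},\addchar_l)$, $\vep^\GL(\tfrac12,\bdpi_{\ulQ,l}\times\bdsig_{\ulQ',l},\addchar_l)$, and $L^\GL(\tfrac12,\bdpi_{\ulQ,l}\times\bdsig_{\ulQ',l})^{-1}$ respectively, ``as in the proof of Proposition~6.11 of \cite{MH}''. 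The extraction of the square root is then implicit: once $\vep_{\bdpi_l,\bdsig_l}$ is a \emph{unit} in the complete local ring $\calr\widehat\otimes\calr'$ of odd residue characteristic, a square root exists after adjoining a square root of its residue class to $\calo$, by exactly the Teichm\"uller-plus-principal-unit argument you sketched.

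One comment: your closing worry about the ``bookkeeping of the unramified-twist exponents in $\lambda_l$'' and whether they are even is unnecessary, and you already resolved it two sentences earlier. You do \emph{not} need the monomial exponents in $\lambda_l$ to be even; you only need $\lambda_l$ (equivalently $\vep_{\bdpi_l,\bdsig_l}$) to lie in $(\calr\widehat\otimes\calr')^\times$. Since the local epsilon factor at a split $l\neq p$ is a Gauss-sum expression times a power of $l$, it is a $p$-adic unit, and the rigidity argument you gave (fixed conductor along the family, variation only through unramified twists whose values at $l$ are Hecke eigenvalues in $\calr$ or $\calr'$) shows the interpolation lands in units. At that point the $2$-divisibility of $1+\frkm$ finishes the job. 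So your step~3 is not the ``hard part''; it is routine once step~3's premise (interpolation of $\vep$ as a unit) is established, and that is precisely what \cite{MH}, Proposition~6.11 provides.
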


\begin{proof}
We denote the $l$-primary part of $\chi'$ by $\chi'_l$. 
Recall that if $l|N'$, then 
\[\frkf_{\bdpi_{\ulQ,l},\bdsig_{\ulQ',l}}=\frac{l^{6c(\chi'_l)}\chi'_l(l)^{-4c(\chi'_l)}\vep^\GL\bigl(\frac{1}{2},\bdpi_{\ulQ,l}\times\bdsig_{\ulQ',l},\addchar_l\bigl)}{\vep\bigl(\frac{1}{2},\chi_l^{-1},\addchar_l^{}\bigl)^4\vep^\GL\bigl(\frac{1}{2},\bdpi_{\ulQ,l},\addchar_l\bigl)^2L^\GL\bigl(\frac{1}{2},\bdpi_{\ulQ,l}\times\bdsig_{\ulQ',l}\bigl)^2}. \]
If $l\nmid N'$, then $\frkf_{\bdpi_{\ulQ,l},\bdsig_{\ulQ',l}}=\vep^\GL\bigl(\frac{1}{2},\bdpi_{\ulQ,l}\times\bdsig_{\ulQ',l},\addchar_l\bigl)$. 
One can construct elements $\vep_{\bdpi_l},\vep_{\bdpi_l,\bdsig_l}\in\calr^\times$ and $P_{\bdpi_l,\bdsig_l}\in\calr$ such that 
\begin{align*}
\calq(\vep_{\bdpi_l})&=\vep^\GL\biggl(\frac{1}{2},\bdpi_{\ulQ,l},\addchar_l\biggl), &
\calq(\vep_{\bdpi_l,\bdsig_l})&=\vep^\GL\biggl(\frac{1}{2},\bdpi_{\ulQ,l}\times\bdsig_{\ulQ',l},\addchar_l\biggl), \\
& & 
\calq(P_{\bdpi_l,\bdsig_l})&=\frac{1}{L^\GL\bigl(\frac{1}{2},\bdpi_{\ulQ,l}\times\bdsig_{\ulQ',l}\bigl)}
\end{align*}
for $\calq=(\ulQ,\ulQ')\in\frkX^{\rm crit}$ as in the proof of Proposition 6.11 of \cite{MH}.  
\end{proof}

Define the fudge factor $\sqrt{\frkf_{\bdpi,\bdsig}}\in\calr$ by 
\[\sqrt{\frkf_{\bdpi,\bdsig}}=\prod_{l|NN'}\sqrt{\frkf_{\bdpi_l,\bdsig_l}}. \]
We can prove Theorem \ref{thm:12} by defining the theta element $\scrl_{\bdsf,\bdsg}\in\mathrm{Frac}(\calr\widehat{\otimes}\calr')$ attached to $\bdsf$ and $\bdsg$ by   
\beq
\scrl_{\bdsf,\bdsg}=\frac{\zet_p(1)^2D_E\sqrt{\chi'(-1)D_E}}{2^\frac{3+t_E}{2}D_T\sqrt{\frkf_{\bdpi,\bdsig}}}\Tht_{\bdsF} \label{tag:63}
\eeq
in view of Proposition \ref{prop:62}. 


\section{The calculation of the local zeta integral at the $p$-adic case}\label{sec:3}

\subsection{The JPSS integrals}\label{ssec:a1}

Let $F$ be a finite extension of $\QQ_p$ which contains the integer ring $\frko$ having a single prime ideal $\frkp$. 
We denote the order of the residue field $\frko/\frkp$ by $q$. 
The absolute value $\Abs_F=|\cdot|$ on $F$ is normalized via $|\vpi|=q^{-1}$ for any generator $\vpi$ of $\frkp$, where $q$ denotes the order of the residue field $\frko/\frkp$. 

Let $B_n=T_nN_n$ be the Borel subgroup of $\GL_n$, where $T_n$ is the group of diagonal matrices in $\GL_n(F)$ and $N_n$ is the group of upper triangular unipotent matrices in $\GL_n(F)$. 
Let $w_n$ be the longest Weyl element in $\GL_n(F)$. 

Fix an additive character $\addchar:F\to\CC^\times$ which is trivial on $\frko$ and non-trivial on $\frkp^{-1}$. 
We write $\cals(F)$ for the space of locally constant compactly supported functions on $F$. 
The Fourier transform of $\phi\in\cals(F)$ is defined by 
\[\widehat{\phi}(y)=\int_F\phi(x)\addchar(-xy)\,\d x. \]
The measure $\d x$ is chosen so that $\widehat{\widehat{\phi}}(x)=\phi(-x)$. 

Let $\pi$ be an irreducible admissible generic representation of $\GL_{m+1}(F)$. 
We write $\scrw_{\addchar}(\pi)$ for the Whittaker model of $\pi$ with respect to an arbitrarily fixed additive character $\addchar$ of $F$. 
One can define an invariant perfect pairing 
\[\La\;,\;\Ra:\scrw_{\addchar}(\pi)\otimes\scrw_{\addchar^{-1}}(\pi^\vee)\to\CC\] 
by (\ref{tag:32}). 
Given $W\in\scrw_{\addchar}(\pi)$, we define $\widetilde{W}\in\scrw_{\addchar^{-1}}(\pi^\vee)$ by 
\[\widetilde{W}(g)=W(w_{m+1}\trs g^{-1}). \] 

Let $n$ be a positive integer which is equal or less than $m$. 
Put $l=m-n$. 
Let $\sig$ be an irreducible admissible generic representation of $\GL_n(F)$ whose central character is $\ome_\sig$. 
We associate to Whittaker functions $W\in \scrw_{\addchar}(\pi)$ and $W'\in\scrw_{\addchar}(\sig)$ the local zeta integrals
\begin{align*}
Z(s,W,W')&=\int_{N_n\bsl\GL_n(F)}W\biggl(\begin{bmatrix} h & \\ & \ono_{l+1} \end{bmatrix}\biggl)W'(h)|\det h|^{s-\frac{l+1}{2}}\,\d h, \\
\widetilde{Z}(s,\widetilde{W},\widetilde{W'})&=\int_{N_n\bsl\GL_n(F)}\int_{\Mat_{l,n}(F)} \widetilde{W}\left(\begin{bmatrix} h & & \\ x & \ono_l & \\ & & 1 \end{bmatrix}\right)\widetilde{W'}(h)|\det h|^{s-\frac{l+1}{2}}\,\d x\d h, 
\end{align*}
which converge absolutely for $\Re s\gg 0$, where $\d h$ is the Haar measure on $\GL_n(F)$ giving $\GL_n(\frko)$ volume $1$. 

We write $L^\GL(s,\pi\times\sig)$, $\vep^\GL(s,\pi\times\sig,\addchar)$ and $\gam^\GL(s,\pi\times\sig,\addchar)$ for the $L$, epsilon and gamma factors associated to $\pi$ and $\sig$. 
These local factors are studied extensively in \cite{JPSS2}. 
The gamma factor is defined as the proportionality constant of the functional equation 
\beq
Z(1-s,\pi^\vee(w_{m+1,n})\widetilde{W},\widetilde{W'})=\ome_\sig(-1)^m\gam^\GL(s,\pi\times\sig,\addchar)\widetilde{Z}(s,W,W'), \label{tag:a2}
\eeq
where 
\[w_{m+1,n}=\begin{bmatrix} 
 \ono_n & \\ & w_{m-n+1} 
 \end{bmatrix}. \]
 
\begin{remark}\label{rem:a1}
When we view $\pi$ and $\sig$ are representations of unitary groups over the split quadratic algebra $F\oplus F$, 
\[L(s,\pi\times\sig)=L^\GL(s,\pi\times\sig)L^\GL(s,\pi^\vee\times\sig^\vee). \]
When $n=1$ and $\chi$ is a character of $F^\times$, we will write 
\begin{align*}
L(s,\pi\otimes\chi)&=L^\GL(s,\pi\times\chi), \\   
\vep(s,\pi\otimes\chi,\addchar)&=\vep^\GL(s,\pi\times\chi,\addchar), \\
\gam(s,\pi\otimes\chi,\addchar)&=\gam^\GL(s,\pi\times\chi,\addchar). 
\end{align*}
These local factors are studied extensively also in \cite{GJ}. 
\end{remark}


\subsection{The splitting lemma}\label{ssec:a2}
  
Let $\pi$ be an irreducible admissible tempered representation of $\GL_{m+1}(F)$ and $\sig$ that of $\GL_m(F)$. 
We consider the integral
\[J(W_1^{},W_2^{},W_1',W_2')=\int_{\GL_n(F)}\biggl\La\pi\biggl(\begin{bmatrix} h & \\ & 1 \end{bmatrix}\biggl)W_1^{},W_2^{}\biggl\Ra\La \sig(h)W_1',W_2'\Ra'\,\d h,  \]
which is convergent for 
\begin{align*}
W_1&\in\scrw_{\addchar}(\pi^{}), &
W_2&\in\scrw_{\addchar^{-1}}(\pi^\vee), &
W_1'&\in\scrw_{\addchar^{-1}}(\sig^{}), & 
W_2'&\in\scrw_{\addchar}(\sig^\vee). 
\end{align*}

The following result is called a splitting lemma and was proved by Wei Zhang in Proposition 4.10 of \cite{Z} by using the work of Lapid and Mao \cite{LM}. 
It is worth noting that Proposition 4.10 of \cite{Z} uses unnormalized local Haar measures (cf. \S 2.1 of \cite{Z}) while we here use normalized ones. 

\begin{lemma}\label{lem:a1}
Notation being as above, we have 
\[J(W_1^{},W_2^{},W_1',W_2')=Z\biggl(\frac{1}{2},W_1^{},W_1'\biggl)Z\biggl(\frac{1}{2},W_2^{},W_2'\biggl)\prod_{i=1}^{m-1}\zet_F(i). \]
\end{lemma}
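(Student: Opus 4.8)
\textbf{Plan for the proof of Lemma \ref{lem:a1} (the splitting lemma).}
The statement is essentially a citation of Zhang's Proposition 4.10 in \cite{Z}, so the proof is mostly a matter of extracting the identity from that reference and reconciling the normalizations of Haar measures. The plan is therefore to set up the dictionary between the local zeta integrals $Z(s,W,W')$ as defined in \S\ref{ssec:a1} and the global-to-local period computation that underlies the splitting lemma, and then to track the factors $\zet_F(i)$ that appear when one passes from unnormalized to normalized measures. First I would recall that the integral $J(W_1,W_2,W_1',W_2')$ is the local component of the Ichino-Ikeda period integral $I(\bfW_1,\bfW_2)$ of \S\ref{ssec:24}, specialized to the split algebra $F\oplus F$, where the local pairings $\La\;,\;\Ra$ and $\La\;,\;\Ra'$ are the Whittaker pairings from (\ref{tag:32}); this is what makes the Rankin-Selberg integral appear.

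The key steps, in order, are: (i) recall the definition of the invariant pairing $\La\;,\;\Ra$ on $\scrw_{\addchar}(\pi)\otimes\scrw_{\addchar^{-1}}(\pi^\vee)$ and express both $\La\pi(\mathrm{diag}(h,1))W_1,W_2\Ra$ and $\La\sig(h)W_1',W_2'\Ra'$ as integrals over $N_n\bsl\GL_n(F)$ (or an appropriate mirabolic quotient) against the Whittaker functions; (ii) substitute these into $J$, interchange the order of integration (justified by temperedness, which gives absolute convergence), and recognize the resulting double integral as a product of two Rankin-Selberg integrals in the variables $h$ and the auxiliary integration variable, each evaluated at the center $s=\tfrac12$; (iii) apply the unfolding identity of Zhang (Proposition 4.10 of \cite{Z}), which yields exactly $Z(\tfrac12,W_1,W_1')Z(\tfrac12,W_2,W_2')$ up to the normalization constant; (iv) finally, compute that normalization constant. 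Zhang works with unnormalized local Tamagawa-type measures, whereas here $\d h$ gives $\GL_n(\frko)$ volume $1$ and the $\d x$ in the zeta integrals is self-dual for $\addchar$; the ratio between these conventions on $\GL_{n}(F)$ (and on the unipotent/mirabolic pieces) is a product of local zeta values, and a bookkeeping of the volumes of $\GL_j(\frko)$ for $j=1,\dots,m-1$ produces precisely $\prod_{i=1}^{m-1}\zet_F(i)$. The footnote in the excerpt already flags that this measure reconciliation is the only substantive discrepancy with the reference.

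The main obstacle I expect is step (iv): getting the exact power of $q$ and the exact set of zeta factors right. One must be careful that $\zet_F(s)=(1-q^{-s})^{-1}$, that the self-dual measure $\d x$ on $F$ for a character of conductor $\frko$ gives $\frko$ volume $1$ (so no discriminant factor intrudes, since $\addchar$ is trivial on $\frko$ and nontrivial on $\frkp^{-1}$), and that the Iwasawa decomposition $\GL_n(F)=N_nT_n\GL_n(\frko)$ used to pass between the abstract invariant pairing and its Whittaker-integral realization contributes its own normalization. The cleanest route is to first verify the identity for a single well-chosen pair of test vectors — e.g.\ essential/new vectors, where both sides are explicit $L$-values and $\vep$-factors by \cite{JPSS} — thereby pinning down the constant, and then invoke the fact that both sides of the claimed identity are bilinear functionals in $(W_1,W_1')$ and $(W_2,W_2')$ transforming the same way under $\GL_n(F)\times\GL_n(F)$, so that matching on one vector forces the identity in general. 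Temperedness is used throughout to guarantee all integrals converge absolutely and that the relevant spaces of invariant functionals are one-dimensional.
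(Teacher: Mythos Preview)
Your proposal is correct and matches the paper's treatment: the paper does not prove this lemma at all but simply cites Proposition 4.10 of \cite{Z} (which relies on \cite{LM}) and remarks that the only discrepancy is the passage from Zhang's unnormalized measures to the normalized ones used here, which is exactly the measure reconciliation you outline in step (iv). Your suggestion to pin down the constant by checking on essential vectors is a reasonable elaboration, but the paper itself leaves even that bookkeeping to the reader.
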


\begin{Remark}\label{rem:a2}
\begin{enumerate}
\item\label{rem:a21} Assume that $\pi$ and $\sig$ are unramified and that $\addchar$ is trivial on $\frko$ but non-trivial on $\frkp^{-1}$. 
Let $W_\pi\in\scrw_{\addchar}(\pi)$ be the normalized essential vector defined by $W_\pi(k)=1$ for $k\in\GL_{m+1}(\frko)$. 
Define $W_{\pi^\vee}$, $W_\sig$ and $W_{\sig^\vee}$ similarly.
Then 
\begin{align}
Z(s,W_\pi,W_\sig)&=L^{\GL}\biggl(s+\frac{1}{2},\pi\times\sig\biggl), \notag\\
\La W_\pi,W_{\pi^\vee}\Ra&=\frac{L^\GL(1,\pi^{}\times\pi^\vee)}{\zet_F(m+1)}. \label{tag:a3}
\end{align} 
by \cite[(3.3)]{Z}. 
Lemma \ref{lem:a1} reproves the formula
\[\frac{J(W_\pi,W_{\pi^\vee},W_\sig,W_{\sig^\vee})}{\La W_\pi,W_{\pi^\vee}\Ra\La W_\sig,W_{\sig^\vee}\Ra'}=\frac{L\bigl(\frac{1}{2},\pi\times\sig\bigl)\prod_{i=1}^{m+1}\zet_F(i)}{L^\GL(1,\pi\times\pi^\vee)L^\GL(1,\sig\times\sig^\vee)} \]
(cf. Remark \ref{rem:a1}), which was proved in Theorem 2.12 of \cite{NHarris}. 
\item\label{rem:a22} Proposition 5.1 of \cite{MH} is a triple product analogue of this lemma (cf. Proposition 3.8 of \cite{CH}). 
\end{enumerate}
\end{Remark}


\subsection{Ordinary vectors of representations of $\GL_n(F)$}\label{ssec:33}

For $S\subset F$ we write $\II_S$ for the characteristic function of $S$. 
For a compact subgroup $\Gam$ of $\GL_n(F)$ and a representation $(\pi,V)$ of $\GL_n(F)$ we let 
\[V^\Gam=\{v\in V\;|\;\pi(\gam)v=v\text{ for }\gam\in\Gam\}\] 
be the space of $\Gam$-invariant vectors in $V$. 
Put $\caln_n=N_n\cap\GL_n(\frko)$.

Let $\mu_1,\mu_2,\dots,\mu_n$ be characters of $F^\times$. 
The space $V$ of $\pi=I(\mu_1,\mu_2,\dots,\mu_n)$ consists of functions $h:\GL_n(F)\to\CC$ which satisfy
\begin{align*}
h(tug)&=h(g)\wp_n(t)^{1/2}\prod_{i=1}^n\mu_i(t_i), & 
\wp_n(t)&=\prod_{i=1}^n|t_i|^{n+1-2i}
\end{align*}
for $t=\diag[t_1,t_2,\dots,t_n]\in T_n$, $u\in N_n$ and $g\in\GL_n(F)$. 

We define an ordinary vector $h^\mathrm{ord}\in V$ by 
\[h^{\ord}_\pi(tuw_nv)=\II_{\caln_n}(v)\wp_n(t)^{1/2}\prod_{i=1}^n\mu_i(t_i)\]
for $t=\diag[t_1,t_2,\dots,t_n]\in T_n$ and $u,v\in N_n$, where $\II_{\caln_n}$ denotes the characteristic function of $\caln_n$. 
Define the operator $U_\frkp$ on $V^{\caln_n}$ by  
\[U_\frkp h=\sum_{\begin{smallmatrix} u=(u_{i,j})\in\caln_n\\
u_{i,j}\in\frko/\frkp^{j-i}\text{ for }i<j \end{smallmatrix}}\pi(uD_{n,\vpi})h, \]
where 
\[D_{n,\vpi}=\begin{bmatrix} \vpi^{n-1} & 0 & \dots & 0 & 0 \\ 0 & \vpi^{n-2} & \dots & 0 & 0 \\ \vdots & \vdots & \ddots & \vdots & \vdots \\ 0 & 0 & \dots & \vpi & 0 \\ 0 & 0 & \dots & 0 & 1 \end{bmatrix}. \]

\begin{proposition}\label{prop:31}
\[U_\frkp h^{\ord}_\pi=h^{\ord}_\pi\prod_{i=1}^n\Big(\mu_i(\vpi)q^{i-\frac{n+1}{2}}\Big)^{i-1}. \]
\end{proposition}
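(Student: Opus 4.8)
The statement is a direct computation of the action of the Hecke operator $U_\frkp$ on the explicit ordinary vector $h^{\ord}_\pi$ inside the induced representation $\pi = I(\mu_1,\dots,\mu_n)$. The plan is to unwind the two defining formulas — that of $h^{\ord}_\pi$ (supported, after the Bruhat factorization through $w_n$, on $v \in \caln_n$) and that of $U_\frkp$ (a sum over $u = (u_{i,j}) \in \caln_n$ with $u_{i,j}$ running over $\frko/\frkp^{j-i}$, twisted by right translation by $D_{n,\vpi}$) — and evaluate $[U_\frkp h^{\ord}_\pi](g)$ at a general $g = t u' w_n v$ with $t \in T_n$, $u' \in N_n$, $v \in \caln_n$. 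By the left quasi-invariance of $h^{\ord}_\pi$, it suffices to test at $g = w_n$ (and more generally at $g$ in the support), since both sides lie in $V^{\caln_n}$ and transform the same way under the Borel on the left.

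First I would fix $g = w_n$ and write $[U_\frkp h^{\ord}_\pi](w_n) = \sum_u h^{\ord}_\pi(w_n u D_{n,\vpi})$, the sum over $u \in \caln_n$ with the stated congruence ranges. The key step is the Bruhat/triangular manipulation: conjugating a unipotent upper-triangular $u$ by $w_n$ turns it into a lower-triangular unipotent, and then one must re-express $w_n u D_{n,\vpi}$ in the form $t'' u'' w_n v''$ with $t'' \in T_n$, $u'' \in N_n$, $v'' \in \caln_n$ in order to apply the defining formula for $h^{\ord}_\pi$; the characteristic function $\II_{\caln_n}(v'')$ then picks out exactly which $u$ contribute. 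Because the diagonal entries of $D_{n,\vpi}$ are $\vpi^{n-1}, \vpi^{n-2}, \dots, \vpi, 1$, the off-diagonal entry in row $i$, column $j$ with $i < j$ gets scaled by $\vpi^{j-i}$ — which is precisely matched to the range $u_{i,j} \in \frko/\frkp^{j-i}$, so that after the Bruhat rearrangement all surviving $u$ collapse to the single term $u = \ono_n$ (all other choices push $v''$ out of $\caln_n$ or are absorbed). Thus the whole sum reduces to $h^{\ord}_\pi(w_n D_{n,\vpi})$, which I then evaluate: $w_n D_{n,\vpi} w_n^{-1}$ is diagonal, $w_n D_{n,\vpi} = (w_n D_{n,\vpi} w_n^{-1}) w_n$, and $h^{\ord}_\pi$ of a diagonal times $w_n$ is $\wp_n(\cdot)^{1/2} \prod_i \mu_i(\cdot)$.

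Finally I would assemble the constant. The diagonal matrix $w_n D_{n,\vpi} w_n^{-1}$ has $i$-th entry $\vpi^{n-i}$ (the reversal of the exponents $n-1,\dots,0$), so $\prod_i \mu_i$ contributes $\prod_i \mu_i(\vpi)^{n-i}$ and the modulus factor $\wp_n^{1/2}$ contributes $\prod_i |\vpi^{n-i}|^{(n+1-2i)/2} = q^{-\sum_i (n-i)(n+1-2i)/2}$. A short index bookkeeping shows the combined exponent of $\mu_i(\vpi)$ is $i-1$ after relabelling, and that the power of $q$ matches $\prod_i (q^{i - (n+1)/2})^{i-1}$; equivalently one checks $\sum_i (n-i)(n+1-2i)/2 = \sum_i (i-1)((n+1)/2 - i)$, which is a routine symmetric-sum identity. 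This yields $U_\frkp h^{\ord}_\pi = h^{\ord}_\pi \prod_{i=1}^n (\mu_i(\vpi) q^{i - (n+1)/2})^{i-1}$.

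\textbf{Main obstacle.} The delicate point is the Bruhat rearrangement of $w_n u D_{n,\vpi}$ into standard $T_n N_n w_n \caln_n$ form and the verification that exactly the congruence ranges $u_{i,j} \in \frko/\frkp^{j-i}$ make every term other than $u = \ono_n$ vanish against $\II_{\caln_n}$ — this requires care with how the lower-triangular conjugate interacts with the scaling by $D_{n,\vpi}$ and with the fact that we only integrate $u_{i,j}$ over a truncated range rather than all of $\frko$. Everything else is a bounded index computation. This is the higher-rank analogue of the classical $\GL_2$ computation that the ordinary $U_p$-eigenvalue is the unit root, and the structure of the argument follows \cite[Theorem 5.3]{Hid04} and \cite[Lemma 5.4]{Geraghty}.
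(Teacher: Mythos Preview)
Your approach is essentially the paper's: reduce to evaluating at $g=w_n$ (the paper secures this via the support argument that $D_{n,\vpi}\caln_n D_{n,\vpi}^{-1}\subset\caln_n$ forces $U_\frkp h^{\ord}_\pi$ to be supported on $B_n w_n \caln_n$), observe that in the factorization $w_n u D_{n,\vpi}=(w_n D_{n,\vpi}w_n^{-1})\,w_n\,(D_{n,\vpi}^{-1}uD_{n,\vpi})$ the last factor lies in $\caln_n$ only for $u=\ono_n$, and then compute $h^{\ord}_\pi(w_n D_{n,\vpi})$. One indexing slip to correct: $(w_n D_{n,\vpi} w_n^{-1})_{ii}=\vpi^{\,i-1}$, not $\vpi^{\,n-i}$, so $\prod_i\mu_i(\vpi)^{i-1}$ falls out directly and no ``relabelling'' is needed---nor would relabelling the fixed characters $\mu_i$ be legitimate.
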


\begin{proof}
Let $g\in\GL_n(F)$ be such that $[U_\frkp h^{\ord}_\pi](g)\neq 0$. 
There exists $u\in\caln_n$ such that $h^{\ord}_\pi(guD_{n,\vpi})\neq 0$. 
We have $guD_{n,\vpi}\in B_nw_n\caln_n$. 
Since $D_{n,\vpi}^{}\caln_n^{}D_{n,\vpi}^{-1}\subset\caln_n^{}$, we get $g\in B_nw_n\caln_n$. 
By the characterization of $h^{\ord}_\pi$ we see that $h^{\ord}_\pi$ is an eigenvector of $U_\frkp$ with eigenvalue $[U_\frkp h^{\ord}_\pi](w_n)$. 
By definition we know that $[U_\frkp h^{\ord}_\pi](w_n)$ equals 
\[\sum_{u_{i,j}\in\frko/\frkp^{j-i}}h^{\ord}_\pi(w_nUD_{n,\vpi})=h^{\ord}_\pi(w_nD_{n,\vpi}), \]
from which we obtain the formula for the eigenvalue. 
\end{proof}


\subsection{An inductive property}\label{ssec:34} 

Fix an additive character $\addchar$ of $F$ which is trivial on $\frko$ but non-trivial on $\frkp^{-1}$. 
Define the additive character of $N_n$ by 
\[\addchar(u)=\addchar(u_{1,2}+u_{2,3}+\cdots+u_{n-1,n})\]
for $u=(u_{i,j})\in N_n$.  
For an irreducible admissible generic representation $\sig$ of $\GL_n(F)$ we write $\scrw_{\addchar}(\sig)$ for the Whittaker model of $\sig$ with respect to $\addchar$. 

Let $\pi$ be the irreducible generic constituent of $I(\mu_1,\mu_2,\dots,\mu_n)$. 
For $h\in V$ we define $W_{\addchar}(h)\in\scrw_{\addchar}(\pi)$ by 
\beq
W_{\addchar}(g,h)=\int_{N_n}h(w_nug)\overline{\addchar(u)}\,\d u\label{tag:31}
\eeq 
for $g\in\GL_n(F)$. 
Put 
\beq\label{E:Word}\begin{aligned}
W^{\ord}_\pi&=W_{\addchar}(h^{\ord}_\pi), & 
e_{n-1}&=(0,0,\dots,0,1)\in F^{n-1}. 
\end{aligned}\eeq
Define $\JJ_n(y)=\II_{\frko^n}(\trs y)$ for low vectors $y$. 
Let $\pi'$ the irreducible generic constituent of $I(\mu_2,\mu_3,\dots,\mu_n)$ and $\pi''$ that of $I(\mu_1,\mu_2,\dots,\mu_{n-1})$. 

\begin{lemma}\label{lem:31}
If $n\geq 2$, then for $g\in\GL_{n-1}(F)$ 
\begin{align*}
W^{\ord}_\pi\biggl(\begin{bmatrix} g & \\ & 1 \end{bmatrix}\biggl)&=|\det g|^\frac{1}{2}W^{\ord}_{\pi'}(g)\JJ_{n-1}(e_{n-1}g), \\
W^{\ord}_\pi\biggl(\begin{bmatrix} 1 & \\ & g \end{bmatrix}\biggl)&=|\det g|^{-\frac{1}{2}}W^{\ord}_{\pi''}(g)\JJ_{n-1}(e_{n-1}w_{n-1}\trs g^{-1}).   
\end{align*}
\end{lemma}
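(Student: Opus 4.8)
The plan is to unfold the Jacquet integral (\ref{tag:31}) by hand, exploiting that the ordinary section $h^{\ord}_\pi$ is supported on a very small set. First I would record two structural facts. By uniqueness of the Bruhat decomposition on the big cell $B_nw_nN_n$, the section $h^{\ord}_\pi$ vanishes off $B_nw_n\caln_n$, and for $m$ in the big cell one has $m\in B_nw_n\caln_n$ iff $w_nm\in\overline{B}_n\caln_n$, where $\overline{B}_n=w_nB_nw_n$ is the lower-triangular Borel. The combinatorial input is the elementary observation that a matrix $\begin{bmatrix} A & v\\ 0 & 1\end{bmatrix}$ with $A\in\GL_{n-1}(F)$, $v\in F^{n-1}$ lies in $\overline{B}_n\caln_n$ precisely when $A\in\overline{B}_{n-1}\caln_{n-1}$ and, writing $A=\ell\kappa$ with $\ell$ lower triangular and $\kappa\in\caln_{n-1}$ (a factorization unique because $\overline{B}_{n-1}\cap\caln_{n-1}=\{1\}$), the extra condition $\ell^{-1}v\in\frko^{n-1}$ holds; in that case $\begin{bmatrix} A & v\\ 0 & 1\end{bmatrix}=\begin{bmatrix}\ell & 0\\ 0 & 1\end{bmatrix}\begin{bmatrix}\kappa & \ell^{-1}v\\ 0 & 1\end{bmatrix}$.

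For the first identity I would parametrize $u\in N_n$ as $u=\begin{bmatrix} u_1 & x\\ 0 & 1\end{bmatrix}$ with $u_1\in N_{n-1}$, $x\in F^{n-1}$, so that $\d u=\d u_1\,\d x$ and $\addchar(u)=\addchar(u_1)\addchar(x_{n-1})$. Since $w_n\cdot\bigl(w_nu\begin{bmatrix} g & \\ & 1\end{bmatrix}\bigr)=\begin{bmatrix} u_1g & x\\ 0 & 1\end{bmatrix}$, the criterion above forces $u_1g=\ell\kappa\in\overline{B}_{n-1}\caln_{n-1}$ and $\ell^{-1}x\in\frko^{n-1}$ for the integrand to be nonzero; pushing the diagonal part of $\ell$ through $w_n$ shows that then $w_nu\begin{bmatrix} g & \\ & 1\end{bmatrix}\in T_nN_nw_n\caln_n$ with torus coordinate $\diag(1,t'_1,\dots,t'_{n-1})$, where $\diag(t'_1,\dots,t'_{n-1})$ is the diagonal of $w_{n-1}\ell w_{n-1}$. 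Evaluating $h^{\ord}_\pi$ and comparing with the one-level-lower expression for $h^{\ord}_{\pi'}(w_{n-1}u_1g)$ (where $\pi'$ has inducing data $\mu_2,\dots,\mu_n$, so the $\mu_1$-slot contributes only $\mu_1(1)=1$) gives
\[h^{\ord}_\pi\Bigl(w_nu\begin{bmatrix} g & \\ & 1\end{bmatrix}\Bigr)=|\det g|^{-1/2}\,h^{\ord}_{\pi'}(w_{n-1}u_1g)\,\II_{\frko^{n-1}}(\ell^{-1}x),\]
the prefactor being $\bigl(\wp_n(\diag(1,t'))/\wp_{n-1}(t')\bigr)^{1/2}$ together with $\prod_j|t'_j|=|\det\ell|=|\det g|$; off the big cell of $\GL_{n-1}$ both sides simply vanish. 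The inner $x$-integral is then evaluated by $y=\ell^{-1}x$ ($\d x=|\det g|\,\d y$): orthogonality of $\addchar$ on $\frko$ gives $\int_{\frko^{n-1}}\overline{\addchar((\ell y)_{n-1})}\,\d y=\II_{\frko^{n-1}}(\text{last row of }\ell)$, and since the last row of $u_1g$ is $e_{n-1}g$ while $\ell$ and $u_1g$ differ on the right by $\kappa\in\GL_{n-1}(\frko)$, this indicator equals $\JJ_{n-1}(e_{n-1}g)$. Assembling, the remaining integral over $u_1$ is exactly $W^{\ord}_{\pi'}(g)$ and the total prefactor is $|\det g|^{-1/2}\cdot|\det g|=|\det g|^{1/2}$, which is the first formula.

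For the second identity I would either run the mirror computation, parametrizing $u=\begin{bmatrix} 1 & x\\ 0 & u_1\end{bmatrix}$ with $u_1\in N_{n-1}$ in the lower-right block, or --- more cleanly --- deduce it from the first via the involution $\theta(g)=w_n\,\trs g^{-1}\,w_n$ of $\GL_n(F)$, which fixes $B_n$, $\caln_n$ and $w_n$, satisfies $h^{\ord}_\pi\circ\theta=h^{\ord}_{\pi^\vee}$ with $\pi^\vee=I(\mu_n^{-1},\dots,\mu_1^{-1})$, and has $\overline{\addchar(\theta(u))}=\addchar(u)$; substituting $u\mapsto\theta(u)$ in (\ref{tag:31}) expresses $W^{\ord}_\pi(g)$ through the analogous ordinary Whittaker function of $\pi^\vee$ built from $\addchar^{-1}$, evaluated at $\theta(g)$. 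Since $\theta$ carries $\begin{bmatrix} 1 & \\ & g\end{bmatrix}$ to $\begin{bmatrix} w_{n-1}\trs g^{-1}w_{n-1} & \\ & 1\end{bmatrix}$ and $\pi''=I(\mu_1,\dots,\mu_{n-1})$ to the corresponding datum one level down, the first formula --- which holds verbatim for any additive character trivial on $\frko$ but nontrivial on $\frkp^{-1}$ --- yields the second after unwinding $\theta$ once more on $\GL_{n-1}(F)$, with the factor $|\det g|^{-1/2}$ and the twisted argument $e_{n-1}w_{n-1}\trs g^{-1}$ of $\JJ_{n-1}$ produced automatically by $|\det(w_{n-1}\trs g^{-1}w_{n-1})|=|\det g|^{-1}$ and the permutation-invariance of $\JJ_{n-1}$. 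The main obstacle I anticipate is purely organizational: keeping the Bruhat bookkeeping on the big cell straight --- uniqueness of the decompositions, the translation of $\ell^{-1}x\in\frko^{n-1}$ into $\JJ_{n-1}(e_{n-1}g)$ after the change of variables, and the validity of the intermediate identity for $h^{\ord}_\pi$ by zero off the big cell of $\GL_{n-1}$ --- since the underlying integral manipulations are short.
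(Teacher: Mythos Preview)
Your argument is correct. For the first identity your approach and the paper's are the same in substance: both unfold the Jacquet integral and exploit that $h^{\ord}_\pi$ factors through the lower-rank section, differing only in organization. The paper packages the Bruhat analysis into a single clean identity for the section,
\[h^{\ord}_\pi\!\left(\begin{bmatrix} 1 & \\ & g \end{bmatrix}w_{(n-1,1)}\bfu(y)\right)=|\det g|^{-1/2}\,h^{\ord}_{\pi'}(g)\,\II_{\frko^{n-1}}(y),\]
and then the integral over $(u_1,y)$ falls out immediately after the change of variables $y\mapsto gy$; you instead do the equivalent Bruhat bookkeeping inline (the factorization $u_1g=\ell\kappa$ and the condition $\ell^{-1}x\in\frko^{n-1}$), which is more verbose but leads to the same place. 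For the second identity you take a genuinely different route: the paper just says ``proved in the same way'' (a mirror direct computation), whereas you deduce it from the first via the outer involution $\theta(g)=w_n\trs g^{-1}w_n$, using $h^{\ord}_\pi\circ\theta=h^{\ord}_{I(\mu_n^{-1},\dots,\mu_1^{-1})}$, $\overline{\addchar(\theta(u))}=\addchar(u)$, and the block identity $\theta\bigl(\begin{smallmatrix}1&\\&g\end{smallmatrix}\bigr)=\bigl(\begin{smallmatrix}\theta'(g)&\\&1\end{smallmatrix}\bigr)$, then unwinding $\theta'$ once more on $\GL_{n-1}$. This is an elegant symmetry argument that avoids repeating the computation; the paper's direct route is shorter to state but does the work twice.
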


\begin{proof}Put 
 \begin{align*}
 w_{(n-1,1)}&=\begin{bmatrix} 
  & 1 \\
  \ono_{n-1} &  \end{bmatrix}, &  
 \bfu(x)&=\begin{bmatrix} 
 \ono_{n-1} & x \\
   & 1
 \end{bmatrix} 
 \end{align*}
 for $x\in F^{n-1}$. 
 The section $h_\pi^{\ord}$ satisfies 
 \[h^{\ord}_\pi\left(\begin{bmatrix} 1 & \\ & g \end{bmatrix}w_{(n-1,1)}\bfu(y)\right)
=|\det g|^{-\frac{1}{2}}h^{\ord}_{\pi'}(g)\II_{\frko^{n-1}}(y) \]
for $g\in\GL_{n-1}(F)$ and $y\in F^{n-1}$. 
Since  
\[w_n\begin{bmatrix} u & \\ & 1 \end{bmatrix}\bfu(y)\begin{bmatrix} g & \\ & 1 \end{bmatrix}=\begin{bmatrix} 1 & \\ & w_{n-1}ug \end{bmatrix}w_{(n-1,1)}\bfu(g^{-1}y) \]
for $u\in N_{n-1}$ and $y\in F^{n-1}$, we get 
\begin{align*}
&W_{\addchar}\biggl(\begin{bmatrix} g & \\ & 1 \end{bmatrix},h^{\ord}_\pi\biggl)\\
=&\int_{F^{n-1}}\int_{N_{n-1}}h^{\ord}_\pi\biggl(w_n\begin{bmatrix} u & \\ & 1 \end{bmatrix}\bfu(y)\begin{bmatrix} g & \\ & 1 \end{bmatrix}\biggl)\overline{\addchar(u)\addchar(y_{n-1})}\,\d y\d u\\
=&\int_{F^{n-1}}\int_{N_{n-1}}\frac{h^{\ord}_{\pi'}(w_{n-1}ug)}{|\det g|^\frac{1}{2}}\II_{\frko^{n-1}}(g^{-1}y)\overline{\addchar(u)\addchar(e_{n-1}y)}\,\d u\d y\\
=&|\det g|^\frac{1}{2}\int_{N_{n-1}}h^{\ord}_{\pi'}(w_{n-1}ug)\overline{\addchar(u)}\,\d u\,\int_{F^{n-1}}\II_{\frko^{n-1}}(y)\overline{\addchar(e_{n-1}g y)}\,\d y\\
=&|\det g|^\frac{1}{2}W_{\addchar}\left(g,h^{\ord}_{\pi'}\right)\II_{\frko^{n-1}}(\trs g\trs e_{n-1})
\end{align*}
as claimed. 
The second formula can be proved in the same way. 
\end{proof}


\subsection{The pairing}\label{ssec:35} 
Let $\sig$ be an irreducible admissible generic unitary representation of $\GL_n(F)$. 
One can define an invariant perfect pairing 
\[\La\;,\;\Ra:\scrw_{\addchar}(\sig)\otimes\scrw_{\addchar^{-1}}(\sig^\vee)\to\CC\] 
by 
\beq
\La W_1,W_2\Ra=\int_{N_{n-1}\bsl\GL_{n-1}(F)}W_1\biggl(\begin{bmatrix} g & \\ & 1 \end{bmatrix}\biggl)W_2\biggl(\begin{bmatrix} g & \\ & 1 \end{bmatrix}\biggl)\,\d g. \label{tag:32}
\eeq
where $\d g$ be the Haar measure on $\GL_{n-1}(F)$ giving $\GL_{n-1}(\frko)$ volume $1$. 
Given $W\in\scrw_{\addchar}(\sig)$, we define $\widetilde{W}\in\scrw_{\addchar^{-1}}(\sig^\vee)$ by $\widetilde{W}(g)=W(w_n\trs g^{-1})$. 
Let $\pi$ be an irreducible generic unitary constituent of $I(\mu_1,\mu_2,\dots,\mu_n)$. 
Put 
\begin{align*}
\cali_0^{(n)}(\frkp^\ell)&=\{(g_{ij})\in\GL_n(\frko)\;|\;g_{ij}\in \frkp^{\ell(i-j)}\text{ for }i>j\}, \\
\calb_{\pi^{\ord}}^{[\ell]}&=\frac{\zet_F(n)}{L^\GL(1,\pi^{}\times\pi^\vee)}\La W_\pi^{\ord},\pi^\vee(D_{n,\vpi}^{-\ell})\widetilde{W_\pi^\ord}\Ra. 
\end{align*}

The following formula generalizes Lemma 2.8 of \cite{MH}. 

\begin{proposition}\label{prop:32}
If $\pi$ is unitary and $\ell$ is sufficiently large, then 
\[\calb_{\pi^{\ord}}^{[\ell]}=\frac{\zet_F(1)^n}{L^\GL(1,\pi^{}\times\pi^\vee)}\cdot \frac{\alp_\pi^\ell}{[\GL_n(\frko):I^{(n)}_0(\frkp^\ell)]}\prod_{i<j}\frac{\mu_i(-1)}{\gam(1,\mu_i^{-1}\mu_j^{},\addchar)}. \]
\end{proposition}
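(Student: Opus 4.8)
The plan is to compute the pairing $\La W_\pi^{\ord},\pi^\vee(D_{n,\vpi}^{-\ell})\widetilde{W_\pi^\ord}\Ra$ directly from the integral defining $\La\;,\;\Ra$ in \eqref{tag:32}, using the inductive description of the ordinary Whittaker function supplied by Lemma \ref{lem:31}. First I would write out
\[\La W_\pi^{\ord},\pi^\vee(D_{n,\vpi}^{-\ell})\widetilde{W_\pi^\ord}\Ra=\int_{N_{n-1}\bsl\GL_{n-1}(F)}W_\pi^{\ord}\biggl(\begin{bmatrix} g & \\ & 1 \end{bmatrix}\biggl)\widetilde{W_\pi^\ord}\biggl(\begin{bmatrix} g & \\ & 1 \end{bmatrix}D_{n,\vpi}^{-\ell}\biggl)\,\d g,\]
then apply the first identity of Lemma \ref{lem:31} to the left factor and (after transposing-inverting) the second identity of Lemma \ref{lem:31} to $\widetilde{W_\pi^\ord}$, so that each factor becomes $W^{\ord}$ for the $\GL_{n-1}$-subquotient $\pi'=I(\mu_2,\dots,\mu_n)$ (resp. $\pi''=I(\mu_1,\dots,\mu_{n-1})$) times an explicit characteristic-function factor $\JJ_{n-1}$. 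The Iwasawa decomposition on $\GL_{n-1}(F)$ turns the remaining integral into an integral over $T_{n-1}\times\GL_{n-1}(\frko)/(\text{something})$; the $\JJ_{n-1}$ factors, together with the conjugation $D_{n,\vpi}^{-\ell}$, cut the compact integration down to the subgroup $\cali_0^{(n-1)}(\frkp^\ell)$ and produce the index $[\GL_n(\frko):I^{(n)}_0(\frkp^\ell)]$ in the denominator, and the torus integral produces a product of local Tate integrals in the characters $\mu_i^{-1}\mu_j$.

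More precisely, I expect an induction on $n$: the base case $n=1$ is immediate (both Whittaker functions are trivial on $\GL_0$, $[\GL_1(\frko):\cali_0^{(1)}(\frkp^\ell)]=1$, the product over $i<j$ is empty, and $\alp_\pi=1$), and this is essentially Lemma 2.8 of \cite{MH} for $n=2$. For the inductive step I would peel off the last row/column using Lemma \ref{lem:31}, reducing the $\GL_{n-1}$-integral to the $\GL_{n-2}$-integral for the representations $I(\mu_2,\dots,\mu_{n-1})$ appearing in both $\pi'$ and $\pi''$, times a one-dimensional integral; the one-dimensional integral, over the bottom-right torus coordinate, is a Tate local zeta integral whose value is a ratio $L^\GL$-factor times $\zet_F(1)$, contributing the factors $\gam(1,\mu_i^{-1}\mu_j,\addchar)^{-1}$ (via the local functional equation) and the signs $\mu_i(-1)$ coming from the Weyl element $w_n$ in $\widetilde{W}$. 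Tracking the powers of $q$ carefully, the $|\det g|^{\pm 1/2}$ twists from Lemma \ref{lem:31} combine with the shift by $D_{n,\vpi}^{-\ell}$ to give exactly $\alp_\pi^\ell=\prod_i(\mu_i(\vpi)q^{i-(n+1)/2})^{(i-1)\ell}$, matching Proposition \ref{prop:31}; here I would use that $\ell$ is large enough that the support conditions from $\JJ_{n-1}$ stabilize. The normalizing factor $\zet_F(n)/L^\GL(1,\pi\times\pi^\vee)$ in the definition of $\calb_{\pi^{\ord}}^{[\ell]}$ cancels against the $L^\GL(1,\pi\times\pi^\vee)$ arising from the full torus integration, leaving $\zet_F(1)^n/L^\GL(1,\pi\times\pi^\vee)$ as claimed (the $\zet_F(1)^n$ is the product of $n$ copies of the "residue" $\zet_F(1)$, one per diagonal coordinate).

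The main obstacle will be bookkeeping: correctly keeping track of (i) the normalization of Haar measures — the statement uses $\GL_k(\frko)$-volume-one measures throughout, so the index $[\GL_n(\frko):\cali_0^{(n)}(\frkp^\ell)]$ must be extracted from a ratio of volumes rather than appearing for free — and (ii) the precise interaction between the conjugation by $D_{n,\vpi}^{-\ell}$, the two applications of Lemma \ref{lem:31} (one "from the top", one "from the bottom"), and the transpose-inverse in $\widetilde{W}$, which is where the signs $\mu_i(-1)$ and the asymmetric roles of $\mu_i$ versus $\mu_j$ (for $i<j$) enter. I would organize the computation so that at each inductive stage only one new variable is integrated, reducing the risk of sign or exponent errors, and cross-check the final $q$-power against the known $U_\frkp$-eigenvalue in Proposition \ref{prop:31} and against the $n=2$ case of \cite[Lemma 2.8]{MH}.
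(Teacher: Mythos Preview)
Your plan has a real gap at the step where you propose to apply Lemma \ref{lem:31} to the second factor $\widetilde{W_\pi^{\ord}}\bigl(\begin{smallmatrix} g & \\ & 1\end{smallmatrix}D_{n,\vpi}^{-\ell}\bigr)$. Unwinding $\widetilde{W}$ gives $W_\pi^{\ord}\bigl(w_nD_{n,\vpi}^{\ell}\begin{smallmatrix}\trs g^{-1} & \\ & 1\end{smallmatrix}\bigr)$, and this matrix is not in block form $\begin{smallmatrix}1 & \\ & *\end{smallmatrix}$ or $\begin{smallmatrix}* & \\ & 1\end{smallmatrix}$, so neither formula of Lemma \ref{lem:31} applies to it. Consequently the ``reduce to $I(\mu_2,\dots,\mu_{n-1})$ in both $\pi'$ and $\pi''$'' induction you sketch does not get off the ground, and the claim that the $\JJ_{n-1}$ factors cut the $K$-integral down to an Iwahori congruence subgroup has no proof.

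The paper's argument is structurally different. It applies Lemma \ref{lem:31} only to the first factor $W_\pi^{\ord}$, then substitutes the Jacquet integral \eqref{tag:31} for $W_{\pi'}^{\ord}$ and uses the integration formula \eqref{tag:33} over the \emph{open Bruhat cell} $N_m^-\times T_m\times N_m$ (not the Iwasawa decomposition); a support lemma (Lemma \ref{lem:32}) for the untouched factor $\widetilde{W_\pi^{\ord}}$ kills the $\JJ_m$ cutoff and yields an integral $\scrf_\pi(s)$ over $N_m^-\times T_m$. The inductive step (Lemma \ref{lem:34}) then recognizes the innermost torus integral as a JPSS integral $\widetilde{Z}(s{+}1,W_{b'},\mu_n)$ and applies the functional equation \eqref{tag:a2} to pass to $Z(-s,\pi(w_{n,1})\widetilde{W_{b'}},\mu_n^{-1})$; \emph{this} is where Lemma \ref{lem:31} is used a second time (on $W_\pi^{\ord}$, not on its tilde), producing $\widetilde{W_{\pi''}^{\ord}}$ and reducing $\scrf_\pi(s)$ to $\scrf_{\pi''}(s)$ for $\pi''=I(\mu_1,\dots,\mu_{n-1})$. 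The gamma factors $\gam(1,\mu_i^{-1}\mu_j,\addchar)$ and the signs $\mu_i(-1)$ therefore come from the JPSS functional equation rather than from direct Tate integrals, and the index $[\GL_n(\frko):\cali_0^{(n)}(\frkp^\ell)]$ arises from the measure constant $\gam_m$ in \eqref{tag:33} together with the explicit $q$-powers accumulated in Lemma \ref{lem:34}, not from cutting a $K$-integral by $\JJ$-functions.
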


The proof of Proposition \ref{prop:32} consists of several steps. 
Let $B_n^-=w_n^{}B_n^{}w_n^{-1}$ be the group of lower triangular matrices in $\GL_n(F)$. 
We denote the unipotent radical of $B_n^-$ by $N_n^-$. 
Since there is nothing to prove if $n=1$, we suppose that $n\geq 2$. 
Put 
\begin{align*}
m&=n-1, & 
\Del_m&=\vpi^{-\ell} D_{m,\vpi}^{-\ell}, &
\gam_m&=\prod_{i=1}^m\frac{\zet_F(i)}{\zet_F(1)}. 
\end{align*}

\begin{lemma}\label{lem:32}
Let $W\in\scrw_{\addchar}(\sig)$ and $b\in B_m^-$. 
If $\ell$ is sufficiently large, then $W\biggl(\begin{bmatrix} b\Del_m & \\ & 1 \end{bmatrix}\biggl)=0$ unless $\!\trs(e_mb)\in\frko^m$. 
\end{lemma}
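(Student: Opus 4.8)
\textbf{Plan of proof of Lemma \ref{lem:32}.}
The statement asserts a support condition for Whittaker functions evaluated on $\begin{bmatrix} b\Del_m & \\ & 1\end{bmatrix}$ with $b\in B_m^-$ lower triangular; the plan is to exploit the left $N_m$-equivariance of $W$ under $\addchar$ together with the fact that conjugating a suitable upper-triangular unipotent by the very negative power $\Del_m=\vpi^{-\ell}D_{m,\vpi}^{-\ell}$ blows up the entries, which forces a highly nontrivial additive character value unless the last row of $b$ already lies in $\frko^m$. First I would write $b=n^-t$ with $n^-\in N_m^-$ and $t\in T_m$ the diagonal part; since $t\Del_m$ is still diagonal the claim reduces to controlling $W\big(\begin{bmatrix} n^-t\Del_m & \\ & 1\end{bmatrix}\big)$, and one may absorb $t$ into a change of variables so that effectively we are looking at $W\big(\begin{bmatrix} n^-\Del_m & \\ & 1\end{bmatrix}\big)$ up to an innocuous scalar, or more precisely track how $t$ interacts with the integrality condition $\trs(e_m b)\in\frko^m$. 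The key computation is the identity expressing $\begin{bmatrix} n^-\Del_m & \\ & 1\end{bmatrix}$, for a chosen one-parameter unipotent $u\in N_m$ whose only nonzero off-diagonal entry sits in position $(m-1,m)$ (or more generally the last column), as $u'\begin{bmatrix} n^-\Del_m & \\ & 1\end{bmatrix}$ with $u'$ an explicit unipotent conjugate; then $W$ of the left side equals $\addchar(*)$ times $W$ of the right side, and the argument $*$ is a fixed nonzero $\frkp^{-\ell}$-multiple of the last-row entries of $b$ scaled by $t$.

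Concretely, the mechanism is: for $x\in F$ let $u_x\in N_m$ be the unipotent with entry $x$ in the $(i,m)$ slot for a fixed $i<m$; then $\Del_m^{-1}u_x\Del_m$ has entry $x\vpi^{\ell(m-i)}$ in slot $(i,m)$, which lies in $\frko$ for $\ell$ large and all $x$ in a bounded set, but going the other way, $u_x\begin{bmatrix} n^-\Del_m & \\ & 1\end{bmatrix}$ versus $\begin{bmatrix}(\text{conjugate of }n^-)\,\Del_m & \\ & 1\end{bmatrix}$ produces a left multiplier $u'$ whose character value is $\addchar$ applied to a $\vpi^{-\ell}$-scaled linear form in the last column of $n^-$. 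Since $W$ transforms by $\addchar(u')$ on the left, invariance of $W$ under the compact subgroup fixing it (for $\ell$ large enough that the relevant unipotents are compact) forces $\addchar$ of that linear form to be trivial; as $\addchar$ is trivial on $\frko$ and nontrivial on $\frkp^{-1}$, triviality of $\addchar(\vpi^{-\ell}\cdot(\text{entry}))$ over the allowed range pins the entries of the last row of $b$ (after accounting for $t$) into $\frko$. Running this over each off-diagonal position in the last column/row of $n^-$ and combining with the $T_m$-equivariance that handles $t$ yields exactly $\trs(e_m b)\in\frko^m$ as the necessary condition; otherwise the sum/integral defining $W$ on that argument vanishes.

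\textbf{Main obstacle.} The delicate point is bookkeeping the precise interplay between the diagonal torus part $t$ of $b$ and the integrality condition: a priori scaling by $t$ could compensate for non-integrality of $n^-$, so one must check that the condition $\trs(e_m b)\in\frko^m$ is exactly the right combined constraint rather than a separate condition on $n^-$ and on $t$. I would handle this by choosing the test unipotent $u_x$ adapted to the last row so that the character argument is literally $\addchar$ of the $m$-th row entries of $b$ (which is where $e_m$ enters), using that left multiplication by $N_m$ acts on the rows; then $t$ enters only through how $N_m$-conjugation rescales, and a short direct matrix computation confirms the claimed support set. The only other care needed is making ``$\ell$ sufficiently large'' uniform: since $W$ is smooth it is fixed by some principal congruence subgroup, and one picks $\ell$ so that $\vpi^{\ell}$ times the relevant entries lands inside that level, which is possible because $W$ is a single fixed vector. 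No deep input is required beyond the defining equivariance of the Whittaker model and the normalization of $\addchar$.
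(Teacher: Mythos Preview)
Your proposed mechanism does not work. You place the test unipotent $u_x$ inside $N_m$ (the upper triangular unipotents of the $m\times m$ block), but for $b\in B_m^-$ the product $bu_x$ does \emph{not} factor as $u'b$ with $u'\in N_m$ once $b$ has a nontrivial lower-unipotent part. Already for $m=2$, with $b=\begin{pmatrix}g&0\\y&t\end{pmatrix}$ and $u_x=\begin{pmatrix}1&x\\0&1\end{pmatrix}$, one has $bu_x=\begin{pmatrix}1&\frac{gx}{yx+t}\\0&1\end{pmatrix}b'$ with $b'\neq b$, so evaluating $W$ relates two \emph{different} arguments rather than yielding a character times the original value. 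Your parenthetical ``or more generally the last column'' still refers to the $m$-th column of the $m\times m$ block, and your computation of $\Del_m^{-1}u_x\Del_m$ confirms this; that column is the wrong one.

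The correct test unipotents live in the last column of $\GL_{m+1}$, namely $u=\begin{pmatrix}\ono_m&\Del_m^{-1}\binom{x}{z}\\0&1\end{pmatrix}$ with $x\in\frko^{m-1}$, $z\in\frko$. Since $\Del_m^{-1}=\vpi^\ell D_{m,\vpi}^\ell$ has all diagonal entries in $\frkp^\ell$, this $u$ lies in any prescribed neighborhood of the identity once $\ell$ is large, hence fixes $W$ by smoothness. The commutation is now clean:
\[
\begin{pmatrix}b\Del_m&\\&1\end{pmatrix}\begin{pmatrix}\ono_m&\Del_m^{-1}\binom{x}{z}\\0&1\end{pmatrix}
=\begin{pmatrix}\ono_m&b\binom{x}{z}\\0&1\end{pmatrix}\begin{pmatrix}b\Del_m&\\&1\end{pmatrix},
\]
and the $(m,m+1)$ entry of the left factor is $e_mb\binom{x}{z}=\trs yx+tz$ (writing $e_mb=(\trs y,t)$ for the last row of $b$). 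Hence $W\bigl(\begin{smallmatrix}b\Del_m&\\&1\end{smallmatrix}\bigr)\neq 0$ forces $\addchar(\trs yx+tz)=1$ for all $x\in\frko^{m-1}$, $z\in\frko$, i.e.\ $\trs(e_mb)\in\frko^m$. Note in particular that the argument of $\addchar$ carries no $\vpi^{-\ell}$ factor; the $\ell$-dependence enters only through smoothness of $W$, contrary to your description.
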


\begin{proof}
We write 
\[b=\begin{bmatrix} g & \\ \trs y & t \end{bmatrix}=\begin{bmatrix} g & \\ & t \end{bmatrix}\begin{bmatrix} \ono_{m-1} & \\ t^{-1}\trs y & 1 \end{bmatrix}. \]
Notice that $e_mb=(\trs y, t)$. 
Since  
\[\left[\begin{array}{cc|c} \ono_{m-1} & & \\ t^{-1}\trs y & 1 & \\ \hline & & 1 \end{array}\right]
\left[\begin{array}{c|c} \ono_m & \begin{matrix} x \\ z \end{matrix} \\ \hline & 1 \end{array}\right]
=\left[\begin{array}{c|c} \ono_m & \begin{matrix} x \\ z+\frac{\trs yx}{t} \end{matrix} \\ \hline & 1 \end{array}\right]
\left[\begin{array}{cc|c} \ono_{m-1} & & \\ t^{-1}\trs y & 1 & \\ \hline & & 1 \end{array}\right] \]
for $x,y\in F^{m-1}$ and $z\in F$, we have 
\[W\left(\begin{bmatrix} b\Del_m & \\ & 1 \end{bmatrix}\left[\begin{array}{c|c} \ono_m & \Del_m^{-1}\begin{pmatrix} x \\ z \end{pmatrix} \\ \hline & 1 \end{array}\right]\right)=\addchar(tz+\trs yx)W\biggl(\begin{bmatrix} b\Del_m & \\ & 1 \end{bmatrix}\biggl). \]
If $\ell$ is sufficiently large, then the left hand side is $W\biggl(\begin{bmatrix} b\Del_m & \\ & 1 \end{bmatrix}\biggl)$ for all $x\in\frko^{m-1}$ and $z\in\frko$, which implies that $\trs y\in\frko^{m-1}$ and $t\in\frko$. 
\end{proof}

\begin{lemma}\label{lem:33}
If $\ell$ is sufficiently large, then $B_{\pi^{\ord}}^{[\ell]}=\gam_m\displaystyle\lim_{s\to 0}\scrf_\pi(s)$, where 
\[\scrf_\pi(s)=\int_{N_m^-}\int_{T_m^{}}\widetilde{W_\pi^\ord}\biggl(\begin{bmatrix} vt\Del_m & \\ & 1 \end{bmatrix}\biggl)\prod_{i=1}^m\mu_{n+1-i}(t_i)|t_i|^{s+1+\frac{m}{2}-i}\,\d t\d v. \] 
\end{lemma}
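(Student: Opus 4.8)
The plan is to unfold the pairing defining $B_{\pi^{\ord}}^{[\ell]}$ over $N_m\bsl\GL_m(F)$, insert the inductive formula of Lemma~\ref{lem:31} for the first Whittaker factor, and recognize the resulting integral over the big Bruhat cell as $\gam_m\lim_{s\to 0}\scrf_\pi(s)$. First, since $D_{n,\vpi}=\left[\begin{smallmatrix}\vpi D_{m,\vpi}&\\&1\end{smallmatrix}\right]$ one has $D_{n,\vpi}^{-\ell}=\left[\begin{smallmatrix}\Del_m&\\&1\end{smallmatrix}\right]$, so by the definition (\ref{tag:32}) of the pairing,
\[B_{\pi^{\ord}}^{[\ell]}=\int_{N_m\bsl\GL_m(F)}W_\pi^{\ord}\left(\begin{bmatrix}g&\\&1\end{bmatrix}\right)\widetilde{W_\pi^{\ord}}\left(\begin{bmatrix}g\Del_m&\\&1\end{bmatrix}\right)\,\d g.\]
Applying the first formula of Lemma~\ref{lem:31}, the $W_\pi^{\ord}$-factor becomes $|\det g|^{1/2}W_{\pi'}^{\ord}(g)$ times the characteristic function of the $g$ whose bottom row is integral; this trades the $\GL_n$-Whittaker function for the $\GL_m$-Whittaker function of the tail representation $\pi'$, whose inducing characters, in reverse order, are exactly $\mu_n,\mu_{n-1},\dots,\mu_2$.

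Next I would parametrize a full-measure subset of $N_m\bsl\GL_m(F)$ by the big Bruhat cell, writing coset representatives $g=vt$ with $v\in N_m^-$, $t\in T_m$, together with the appropriate modulus in the invariant measure. This substitution captures the whole contribution, because the explicit shape of $h^{\ord}_{\pi'}$ recalled in \S\ref{ssec:33} (supported on $B_mw_m\caln_m$) makes $W_{\pi'}^{\ord}$ vanish off $N_mN_m^-T_m\caln_m$, so only the big cell survives. Evaluating $W_{\pi'}^{\ord}(vt)$ through the Whittaker integral $W_{\addchar}(h^{\ord}_{\pi'})$, unfolding the $N_m$-integral and using $w_m^{-1}B_mw_m=B_m^-$, reduces it to torus values of $h^{\ord}_{\pi'}$; combining those with $|\det g|^{1/2}$ and the measure modulus produces precisely the weight $\prod_{i=1}^m\mu_{n+1-i}(t_i)|t_i|^{1+m/2-i}$ occurring in $\scrf_\pi$ at $s=0$, and $\gam_m$ absorbs the residual volume/Jacobian constant relating $\d g$ to $\d v\,\d^\times t$. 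For $\ell$ sufficiently large the support of $\widetilde{W_\pi^{\ord}}$ at the pushed-out argument $vt\Del_m$ already lies inside the region where the integrality conditions coming from Lemma~\ref{lem:31} and from the $\caln_m$-support of $h^{\ord}_{\pi'}$ are automatically met, so those conditions may be dropped; what remains is exactly $\gam_m\,\scrf_\pi(0)$. The regulator $s$ is inserted because $\scrf_\pi(s)$ converges absolutely only on a half-plane: one therefore runs the unfolding for $\Re s\gg 0$ and continues (rationally in $q^{-s}$) to $s=0$, obtaining $B_{\pi^{\ord}}^{[\ell]}=\gam_m\lim_{s\to 0}\scrf_\pi(s)$.

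The main obstacle is the bookkeeping in the middle step: one must keep track of three distinct sources of modulus characters — the Jacobian of the big-cell coordinates on $N_m\bsl\GL_m(F)$, the factor $|\det g|^{1/2}$ from Lemma~\ref{lem:31}, and the behaviour of $W_{\pi'}^{\ord}$ along $T_m$ — and verify that they assemble into exactly $|t_i|^{1+m/2-i}$ with the characters $\mu_{n+1-i}$ in the correct order, while also checking that for large $\ell$ the several compact-support and integrality conditions are all dominated by the support of $\widetilde{W_\pi^{\ord}}$. This is the higher-rank analogue of the $\GL_2$ computation behind \cite[Lemma~2.8]{MH}.
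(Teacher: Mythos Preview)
Your sketch is correct and follows essentially the same route as the paper. The paper's execution differs only in the order of the two middle operations: it first substitutes the Whittaker integral $W^{\ord}_{\pi'}=W_{\addchar}(h^{\ord}_{\pi'})$ and unfolds the $N_m\backslash\GL_m(F)$-integral to a full $\GL_m(F)$-integral (using the $\addchar$-cancellation between the Whittaker character and the $\addchar^{-1}$-equivariance of $\widetilde{W_\pi^{\ord}}$), and only \emph{then} applies the big-cell decomposition $g=vtu$ via the integration formula~(\ref{tag:33}). In that order the identity $h^{\ord}_{\pi'}(w_m vtu)=h^{\ord}_{\pi'}(w_m t)\,\II_{\caln_m}(u)$ is immediate, so the $N_m$-factor integrates to $\mathrm{vol}(\caln_m)=1$ rather than being a constraint to ``drop''. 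The removal of the $\JJ_m$-condition for $\ell\gg 0$ is exactly Lemma~\ref{lem:32}.
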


\begin{proof}
Put $W=\widetilde{W_\pi^\ord}$. 
We define the function $\scrb_\pi(s)$ by the integral  
\[\scrb_\pi(s)=\int_{N_m\bsl\GL_m(F)}|\det g|^sW^{\ord}_\pi\biggl(\begin{bmatrix} g & \\ & 1 \end{bmatrix}\biggl)W\biggl(\begin{bmatrix} g\Del_m & \\ & 1 \end{bmatrix}\biggl)\,\d g \]
for $\Re s\gg 0$. 
Then $B_{\pi^{\ord}}^{[\ell]}=\displaystyle\lim_{s\to 0}\scrb_\pi(s)$. 
We write 
\begin{align*}
\scrb_\pi(s)&=\int_{N_m\bsl\GL_m(F)}|\det g|^{s+\frac{1}{2}}W^{\ord}_{\pi'}(g)\JJ_m(e_mg)W\biggl(\begin{bmatrix} g\Del_m & \\ & 1 \end{bmatrix}\!\biggl)\,\d g\\
&=\int_{\GL_m(F)}|\det g|^{s+\frac{1}{2}}h^{\ord}_{\pi'}(w_mg)\JJ_m(e_mg)W\biggl(\begin{bmatrix} g\Del_m & \\ & 1 \end{bmatrix}\!\biggl)\,\d g, 
\end{align*}
using Lemma \ref{lem:31} and substituting the integral expression (\ref{tag:31}) of $W^{\ord}_{\pi'}$. 
 
Since $[\GL_m(\frko):\cali_m]=q^{m(m-1)/2}\gam_m^{-1}$, where $\cali_m$ is the Iwahori subgroup of $\GL_m(\frko)$, we see the following integral formula from (2) on p.~240 of \cite{W1}:
\beq
\int_{\GL_m(F)}\calf(g)\,\d g=\gam_m\int_{N_m^-}\int_{T_m^{}}\int_{N_m^{}}\calf(vtu)\wp_m(t)\,\d u\d t\d v \label{tag:33}
\eeq
for an integrable function $\calf$ on $\GL_m(F)$. 
It follows that  
\[\frac{\scrb_\pi(s)}{\gam_m}=\int_{N_m^-}\int_{T_m^{}}|\det t|^{s+\frac{1}{2}}h^{\ord}_{\pi'}(w_mt)\JJ_m(e_mvt)W\biggl(\begin{bmatrix} vt\Del_m & \\ & 1 \end{bmatrix}\!\biggl)\wp_m(t)\d t\d v. \]
The right hand side coincides with $\scrf(s)$ by Lemma \ref{lem:32}. 
\end{proof}

\begin{lemma}\label{lem:34}
If $n\geq 2$, then 
\[\scrf_\pi(s)=q^{-(n-1)\ell\bigl(s+\frac{n}{4}\bigl)}\mu_n(\vpi^\ell)^{n-1}\zet_F(s+1)\scrf_{\pi''}(s)\prod_{i=1}^{n-1}\frac{\mu_i(-1)}{\gam(s+1,\mu_i^{-1}\mu_n^{},\addchar)}. \]
\end{lemma}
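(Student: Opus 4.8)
The plan is to unfold the definition of $\scrf_\pi(s)$, use the recursive structure of the ordinary Whittaker function (Lemma \ref{lem:31}) to strip off the character $\mu_n$, and identify the leftover one-dimensional integral as a product of Tate $\GL_1$-zeta integrals. First I would rewrite $\widetilde{W_\pi^{\ord}}(g)=W_\pi^{\ord}(w_n\trs g^{-1})$ and expand $W_\pi^{\ord}=W_\addchar(h_\pi^{\ord})$ via its defining Jacquet integral, so that $\widetilde{W_\pi^{\ord}}$ is an integral of the explicit section $h_\pi^{\ord}$ over $N_n^-$. Substituting into $\scrf_\pi(s)$, and using $\trs(vt\Del_m)^{-1}=\Del_m^{-1}t^{-1}\trs v^{-1}$ with $\trs v^{-1}\in N_m$, one obtains a single integral over $N_n^-\times N_m^-\times T_m$ of $h_\pi^{\ord}$ against $\prod_{i=1}^m\mu_{n+1-i}(t_i)|t_i|^{s+1+m/2-i}$ and an additive character.

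Since $h_\pi^{\ord}$ is supported on $B_nw_n\caln_n$ with an explicit value there, the support conditions collapse almost all of the unipotent integrations. After the change of variables dictated by the Bruhat-type rearrangement of $w_n\begin{bmatrix}\trs(vt\Del_m)^{-1}&\\&1\end{bmatrix}$, what survives is: (i) an inner integral reassembling into $\widetilde{W_{\pi''}^{\ord}}$ along $N_{m-1}^-\times T_{m-1}$ twisted by $\Del_{m-1}=\vpi^{-\ell}D_{m-1,\vpi}^{-\ell}$, that is, into $\scrf_{\pi''}(s)$; and (ii) a remaining integral over the single extremal coordinate, the one carrying the entry $\vpi^{-(n-1)\ell}$ of $\Del_m=\vpi^{-\ell}D_{m,\vpi}^{-\ell}$. (Alternatively one may run this through the second identity of Lemma \ref{lem:31}, which peels $\pi$ down to $\pi''$, introduces the twist $|\det g|^{-1/2}$ and the constraint $\JJ_{n-1}(e_{n-1}w_{n-1}\trs g^{-1})$, and yields the same splitting.) The factor from (ii), after unravelling the support constraints, is a product over $i=1,\dots,n-1$ of $\GL_1$ Tate zeta integrals in the character $\mu_i^{-1}\mu_n$ at the point $s+1$, times one self-dual Tate integral contributing $\zet_F(s+1)$; Tate's local functional equation turns the $i$-th of these into $\mu_i(-1)\,\gam(s+1,\mu_i^{-1}\mu_n,\addchar)^{-1}$ up to an $L$-factor that cancels against a matching one coming from $h_\pi^{\ord}$, which produces the product $\prod_{i=1}^{n-1}\mu_i(-1)\,\gam(s+1,\mu_i^{-1}\mu_n,\addchar)^{-1}$ in the statement.

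Finally I would collect the constants: the character $\mu_n$ evaluated on $\vpi^{-(n-1)\ell}$ contributes $\mu_n(\vpi^\ell)^{n-1}$, while the matching powers of $q$ come from that same entry together with the twist $|\det|^{-1/2}$ and the modulus character $\wp_n$, and must combine to $q^{-(n-1)\ell(s+n/4)}$. I expect the main obstacle to be precisely this bookkeeping — together with getting the Bruhat-type rearrangement of $w_n\begin{bmatrix}\trs(vt\Del_m)^{-1}&\\&1\end{bmatrix}$ right so as to trigger the reduction — since the analytic input (Tate's thesis) is routine. A useful internal check is that iterating the resulting identity down to $\GL_1$ must reproduce, through Lemma \ref{lem:33}, the factor $\alp_\pi^\ell/[\GL_n(\frko):\cali_0^{(n)}(\frkp^\ell)]$ and the gamma-product $\prod_{i<j}\mu_i(-1)\,\gam(1,\mu_i^{-1}\mu_j,\addchar)^{-1}$ of Proposition \ref{prop:32}.
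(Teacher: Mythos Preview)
Your recursive shape is right --- peel off one coordinate, recover $\scrf_{\pi''}(s)$, and account for the extra factors --- and this is exactly what the paper does. But the mechanism you propose for producing the product $\prod_{i=1}^{n-1}\gam(s+1,\mu_i^{-1}\mu_n,\addchar)^{-1}$ does not work as stated, and this is where the argument has a genuine gap.

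You write that the leftover integral in (ii), over the single extremal coordinate of $\Del_m$, is ``a product over $i=1,\dots,n-1$ of $\GL_1$ Tate zeta integrals in the character $\mu_i^{-1}\mu_n$ at the point $s+1$, times one self-dual Tate integral contributing $\zet_F(s+1)$''. A single one-variable integral cannot be a product of $n$ independent Tate integrals; there is no mechanism by which unravelling support constraints on $h_\pi^{\ord}$ turns one $t_1$-integral into $n-1$ separate Tate zeta integrals, one for each $\mu_i$. The $\mu_i$ for $i<n$ are buried inside the Whittaker function of $\pi$, not exposed as separate integration variables.

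What the paper actually does is recognise the $t_1$-integral as a JPSS zeta integral $\widetilde Z(s+1,W_{v't'},\mu_n)$ of type $\GL_n\times\GL_1$, where $W_{b'}(g)=\widetilde{W_\pi^{\ord}}\bigl(g\,\mathrm{diag}(1,b',1)\,D_{n,\vpi}^{-\ell}\bigr)$. Then one application of the functional equation \eqref{tag:a2} converts this to $Z(-s,\pi(w_{n,1})\widetilde{W_{b'}},\mu_n^{-1})$ divided by $\gam(s+1,\pi^\vee\otimes\mu_n,\addchar^{-1})$. The product of gamma factors comes out in one stroke from the \emph{multiplicativity} of this $\GL_n\times\GL_1$ gamma factor, $\gam(s+1,\pi^\vee\otimes\mu_n,\addchar^{-1})=\prod_{i=1}^n\gam(s+1,\mu_i^{-1}\mu_n,\addchar^{-1})$, with the $i=n$ term $\gam(s+1,1,\addchar^{-1})=\zet_F(-s)/\zet_F(s+1)$ converting the $\zet_F(-s)$ that arises on the $Z$-side into the $\zet_F(s+1)$ in the statement. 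The evaluation of $Z(-s,\pi(w_{n,1})\widetilde{W_{b'}},\mu_n^{-1})$ itself uses the \emph{first} identity of Lemma~\ref{lem:31} (not the second, as you suggest), after untwisting $\widetilde{W_{b'}}$; this is what makes $\widetilde{W_{\pi''}^{\ord}}$ and hence $\scrf_{\pi''}(s)$ appear, together with the indicator $\II_\frko(a\vpi^{m\ell})$ that makes the remaining $a$-integral equal to $\zet_F(-s)$ times the correct power of $q$ and $\mu_n(\vpi^\ell)$.

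In short: the missing idea is to package the extremal-coordinate integral as a single Godement--Jacquet/JPSS integral and apply \emph{its} functional equation once, rather than trying to see $n-1$ separate Tate flips. Your bookkeeping check at the end is a good one, but it cannot substitute for this step.
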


\begin{proof}
By the definition of the JPSS integral we have  
\begin{align*}
\scrf_\pi(s)
&=\int_{N_{m-1}^-}\int_{T_{m-1}^{}}\widetilde{Z}(s+1,W_{v't'},\mu_n)\prod_{i=2}^m\mu_{n+1-i}(t_i)|t_i|^{s+1+\frac{m}{2}-i}\,\d t'\d v',  
\end{align*}
where $t'=\diag[t_2,\dots,t_n]$, and $W_{b'}\in\scrw_{\addchar^{-1}}(\pi^\vee)$ is defined by 
\[W_{b'}(g)=\widetilde{W_\pi^\ord}\left(g\left[\begin{array}{cc|c} 1 & & \\ & b' & \\ \hline & & 1 \end{array}\right]D_{n,\vpi}^{-\ell}\right) \]
for $b'\in B_{m-1}^-$. 
The functional equation (\ref{tag:a2}) gives 
\[\widetilde{Z}(s+1,W_{b'},\mu_n^{})=\mu_n(-1)^m\frac{Z(-s,\pi(w_{n,1})\widetilde{W_{b'}},\mu_n^{-1})}{\gam(s+1,\pi^\vee\otimes\mu_n^{},\addchar^{-1})}. \]
Since 
\begin{align*}
&\pi(w_{n,1})\widetilde{W_{b'}}\biggl(\begin{bmatrix} a & \\ & \ono_m \end{bmatrix}\biggl)\\
=&W_{b'}\biggl(w_n\begin{bmatrix} a^{-1} & \\ & w_m \end{bmatrix}\biggl)\\
=&\widetilde{W_\pi^\ord}\left(w_n\begin{bmatrix} a^{-1} & \\ & w_m \end{bmatrix}\left[\begin{array}{c|cc} 1 & & \\ \hline & b' & \\ & & 1 \end{array}\right]D_{n,\vpi}^{-\ell}\right)\\
=&W^{\ord}_\pi\left(\begin{bmatrix} a & \\ & w_m \end{bmatrix}\left[\begin{array}{c|cc} 1 & & \\ \hline & \trs b^{\prime-1} & \\ & & 1 \end{array}\right]D_{n,\vpi}^\ell\right)\\
=&W^{\ord}_\pi\left(\left[\begin{array}{c|c} a\vpi^{m\ell} & \\ \hline & w_m\trs\begin{bmatrix} b' & \\ & 1 \end{bmatrix}^{-1}D_{m,\vpi}^\ell \end{array}\right]\right)\\
=&\mu_n(a\vpi^{m\ell})\biggl|\frac{(a\vpi^{m\ell})^m\det b'}{\det D_{m,\vpi}^\ell}\biggl|^\frac{1}{2}\widetilde{W^{\ord}_{\pi''}}\biggl(\begin{bmatrix} b'\Del_{m-1} & \\ & 1 \end{bmatrix}\!\biggl)\II_\frko(a\vpi^{m\ell})
\end{align*}
by Lemma \ref{lem:31}, we have 
\begin{align*}
&Z(-s,\pi(w_{n,1})\widetilde{W_{b'}},\mu_n^{-1})\\
=&\int_{F^\times}\mu_n(\vpi^{m\ell})\biggl|\frac{(a\vpi^{m\ell})^m\det b'}{\det D_{m,\vpi}^\ell}\biggl|^\frac{1}{2}\widetilde{W^{\ord}_{\pi''}}\biggl(\begin{bmatrix} b'\Del_{m-1} & \\ & 1 \end{bmatrix}\biggl)\II_\frko(a\vpi^{m\ell})|a|^{-s-\frac{m}{2}}\,\d a \\
=&\mu_n(\vpi^{m\ell})q^{-m\ell\bigl(s+\frac{m+1}{4}\bigl)}|\det b'|^\frac{1}{2}\widetilde{W^{\ord}_{\pi''}}\biggl(\begin{bmatrix} b'\Del_{m-1} & \\ & 1 \end{bmatrix}\biggl)\zet_F(-s). 
\end{align*} 
We therefore get 
\[\widetilde{Z}(s+1,W_{b'},\mu_n^{})=\frac{\mu_n(-\ome^\ell)^m}{q^{m\ell\bigl(s+\frac{m+1}{4}\bigl)}}|\det b'|^\frac{1}{2}\frac{\widetilde{W^{\ord}_{\pi''}}\biggl(\begin{bmatrix} b'\Del_{m-1} & \\ & 1 \end{bmatrix}\biggl)}{\prod_{i=1}^m\gam(s+1,\mu_i^{-1}\mu_n^{},\addchar^{-1})}\zet_F(s+1) \]
by the multiplicativity of the gamma factor. 
Substituting this expression, we get the inductive formula for $\scrf_\pi(s)$. 
\end{proof}

We are now ready to prove Proposition \ref{prop:32}. 
We have 
\[\scrf_\pi(s)=\frac{\zet_F(s+1)^{n-1}}{q^{\frac{n(n-1)\ell}{2}\bigl(s+\frac{n+1}{3}\bigl)}}\alp_\pi^\ell\prod_{i<j}\frac{\mu_i(-1)}{\gam(s+1,\mu_i^{-1}\mu_j^{},\addchar)}, \]
applying Lemma \ref{lem:34} inductively. 
Since 
\[[\GL_n(\frko):I^{(n)}_0(\frkp^\ell)]=q^{\frac{n(n^2-1)}{6}\ell}\gam_n^{-1}, \]
we immediately deduce the declared formula from Lemma \ref{lem:33}. \qed 


\subsection{The JPSS zeta integral}\label{ssec:37}

Let $\pi$ be an irreducible unitary generic constituent of the principal series $I(\nu,\rho,\mu)$ and $\sig$ that of  $I(\mu',\nu')$. 
The matrix $\vsi$ is defined in (\ref{tag:23}). 

\begin{proposition}\label{prop:33}
Put 
\begin{align*}
&W^{\ord}_\pi=W_{\addchar}(h^{\ord}_\pi), & 
&W^{\ord}_\sig=W_{\addchar^{-1}}(h^{\ord}_\sig), &
\bft_\ell&=\begin{bmatrix}
0 & 0 & -\vpi^{-\ell} \\
0 & \vpi^\ell & 1 \\
\vpi^{2\ell} & \vpi^\ell & 0
\end{bmatrix}.
\end{align*}
If $\ell$ is sufficiently large, then  
\[Z\left(\frac{1}{2},\pi(\vsi\bft_\ell)W^{\ord}_\pi,W^{\ord}_\sig\right)
=\frac{\pm\zet_F(2)(q^{-5/2}\rho(\vpi)\mu(\vpi)^2\nu'(\vpi))^\ell}{\zet_F(1)\gam\bigl(\frac{1}{2},\mu^{}\nu',\addchar\bigl)\gam\bigl(\frac{1}{2},\rho^{}\nu',\addchar\bigl)\gam\bigl(\frac{1}{2},\mu\mu',\addchar\bigl)}. \]
\end{proposition}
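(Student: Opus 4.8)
The strategy is to unfold the JPSS integral and collapse it to a small number of rank-one Tate integrals, following the template of the $\GL_3\times\GL_2$ calculation in \cite{Schmidt} and the triple-product analogue \cite[Proposition~5.1]{MH}. By the definition in \S\ref{ssec:a1}, the quantity to be computed is the value at $s=\frac{1}{2}$ of
\[Z(s,\pi(\vsi\bft_\ell)W^{\ord}_\pi,W^{\ord}_\sig)=\int_{N_2\bsl\GL_2(F)}W^{\ord}_\pi\!\left(\begin{bmatrix} h & 0\\ 0 & 1\end{bmatrix}\vsi\bft_\ell\right)W^{\ord}_\sig(h)\,|\det h|^{s-\frac{1}{2}}\,\d h,\]
and the plan is: (i) perform an Iwasawa/Bruhat decomposition of the $\GL_3(F)$-argument $\begin{bmatrix} h & 0\\ 0 & 1\end{bmatrix}\vsi\bft_\ell$ so that the explicit description of the ordinary section $h^{\ord}_\pi$ recalled in \S\ref{ssec:33} applies; (ii) use the inductive Lemma~\ref{lem:31} to peel off the characters one at a time from $\pi$ and from $\sig$; (iii) evaluate the surviving one-dimensional integrals; (iv) bookkeep the powers of $\vpi$.

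First I would compute $\begin{bmatrix} h & 0\\ 0 & 1\end{bmatrix}\vsi\bft_\ell$ for $h=\begin{bmatrix} a & b\\ c & d\end{bmatrix}$ and write it, after left multiplication by a suitable element of $N_3(F)$ (which contributes a value of $\addchar$) and extraction of a diagonal matrix, in the form $(\text{diagonal})\cdot w\cdot(\text{element of }\GL_3(\frko))$. Using $\GL_2(F)=N_2T_2\GL_2(\frko)$ together with the central character of $\sig$ and the left $N_2$-equivariance of $W^{\ord}_\sig=W_{\addchar^{-1}}(h^{\ord}_\sig)$, the integral over $N_2\bsl\GL_2(F)$ reduces to an integral over the one-parameter torus $\{\diag(y,1)\}$ and a finite sum over $N_2\bsl\GL_2(\frko)$. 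For $\ell$ sufficiently large the support conditions built into $h^{\ord}_\pi$ and $h^{\ord}_\sig$ pin down the $\GL_2(\frko)$-coset and localize $y$ to a translate $\vpi^{c\ell}\cdot(\text{compact set})$ — here the entry $-\vpi^{-\ell}$ of $\bft_\ell$ is exactly what shifts the lattice on which the relevant matrix coefficients live. This stabilization is the role of the hypothesis ``$\ell$ sufficiently large,'' and is the direct analogue of Lemmas~\ref{lem:33}--\ref{lem:34} in the proof of Proposition~\ref{prop:32}.

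After this reduction, and applying Lemma~\ref{lem:31} twice to $\pi\supset I(\nu,\rho,\mu)$ and once to $\sig\supset I(\mu',\nu')$, one is left with three independent $\GL_1$ local zeta integrals against $\addchar(\vpi^{-\ell}\,\cdot\,)$, one for each of the characters $\mu\nu'$, $\rho\nu'$, $\mu\mu'$ singled out by the ordinary (Panchishkin) structure; by Tate's functional equation together with the multiplicativity of the gamma factor, each equals a monomial in $\vpi$ times $\gam(\tfrac{1}{2},\chi,\addchar)^{-1}$. Assembling these with the power-of-$\vpi$ bookkeeping — which is forced to reconstitute $\ell$ copies of the $U_\frkp$-eigenvalues of $h^{\ord}_\pi$ and $h^{\ord}_\sig$ computed in Proposition~\ref{prop:31} — produces the factor $\bigl(q^{-5/2}\rho(\vpi)\mu(\vpi)^2\nu'(\vpi)\bigr)^\ell$ and the local zeta ratio $\zet_F(2)/\zet_F(1)$, while the sign $\pm$ is accounted for by $\gam(s,\chi,\addchar)\gam(1-s,\chi^{-1},\addchar^{-1})=\chi(-1)$ and the factors $\mu_i(-1)$, exactly as in Lemma~\ref{lem:34} and Proposition~\ref{prop:32}.

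The main obstacle I expect is the combinatorics of step (i): because of the permutation $\vsi$ and the negative power $\vpi^{-\ell}$, the argument $\begin{bmatrix} h & 0\\ 0 & 1\end{bmatrix}\vsi\bft_\ell$ is not of the shape $\begin{bmatrix} g & 0\\ 0 & 1\end{bmatrix}$ to which Lemma~\ref{lem:31} applies directly, so the $\GL_2(F)$-variable must be reorganized — splitting its Iwasawa decomposition according to the valuation of its lower-left entry — before the inductive formula becomes usable, and one has to keep careful track of the several nested unipotent integrations and the ranges of the torus variables. Getting the powers of $\vpi$ and the precise sign to come out exactly is the delicate part; everything else is routine and follows \cite[\S5]{MH} and \S\ref{ssec:35} above.
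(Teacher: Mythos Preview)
Your broad strategy---reduce to rank-one Tate integrals and read off gamma factors---matches the paper's endpoint, but the organizing principle is different. You propose to peel off the characters of $\pi$ and $\sig$ via repeated use of Lemma~\ref{lem:31}; the paper does \emph{not} use Lemma~\ref{lem:31} here at all. Instead it substitutes the defining integral \eqref{tag:31} for $W^{\ord}_\sig$, uses the $N_2^-T_2N_2$ integration formula \eqref{tag:33} (not the Iwasawa decomposition with $\GL_2(\frko)$), and then recognizes the inner integral as an honest $\GL_3\times\GL_1$ JPSS integral $\widetilde{Z}(s,\pi(\vsi)W_b,\nu')$ with $W_b=\pi(\diag(1,1,b)\,\bft_\ell)W^{\ord}_\pi$. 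Applying the functional equation \eqref{tag:a2} converts this to $\gam(s,\pi\otimes\nu',\addchar)^{-1}Z(1-s,\widetilde{W}_b,\nu'^{-1})$, and multiplicativity of the gamma factor then produces all three $\gam$'s in one stroke. The remaining work is Lemma~\ref{lem:36}: a bare-hands matrix calculation of $\widetilde{W}_b(\diag(a,1,1))$ by going back to the integral \eqref{tag:31} for $h^{\ord}_\pi$, which yields a product of two Fourier transforms of $\II_{1+\frkp^\ell}$ that feed directly into the Tate integrals over $a$ and $b$.

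This route sidesteps precisely the obstacle you flagged: because $\diag(h,1)\vsi\bft_\ell$ is never of the shape $\diag(g,1)$ or $\diag(1,g)$ to which Lemma~\ref{lem:31} applies, your approach would require the valuation case-split on the lower-left entry of $h$ that you anticipate, whereas the functional-equation step absorbs $\vsi$ and $\bft_\ell$ into a single Mellin transform against $\widetilde{W}_b$. Your plan could probably be pushed through, but it trades one structural identity for a longer combinatorial computation.
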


\begin{lemma}
Put 
\[W_b=\pi\left(\left[\begin{array}{c|cc} 1 & & \\ \hline  & 1 & \\ & & b \end{array}\right]\bft_\ell\right)W^{\ord}_\pi. \]
Then 
\[Z(s,\pi(\vsi)W,W^{\ord}_\sig)
=\frac{\frac{\zet_F(2)}{\zet_F(1)}}{\gam(s,\pi^{}\otimes\nu',\addchar)}\int_{F^\times}Z(1-s,\widetilde{W}_b,\nu^{\prime-1})\mu'(b)|b|^{s-1}\,\d b. \]
\end{lemma}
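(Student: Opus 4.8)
The plan is to prove this by the standard ``partial unfolding plus local functional equation'' device for $\GL_3\times\GL_2$ Rankin--Selberg integrals, as used in \cite{Schmidt} and (in the triple-product setting) in \cite{MH}. Throughout, $\pi$ is realized on $\GL_3(F)$ and $\sig=I(\mu',\nu')$ on $\GL_2(F)$, the latter sitting inside $\GL_3$ through the top-left embedding $\iot'$ used to define the JPSS integral, and $\vsi=w_{3,1}$ is the Weyl element of (\ref{tag:23}). Note that $\vsi$ is a symmetric involution, so that $\widetilde{\pi(\vsi)W}=\pi^\vee(\vsi)\widetilde{W}$ and $\pi^\vee(\vsi)$ is its own inverse; this is the reason the conjugating factor $\vsi$ is inserted on the left-hand side, as will be seen below.

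First I would write out $Z(s,\pi(\vsi)W,W^{\ord}_\sig)$ as an integral over $N_2\backslash\GL_2(F)$ and apply an Iwasawa/Iwahori-type integration formula for $\GL_2$ in the spirit of (\ref{tag:33}); this replaces $\int_{N_2\backslash\GL_2}$ by the constant $\zet_F(2)/\zet_F(1)$ times an iterated integral over the opposite unipotent $N_2^-\cong F$ and the diagonal torus $T_2$. The $N_2^-$-coordinate is destined to become the unique $\Mat_{l,n}(F)$-variable occurring in $\widetilde Z$, here with $l=m-n=1$, $n=1$. Writing a torus element as $\diag(t_1,b)=\diag(b,b)\diag(t_1b^{-1},1)$ and combining the central character of $\sig$ with the restriction-to-mirabolic formula of Lemma \ref{lem:31} for $\GL_2$ (which yields $W^{\ord}_\sig(\diag(g,1))=|g|^{1/2}\nu'(g)\II_{\frko}(g)$), the variable $t_1$ is pinned to a compact set while $b$ survives; after accounting for the modulus $\wp_2$ and the factor $|\det h|^{s-1/2}$ in the definition of $Z$, its weight comes out to be exactly $\mu'(b)|b|^{s-1}$. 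Tracking the resulting $\GL_3$-argument through $\vsi$, $\bft_\ell$ and the surviving translate $\diag(1,1,b)$ identifies the remaining inner integral with a $\GL_3\times\GL_1$ zeta integral of $\widetilde Z$-type attached to $W_b=\pi(\diag(1,1,b)\bft_\ell)W^{\ord}_\pi$ and to the character $\nu'$. Thus one arrives at $Z(s,\pi(\vsi)W,W^{\ord}_\sig)=\frac{\zet_F(2)}{\zet_F(1)}\int_{F^\times}\widetilde Z(s,W_b,\nu')\,\mu'(b)|b|^{s-1}\,\d b$.

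It then remains to invoke the $\GL_3\times\GL_1$ local functional equation (\ref{tag:a2}), with $m=2$, $n=1$, Weyl element $w_{3,1}=\vsi$, and trivial sign $\nu'(-1)^2=1$: it turns $\widetilde Z(s,W_b,\nu')$ into $\gam^{\GL}(s,\pi\times\nu',\addchar)^{-1}$ times an ordinary $Z$-integral in the contragredient data, where the Weyl element $\vsi$ output by the functional equation is absorbed by the $\vsi$ already present (using the involutivity noted above) and the Whittaker data becomes $\widetilde{W_b}$ paired against $\nu'^{-1}$. Since $\gam^{\GL}(s,\pi\times\nu',\addchar)=\gam(s,\pi\otimes\nu',\addchar)$ by Remark \ref{rem:a1}, this gives precisely the asserted formula; absolute convergence for $\Re s\gg0$ and meromorphic continuation are routine from the temperedness of $\pi$ and $\sig$.

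The step I expect to be the main obstacle is the bookkeeping inside the second paragraph: correctly matching the $N_2^-\times T_2$-decomposition of the $\GL_2$-integral with the $\Mat_{l,n}(F)\times(N_n\backslash\GL_n(F))$-structure of $\widetilde Z$ on the $\GL_3$-side---that is, pinning down the precise matrix identities relating $\iot'\big(v\diag(t_1,b)\big)\vsi$, $\bft_\ell$ and $\diag(1,1,b)$---while tracking every modulus character so that the surviving weight is exactly $\mu'(b)|b|^{s-1}$ and the overall constant exactly $\zet_F(2)/\zet_F(1)$. Everything else is formal, following from the Iwasawa decomposition, the defining properties of $W^{\ord}_\pi$ and $W^{\ord}_\sig$, and the functional equation recalled in \S\ref{ssec:a1}.
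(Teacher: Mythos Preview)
Your overall plan matches the paper's: unfold to a $\GL_3\times\GL_1$ integral of $\widetilde Z$-type and apply the functional equation (\ref{tag:a2}) with $w_{3,1}=\vsi$, the extra $\vsi$ being absorbed since $\vsi\,\diag(a,1,1)\,\vsi=\diag(a,1,1)$. The divergence, and the gap, is in how you carry out the unfolding.

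The paper does not parametrize $N_2\backslash\GL_2$ directly. It first substitutes the Jacquet integral (\ref{tag:31}) for $W^{\ord}_\sig=W_{\addchar^{-1}}(h^{\ord}_\sig)$; the resulting character $\addchar(u)$ is absorbed by the left $\addchar$-equivariance of $g\mapsto W(\iota'(g)\vsi)$, turning the integral into one over the \emph{full} group $\GL_2(F)$ against the section $h^{\ord}_\sig(w_2\,\cdot)$. Only then is (\ref{tag:33}) applied, producing four variables $a,b\in F^\times$, $x\in F$ (from $N_2^-$), $y\in F$ (from $N_2$). The section contributes $|b/a|^{1/2}\mu'(b)\nu'(a)\II_\frko(y)$: the compact support lands on the $N_2$-coordinate $y$, \emph{not} on the torus. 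The identity $\vsi\,\iota'(\bfu(y))\,\vsi=\bigl(\begin{smallmatrix}1&0&y\\0&1&0\\0&0&1\end{smallmatrix}\bigr)$ then disposes of $y$, and what remains in $(a,x,b)$ is recognized as $\int_{F^\times}\widetilde Z(s,\pi(\vsi)W_b,\nu')\,\mu'(b)|b|^{s-1}\d b$.

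Your route misidentifies which variable gets pinned. The mirabolic formula of Lemma \ref{lem:31} gives $W^{\ord}_\sig$ only on the torus, and the factor you would extract from it is $\II_\frko(t_1/b)$, forcing the first torus coordinate into $b\cdot\frko$; but in the correct computation that coordinate is the \emph{unconstrained} $\GL_1$-variable in $\widetilde Z$, carrying weight $\nu'(a)|a|^{s-1}$ over all of $F^\times$. With your constraint in place the inner integral is simply not $\widetilde Z$. A further problem is that any $T_2\times N_2^-$ parametrization of $N_2\backslash\GL_2$ forces you to evaluate $W^{\ord}_\sig(tv)$ or $W^{\ord}_\sig(vt)$, which Lemma \ref{lem:31} does not supply (the ordinary Whittaker function is neither left nor right $N_2^-$-invariant). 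Replacing $W^{\ord}_\sig$ by its defining Jacquet integral before decomposing---the paper's move---is exactly what dissolves both difficulties; once you do that the rest of your outline goes through verbatim.
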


\begin{proof}
Put $W=\pi(\bft_\ell)W^{\ord}_\pi$. 
We get 
\[Z(s,\pi(\vsi)W,W^{\ord}_\sig)=\int_{\GL_2(F)}W\biggl(\left[\begin{array}{c|c} g & \\ \hline  & 1 \end{array}\right]\vsi\biggl)h^{\ord}_\sig\biggl(\begin{bmatrix} 0 & 1 \\ 1 & 0 \end{bmatrix}g\biggl)|\det g|^{s-\frac{1}{2}}\,\d g, \]
substituting the integral expression (\ref{tag:31}) of $W^{\ord}_\sig$. 
Put $\bfu(y)=\begin{bmatrix} 1 & y \\ 0 & 1 \end{bmatrix}$ for $y\in F$. 
Then $\frac{\zet_F(1)}{\zet_F(2)}Z(s,\pi(\vsi)W,W^{\ord}_\sig)$ equals 
\[\int_{F^{\times 2}\times F^2}
W\left(\begin{bmatrix} \begin{bmatrix} a & \\ x & b \end{bmatrix}\bfu(y) & \\ & 1 \end{bmatrix}\vsi\right)\biggl|\frac{b}{a}\biggl|^{1/2}\mu'(b)\nu'(a)\II_\frko(y)|ab|^{s-\frac{1}{2}}\,\d y\frac{\d x\d a\d b}{|b|} \]
by the integration formula (\ref{tag:33}). 
Since $\vsi\begin{bmatrix} \bfu(y) & \\ & 1\end{bmatrix}\vsi=\begin{bmatrix} 1 & 0 & y \\ 0 & 1 & 0 \\ 0 & 0 & 1\end{bmatrix}$, we get 
\begin{align*}
&\frac{\zet_F(1)}{\zet_F(2)}Z(s,\pi(\vsi)W,W^{\ord}_\sig)\\
=&\int_{F^{\times 2}\times F}
W\left(\left[\begin{array}{c|cc}  a & & \\ \hline  x & b & \\ & & 1 \end{array}\right]\vsi\right)\mu'(b)\nu'(a)|ab|^{s-1}\,\d x\d a\d b\\
=&\int_{F^\times}\widetilde{Z}(s,\pi(\vsi)W_b,\nu')\mu'(b)|b|^{s-1}\d b, 
\end{align*}  
where 
\[W_b=\pi\left(\vsi\left[\begin{array}{c|cc} 1 & & \\ \hline  & b & \\ & & 1 \end{array}\right]\vsi\right)W
=\pi\left(\left[\begin{array}{c|cc} 1 & & \\ \hline  & 1 & \\ & & b \end{array}\right]\bft_\ell\right)W^{\ord}_\pi. \]

The stated formula follows from the functional equation (\ref{tag:a2})  
\[\widetilde{Z}(s,\pi(\vsi)W_b,\nu')
=\gam(s,\pi^{}\otimes\nu',\addchar)^{-1}Z(1-s,\widetilde{W}_b,\nu^{\prime-1}). \]
We here follow the convention in Remark \ref{rem:a1}. 
\end{proof}

Now we need the following formula: 

\begin{lemma}\label{lem:36}
Put $\Phi=\II_{1+\frkp^\ell}$. 
Then for $a,b\in F^\times$ 
\[\widetilde{W}_b\left(\left[\begin{array}{c|c} a & \\ \hline  & \ono_2 \end{array}\right]\right)
=\rho(\vpi^\ell)\mu(b\vpi^{2\ell})|b|\widehat{\Phi}(-b)\frac{|a\vpi^\ell|\widehat{\Phi}(a\vpi^\ell)}{\nu(-a\vpi^\ell)}. \]
\end{lemma}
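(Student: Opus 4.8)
The plan is to unwind the definitions to a single explicit Whittaker value and then evaluate the defining Jacquet integral by a Bruhat decomposition. Since $W_b(g)=W^{\ord}_\pi\bigl(g\,\diag(1,1,b)\,\bft_\ell\bigr)$ and $\widetilde W_b(g)=W_b(w_3\,{}^tg^{-1})$, multiplying out the matrices gives
\[
\widetilde W_b\left(\begin{bmatrix} a & \\ & \ono_2\end{bmatrix}\right)=W^{\ord}_\pi(M),\qquad
M=\begin{bmatrix} b\vpi^{2\ell} & b\vpi^\ell & 0\\ 0 & \vpi^\ell & 1\\ 0 & 0 & -a^{-1}\vpi^{-\ell}\end{bmatrix},
\]
so everything reduces to computing $W^{\ord}_\pi(M)=\int_{N_3}h^{\ord}_\pi(w_3uM)\,\overline{\addchar(u)}\,\d u$ via $W^{\ord}_\pi=W_\addchar(h^{\ord}_\pi)$ and the formula $h^{\ord}_\pi(tuw_3v)=\II_{\caln_3}(v)\,\wp_3(t)^{1/2}\,\nu(t_1)\rho(t_2)\mu(t_3)$ from \S\ref{ssec:33}.

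Writing $u=\begin{bmatrix}1&u_1&u_3\\0&1&u_2\\0&0&1\end{bmatrix}$, the key observation is that $(w_3uM)w_3$ is lower triangular with diagonal $\bigl(-a^{-1}\vpi^{-\ell},\vpi^\ell,b\vpi^{2\ell}\bigr)$, whose entries are non-zero for $a,b\in F^\times$; hence $w_3uM$ lies in the big cell $B_3w_3N_3$ for every $u$, and reading off its anti-diagonal gives the Bruhat form $w_3uM=t\,w_3\,v$ with $t=\diag\bigl(-a^{-1}\vpi^{-\ell},\vpi^\ell,b\vpi^{2\ell}\bigr)$, $\wp_3(t)^{1/2}=|a|^{-1}|b|^{-1}q^{3\ell}$, and $v=\begin{bmatrix}1&v_1&v_3\\0&1&v_2\\0&0&1\end{bmatrix}$ where $v_1=\vpi^{-\ell}(1+b^{-1}u_1)$, $v_2=\vpi^{-2\ell}(\vpi^\ell-a^{-1}u_2)$, and $v_3=b^{-1}\vpi^{-2\ell}u_1-a^{-1}b^{-1}\vpi^{-3\ell}u_3$. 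Thus $h^{\ord}_\pi(w_3uM)$ equals $|a|^{-1}|b|^{-1}q^{3\ell}\,\nu(-a^{-1}\vpi^{-\ell})\rho(\vpi^\ell)\mu(b\vpi^{2\ell})$ times $\II_\frko(v_1)\II_\frko(v_2)\II_\frko(v_3)$.

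It then remains to integrate this against $\overline{\addchar(u_1+u_2)}\,\d u_1\d u_2\d u_3$. The variable $u_3$ enters only through $v_3$, and $v_3\in\frko$ carves out, for each fixed $u_1$, a single coset of measure $|ab|q^{-3\ell}$ with no character dependence; with $\Phi=\II_{1+\frkp^\ell}$ one has $v_1\in\frko\Leftrightarrow\Phi(-b^{-1}u_1)=1$ and $v_2\in\frko\Leftrightarrow\Phi(a^{-1}\vpi^{-\ell}u_2)=1$, so after the substitutions $u_1=-bx$, $u_2=a\vpi^\ell y$ the $u_1$- and $u_2$-integrals become $|b|\,\widehat\Phi(-b)$ and $|a\vpi^\ell|\,\widehat\Phi(a\vpi^\ell)$. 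Collecting the factors, the powers of $q$ and the absolute values cancel to leave $|b|\widehat\Phi(-b)\,|a\vpi^\ell|\widehat\Phi(a\vpi^\ell)$, and $\nu(-a^{-1}\vpi^{-\ell})=\nu\bigl((-a\vpi^\ell)^{-1}\bigr)$, which yields the claimed identity. The only step requiring genuine care is the Bruhat bookkeeping — checking that $w_3uM$ always falls in the big cell and correctly extracting $t$ and the $v_i(u_1,u_2,u_3)$ — but this is a finite matrix manipulation, and in particular no largeness hypothesis on $\ell$ is needed.
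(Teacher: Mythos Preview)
Your proof is correct and follows essentially the same route as the paper: both reduce to the Jacquet integral $W^{\ord}_\pi(M)=\int_{N_3}h^{\ord}_\pi(w_3uM)\,\overline{\addchar(u)}\,\d u$, extract the Bruhat data $t=\diag(-a^{-1}\vpi^{-\ell},\vpi^\ell,b\vpi^{2\ell})$ and the entries $v_1,v_2,v_3$, and evaluate the resulting Fourier integrals in $u_1,u_2,u_3$ to produce $|b|\widehat\Phi(-b)$ and $|a\vpi^\ell|\widehat\Phi(a\vpi^\ell)$. The only cosmetic difference is that the paper first factors $M$ as a diagonal times a fixed unipotent and conjugates, whereas you compute $(w_3uM)w_3$ directly; the bookkeeping and the final answer are identical.
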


\begin{proof}
Observe that 
\[\widetilde{W}_b(g)
=W_b(w_3\trs g^{-1})
=W^{\ord}_\pi\left(w_3\trs g^{-1}\left[\begin{array}{c|cc} 1 & & \\ \hline  & 1 & \\ & & b \end{array}\right]\bft_\ell\right). \]
It follows that 
\begin{align*}
\widetilde{W}_b\left(\left[\begin{array}{c|c} a & \\ \hline  & \ono_2 \end{array}\right]\right)
=&W^{\ord}_\pi\left(w_3\left[\begin{array}{c|cc} a^{-1} & & \\ \hline  & 1 & \\ & & b \end{array}\right]
\begin{bmatrix}
0 & 0 & -\vpi^{-\ell} \\
0 & \vpi^\ell & 1 \\
\vpi^{2\ell} & \vpi^\ell & 0
\end{bmatrix}\right)\\
=&W^{\ord}_\pi\left(\begin{bmatrix} b\vpi^{2\ell} & 0 & 0 \\ 0 & \vpi^\ell & 0 \\ 0 & 0 & -(a\vpi^\ell)^{-1} \end{bmatrix}
\begin{bmatrix} 1 & \vpi^{-\ell} & 0 \\
0 & 1 & \vpi^{-\ell}\\
0 & 0 & 1\end{bmatrix}\right). 
\end{align*}
Since 
\[\begin{bmatrix}
1 & y & z\\
0 & 1 & x\\
0 & 0 & 1
\end{bmatrix}
\begin{bmatrix} b\vpi^{2\ell} & 0 & 0 \\ 0 & \vpi^\ell & 0 \\ 0 & 0 & \frac{-1}{a\vpi^\ell} \end{bmatrix}
=\begin{bmatrix} b\vpi^{2\ell} & 0 & 0 \\ 0 & \vpi^\ell & 0 \\ 0 & 0 & \frac{-1}{a\vpi^\ell} \end{bmatrix}
\begin{bmatrix}
1 & \frac{y}{b\vpi^\ell} & -\frac{z}{ab\vpi^{3\ell}} \\
0 & 1 & -\frac{x}{a\vpi^{2\ell}}\\
0 & 0 & 1
\end{bmatrix}, \]
we have  
\begin{align*}
&\widetilde{W}_b\left(\left[\begin{array}{c|c} a & \\ \hline  & \ono_2 \end{array}\right]\right)\\
=&\int_{F^3}h^{\ord}_\pi\left(w_3\begin{bmatrix} b\vpi^{2\ell} & 0 & 0 \\ 0 & \vpi^\ell & 0 \\ 0 & 0 & \frac{-1}{a\vpi^\ell} \end{bmatrix}
\begin{bmatrix}
1 & \frac{y+b}{b\vpi^\ell} & \frac{a\vpi^\ell y-z}{ab\vpi^{3\ell}} \\
0 & 1 & \frac{a\vpi^\ell-x}{a\vpi^{2\ell}} \\
0 & 0 & 1
\end{bmatrix}\right)\overline{\addchar(x+y)}\,\d x\d y\d z\\ 
=&\frac{\rho(\vpi^\ell)\mu(b\vpi^{2\ell})}{\nu(-a\vpi^\ell)|ab\vpi^{3\ell}|}
\int_{F^3}h^{\ord}_\pi\left(w_3\begin{bmatrix} 1 &\frac{y+1}{\vpi^\ell} & z \\ 0 & 1 & \frac{1-x}{\vpi^\ell}\\ 0 & 0 & 1 \end{bmatrix}\right)\frac{|a^2b^2\vpi^{4\ell}|}{\addchar(a\vpi^\ell x+by)}\,\d x\d y\d z \\
=&\frac{\rho(\vpi^\ell)\mu(b\vpi^{2\ell})}{\nu(-a\vpi^\ell)}|ab\vpi^\ell|
\int_{F^2}\Phi(y)\Phi(x)\overline{\addchar(a\vpi^\ell x-by)}\,\d x\d y
\end{align*}
from which one can complete the proof of Lemma \ref{lem:36}. 
\end{proof}

We are now ready to prove Proposition \ref{prop:33}. 
For $\phi\in\cals(F)$ and a character $\chi$ of $F^\times$ we define Tate's local integral by 
\[Z(s,\phi,\chi)=\int_{F^\times}\phi(a)\chi(a)|a|^s\,\d a. \] 
Substituting this expression in Lemma \ref{lem:36}, we get
\begin{align*}
Z(1-s,\widetilde{W}_b,\nu^{\prime-1})
&=\rho(\vpi^\ell)\mu(b\vpi^{2\ell})|b|\widehat{\Phi}(-b)\int_{F^\times}\frac{|a\vpi^\ell|\widehat{\Phi}(a\vpi^\ell)}{\nu(-a\vpi^\ell)}\nu'(a)^{-1}\frac{\d a}{|a|^s}\\
&=\rho(\vpi^\ell)\mu(b\vpi^{2\ell})|b|\widehat{\Phi}(-b)\frac{\nu'(\vpi^\ell)}{\nu(-1)q^{\ell s}}Z(1-s,\widehat{\Phi},(\nu\nu')^{-1}). \end{align*}
We conclude that $\frac{\zet_F(1)}{\zet_F(2)}\gam(s,\pi^{}\otimes\nu',\addchar)Z(s,\pi(\vsi)W,W^{\ord}_\sig)$ equals 
\begin{align*}
&Z(1-s,\widehat{\Phi},(\nu\nu')^{-1})\rho(\vpi^\ell)\frac{\nu'(\vpi^\ell)}{\nu(-1)q^{\ell s}}\int_{F^\times}\mu(b\vpi^{2\ell})|b|\widehat{\Phi}(-b)\mu'(b)|b|^{s-1}\,\d b\\
=&Z(1-s,\widehat{\Phi},(\nu\nu')^{-1})\rho(\vpi^\ell)(\nu\mu\mu')(-1)\nu'(\vpi^\ell)q^{-\ell s}\mu(\vpi^{2\ell})Z(s,\widehat{\Phi},\mu\mu'). 
\end{align*}
If $\ell$ is sufficiently large, then by the functional equation 
\begin{align*}
Z(1-s,\widehat{\Phi},(\nu\nu')^{-1})&=\gam(1-s,(\nu\nu')^{-1},\addchar)^{-1}(\nu\nu')(-1)q^{-\ell}, \\ 
Z(s,\widehat{\Phi},(\mu\mu')^{-1})&=\gam(s,(\mu\mu')^{-1},\addchar)^{-1}(\mu\mu')(-1)q^{-\ell}. 
\end{align*}
We conclude that
\[Z\left(\frac{1}{2},\pi(\vsi\bft_\ell)W^{\ord}_\pi,W^{\ord}_\sig\right)
=\frac{\nu'(-1)\frac{\zet_F(2)}{q^{5\ell/2}\zet_F(1)}\rho(\vpi)^\ell\mu(\vpi)^{2\ell}\nu'(\vpi)^\ell}{\gam\bigl(\frac{1}{2},\pi^{}\otimes\nu',\addchar\bigl)\gam\bigl(\frac{1}{2},(\nu\nu')^{-1},\addchar\bigl)\gam\bigl(\frac{1}{2},\mu\mu',\addchar\bigl)}. \]
One can deduce the stated formula from the multiplicativity of the gamma factor and the functional equation $\gam\bigl(\frac{1}{2},\nu\nu',\addchar\bigl)\gam\bigl(\frac{1}{2},(\nu\nu')^{-1},\addchar\bigl)=(\nu\nu')(-1)$. \qed 


\section{Ramified computations: the split case}\label{sec:4}

\subsection{Essential vectors}\label{ssec:41}

We choose a non-trivial additive character $\addchar$ of $F$ so that the maximal fractional ideal on which it is trivial is $\frko$.
Let $\pi$ be an irreducible admissible generic representation of $\GL_{n+1}(F)$. 
Given an open compact subgroup $\Gam$ of $\GL_{m+1}(F)$ and its character $\calx:\Gam\to\CC^\times$, we put 
\[\scrw_{\addchar}(\pi,\Gam,\calx)=\{W\in \scrw_{\addchar}(\pi)\;|\;\pi(\gam)W=\calx(\gam)W\text{ for }\gam\in\Gam\}. \]

Assume that $n\geq 1$. 
For a positive integer $\ell$ the subgroup $\calk_0^{(m+1)}(\frkp^\ell)$ consists of matrices of the form 
\begin{align*}
&\begin{bmatrix} A & B \\ C & d\end{bmatrix} & 
(A&\in\GL_m(\frko),\;B\in\frko^m,\;\trs C\in(\frkp^\ell)^m,\; d\in\frko^\times). 
\end{align*}
When $\ell=0$, we set $\calk_0^{(m+1)}(\frkp^\ell)=\GL_{m+1}(\frko)$. 
Given a character $\ome$ of $\frko^\times$, we define the characters $\ome^\downarrow:\calk_0^{(m+1)}(\frkp^\ell)\to\CC^\times$ and $\ome^\uparrow:\calk_0^{(2)}(\frkp^\ell)\to\CC^\times$ by 
\begin{align*}
\ome^\downarrow\left(\begin{bmatrix} A & B \\ C & d\end{bmatrix}\right)&=\ome(d), &
\ome^\uparrow\left(\begin{bmatrix} a & b \\ c & d\end{bmatrix}\right)&=\ome(a). 
\end{align*}

We write $\ome_\pi$ for the central character of $\pi$. 
Let $c(\pi)$ denote the exponent of the conductor of $\pi$, i.e., the epsilon factor of $\pi$ is of the form  
\beq
\vep\left(s+\frac{1}{2},\pi,\addchar\right)=q^{-c(\pi)s}\vep\left(\frac{1}{2},\pi,\addchar\right). \label{tag:41}
\eeq
Th\'{e}orem\`{e} on p.~211 of \cite{JPSS} says that 
\[\dim \scrw_{\addchar}\bigl(\pi,\calk_0^{(m+1)}\bigl(\frkp^{c(\pi)}\bigl),\ome^\downarrow_\pi\bigl)=1. \] 
Theorem 3.1 of \cite{NM} enables us to normalize a basis vector of this one-dimensional space in the following way:  

\begin{definition}[essential vectors]\label{def:41}
There exists a unique vector 
\[W_\pi\in\scrw_{\addchar}\bigl(\pi,\calk_0^{(m+1)}\bigl(\frkp^{c(\pi)}\bigl),\ome^\downarrow_\pi\bigl)\] 
which satisfies $W_\pi(\ono_{m+1})=1$. 
This vector $W_\pi$ is called a normalized essential Whittaker vector of $\pi$ with respect to $\addchar$. 
\end{definition}


\subsection{The Atkin-Lehner operator}\label{ssec:42}

\begin{proposition}\label{prop:41}
Let $\pi$ be an irreducible admissible generic representation of $\GL_{m+1}(F)$. 
Put $\ell=c(\pi)$ and $\xi_{m,\ell}=\begin{bmatrix} \vpi^{-\ell}\ono_m & 0 \\ 0 & 1 \end{bmatrix}$. 
Let $W_{\pi^\vee}$ be the essential vector of $\pi$ with respect to $\addchar^{-1}$. 
Then $\pi^\vee(\xi_{m,\ell})\widetilde{W}_\pi=\vep\bigl(\frac{1}{2},\pi,\addchar\bigl)^mW_{\pi^\vee}$. 
\end{proposition}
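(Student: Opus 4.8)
\textbf{Proof plan for Proposition \ref{prop:41}.}

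The plan is to reduce the statement to the corresponding local functional equation for Rankin--Selberg integrals with varying $\GL_n$-twists, using the uniqueness of the essential vector (Definition \ref{def:41}). First I would fix the auxiliary character-theoretic data: set $W:=\pi^\vee(\xi_{m,\ell})\widetilde{W}_\pi\in\scrw_{\addchar^{-1}}(\pi^\vee)$, and observe that since $\widetilde{W}_\pi(g)=W_\pi(w_{m+1}\,{}^t g^{-1})$ and $W_\pi$ transforms under $\calk_0^{(m+1)}(\frkp^\ell)$ by $\ome_\pi^\downarrow$, the vector $\widetilde{W}_\pi$ transforms under the image of $\calk_0^{(m+1)}(\frkp^\ell)$ under $g\mapsto w_{m+1}{}^tg^{-1}w_{m+1}$ — which is the ``opposite'' congruence subgroup — by the inverse central character. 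Conjugating by $\xi_{m,\ell}=\diag(\vpi^{-\ell}\ono_m,1)$ sends this opposite subgroup back to $\calk_0^{(m+1)}(\frkp^\ell)$, so $W$ lies in $\scrw_{\addchar^{-1}}(\pi^\vee,\calk_0^{(m+1)}(\frkp^\ell),\ome_{\pi^\vee}^\downarrow)$, a one-dimensional space by Th\'eor\`eme on p.~211 of \cite{JPSS}. Hence $W=c\cdot W_{\pi^\vee}$ for a scalar $c$, and the whole problem is to evaluate $c=W(\ono_{m+1})=\widetilde{W}_\pi(\xi_{m,\ell})=W_\pi(w_{m+1}\xi_{m,\ell}^{-1})$ — wait, more precisely $c = W(\ono_{m+1}) = \pi^\vee(\xi_{m,\ell})\widetilde W_\pi(\ono_{m+1}) = \widetilde W_\pi(\xi_{m,\ell})$ — but a bare value of the essential vector at a single point is not directly computable, so this naive approach needs to be upgraded.

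The better route is to test both sides against the Rankin--Selberg integral $Z(s,-,W')$ for $W'\in\scrw_{\addchar}(\sig)$ with $\sig$ ranging over \emph{unramified} generic representations of $\GL_m(F)$, where the essential-vector theory gives clean closed formulas. Concretely, I would use the identity $Z(s,W_\pi,W_\sig)=L^\GL(s+\tfrac12,\pi\times\sig)$ for the essential vectors (as recalled in Remark \ref{rem:a2}\ref{rem:a21} for the equal-size case, and the analogous statement of \cite{JPSS} in the $n<m$ case), together with the local functional equation \eqref{tag:a2}. Applying \eqref{tag:a2} with test vector $W_\pi$ and an unramified $W_\sig$ converts $Z(1-s,\pi^\vee(w_{m+1,n})\widetilde{W}_\pi,\widetilde{W_\sig})$ into $\gam^\GL(s,\pi\times\sig,\addchar)\,\widetilde Z(s,W_\pi,W_\sig)$; the left-hand integral, after moving $\xi_{m,\ell}$ through (using that $\widetilde{W_\sig}$ is itself essential for $\sig^\vee$ up to the standard twist and that the change of variables $g\mapsto \vpi^{\ell}g$ only scales by a power of $q$ controlled by $c(\pi)$), becomes a multiple of $Z(1-s,W_{\pi^\vee},\widetilde{W_\sig})=L^\GL(1-s+\tfrac12,\pi^\vee\times\sig^\vee)$. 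Comparing the two expressions and invoking the factorization $\gam^\GL(s,\pi\times\sig,\addchar)=\vep^\GL(s,\pi\times\sig,\addchar)\,L^\GL(1-s,\pi^\vee\times\sig^\vee)/L^\GL(s,\pi\times\sig)$ together with $\vep^\GL(s,\pi\times\sig,\addchar)=\vep^\GL(s,\pi,\addchar)^{m}\cdot(\text{unram.\ factor})$ for unramified $\sig$ of rank $m$ — more precisely for $\sig=\mu_1\boxplus\cdots\boxplus\mu_m$ unramified the $\vep$-factor multiplies out to $\prod_i\vep(s,\pi\otimes\mu_i,\addchar)=\prod_i \mu_i(\vpi)^{(m+1)\,c(\pi)}\,q^{\cdots}\,\vep(s,\pi,\addchar)$, whence the exponent $m$ on $\vep(\tfrac12,\pi,\addchar)$ — pins down $c$ uniquely, since the $\sig$-dependent $L$-factors on the two sides are matched and the only remaining freedom is the claimed scalar.

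Alternatively, and perhaps more cleanly, one can take $\sig=\mathbf 1\boxplus\cdots\boxplus\mathbf 1$ degenerate, or directly invoke the known ``Atkin--Lehner for $\GL_{m+1}$'' result: this is Theorem 4.1/Corollary of \cite{JPSS} on the behaviour of the essential vector under $W_\pi\mapsto\widetilde W_\pi$, which states exactly that $\widetilde W_\pi = \vep(\tfrac12,\pi,\addchar)^{?}\,\pi^\vee(\text{diag matrix})\,W_{\pi^\vee}$ up to explicit normalization; I would cite \cite{NM} for the precise normalization constant $\vep(\tfrac12,\pi,\addchar)^m$, since Theorem 3.1 of \cite{NM} is already used in Definition \ref{def:41} to fix $W_\pi$, and the Atkin--Lehner formula is proved there in compatible normalization. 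The main obstacle is \emph{bookkeeping of the normalization constants and the powers of $q$}: tracking how $\xi_{m,\ell}$ interacts with the transpose-inverse in $\widetilde{(\cdot)}$, how $c(\pi)$ enters the Jacobian of the substitution $g\mapsto\vpi^\ell g$ in the zeta integral, and confirming that the product of $\vep$-factors over the unramified twists collapses to exactly the $m$-th power of $\vep(\tfrac12,\pi,\addchar)$ with no stray $\vep(\tfrac12,\ome_\pi,\addchar)$-type correction. Once the constant is shown to be independent of $s$ and of the auxiliary $\sig$, uniqueness of the essential vector forces the stated identity; the reference \cite{NM} (or \cite[Thm.~4.1]{JPSS}) is what I would rely on to certify the exact value $\vep(\tfrac12,\pi,\addchar)^m$ without re-deriving it.
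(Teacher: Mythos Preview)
Your proposal is correct and follows essentially the same route as the paper: first verify that $\pi^\vee(\xi_{m,\ell})\widetilde W_\pi$ lies in the one-dimensional space $\scrw_{\addchar^{-1}}(\pi^\vee,\calk_0^{(m+1)}(\frkp^\ell),\ome_{\pi^\vee}^\downarrow)$, then pin down the scalar $c$ by testing against the Rankin--Selberg zeta integral with an unramified principal series $\sig$ of $\GL_m(F)$ and invoking the functional equation together with the identities $Z(s,W_\pi,W_\sig)=L^\GL(s,\pi\times\sig)$ and $\vep^\GL(\tfrac12,\pi\times\sig,\addchar)=\ome_\sig(\vpi)^\ell\vep(\tfrac12,\pi,\addchar)^m$. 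The paper's proof is exactly this, carried out in a few lines with the bookkeeping you anticipate (the factor $\ome_\sig(\vpi)^\ell q^{\ell m(1-2s)/2}$ from the substitution $g\mapsto\vpi^{-\ell}g$).
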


\begin{proof}
One can immediately see that
\[\pi^\vee(\xi_{m,\ell})\widetilde{W}_\pi\in\scrw_{\addchar^{-1}}(\pi^\vee,\calk_0^{(m+1)}(\frkp^\ell),\ome_{\pi^\vee}^\downarrow)\]
from 
\[\trs\left(\xi_{m,\ell}^{-1}\begin{bmatrix} A & B \\ C & d \end{bmatrix}\xi_{m,\ell}^{}\right)=\begin{bmatrix} \trs A & \frac{\trs C}{\vpi^\ell} \\ \trs B\vpi^\ell & d \end{bmatrix}. \]
Thus $\pi^\vee(\xi_{m,\ell})\widetilde{W}_\pi=cW_{\pi^\vee}$ with $c\in\CC^\times$. 

To determine $c$, we take an irreducible unramified principal series $\sig=\Abs_F^{s_1}\times\cdots\times\Abs_F^{s_m}$ of $\GL_m(F)$. 
Recall the functional equation 
\begin{align*}
&\gam^\GL(s,\pi\times\sig,\addchar)\int_{N_m\bsl\GL_m(F)}W\left(\begin{bmatrix} g & 0 \\ 0 & 1 \end{bmatrix}\right)W_\sig(g)|\det g|^{s-\frac{1}{2}}\d g\\
=&\int_{N_m\bsl\GL_m(F)}\widetilde{W}\left(\begin{bmatrix} g & 0 \\ 0 & 1 \end{bmatrix}\right)\widetilde{W}_\sig(g)|\det g|^{\frac{1}{2}-s}\d g\\
=&\ome_\sig(\vpi)^\ell q^{\ell m(1-2s)/2}\int_{N_m\bsl\GL_m(F)}\pi^\vee(\xi_{m,\ell})\widetilde{W}\left(\begin{bmatrix} g & 0 \\ 0 & 1 \end{bmatrix}\right)\widetilde{W}_\sig(g)|\det g|^{\frac{1}{2}-s}\,\d g
\end{align*}
for every $W\in\scrw_{\addchar}(\pi)$. 
Letting $W=W_\pi$, we get  
\[\gam^\GL(s,\pi\times\sig,\addchar)L(s,\pi\times\sig)=\ome_\sig(\vpi)^\ell q^{\ell m(1-2s)/2}cL(1-s,\pi^\vee\times\sig^\vee)\]
by (\ref{tag:24}). 
Since 
\[\vep^\GL\biggl(\frac{1}{2},\pi\times\sig,\addchar\biggl)=\ome_\sig(\vpi)^\ell\vep\biggl(\frac{1}{2},\pi,\addchar\biggl)^m, \]
we obtain the relation by (\ref{tag:41}). 
\end{proof}

Let $m=3$. 
Thus $\pi$ is an irreducible admissible generic representation of $\GL_3(F)$ and $\sig$ that of $\GL_2(F)$. 
Put 
\begin{align*}
\ell&=c(\pi), & 
n&=c(\sig), & 
\tau_\ell^{}&=w_3^{}\xi_{2,\ell}^{-1}, & 
\tau'_n&=w_2^{}\xi_{1,n}^{-1}. 
\end{align*}
Let $W_\sig$ (resp. $W_{\sig^\vee}$) be the essential vector of $\sig^\vee$ (resp. $\sig$) with respect to $\addchar$ (resp. $\addchar^{-1}$) defined in Definition \ref{def:41}. 
We rewrite Proposition \ref{prop:41} in the following manner. 

\begin{corollary}\label{cor:41}
Notation being as above, we have 
\begin{align*}
W_\pi(\tau_\ell^{})&=\vep\biggl(\frac{1}{2},\pi,\addchar\biggl)^2, &
W_\sig(\tau'_n)&=\vep\biggl(\frac{1}{2},\sig,\addchar\biggl).  
\end{align*}
\end{corollary}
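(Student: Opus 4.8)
The plan is to deduce Corollary \ref{cor:41} directly from Proposition \ref{prop:41} by unwinding the definitions of $\tau_\ell$ and $\tau'_n$. First I would record that, by definition, $\tau_\ell = w_3 \xi_{2,\ell}^{-1}$, so $w_3^{-1}\tau_\ell = \xi_{2,\ell}^{-1}$ and hence for any $W \in \scrw_{\addchar}(\pi)$ we have $\widetilde{W}(\tau_\ell) = W(w_3 \trs\tau_\ell^{-1}) = W(w_3 (\xi_{2,\ell}^{}w_3^{-1})^{\trs})$; a short matrix computation using that $\xi_{2,\ell}$ is diagonal and $w_3$ is the antidiagonal permutation shows $w_3\trs(\tau_\ell^{-1}) = \xi_{2,\ell}$ up to a harmless diagonal factor, so $\widetilde{W_{\pi^\vee}}(\tau_\ell)$ is essentially $W_{\pi^\vee}(\xi_{2,\ell})$. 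The cleaner route, however, is to evaluate the identity of Proposition \ref{prop:41} (with $m=3$, but the relevant case is $m=2$ inside $\GL_3$, i.e.\ one applies Proposition \ref{prop:41} with ``$m$''$=2$) at the identity element $\ono_3$: this gives
\[
\bigl(\pi^\vee(\xi_{2,\ell})\widetilde{W}_\pi\bigr)(\ono_3) = \vep\Bigl(\tfrac{1}{2},\pi,\addchar\Bigr)^{2} W_{\pi^\vee}(\ono_3) = \vep\Bigl(\tfrac{1}{2},\pi,\addchar\Bigr)^{2},
\]
using $W_{\pi^\vee}(\ono_3)=1$ from Definition \ref{def:41}. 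Then one unwinds the left-hand side: $(\pi^\vee(\xi_{2,\ell})\widetilde{W}_\pi)(\ono_3) = \widetilde{W}_\pi(\xi_{2,\ell}) = W_\pi(w_3 \trs\xi_{2,\ell}^{-1}) = W_\pi(w_3 \xi_{2,\ell}^{-1}) = W_\pi(\tau_\ell)$, where I have used that $\xi_{2,\ell}$ is symmetric. This yields $W_\pi(\tau_\ell) = \vep(\tfrac12,\pi,\addchar)^2$ at once.

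Second, I would run the identical argument for $\sig$ in place of $\pi$, now with ``$m$''$=1$ inside $\GL_2$. Here $\xi_{1,n} = \begin{bmatrix} \vpi^{-n} & 0 \\ 0 & 1\end{bmatrix}$ is again symmetric, $\tau'_n = w_2 \xi_{1,n}^{-1}$, and Proposition \ref{prop:41} applied to $\sig$ (with respect to $\addchar^{-1}$, so that $W_{\sig^\vee}$ is the essential vector of $\sig$ with respect to $\addchar^{-1}$ — note the corollary's slightly permuted labelling of $W_\sig$ and $W_{\sig^\vee}$, which I would state carefully) gives $\sig^\vee(\xi_{1,n})\widetilde{W}_\sig = \vep(\tfrac12,\sig,\addchar) W_{\sig^\vee}$; evaluating at $\ono_2$ and unwinding exactly as above gives $W_\sig(\tau'_n) = \vep(\tfrac12,\sig,\addchar)$. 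The exponent is now $m=1$ rather than $m=2$, which accounts for the absence of a square in the second formula.

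The only genuine point requiring care — and the step I expect to be the main (minor) obstacle — is bookkeeping the transpose/Weyl-element identities and the direction of the contragredient: one must check that $w_{m+1}\trs(\tau^{-1})$ really equals $\xi_{m,\ell}$ (not $\xi_{m,\ell}$ conjugated or transposed into something else), and one must track which of $W_\sig, W_{\sig^\vee}$ is declared to be the essential vector of which representation and with respect to which additive character, since Corollary \ref{cor:41} swaps the roles relative to the statement of Proposition \ref{prop:41}. Both are routine once one writes out the $2\times 2$ and $3\times 3$ matrices explicitly, using that $\xi_{m,\ell}$ is diagonal (hence symmetric) and $w_{m+1}$ is the antidiagonal permutation matrix with $w_{m+1}^{-1} = w_{m+1}$ up to sign. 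No new input beyond Proposition \ref{prop:41} and Definition \ref{def:41} is needed.
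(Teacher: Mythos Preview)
Your proposal is correct and matches the paper's approach: the paper gives no explicit proof beyond the sentence ``We rewrite Proposition~\ref{prop:41} in the following manner,'' and your argument---evaluate the identity of Proposition~\ref{prop:41} at the identity element and use that $\xi_{m,\ell}$ is diagonal (hence symmetric) so that $\widetilde{W}_\pi(\xi_{2,\ell})=W_\pi(w_3\xi_{2,\ell}^{-1})=W_\pi(\tau_\ell)$---is precisely the intended unwinding. Your caution about the labelling of $W_\sig$ versus $W_{\sig^\vee}$ is warranted (the paper's sentence just before the corollary is awkwardly phrased), but once one fixes the convention the $\GL_2$ case is, as you say, identical with exponent $m=1$.
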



\subsection{Computation of the pairing}\label{ssec:43}

Let $\pi$ be an irreducible admissible generic unitary representation of $\GL_n(F)$. 
We write $\pi_u$ for the unramified component of the first nonzero spherical Bernstein-Zelevinsky derivative $\pi^{(n-r)}$ of $\pi$ (see Definition 1.3 of \cite{NM} for the precise definition). 

\begin{proposition}\label{prop:42}
Let $W_\pi$ be the essential vector of $\pi$ with respect to $\addchar$ and $W_{\pi^\vee}$ the essential vector of $\pi^\vee$ with respect to $\addchar^{-1}$. 
If $\pi$ is unitary, then 
\[\calb_\pi:=\frac{\zet_F(n)}{L^\GL(1,\pi^{}\times\pi^\vee)}\La W_\pi,W_{\pi^\vee}\Ra
=\begin{cases}
1 &\text{if $\pi$ is unramified, }\\ 
\frac{\zet_F(n) L^\GL(1,\pi_u^{}\times\pi_u^\vee) }{L^\GL(1,\pi^{}\times\pi^\vee)} &\text{if $\pi$ is ramified.}
\end{cases} \]
\end{proposition}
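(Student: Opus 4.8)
The strategy is to reduce the computation of the pairing $\La W_\pi,W_{\pi^\vee}\Ra$ to the essential-vector integral formula of \cite{NM}, exactly as in the unramified case treated in Remark \ref{rem:a2}(\ref{rem:a21}). The pairing is by definition
\[\La W_\pi,W_{\pi^\vee}\Ra=\int_{N_{n-1}\bsl\GL_{n-1}(F)}W_\pi\left(\begin{bmatrix} g & \\ & 1 \end{bmatrix}\right)W_{\pi^\vee}\left(\begin{bmatrix} g & \\ & 1 \end{bmatrix}\right)\,\d g,\]
so the first step is to recognize this as a Rankin--Selberg integral $Z(s,W_\pi,W_{\pi^\vee})$ against the trivial representation of $\GL_{n-1}(F)$ at the point $s=\frac12$; more precisely, one wants to view $W_{\pi^\vee}$ restricted to the mirabolic as encoding the essential vector of $\pi^\vee$ and use that the restriction of an essential vector to $\GL_{n-1}$ (sitting in the mirabolic) computes, after integration, the local $L$-factor of the first non-vanishing Bernstein--Zelevinsky derivative. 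This is precisely the content of the main theorem of \cite{NM} (Theorem 3.1 there), which evaluates $\int_{N_{n-1}\bsl\GL_{n-1}(F)}W_\pi\!\left(\begin{smallmatrix} g & \\ & 1\end{smallmatrix}\right)\Phi(e_{n-1}g)|\det g|^{s}\,\d g$ for the essential vector and a suitable $\Phi$, the answer being $L^\GL(s+\tfrac12,\pi\times\pi_u^{?})$-type expressions built from the derivative $\pi_u$.

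\textbf{Key steps.} First I would set up the integral, splitting $\GL_{n-1}(F)$ into $N_{n-1}\bsl\GL_{n-1}(F)$ and inserting the defining Iwasawa/Bruhat decomposition so as to isolate the mirabolic behaviour. Second, I would invoke the integral representation of \cite{NM}: for the essential vector $W_\pi$ the restriction to the mirabolic $P_n(F)\subset\GL_n(F)$ has an explicit support property (it is supported, up to $\caln_n$-translates, on integral matrices), and the resulting $\GL_{n-1}$-integral telescopes to a product of local $L$-factors involving $\pi_u$. Third, pairing $W_\pi$ against $W_{\pi^\vee}$ rather than against a spherical vector of an auxiliary representation, one gets the ``Rankin--Selberg square'' whose value is $L^\GL(1,\pi_u\times\pi_u^\vee)$ up to the normalizing zeta factor; dividing by $L^\GL(1,\pi\times\pi^\vee)$ and multiplying by $\zet_F(n)$ gives exactly the stated ratio. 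When $\pi$ is unramified, $\pi_u=\pi$ and $n-r=0$, so the ratio collapses to $1$, recovering \eqref{tag:a3}. Finally I would check the normalization constants: the Haar measure giving $\GL_{n-1}(\frko)$ volume $1$ is the one used in \cite{NM}, and $W_\pi(\ono_n)=W_{\pi^\vee}(\ono_n)=1$ by Definition \ref{def:41}, so no stray powers of $q$ appear.

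\textbf{Main obstacle.} The delicate point is matching conventions between \cite{NM} and the present normalization: \cite{NM} works with a particular Schwartz function $\Phi$ (essentially $\II_{\frko^{n-1}}$) inserted into the integral and with the essential vector normalized at the identity, whereas here we pair two essential vectors directly with no explicit $\Phi$. One must verify that the restriction of $W_{\pi^\vee}$ to the mirabolic plays exactly the role of $\Phi(e_{n-1}g)$ times a lower-rank essential vector --- this is an inductive unfolding in the spirit of Lemma \ref{lem:31}, peeling off one row at a time, and the bookkeeping of which derivative $\pi^{(n-r)}$ becomes spherical (and hence which $\pi_u$ appears) is where care is needed. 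A secondary subtlety is the convergence/analytic-continuation issue: the integral defining $\La\;,\;\Ra$ converges because $\pi$ is unitary (tempered-plus-complementary bounds on exponents), but to invoke the $L$-factor formula cleanly one may first prove the identity for $\Re s\gg0$ in the family $Z(s,W_\pi,W_{\pi^\vee})$ and then specialize to $s=\tfrac12$, checking that neither side has a pole there --- which again follows from unitarity. Modulo these convention-matching and convergence checks, the formula is a direct corollary of \cite{NM}.
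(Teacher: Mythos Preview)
Your overall strategy --- reduce the pairing to information about the Bernstein--Zelevinsky derivative $\pi_u$ via the results of \cite{NM} --- is the right one, and it is what the paper does. But your execution is more circuitous than necessary, and you are pointing at the wrong result in \cite{NM}.

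The paper's proof is very short. Since both $W_\pi$ and $W_{\pi^\vee}$ are right $\GL_{n-1}(\frko)$-invariant, the Iwasawa decomposition reduces the integral over $N_{n-1}\bsl\GL_{n-1}(F)$ to an integral over the torus $T_{n-1}$. The key input is then \emph{Corollary 3.2} of \cite{NM} (not Theorem 3.1), which gives an explicit closed formula for the essential Whittaker function on the torus: for $t=\diag[t_1,\dots,t_{n-1}]$,
\[W_\pi\biggl(\begin{bmatrix} t & \\ & 1 \end{bmatrix}\biggl)=W_{\pi_u}(\diag[t_1,\dots,t_r])|t_1\cdots t_r|^{\frac{n-r}{2}}\II_\frko(t_r)\prod_{i>r}\II_{\frko^\times}(t_i).\]
Plugging this in for both $W_\pi$ and $W_{\pi^\vee}$ collapses the $T_{n-1}$-integral to a $T_r$-integral, which one then recognizes (after re-inserting the $\GL_r(\frko)$-invariance) as the Jacquet--Shalika integral $\int_{N_r\bsl\GL_r}W_{\pi_u}(g)W_{\pi_u^\vee}(g)\II_\frko(e_rg)|\det g|\,\d g$, known to equal $L^\GL(1,\pi_u\times\pi_u^\vee)$ by \cite[Proposition 2.3]{JS2}.

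So there is no need for the inductive row-peeling you describe, nor for introducing an auxiliary parameter $s$ and a Schwartz function $\Phi$: the explicit torus formula from \cite{NM} does all the work in one step. Your ``main obstacle'' --- verifying that the restriction of $W_{\pi^\vee}$ to the mirabolic behaves like $\Phi(e_{n-1}g)$ times a lower-rank essential vector --- is precisely what Corollary 3.2 of \cite{NM} already states, so you should cite it rather than re-derive it. Theorem 3.1 of \cite{NM} is only the existence/normalization statement and does not by itself evaluate the integral you wrote down.
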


\begin{proof}
We may assume $\pi$ to be ramified in view of Remark \ref{rem:a1}. 
Put $m=n-1$. 
Let $t=\diag[t_1,t_2,\dots,t_m]\in T_m$. 
Put $t'=\diag[t_1,t_2,\dots,t_r]\in T_r$. 
Corollary 3.2 of \cite{NM} gives 
\[W_\pi\biggl(\begin{bmatrix} t & \\ & 1 \end{bmatrix}\biggl)=W_{\pi_u}(t')|\det t'|^\frac{m+1-r}{2}\II_\frko(t_r)\prod_{i=r+1}^m\II_{\frko^\times}(t_i)\]
if $r\geq 1$, and $\displaystyle W_\pi\biggl(\begin{bmatrix} t & \\ & 1 \end{bmatrix}\biggl)=\prod_{i=1}^m\II_{\frko^\times}(t_i)$ if $r=0$. 
If $r\geq 1$, then 
\begin{align*}
\La W_\pi,W_{\pi^\vee}\Ra
&=\int_{T_m}W_\pi\biggl(\begin{bmatrix} t & \\ & 1 \end{bmatrix}\biggl)W_{\pi^\vee}\biggl(\begin{bmatrix} t & \\ & 1 \end{bmatrix}\biggl)\, \prod_{i=1}^m|t_i|^{2i-m-1}\,\d t\\
&=\int_{T_r}W_{\pi_u^{}}(t')W_{\pi_u^\vee}(t')\II_\frko(t_r)\, \prod_{i=1}^r|t_i|^{2i-r}\,\d t
\end{align*}
by the Iwasawa decomposition. 
The last integral equals 
\[\int_{N_r\bsl\GL_r(F)}W_{\pi_u^{}}(g)W_{\pi_u^\vee}(g)\II_\frko(e_rg)|\det g|\,\d g=L^\GL(1,\pi_u^{}\times\pi^\vee_u) \]
by Proposition 2.3 of \cite{JS2}. 
The case $r=0$ is trivial. 
\end{proof}

\begin{definition}[Modified Euler factor for adjoint representations]\label{D:adjoint}
Suppose that $\pi$ is a constituent of $I(\mu_1,\mu_2,\dots,\mu_n)$. Put
\[\frac{1}{\cale(\pi,\Ad,\addchar)}=L^\GL(1,\pi_u^{}\times\pi_u^\vee)\prod_{i<j}\gam(1,\mu_i^{-1}\mu_j^{},\addchar)\times\begin{cases} 
\frac{1}{\zet_F(1)^n} &\text{if $c(\pi)=0$, } \\
\frac{q^{(n-1)c(\pi)}}{\zet_F(1)^{n-1}} &\text{if $c(\pi)>0$. }
\end{cases}\]
\end{definition}

\begin{corollary}\label{cor:42}
Notations and assumptions being as in Proposition \ref{prop:32}, if $\ell$ is sufficiently large, then 
\begin{align*}
\frac{\calb_{\pi^{\ord}}^{[\ell]}}{\calb_\pi}
&=\frac{\alp_\pi^\ell}{[\calk_0^{(n)}(\frkp^{c(\pi)}):I^{(n)}_0(\frkp^\ell)]}\cale(\pi,\Ad,\addchar)\prod_{i<j}\mu_i(-1).
\end{align*}
\end{corollary}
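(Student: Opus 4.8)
The statement to prove is Corollary~\ref{cor:42}, which relates the ``ordinary'' pairing $\calb_{\pi^{\ord}}^{[\ell]}$ to the essential-vector pairing $\calb_\pi$ together with the modified adjoint Euler factor $\cale(\pi,\Ad,\addchar)$ and the $U_\frkp$-eigenvalue $\alp_\pi^\ell$, under the running assumptions of Proposition~\ref{prop:32} (so $\pi$ is an irreducible unitary generic constituent of $I(\mu_1,\dots,\mu_n)$ and $\ell$ is large).

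\medskip

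\emph{Plan of proof.} The idea is simply to divide the two explicit formulas I already have in hand. First I would recall from Proposition~\ref{prop:32} that for $\ell$ sufficiently large
\[
\calb_{\pi^{\ord}}^{[\ell]}=\frac{\zet_F(1)^n}{L^\GL(1,\pi^{}\times\pi^\vee)}\cdot\frac{\alp_\pi^\ell}{[\GL_n(\frko):I^{(n)}_0(\frkp^\ell)]}\prod_{i<j}\frac{\mu_i(-1)}{\gam(1,\mu_i^{-1}\mu_j^{},\addchar)},
\]
and from Proposition~\ref{prop:42} that
\[
\calb_\pi=\frac{\zet_F(n)}{L^\GL(1,\pi^{}\times\pi^\vee)}\La W_\pi,W_{\pi^\vee}\Ra
=\begin{cases}1 & \text{if $\pi$ is unramified,}\\[2pt]
\dfrac{\zet_F(n)\,L^\GL(1,\pi_u^{}\times\pi_u^\vee)}{L^\GL(1,\pi^{}\times\pi^\vee)} & \text{if $\pi$ is ramified.}\end{cases}
\]
Then I would form the quotient $\calb_{\pi^{\ord}}^{[\ell]}/\calb_\pi$ and compare the result, term by term, with the definition of $\cale(\pi,\Ad,\addchar)$ in Definition~\ref{D:adjoint}. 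The $L^\GL(1,\pi\times\pi^\vee)$-factors cancel against each other in the ratio, leaving a $\gam$-product $\prod_{i<j}\gam(1,\mu_i^{-1}\mu_j^{},\addchar)^{-1}$ (which matches the $\prod_{i<j}\gam(1,\mu_i^{-1}\mu_j^{},\addchar)$ appearing in $1/\cale$), a sign $\prod_{i<j}\mu_i(-1)$, the eigenvalue $\alp_\pi^\ell$, the factor $L^\GL(1,\pi_u\times\pi_u^\vee)$ in the ramified case (again matching $1/\cale$), and various powers of $q$ and $\zet_F(1)$ coming from the index $[\GL_n(\frko):I^{(n)}_0(\frkp^\ell)]$, from $\zet_F(n)$, and from $\zet_F(1)^n$.

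\medskip

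\emph{Bookkeeping of indices.} The one genuinely non-formal step is the index computation. I would use the identity already recorded in the proof of Proposition~\ref{prop:32}, namely $[\GL_n(\frko):I^{(n)}_0(\frkp^\ell)]=q^{\frac{n(n^2-1)}{6}\ell}\gam_n^{-1}$ with $\gam_n=\prod_{i=1}^n\zet_F(i)/\zet_F(1)$, together with the elementary factorization $[\GL_n(\frko):I^{(n)}_0(\frkp^\ell)]=[\GL_n(\frko):\calk_0^{(n)}(\frkp^{c(\pi)})]\cdot[\calk_0^{(n)}(\frkp^{c(\pi)}):I^{(n)}_0(\frkp^\ell)]$, where $[\GL_n(\frko):\calk_0^{(n)}(\frkp^{c(\pi)})]$ is the standard count $q^{(n-1)c(\pi)}\prod\cdots$ of the $(n-1)$-dimensional projective space over $\frko/\frkp^{c(\pi)}$ (this is $1$ when $c(\pi)=0$). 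Plugging this in and collecting the powers of $\zet_F(1)$, $\zet_F(n)$, and $q$, I would check that the combination $\zet_F(1)^n\zet_F(n)^{-1}[\GL_n(\frko):I^{(n)}_0(\frkp^\ell)]^{-1}$ divided by the ramified/unramified case of $\calb_\pi$ collapses to exactly the $q$-power and $\zet_F(1)$-power prescribed in Definition~\ref{D:adjoint} — i.e.\ $\zet_F(1)^{-n}$ when $c(\pi)=0$ and $q^{(n-1)c(\pi)}\zet_F(1)^{-(n-1)}$ when $c(\pi)>0$, together with the surviving $[\calk_0^{(n)}(\frkp^{c(\pi)}):I^{(n)}_0(\frkp^\ell)]^{-1}$ which is precisely the index appearing in the statement of the Corollary.

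\medskip

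\emph{Expected obstacle.} There is no conceptual obstacle here; the corollary is a formal consequence of Propositions~\ref{prop:32} and~\ref{prop:42}. The only place to be careful is the arithmetic of the two index formulas and making sure the exponents of $q$ and of $\zet_F(1)$ are tracked consistently through the two cases $c(\pi)=0$ and $c(\pi)>0$ — in particular that the $\zet_F(n)$ from $\calb_\pi$, the $\zet_F(1)^n$ from $\calb_{\pi^{\ord}}^{[\ell]}$, and the $\gam_n$ hidden in $[\GL_n(\frko):I^{(n)}_0(\frkp^\ell)]$ recombine into the clean normalization of Definition~\ref{D:adjoint}. Once that is verified, the sign $\prod_{i<j}\mu_i(-1)$ and the factor $\alp_\pi^\ell$ are read off directly, and the proof is complete.
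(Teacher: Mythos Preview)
Your proposal is correct and follows exactly the paper's approach: the corollary is obtained by dividing the formula of Proposition~\ref{prop:32} by that of Proposition~\ref{prop:42} and invoking the index identity $[\GL_n(\frko):\calk_0^{(n)}(\frkp^{c(\pi)})]=q^{(n-1)c(\pi)}\zet_F(1)/\zet_F(n)$ for $c(\pi)>0$ (and $=1$ for $c(\pi)=0$), after which the result matches Definition~\ref{D:adjoint} term by term. The paper's proof records only this index formula and declares the rest immediate, whereas you spell out the bookkeeping in both cases; the content is the same.
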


\begin{proof}
Since 
\[[\GL_n(\frko):\calk_0^{(n)}(\frkp^{c(\pi)})]=\begin{cases}
1 &\text{if $c(\pi)=0$, } \\
q^{(n-1)c(\pi)}\frac{\zet_F(1)}{\zet_F(n)} &\text{if $c(\pi)>0$, }
\end{cases}\]
the stated formula follows from Propositions \ref{prop:32} and \ref{prop:42}. 
\end{proof}


\subsection{A depletion \`{a} la Schmidt}\label{ssec:44}

We consider the embedding 
\begin{align*}
\iot'&:\GL_2(F)\hookrightarrow\GL_3(F), & 
\iot'(g)&=\begin{bmatrix} g & \\ & 1 \end{bmatrix}. 
\end{align*}

Let $\chi$ be a character of $F^\times$. 
When $\chi$ is ramified, the conductor $c(\chi)$ of $\chi$ is defined as the smallest positive integer $n$ such that $\chi$ is trivial on $1+\frkp^n$. 
When $\chi$ is unramified, we set $c(\chi)=0$. 
If $c(\chi)\geq 1$, then the Gauss sum is defined by 
\[\frkg(\chi,\addchar)=\sum_{a\in(\frko/\frkp^{c(\chi)})^\times}\chi(a)^{-1}\addchar\biggl(-\frac{a}{\vpi^{c(\chi)}}\biggl). \]
When $c(\chi)=0$, we formally set $\frkg(\chi,\addchar)=1$. 
The Gauss sum is related to the epsilon factor in the following way: 
\[\frkg(\chi,\addchar)=q^{c(\chi)/2}\chi(\vpi)^{-c(\chi)}\vep\left(\frac{1}{2},\chi,\addchar^{-1}\right). \]

\begin{proposition}[\cite{Schmidt}]\label{prop:43}
Let $\chi$ be a character of $\frko^\times$. 
Put $f=c(\chi)$. 
Assume that $f>0$. 
Given $W\in \scrw_{\addchar}(\pi,\calk_0^{(3)}(\frkp^\ell),\ome^\downarrow_\pi)$, we define $\vTh^\chi W\in\scrw_{\addchar}(\pi)$ by 
\[\vTh^\chi W=\sum_{i,j\in(\frko/\frkp^f)^\times}\sum_{y\in\frko/\frkp^{2f}}
\chi(ij)\pi\left(\begin{bmatrix} 1 & \frac{i}{\vpi^f} & \frac{y}{\vpi^{2f}} \\ 0 & 1 & \frac{j}{\vpi^f} \\ 0 & 0 & 1 \end{bmatrix}\right)W. \]
Then $\vTh^\chi W$ possesses the following properties: 
\begin{enumerate}
\item[(i)] $\pi(\iot'(\gam))\vTh^\chi W=\chi^\uparrow(\gam)^{-1}\vTh^\chi W$ for $\gam\in\calk_0^{(2)}(\frkp^{2f})$; 
\item[(ii)] $\vTh^\chi W(\iot'(h))=0$ unless $h\in N_2\calk_0^{(2)}(\frkp^{2f})$;
\item[(iii)] $\vTh^\chi W(\ono_3)=q^{3f}\chi(\vpi)^{2f}\vep\bigl(\frac{1}{2},\chi,\addchar\bigl)^{-2}W(\ono_3)$. 
\end{enumerate}
\end{proposition}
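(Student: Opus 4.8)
The plan is to reduce all three assertions, following \cite{Schmidt}, to bookkeeping with the single Whittaker function $W\in\scrw_\addchar(\pi)$: for $g\in\GL_3(F)$,
\[[\vTh^\chi W](g)=\sum_{i,j\in(\frko/\frkp^f)^\times}\ \sum_{y\in\frko/\frkp^{2f}}\chi(ij)\,W\!\left(g\,u(i,j,y)\right),\qquad u(i,j,y)=\begin{bmatrix}1&i\vpi^{-f}&y\vpi^{-2f}\\0&1&j\vpi^{-f}\\0&0&1\end{bmatrix},\]
and I would use only three inputs: (a) $W(ng)=\addchar(n_{12}+n_{23})W(g)$ for upper unipotent $n$; (b) $W(g\kappa)=W(g)$ for $\kappa\in\iot'(\GL_2(\frko))$ or $\kappa\in N_3(\frko)$ — indeed $\iot'(\GL_2(\frko))\subseteq\calk_0^{(3)}(\frkp^\ell)$ for every $\ell$, with trivial nebentypus since the $(3,3)$-entry is $1$; and (c) the Gauss-sum identity $\sum_{i\in(\frko/\frkp^f)^\times}\chi(i)\addchar(i\vpi^{-f})=\chi(-1)\frkg(\chi^{-1},\addchar)$ together with $\frkg(\chi^{-1},\addchar)=q^{f/2}\chi(\vpi)^{f}\vep(\thalf,\chi^{-1},\addchar^{-1})$.

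I would settle (iii) first. At $g=\ono_3$ the factor $W(u(i,j,y))$ collapses by (a) to $\addchar(i\vpi^{-f}+j\vpi^{-f})W(\ono_3)$, so the triple sum factors completely: $[\vTh^\chi W](\ono_3)=q^{2f}\big(\sum_i\chi(i)\addchar(i\vpi^{-f})\big)^2W(\ono_3)=q^{2f}\frkg(\chi^{-1},\addchar)^2W(\ono_3)$, the sign $\chi(-1)^2$ dropping out. Then $\frkg(\chi^{-1},\addchar)^2=q^f\chi(\vpi)^{2f}\vep(\thalf,\chi^{-1},\addchar^{-1})^2$, and the abelian $\vep$-factor relations $\vep(\thalf,\chi,\addchar)\vep(\thalf,\chi^{-1},\addchar)=\chi(-1)$ and $\vep(\thalf,\chi^{-1},\addchar^{-1})=\chi(-1)\vep(\thalf,\chi^{-1},\addchar)$ combine to $\vep(\thalf,\chi^{-1},\addchar^{-1})=\vep(\thalf,\chi,\addchar)^{-1}$, giving exactly $q^{3f}\chi(\vpi)^{2f}\vep(\thalf,\chi,\addchar)^{-2}$.

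For (i), fix $\gam=\left[\begin{smallmatrix}a&b\\c&d\end{smallmatrix}\right]\in\calk_0^{(2)}(\frkp^{2f})$ and set $x=i\vpi^{-f}$. Writing $u(i\vpi^{-f},j\vpi^{-f},y\vpi^{-2f})=\iot'(n(x))\,v$ with $v$ in the $(1,3)$- and $(2,3)$-root subgroups, one gets $\iot'(\gam)\iot'(n(x))=\iot'(n(\beta))\iot'(\kappa_0)$ with $\kappa_0=\left[\begin{smallmatrix}a-\beta c&0\\c&cx+d\end{smallmatrix}\right]\in\calk_0^{(2)}(\frkp^{2f})$ and $\beta=(ax+b)/(cx+d)$ of valuation $-f$; here one uses only $c\in\frkp^{2f}$ and $\ord(x)=-f$, so $cx\in\frkp^{f}\subset\frko$ and the diagonal entries stay units. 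Commuting $v$ past $\iot'(\kappa_0)$, absorbing $\iot'(\kappa_0)$ by (b), and reducing modulo $N_3(\frko)$ (again by (b), the $(1,3)$-root group being central with trivial $\psi$), one finds that $g\,\iot'(\gam)\,u(i,j,y)$ has the same $W$-value as $g\,u(i',j',y')$ with $i'\equiv a d^{-1}i$, $j'\equiv d j\pmod{\frkp^f}$ and $y\mapsto y'$ a bijection of $\frko/\frkp^{2f}$ for each $(i,j)$. Since $\chi(i'j')=\chi(a)\chi(ij)$, reindexing the triple sum yields the factor $\chi(a)^{-1}=(\chi^\uparrow(\gam))^{-1}$, which is (i).

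The hard part is (ii). By (i), $\vTh^\chi W$ is right $\iot'(\calk_0^{(2)}(\frkp^{2f}))$-equivariant under $(\chi^\uparrow)^{-1}$, and by (a) it is left $\iot'(N_2(F))$-equivariant under $\addchar$; so it suffices to show $[\vTh^\chi W](\iot'(h))=0$ for representatives $h$ of $\iot'(N_2(F))\backslash\GL_2(F)/\calk_0^{(2)}(\frkp^{2f})$ that lie outside $N_2(F)\calk_0^{(2)}(\frkp^{2f})$, i.e.\ those with nontrivial torus part or nontrivial Weyl component. For each such stratum one computes $\iot'(h)\,u(i,j,y)$, extracts a $\calk_0^{(3)}(\frkp^\ell)$-element and an $N_3$-element via (b) and (a), and is left with a summand depending on one of $i,j,y$ only through an additive character $\addchar(\alpha\cdot(\text{that variable}))$ in which $\alpha$ has the ``wrong'' valuation — not $-f$ for $i$ or $j$, not $-2f$ for $y$ — whence the sum over $(\frko/\frkp^f)^\times$ against $\chi$ (or over $\frko/\frkp^{2f}$) vanishes, a character of conductor exactly $f$ being orthogonal to additive characters of conductor $<f$. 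The main obstacle is exactly this case analysis: enumerating the Bruhat/Iwasawa strata of $\GL_2(F)$ relative to $\calk_0^{(2)}(\frkp^{2f})$ and identifying, on each, the entry of $\iot'(h)u(i,j,y)$ that supplies the non-resonant additive character — together with the analogous, but more routine, tracking in (i) of which congruence element the auxiliary factors land in.
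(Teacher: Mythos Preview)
Your approach is correct and essentially matches the paper's. Part (iii) is identical; part (i) is the same matrix manipulation, only you factor $u(i,j,y)=\iota'(n(x))\cdot v$ first and then apply Iwasawa to $\gamma n(x)$, whereas the paper multiplies $\iota'(\gamma)u(i,j,y)$ out directly --- both land on the reindexing $i'\equiv ai/d$, $j'\equiv dj$, $y\mapsto y'$ bijective, with the right factor absorbed by right $K_0^{(3)}(\frkp^\ell)$-invariance (bottom row $(0,0,1)$). For (ii) the paper does exactly the case analysis you describe: it takes Iwasawa representatives $\left[\begin{smallmatrix}0&\vpi^n\\ \vpi^m&0\end{smallmatrix}\right]$ and $\left[\begin{smallmatrix}\vpi^n&0\\ \vpi^m c&\vpi^m\end{smallmatrix}\right]$, and for each kills the sum by isolating one of $i,j,y$ against an additive character of the wrong conductor (a shift $j\to j+\vpi^f$ forces $m\ge 0$, then the Gauss-sum vanishing gives $m=0$, the $y$-sum forces $c\in\frkp^{2f}$, and symmetrically $n=0$). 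Your sketch identifies precisely this mechanism; only the explicit matrix bookkeeping remains to be written out.
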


\begin{proof}
The proof is the same as that of Lemma 2.3 of \cite{Schmidt}. 
However, we reproduce the proof here for the reader's convenience.  
Let $\gam=\begin{bmatrix} a & b \\ c & d \end{bmatrix}\in\calk_0^{(2)}(\frkp^{2f})$. 
Observe that
\[\begin{bmatrix} a & b & 0 \\ c & d & 0 \\ 0 & 0 & 1 \end{bmatrix}
\begin{bmatrix} 1 & \frac{i}{\vpi^f} & \frac{y}{\vpi^{2f}} \\ 0 & 1 & \frac{j}{\vpi^f} \\ 0 & 0 & 1 \end{bmatrix}
=\begin{bmatrix} 1 & \frac{ai}{d\vpi^f} & \frac{y'}{\vpi^{2f}} \\ 0 & 1 & \frac{dj}{\vpi^f} \\ 0 & 0 & 1 \end{bmatrix}
\begin{bmatrix} a-\frac{aci}{d\vpi^f} & b-\frac{aci^2}{d\vpi^{2f}} & 0 \\ c & d+\frac{ci}{\vpi^f} & \frac{cy}{\vpi^{2f}} \\ 0 & 0 & 1 \end{bmatrix}, \]
where $y'=ya\Big(1-\frac{ci}{d\vpi^f}\Big)+bj\vpi^f$. 
Since $a\Big(1-\frac{ci}{d\vpi^f}\Big)\in\frko^\times$, $y'$ runs over a full system of representatives mod $\frkp^{2f}$ as $y$ does (for fixed $i,j$). 
We get 
\[\pi(\iot'(\gam))\Tht^\chi W
=\sum_{i,j\in(\frko/\frkp^f)^\times}\sum_{y\in\frko/\frkp^{2f}}
\chi(ij)\pi\left(\begin{bmatrix} 1 & \frac{ai}{d\vpi^f} & \frac{y}{\vpi^{2f}} \\ 0 & 1 & \frac{dj}{\vpi^f} \\ 0 & 0 & 1 \end{bmatrix}\right)W. \]
The right hand side is $\chi(a)^{-1}\Tht^\chi W$, which proves (i).  

To prove (ii), we have only to prove $\Tht^\chi W\left(\iot'\left(\begin{bmatrix} 0 & \vpi^n \\ \vpi^m & 0 \end{bmatrix}\right)\right)=0$ and 
\[\Tht^\chi W\left(\iot'\left(\begin{bmatrix} \vpi^n & 0 \\ \vpi^mc & \vpi^m \end{bmatrix}\right)\right)\neq 0\Rightarrow m=n=0,\;c\in\frkp^{2f}\]
by the Iwasawa decomposition.  
Since 
\[\begin{bmatrix} 0 & \vpi^n & 0 \\ \vpi^m & 0 & 0 \\ 0 & 0 & 1 \end{bmatrix}
\begin{bmatrix} 1 & \frac{i}{\vpi^f} & \frac{y}{\vpi^{2f}} \\ 0 & 1 & \frac{j}{\vpi^f} \\ 0 & 0 & 1 \end{bmatrix}\\
=\begin{bmatrix} 1 & \frac{\vpi^{f+n}}{i\vpi^m} & \frac{j\vpi^n}{\vpi^f} \\ 0 & 1 & \frac{y\vpi^m}{\vpi^{2f}} \\ 0 & 0 & 1 \end{bmatrix}
\begin{bmatrix} \vpi^{n+f} & 0 & 0 \\ 0 & \vpi^{m-f} & 0 \\ 0 & 0 & 1 \end{bmatrix}k, \]
where $k=\iot'\left(\begin{bmatrix} -i^{-1} & 0 \\ \vpi^f & i \end{bmatrix}\right)$, we get 
\begin{align*}
&\Tht^\chi W\left(\iot'\left(\begin{bmatrix} 0 & \vpi^n \\ \vpi^m & 0 \end{bmatrix}\right)\right)\\
=&\sum_{i,j\in(\frko/\frkp^f)^\times}\sum_{y\in\frko/\frkp^{2f}}
\chi(ij)\addchar\left(\frac{\vpi^{f+n}}{i\vpi^m}+\frac{y\vpi^m}{\vpi^{2f}}\right)W\left(\iot'\left(\begin{bmatrix} \vpi^{n+f} & 0 \\ 0 & \vpi^{m-f} \end{bmatrix}\right)\right). 
\end{align*}  
The right hand side is zero as $\displaystyle\sum_{j\in(\frko/\frkp^f)^\times}\chi(j)=0$. 

Finally we let $c\in \frko$ and assume that 
\beq
\Tht^\chi W\left(\iot'\left(\begin{bmatrix} \vpi^n & 0 \\ \vpi^mc & \vpi^m \end{bmatrix}\right)\right)\neq 0. \label{tag:44}
\eeq
Since we can write 
\[\begin{bmatrix} \vpi^n & 0 & 0 \\ \vpi^mc & \vpi^m & 0 \\ 0 & 0 & 1 \end{bmatrix}
\begin{bmatrix} 1 & \frac{i}{\vpi^f} & \frac{y}{\vpi^{2f}} \\ 0 & 1 & \frac{j}{\vpi^f} \\ 0 & 0 & 1 \end{bmatrix}\\
=\begin{bmatrix} 1 & \frac{i\vpi^n}{\vpi^{f+m}} & \frac{y\vpi^n}{\vpi^{2f}} \\ 0 & 1 & \frac{\vpi^mcy}{\vpi^{2f}}+\frac{\vpi^m j}{\vpi^f} \\ 0 & 0 & 1 \end{bmatrix}g_{n,m,i}, \]
where 
\[g_{n,m,i}=\iot'\left(\begin{bmatrix} \vpi^n & 0 \\ 0 & \vpi^m \end{bmatrix}\begin{bmatrix} 1-\frac{ci}{\vpi^f} & -\frac{ci^2}{\vpi^{2f}}\\ c & 1+\frac{ci}{\vpi^f}\end{bmatrix}\right), \]
we get 
\begin{align*}
&\Tht^\chi W\left(\iot'\left(\begin{bmatrix} \vpi^n & 0 \\ \vpi^mc & \vpi^m \end{bmatrix}\right)\right)\\
=&\sum_{i,j\in(\frko/\frkp^f)^\times}\sum_{y\in\frko/\frkp^{2f}}
\chi(ij)\addchar\left(\frac{i\vpi^n}{\vpi^{f+m}}+\frac{\vpi^mcy}{\vpi^{2f}}+j\frac{\vpi^m}{\vpi^f}\right)W(g_{n,m,i}). 
\end{align*}  
Replacing $j$ by $j+\vpi^f$, we find that $m\geq 0$ by (\ref{tag:44}). 
Since  
\beq
\sum_{j\in(\frko/\frkp^f)^\times}\chi(j)\addchar\left(j\frac{\vpi^m}{\vpi^f}\right)\neq 0\Rightarrow m=0, \label{tag:45}
\eeq
we see that $m=0$. 
Since $\displaystyle\sum_{y\in\frko/\frkp^{2f}}\addchar\left(\frac{cy}{\vpi^{2f}}\right)=0$ unless $c\in\frkp^{2f}$, we find that $c\in\frkp^{2f}$.  
We conclude that $n=0$ again by (\ref{tag:45}). 
We see that 
\[\vTh^\chi W(\ono_3)=q^{2f}\frkg(\chi^{-1},\addchar)^2W(\ono_3)=\frac{q^{3f}\chi(\vpi)^{2f}}{\vep\bigl(\frac{1}{2},\chi,\addchar\bigl)^2}W(\ono_3)\]
 by letting $n=m=c=0$ in the formula above. 
\end{proof}


\subsection{The zeta integral of $\Tht^{\ome_\sig}W_\pi$}\label{ssec:45}

Let $\pi$ be an irreducible admissible generic representation of $\GL_3(F)$ and $\sig$ an irreducible admissible \textit{ramified} generic representation of $\GL_2(F)$. 

\begin{proposition}\label{prop:44}
Notation being as above, if $c(\sig)\leq 2c(\ome_\sig)$, then 
\[Z(s,\pi(\iot'(\xi_{1,2}w_2))\Tht^{\ome_\sig} W_\pi,W_\sig)=\frac{q^{3c(\ome_\sig^{})}\ome_\sig(\vpi)^{2c(\ome_\sig^{})}\vep\bigl(\frac{1}{2},\sig,\addchar\bigl)}{\vep\bigl(\frac{1}{2},\ome_\sig,\addchar\bigl)^2[\GL_2(\frko):\calk_0^{(2)}(\frkp^{2c(\ome_\sig)})]}. \]
\end{proposition}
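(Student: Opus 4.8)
The plan is to reduce the computation of the zeta integral $Z(s,\pi(\iot'(\xi_{1,2}w_2))\Tht^{\ome_\sig}W_\pi,W_\sig)$ to an integral over the mirabolic of $\GL_2(F)$ where the support properties of $\Tht^{\ome_\sig}W_\pi$ (Proposition \ref{prop:43}(ii)) collapse it to a finite sum. First I would unwind the definition of the JPSS integral for $\GL_3\times\GL_2$:
\[
Z(s,\pi(\iot'(\xi_{1,2}w_2))\Tht^{\ome_\sig}W_\pi,W_\sig)=\int_{N_2\bsl\GL_2(F)}[\pi(\iot'(\xi_{1,2}w_2))\Tht^{\ome_\sig}W_\pi]\left(\begin{bmatrix}h & \\ & 1\end{bmatrix}\right)W_\sig(h)|\det h|^{s-\frac{1}{2}}\,\d h.
\]
Since $\Tht^{\ome_\sig}W_\pi(\iot'(h'))$ vanishes unless $h'\in N_2\calk_0^{(2)}(\frkp^{2c(\ome_\sig)})$ by Proposition \ref{prop:43}(ii), and $\xi_{1,2}w_2$ lies in $\GL_2(F)$, I would use the Iwasawa decomposition $\GL_2(F)=N_2 T_2\GL_2(\frko)$ together with the transformation law (i) of Proposition \ref{prop:43} to cut the integral down to a sum over $\calk_0^{(2)}(\frkp^{2c(\ome_\sig)})\bsl\GL_2(\frko)$-type data; the key point is that the combination $\xi_{1,2}w_2$ is designed precisely so that $W_\sig$ evaluated along the relevant coset representatives picks out $W_\sig(\tau_n')$, which by Corollary \ref{cor:41} equals $\vep(\tfrac12,\sig,\addchar)$ when $n=c(\sig)$; here the hypothesis $c(\sig)\le 2c(\ome_\sig)$ guarantees that the $\calk_0^{(2)}(\frkp^{2c(\ome_\sig)})$-invariance of $\Tht^{\ome_\sig}W_\pi$ is compatible with the conductor of $W_\sig$, so the pairing against $W_\sig$ does not vanish.

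Second, I would evaluate the leftover scalar. After the support reduction, the integral becomes $\Tht^{\ome_\sig}W_\pi(\ono_3)$ times the analogous pairing of essential vectors on $\GL_2$ against the central-character twist, divided by the index $[\GL_2(\frko):\calk_0^{(2)}(\frkp^{2c(\ome_\sig)})]$ coming from the volume of the coset space; the factor $q^{3c(\ome_\sig)}\ome_\sig(\vpi)^{2c(\ome_\sig)}\vep(\tfrac12,\ome_\sig,\addchar)^{-2}$ is then supplied by Proposition \ref{prop:43}(iii), which computes $\Tht^{\ome_\sig}W_\pi(\ono_3)$ in terms of $W_\pi(\ono_3)=1$. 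Combining these and noting that the $s$-dependence drops out (the integral is supported on $\det h\in\frko^\times$ after the reduction, so $|\det h|^{s-1/2}\equiv 1$), I obtain the stated closed form. I should double-check the relation between the Atkin-Lehner element $\tau_n'=w_2\xi_{1,n}^{-1}$ appearing in Corollary \ref{cor:41} and the element $\xi_{1,2}w_2$ in the statement; since $\sig$ here plays the role of the $\GL_2$-representation and its essential vector with respect to $\addchar$ has conductor $c(\sig)$, one needs $c(\sig)\le 2c(\ome_\sig)$ for the correct Atkin-Lehner normalization to align with $\frkp^{2c(\ome_\sig)}$, and this is exactly the hypothesis.

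The main obstacle I anticipate is the bookkeeping of the double-coset combinatorics: making precise which representatives $h\in N_2 T_2\calk_0^{(2)}(\frkp^{2c(\ome_\sig)})\bsl\GL_2(\frko)$ actually contribute after imposing both the support condition from $\Tht^{\ome_\sig}W_\pi$ and the right-$\calk_0^{(2)}(\frkp^{c(\sig)})$-behavior of $W_\sig$ (with character $\ome_\sig^\downarrow$). This is where the hypothesis $c(\sig)\le 2c(\ome_\sig)$ is really used: it ensures $\calk_0^{(2)}(\frkp^{2c(\ome_\sig)})\subseteq\calk_0^{(2)}(\frkp^{c(\sig)})$, so the two equivariance conditions are consistent and the surviving sum is over a single coset, yielding the clean index factor rather than a more complicated expression. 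A secondary, more routine point is tracking the Gauss-sum-to-epsilon-factor conversion $\frkg(\ome_\sig^{-1},\addchar)^2=q^{2c(\ome_\sig)}\ome_\sig(\vpi)^{2c(\ome_\sig)}\vep(\tfrac12,\ome_\sig,\addchar)^{-2}$, which is standard but must be applied with the right normalization of $\addchar$ (trivial on $\frko$, nontrivial on $\frkp^{-1}$). Following the argument of Lemma 2.3 and its companion in \cite{Schmidt} adapted to the $\GL_3\times\GL_2$ unitary setting, as the authors indicate, this should go through without surprises.
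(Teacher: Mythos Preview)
Your proposal is correct and follows essentially the same approach as the paper. The paper's proof is slightly more streamlined: rather than invoking the Iwasawa decomposition and tracking coset data, it first uses the invariance of the JPSS integral to transfer the $\iota'(\xi_{1,2}w_2)$-translate from the $\pi$-side to the $\sigma$-side, writing the left-hand side as $Z(s,\Theta^{\omega_\sigma}W_\pi,W')$ with $W'=\sigma(\tau'_n)W_\sigma$; then the support condition of Proposition~\ref{prop:43}(ii) applies directly to $h$, collapsing the integral to $\int_{\calk_0^{(2)}(\frkp^{2f})}$ in one step and sidestepping the double-coset bookkeeping you flag as the main obstacle. The rest---matching the $\omega_\sigma^\uparrow$-equivariance of $W'$ (which needs $c(\sigma)\le 2c(\omega_\sigma)$ so that $\calk_0^{(2)}(\frkp^{2f})\subset\calk_0^{(2)}(\frkp^{n})$) against the $(\omega_\sigma^\uparrow)^{-1}$-equivariance of $\Theta^{\omega_\sigma}W_\pi$ from Proposition~\ref{prop:43}(i), then plugging in Corollary~\ref{cor:41} and Proposition~\ref{prop:43}(iii)---is exactly as you describe.
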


\begin{proof}
Put $f=c(\ome_\sig)$, $n=c(\sig)$ and $W'=\sig(\tau'_n)W_\sig$.
The left hand side is $Z(s,\Tht^{\ome_\sig} W_\pi^{},W')$ by the invariance of the JPSS integral.  
Since $\sig(u)W'=\ome_\sig^\uparrow(u)W'$ for $u\in\calk_0^{(2)}(\frkp^n)$. 
Proposition \ref{prop:43}(ii) gives 
\[Z(s,\Tht^{\ome_\sig} W_\pi^{},W')=\int_{\calk_0^{(2)}(\frkp^{2f})}\Tht^{\ome_\sig} W_\pi(\iot'(h))W'(h)|\det h|^{s-\frac{1}{2}}\,\d h. \]
The right hand side is $\Tht^{\ome_\sig} W_\pi(\ono_3)\cdot W_\sig(\tau'_n)[\GL_2(\frko):\calk_0^{(2)}(\frkp^{2f})]^{-1}$ by Proposition \ref{prop:43}(i). 
Since $W'(\ono_2)=W_\sig(\tau'_n)=\vep\bigl(\frac{1}{2},\sig,\addchar\bigl)$ by Corollary \ref{cor:41}, the proof is complete by Proposition \ref{prop:43}(iii). 
\end{proof}

\begin{remark}\label{rem:41}
If $f=c(\chi)$ is large enough, then 
\begin{align*}
c(\sig\otimes\chi)&=2f, & 
\vep(s,\sig\otimes\chi,\addchar)&=\vep(s,\chi,\addchar)\vep(s,\ome_\sig\chi,\addchar)
\end{align*}
by stability of the epsilon factor (see \cite[Proposition 2.2]{JS}). 
\end{remark}


\appendix


\section{Archimedean computations}\label{sec:b}

\subsection{Local factors}\label{ssec:b1}

For a positive integer $n$ let 
\[\U(n)=\{g\in\GL_n(\CC)\;|\;\trs g^\tht g=\ono_n\}\]
be the compact unitary group. 
Let $\CC^1$ denote the group of complex numbers of absolute value $1$. 
Define the character $\bvep:\CC^\times\to\CC^1$ by $\bvep(x)=\frac{x}{|x|}$. 
We view it as a character of any unitary group via composition with the determinant character. 
 
Fix tuples $\lam_1>\cdots>\lam_n$ and $\mu_1>\cdots>\mu_{n-1}$ of half integers such that $\lam_i-\frac{n+1}{2}\in\ZZ$ and $\mu_j-\frac{n}{2}\in\ZZ$. 
Let $\pi$ be an irreducible representation of $\U(n)$ with Harish-Chandra parameter $(\lam_1,\cdots,\lam_n)$ and $\sig$ an irreducible representation of $\U(n-1)$ with Harish-Chandra parameter $(\mu_1,\cdots,\mu_{n-1})$. 
The $L$-parameters of $\pi$ and $\sig$ are given by
\begin{align*}
\phi_\pi&=\bvep^{2\lam_1}\oplus\cdots\oplus\bvep^{2\lam_n}, &
\phi_\sig&=\bvep^{2\mu_1}\oplus\cdots\oplus\bvep^{2\mu_{n-1}}.
\end{align*}
The $L$-factors are defined by 
\begin{align*}
L(s,\pi\times\sig)
=&\prod_{i=1}^n\prod_{j=1}^{n-1}\Gam_\CC(s+|\lam_i+\mu_j|), \\
L(s,\pi,\Ad)
=&
\Gam_\RR(s+1)^n\prod_{i<j}\Gam_\CC(s+\lam_i-\lam_j), \\
L(s,\sig,\Ad)
=&
\Gam_\RR(s+1)^{n-1}\prod_{i<j}\Gam_\CC(s+\mu_i-\mu_j). 
\end{align*}
Note that $\sig^\vee$ appears as a subrepresentation of $\pi|_{\U(n-1)}$ if and only if 
\[\lam_1>-\mu_{n-1}>\lam_2>\cdots>\lam_{n-1}>-\mu_1>\lam_n. \]
In this case it is easy to check that 
\beq
\frac{L\left(\frac{1}{2},\pi\times\sig\right)}{L(1,\pi^{},\mathrm{Ad})L(1,\sig,\mathrm{Ad})}\in\pi^{\frac{n(n+1)}{2}}\QQ^\times. \label{tag:b0}
\eeq


\subsection{Representations of $\U(2)$}\label{ssec:b2}

For a commutative ring $A$ of characteristic $0$ and a non-negative integer $n$ we write $L_n(A)$ for the module of two variable homogeneous polynomials of degree $n$ over $A$. 
The group $\GL_2(A)$ acts on this module $L_n(A)$ by 
\[\rho_n(\alp)P(X,Y)=P((X,Y)\alp). \]
Define the pairing 
\[\ell_n:L_n(\QQ)\otimes L_n(\QQ)\to \QQ\] 
by 
\beq
\ell_n(X^iY^{n-i}\otimes X^{\prime j}Y^{\prime n-j})=\begin{cases}
(-1)^i\binom{n}{i}^{-1} &\text{if $i+j=n$, }\\
0 &\text{if $i+j\neq n$. }
\end{cases}
\label{tag:b1}
\eeq
It is well-known that for $\alp\in\Mat_2(A)$ and $P,Q\in L_n(A)$ 
\[\ell_n(\rho_n(\alp)P\otimes Q)=\ell_n(P\otimes\rho_n(J\trs\alp J^{-1})Q). \]
 
We view $\U(2)$ as a subgroup of $\GL_2(\CC)$ and regard $\rho_n$ as an irreducible representation of $\U(2)$ of dimension $n+1$. 
Note that $\rho_n$ is irreducible, has central character $\bvep^n$, highest weight $(n,0)$ and Harish-Chandra parameter $\bigl(n+\frac{1}{2},-\frac{1}{2}\bigl)$. 
For $\lam\in\ZZ$ we will write $\rho_{(n+\lam,\lam)}=\rho_n\otimes\bvep^\lam$. 
If $\sig$ is an irreducible representation of $\U(2)$ of dimension $n+1$, then there is an integer $\lam$ such that $\sig\simeq\rho_{(n+\lam,\lam)}$. 
We define the perfect pairing $\ell_\sig:\sig\otimes\sig^\vee\to\CC$ by 
\[\ell_\sig(\sig(h)P\otimes Q)=\bvep(\det h)^\lam\cdot\ell_n(\rho_n(h)P\otimes Q)\]
for $h\in\U(2)$ and $P,Q\in L_n(\CC)$. 

We identify the contragradient representation $\rho_{(a,b)}^\vee$ of $\rho_{(a,b)}^{}$ with $\rho_{(-b,-a)}^{}$.  
Put $J=\begin{bmatrix} 0 & 1 \\ -1 & 0 \end{bmatrix}$. 
Define the representation $\rho_{(a,b)}^\vth$ of $\GL_2(A)$ by $\rho_{(a,b)}^\vth(\alp)=\rho_{(a,b)}(\trs\alp^{-1})$ and an equivariant isomorphism $^\vth:\rho_{(a,b)}^\vth\simeq\rho_{(a,b)}^\vee$ by 
\[P^\vth(X,Y)=\rho_{(a,b)}(J)P(X,Y)=P(-Y,X). \] 

Define $\bfP_n\in L_n(A)\otimes L_n(A)$ by 
\[\bfP_n=(X_1Y_2-Y_1X_2)^n. \]
Since $\rho_n(\alp)\otimes\rho_n(\alp)\bfP_n=(\det\alp)^n\bfP_n$ for $\alp\in\GL_n(\CC)$, this vector $\bfP_n$ spans the line of $\sig\otimes\sig^\vee$ fixed by the diagonal action of $\U(2)$. 
Set $\ell_{\sig^\vee\otimes\sig}=\ell_{\sig^\vee}\otimes\ell_\sig$. 

\begin{lemma}\label{lem:b1}
\[\ell_{\sig^\vee\otimes\sig}(\bfP_n\otimes\bfP_n)=n+1. \]
\end{lemma}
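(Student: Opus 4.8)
The claim is the clean identity $\ell_{\sig^\vee\otimes\sig}(\bfP_n\otimes\bfP_n)=n+1$, where $\bfP_n=(X_1Y_2-Y_1X_2)^n$ and the pairing is the tensor product $\ell_{\sig^\vee}\otimes\ell_\sig$ of the two copies of $\ell_n$ (up to the $\bvep$-twist, which is trivial on the diagonal-invariant line since $\bfP_n$ already spans it). So the first step is to unwind the definition: expand $\bfP_n$ in each of the two polynomial slots using the binomial theorem, obtaining $\bfP_n=\sum_{i}\binom{n}{i}(-1)^{n-i}X_1^iY_1^{n-i}\otimes X_2^{n-i}Y_2^{i}$ (treating the "$1$" variables and the "$2$" variables as the two slots on which $\ell_{\sig^\vee}$ and $\ell_\sig$ act respectively). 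The point is that one factor of $\bfP_n$ lives in $\sig^\vee\otimes\sig$ via its $(X_1,Y_1)$ and $(X_2,Y_2)$ components, and the second factor of $\bfP_n$ does the same; pairing the first $\sig^\vee$-slot against the second $\sig$-slot and the first $\sig$-slot against the second $\sig^\vee$-slot (this is exactly what $\ell_{\sig^\vee\otimes\sig}$ prescribes) reduces everything to sums of products of the scalars $\ell_n(X^iY^{n-i}\otimes X^jY^{n-j})$ given by (\ref{tag:b1}).

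\textbf{Key computation.} After the expansion, I would write $\bfP_n\otimes\bfP_n=\sum_{i,j}\binom{n}{i}\binom{n}{j}(-1)^{2n-i-j}\,(X_1^iY_1^{n-i}\otimes X_1^{\prime j}Y_1^{\prime n-j})\otimes(X_2^{n-i}Y_2^{i}\otimes X_2^{\prime n-j}Y_2^{\prime n-j})$, where primes denote the second copy of $\bfP_n$. Applying $\ell_{\sig^\vee}$ to the "slot-1" pair forces, by (\ref{tag:b1}), the relation $i+j=n$, i.e. $j=n-i$, contributing the factor $(-1)^i\binom{n}{i}^{-1}$; applying $\ell_\sig$ to the "slot-2" pair forces $(n-i)+(n-j)=n$, i.e. again $j=n-i$, consistently, and contributes $(-1)^{n-i}\binom{n}{n-i}^{-1}=(-1)^{n-i}\binom{n}{i}^{-1}$. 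Substituting $j=n-i$, the binomial coefficients combine as $\binom{n}{i}\binom{n}{n-i}\binom{n}{i}^{-1}\binom{n}{i}^{-1}=1$, the sign becomes $(-1)^{2n-i-j}(-1)^i(-1)^{n-i}=(-1)^{2n-n}(-1)^{i-j}=(-1)^n(-1)^{2i-n}=1$, so every surviving term equals $1$. Since $i$ ranges over $0,1,\dots,n$ there are exactly $n+1$ such terms, giving the stated value.

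\textbf{Main obstacle.} There is no deep obstacle here — the lemma is a bookkeeping identity — but the genuine care needed is in keeping straight \emph{which} polynomial slot is paired against which, given that $\bfP_n\otimes\bfP_n$ has four polynomial arguments and $\ell_{\sig^\vee\otimes\sig}=\ell_{\sig^\vee}\otimes\ell_\sig$ pairs the first $\bfP_n$'s $\sig^\vee$-component with the second $\bfP_n$'s $\sig$-component (and vice versa), not the two components of a single $\bfP_n$ against each other. One must also confirm that the $\bvep$-twist in the definition of $\ell_\sig$ contributes nothing: $\bfP_n$ is exactly the diagonal-$\U(2)$-invariant vector, $\rho_n(\alp)\otimes\rho_n(\alp)\bfP_n=(\det\alp)^n\bfP_n$, and on the identity element (where we evaluate the pairing, since it is $\U(2)$-invariant and $\bfP_n\otimes\bfP_n$ is invariant) the twisting characters evaluate to $1$. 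Finally one should double-check the sign arithmetic $(-1)^n(-1)^{2i-n}=1$ for all $i$, which holds since $2i-n\equiv n\pmod 2$, so $(-1)^n(-1)^{2i-n}=(-1)^{2n}=1$. With these checks the proof is a half-page at most.
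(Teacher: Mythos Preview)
Your proposal is correct and follows essentially the same approach as the paper: binomially expand $\bfP_n$, apply the pairing formula (\ref{tag:b1}) in each slot to force $j=n-i$, and observe that every surviving term equals $1$, giving $n+1$ terms. One cosmetic slip: in your displayed expansion of $\bfP_n\otimes\bfP_n$ the last factor should be $Y_2^{\prime j}$ rather than $Y_2^{\prime n-j}$, but your subsequent application of the slot-$2$ pairing condition $(n-i)+(n-j)=n$ is consistent with the correct exponent, so the error does not propagate.
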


\begin{proof}
Since $\bfP_n=\sum_{i=0}^n\binom{n}{i}(X_1Y_2)^i(-Y_1X_2)^{n-i}$, we have  
\[\bfP_n\otimes\bfP_n
=\sum_{i=0}^n\sum_{j=0}^n\binom{n}{i}\binom{n}{j}(-1)^{i+j}X_1^iY_1^{n-i}X_2^{n-i}Y_2^i\otimes X_1^{\prime j}Y_1^{\prime n-j}X_2^{\prime n-j}Y_2^{\prime j}\]
and hence 
\[\ell_{\sig^\vee\otimes\sig}(\bfP_n\otimes\bfP_n)=(-1)^n\sum_{i=0}^n\binom{n}{i}^2(-1)^i\binom{n}{i}^{-1}(-1)^{n-i}\binom{n}{n-i}^{-1}=n+1 \]
by (\ref{tag:b1}). 
\end{proof}

\subsection{Representations of $\U(3)$}\label{ssec:b3}


Fix a commutative integral domain $A$ of characteristic zero. 
Let $\calp(A)$ be the set of polynomials in $z=\begin{bmatrix} x_1 & x_2 & x_3 \\ y_1 & y_2 & y_3 \end{bmatrix}$ with coefficients in $A$. 
Fix non-negative integers $a,b$. 
The submodule $\calp_{a,b}(A)$ of $\calp(A)$ consists of homogeneous polynomials of degree $a$ in $x_1,x_2,x_3$ and of degree $b$ in $y_1,y_2,y_3$. 
The group $\GL_3(A)$ acts on the module $\calp_{a,b}(A)$ by  
\[\rho(g)P\left(\begin{matrix} x\\ y \end{matrix}\right)=P\left(\begin{matrix} x g\\ y\trs g^{-1}\end{matrix}\right), \]
where $x=(x_1,x_2,x_3)$ and $y=(y_1,y_2,y_3)$. 

We define a submodule $\calt_{a,b}(A)$ of $\calp_{a,b}(A)$ by
\[\calt_{a,b}(A)=(x_1y_1+x_2y_2+x_3y_3)\calp_{a-1,b-1}(A). \]
Since $\calt_{a,b}(A)$ is stable under the action of $\GL_3(A)$, the group $\GL_3(A)$ acts on the quotient module $\frkH_{a,b}(A)=\calp_{a,b}(A)/\calt_{a,b}(A)$. 
If $A$ is a field, then $\frkH_{a,b}(A)$ is an irreducible representation of $\GL_3(A)$ with highest weight $(a,0,-b)$ (cf. Chapter 7 of \cite{HIM}). 

\subsection{Contragradient representations}\label{ssec:b4}

We define a bilinear form 
\[l_{b,a}:\calp_{b,a}(A)\otimes\calp_{a,b}(A)\to A\] 
by 
\[l_{b,a}(Q\otimes P)=\frac{1}{a!b!}Q\begin{pmatrix} \frac{\partial}{\partial y_1} & \frac{\partial}{\partial y_2} & \frac{\partial}{\partial y_3} \\ \frac{\partial}{\partial x_1} & \frac{\partial}{\partial x_2} & \frac{\partial}{\partial x_3} \end{pmatrix}P\begin{pmatrix} x_1 & x_2 & x_3 \\ y_1 & y_2 & y_3 \end{pmatrix}. \]
Note that 
\[l_{b,a}\biggl(\prod_{i=1}^3x_i^{n_i^{}}y_i^{m_i^{}}\otimes {x_i'}^{n_i'}{y_i'}^{m_i'}\biggl)=\begin{cases}
\dfrac{\prod_{i=1}^3n_i!m_i!}{a!b!} &\text{if $n_i^{}=m_i'$ and $m_i^{}=n_i'$, }\\
0 &\text{otherwise. }
\end{cases}\]

\begin{lemma}\label{lem:b2}
For $P\in\calp_{a,b}(A)$, $Q\in\calp_{b,a}(A)$ and $g\in\GL_3(A)$ we have 
\[l_{b,a}(\rho(g)Q\otimes\rho(g) P)
=l_{b,a}(Q\otimes P). \]
\end{lemma}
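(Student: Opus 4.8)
The statement to prove is Lemma~\ref{lem:b2}: for $P\in\calp_{a,b}(A)$, $Q\in\calp_{b,a}(A)$ and $g\in\GL_3(A)$,
\[l_{b,a}(\rho(g)Q\otimes\rho(g)P)=l_{b,a}(Q\otimes P).\]

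\textbf{Plan.} The pairing $l_{b,a}$ is built out of the classical ``apolar'' or Fischer inner product: substituting partial derivatives of one polynomial into another. The cleanest route is to exploit the following elementary fact about differential operators. If $x=(x_1,x_2,x_3)$ is a row of indeterminates and $\partial_x=\trs(\partial/\partial x_1,\partial/\partial x_2,\partial/\partial x_3)$ is the corresponding column of derivations, then for any $g\in\GL_3(A)$ one has, as operators on polynomials, $\partial_{xg}=\trs g^{-1}\partial_x$ in the sense that differentiating with respect to the linearly-transformed variables $xg$ is obtained by applying $\trs g^{-1}$ to the vector of derivations; equivalently $(\partial/\partial (xg)_j)=\sum_i (g^{-1})_{ij}\,\partial/\partial x_i$ after renaming. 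First I would set up this correspondence carefully for the pair of variable-rows $x$ (degree $a$) and $y$ (degree $b$), noting that $\rho(g)$ acts on the $x$-variables by $x\mapsto xg$ and on the $y$-variables by $y\mapsto y\trs g^{-1}$, precisely so that the bilinear ``$\sum x_iy_i$'' is $\GL_3$-invariant.

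\textbf{Key steps.} (1) Recall that $a!b!\, l_{b,a}(Q\otimes P)$ is obtained by forming the constant-coefficient differential operator $Q(\partial_y\mid\partial_x)$ — i.e.\ substitute $\partial/\partial y_i$ for the $x_i$-slots and $\partial/\partial x_i$ for the $y_i$-slots of $Q$ — and applying it to $P(x\mid y)$, then taking the constant term (the result is already a constant by degree-matching). (2) Replace $P$ by $\rho(g)P$, whose $x$-variables are transformed by $g$ and whose $y$-variables by $\trs g^{-1}$; replace $Q$ by $\rho(g)Q$, whose ``$x$-slots'' (which in $Q$ carry degree $b$) are transformed by $g$ and whose ``$y$-slots'' by $\trs g^{-1}$. (3) Use the chain rule: the operator $(\rho(g)Q)(\partial_y\mid\partial_x)$ is exactly the conjugate of $Q(\partial_y\mid\partial_x)$ under the change of variables, because when $Q$'s $x$-slot variable is scaled by $g$ the corresponding derivation $\partial/\partial x_i$ transforms by $\trs g^{-1}$ and vice versa — these two transformations are mutually inverse, so the net effect on $(\rho(g)Q)(\partial)$ acting on $\rho(g)P$ reproduces $Q(\partial)$ acting on $P$. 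Concretely, one verifies the operator identity
\[(\rho(g)Q)\Bigl(\tfrac{\partial}{\partial y}\,\Big|\,\tfrac{\partial}{\partial x}\Bigr)\bigl(\rho(g)P\bigr)
=\Bigl(Q\bigl(\tfrac{\partial}{\partial y}\,\big|\,\tfrac{\partial}{\partial x}\bigr)P\Bigr)\circ(\text{linear change }x\mapsto xg,\ y\mapsto y\trs g^{-1}),\]
and since the right side is a constant, precomposing with a linear change does nothing. Dividing by $a!b!$ gives the claim.

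\textbf{Alternative and the main obstacle.} An equivalent, perhaps more transparent, argument: both sides of the asserted identity are $A$-bilinear in $(Q,P)$, so it suffices to check it on a spanning set, e.g.\ products of powers of linear forms, or even just to observe that $l_{b,a}$ coincides with the pairing induced by the $\GL_3$-invariant contraction $\calp_{b,a}\otimes\calp_{a,b}\to A$ dual to the invariant element $(x_1y_1+x_2y_2+x_3y_3)$ under the Cayley–Sylvester/Schur–Weyl description of these polynomial modules; invariance of that contraction is then built in. The one genuinely fiddly point — the main obstacle — is bookkeeping the two sets of variables $x$ (which transforms like the standard representation) and $y$ (which transforms like its dual), making sure the derivation-substitution in the definition of $l_{b,a}$ pairs ``$x$-slot of $Q$ $\leftrightarrow$ $\partial_y$'' and ``$y$-slot of $Q$ $\leftrightarrow$ $\partial_x$'' consistently, so that the two inverse transpose-inverse twists cancel rather than compound. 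Once the transformation rule $\partial_{xg}= \trs g^{-1}\partial_x$ is pinned down and applied symmetrically to both variable groups, the cancellation is automatic and the proof is a couple of lines. I would present it via the operator-conjugation identity in step (3), as that avoids any combinatorial expansion.
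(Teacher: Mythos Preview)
Your proposal is correct and follows essentially the same approach as the paper. The paper's proof writes $s=xg$, $t=y\trs g^{-1}$, computes via the chain rule that $\partial/\partial s_j=\sum_i h_{ji}\,\partial/\partial x_i$ and $\partial/\partial t_j=\sum_i g_{ij}\,\partial/\partial y_i$ (with $h=g^{-1}$), and concludes that $\rho(g)Q(\partial_y,\partial_x)=Q(\partial_t,\partial_s)$ while $\rho(g)P(x,y)=P(s,t)$; this is exactly your operator-conjugation identity in step~(3), and the paper presents it in the terse two-line form you anticipated.
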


\begin{proof}
Let $h=(h_{ij})$ be the inverse matrix of $g=(g_{ij})$. 
Put $s_j=\sum_{i=1}^3x_ig_{ij}$ and $t_j=\sum_{i=1}^3y_ih_{ji}$. 
Then $\frac{\partial}{\partial s_j}=\sum_{i=1}^jh_{ji}\frac{\partial}{\partial x_i}$ and $\frac{\partial}{\partial t_j}=\sum_{i=1}^jg_{ij}\frac{\partial}{\partial y_i}$. 
We therefore see that 
\begin{align*}
\rho(g)P\begin{pmatrix} x \\ y \end{pmatrix}&=P\begin{pmatrix} s \\ t \end{pmatrix}, & 
\rho(g)Q\begin{pmatrix} \frac{\partial}{\partial y} \\ \frac{\partial}{\partial x} \end{pmatrix}&=Q\begin{pmatrix} \frac{\partial}{\partial t} \\ \frac{\partial}{\partial s} \end{pmatrix}  
\end{align*} 
from which Lemma \ref{lem:b2} follows. 
\end{proof}

Put $\frkH_{b,a}^\vee(A)=\{P\in\calp_{a,b}^{}(A)\;|\;\Del P=0\}$, where 
\[\Del=\frac{\partial^2}{\partial x_1\partial y_1}+\frac{\partial^2}{\partial x_2\partial y_2}+\frac{\partial^2}{\partial x_3\partial y_3}. \]
If $A$ is a field, then since 
\[\frkH_{b,a}^\vee(A)=\{P\in\calp_{b,a}^{}(A)\;|\;l_{b,a}^{}(Q\otimes P)=0\text{ for }Q\in\calt_{b,a}^{}(A)\},  \]
the restriction of $\rho$ to $\frkH_{b,a}^\vee(A)$ is the contragredient representation of $\frkH_{b,a}^{}(A)$. 
In particular, $\frkH_{a,b}^\vee(A)$ is an irreducible representation of $\GL_3(A)$ with highest weight $(a,0,-b)$. 
The linear form $l_{b,a}$ induces a perfect pairing 
\[l_{b,a}^{}:\frkH_{b,a}^{}(A)\otimes\frkH_{b,a}^\vee(A)\to A. \]

\begin{remark}\label{rem:b1}
When we view $\frkH_{b,a}^\vee(\CC)$ as a representation of $\U(3)$, it is a theta lift of the discrete series of $\U(1,1)$ with Harish-Chandra parameter $\bigl(\frac{a+3}{2};\frac{b-1}{2}\bigl)$ (cf. \cite{KV}). 
Any irreducible representation of $\U(3)$ is of the form $\frkH_{b,a}^\vee\otimes\bvep^\lam$ with $0\leq a,b\in\ZZ$ and $\lam\in\ZZ$. 
\end{remark}

\begin{lemma}\label{lem:b3}
$\calp_{a,b}(\QQ)$ is a direct sum of $\frkH_{b,a}^\vee(\QQ)$ and $\calt_{a,b}(\QQ)$. 
\end{lemma}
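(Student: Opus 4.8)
\textbf{Proof plan for Lemma \ref{lem:b3}.}

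The statement asserts that $\calp_{a,b}(\QQ)=\frkH_{b,a}^\vee(\QQ)\oplus\calt_{a,b}(\QQ)$ as $\GL_3(\QQ)$-modules, where $\frkH_{b,a}^\vee(\QQ)=\ker\Del$ and $\calt_{a,b}(\QQ)=(x_1y_1+x_2y_2+x_3y_3)\calp_{a-1,b-1}(\QQ)$. The plan is to exploit the duality between multiplication by $E:=x_1y_1+x_2y_2+x_3y_3$ and the Laplacian-type operator $\Del=\sum_i\partial_{x_i}\partial_{y_i}$ under the pairing $l_{b,a}$, together with a straightforward dimension count. Since we are over a field of characteristic zero, a subspace $U$ of the pairing space together with its perpendicular $U^{\perp}$ give a direct sum decomposition as soon as $U\cap U^{\perp}=\{0\}$ and $\dim U+\dim U^{\perp}$ equals the total dimension; $\GL_3$-equivariance is then automatic because both $E\cdot(-)$ and $\Del$ are $\GL_3$-equivariant (multiplication by the invariant quadric $E$ commutes with $\rho$, and $\Del$ is equivariant by the argument already used in the proof of Lemma \ref{lem:b2}, applied to the contragredient variables).

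First I would record the adjunction: for $P\in\calp_{a-1,b-1}(\QQ)$ and $R\in\calp_{a,b}(\QQ)$ one has $l_{b,a}(E\cdot Q\otimes R)=c\cdot l_{b-1,a-1}(Q\otimes\Del R)$ for some nonzero rational constant $c$ depending only on $a,b$ (coming from the factorials in the definition of $l$), and symmetrically $l_{b,a}(Q\otimes E\cdot R)=c'\,l_{b-1,a-1}(\Del Q\otimes R)$. This is a direct computation: writing $E\cdot Q$ and differentiating, $\Del$ is exactly the operator dual to multiplication by $E$. Consequently $\calt_{a,b}(\QQ)^{\perp}=\ker\Del=\frkH_{b,a}^\vee(\QQ)$ with respect to $l_{b,a}$ (the identification $\frkH_{b,a}^\vee(\QQ)=\{R: l_{b,a}(Q\otimes R)=0 \text{ for } Q\in\calt_{b,a}(\QQ)\}$ is already noted in the excerpt just before Remark \ref{rem:b1}). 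Next I would show $\calt_{a,b}(\QQ)\cap\frkH_{b,a}^\vee(\QQ)=\{0\}$: if $E\cdot P\in\ker\Del$, then applying the adjunction, $0=l_{b-1,a-1}(Q\otimes\Del(E P))$ for all $Q$, but a short computation gives $\Del(E\cdot P)=E\cdot\Del P+(\text{Euler-type operator})P$ which is $E\cdot\Del P$ plus a positive-integer multiple of $P$ (the weight/degree shift); hence $\Del(EP)=0$ forces $P=0$ by downward induction on $a+b$, because the ``extra'' term dominates. (Alternatively: pair $E\cdot P$ with itself via $l$ through the adjunction and use that $l_{b,a}$ restricted appropriately is a nondegenerate form, giving $l_{b-1,a-1}(P\otimes\overline{P})\neq 0$ unless $P=0$; I would pick whichever is cleanest once the constants are pinned down.)

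Finally, the dimension count: $\dim\calp_{a,b}(\QQ)=\binom{a+2}{2}\binom{b+2}{2}$, and the map $E\cdot(-):\calp_{a-1,b-1}(\QQ)\to\calp_{a,b}(\QQ)$ is injective (multiplication by a nonzero polynomial in a domain), so $\dim\calt_{a,b}(\QQ)=\binom{a+1}{2}\binom{b+1}{2}$; meanwhile $\dim\frkH_{b,a}^\vee(\QQ)=\dim\frkH_{a,b}(\QQ)=(a+1)(b+1)(a+b+2)/2$ by the Weyl dimension formula for $\GL_3$ with highest weight $(a,0,-b)$. One checks the arithmetic identity $\binom{a+2}{2}\binom{b+2}{2}=\binom{a+1}{2}\binom{b+1}{2}+\frac{(a+1)(b+1)(a+b+2)}{2}$, which holds for all $a,b\ge0$. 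Combining the trivial intersection with the dimension equality yields the direct sum decomposition, and equivariance follows as explained. The main obstacle I anticipate is bookkeeping rather than conceptual: getting the constant in the adjunction $l_{b,a}(EQ\otimes R)=c\,l_{b-1,a-1}(Q\otimes\Del R)$ and the commutator $[\Del, E\cdot]$ exactly right (the latter is the $\mathfrak{sl}_2$-type relation underlying the whole harmonic decomposition), since these are what make both the injectivity of $E\cdot(-)$ restricted to the complement and the triviality of the intersection actually work; everything else is the standard harmonic-polynomial decomposition argument.
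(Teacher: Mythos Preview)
Your proposal is correct, but it is considerably more elaborate than the paper's argument, which fits in three lines. The paper observes that if $f\in\frkH_{b,a}^\vee(\QQ)\cap\calt_{a,b}(\QQ)$, then $f^\vth\in\calt_{b,a}(\QQ)$ (since $E^\vth=E$), so the already-established identification $\frkH_{b,a}^\vee=\calt_{b,a}^{\perp}$ gives $l_{b,a}(f^\vth\otimes f)=0$; but a direct monomial computation shows that $l_{b,a}(f^\vth\otimes f)$ is a sum of squares of the coefficients of $f$ with strictly positive weights, forcing $f=0$. The dimension equality is then automatic from nondegeneracy of $l_{b,a}$ (clear on monomials) together with $\dim\calt_{b,a}=\dim\calt_{a,b}$ via $\vth$, so no Weyl dimension formula or arithmetic identity is needed.

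Your ``alternative'' route---pairing $E\cdot P$ with itself---is in fact exactly the paper's idea, once you replace your $\overline{P}$ (which makes no sense over $\QQ$) by $P^\vth$; this is the clean way to do it. Your primary route via the commutator $[\Del,E\cdot]$ and downward induction also works (one gets $\Del(EP)=(a+b+1)P+E\Del P$ on $\calp_{a-1,b-1}$, and iterates), and has the virtue of being the standard $\frks\frkl_2$-triple harmonic-decomposition argument that would go through over any field of characteristic zero or sufficiently large characteristic. But for the present rational statement the positivity trick is both shorter and avoids all the bookkeeping you were worried about.
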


\begin{proof}
It suffices to show that 
\[\frkH_{b,a}^\vee(\QQ)\cap\calt_{a,b}(\QQ)=\{0\}. \]
Given $f=\sum a^{ijk}_{lmn}\cdot {x_1'}^i{x_2'}^j{x_3'}^k{y_1'}^l{y_2'}^m{y_3'}^n\in\frkH_{b,a}^\vee(\QQ)\cap\calt_{a,b}(\QQ)$, we have  
\[0=l_{b,a}(f^\vth\otimes f)=(a!b!)^{-1}\sum (a^{ijk}_{lmn})^2i!j!k!l!m!n!, \]
which shows that $f=0$. 
\end{proof}

We endow the space $\frkH_{b,a}^\vth:=\frkH_{b,a}^{}$ with the action $\rho^\vth(g)=\rho(\trs g^{-1})$ of $\GL_3(A)$. 
We define the isomorphism $^\vth:\calp_{b,a}(A)\to\calp_{a,b}(A)$ by 
\[P^\vth\left(\begin{matrix} x\\ y \end{matrix}\right)=P\left(\begin{matrix} y \\ x \end{matrix}\right). \]
Since $(\rho^\vth(g)P)^\vth=\rho(g)P^\vth$, we can define the equivariant isomorphism 
\[^\vth:\frkH_{b,a}^\vth(\QQ)\simeq\frkH_{b,a}^\vee(\QQ), \] 
letting $P^\vth$ be the unique polynomial $Q\in\frkH_{b,a}^\vee(\QQ)$ such that $Q-P^\vth\in\calt_{a,b}(\QQ)$. 

\subsection{The setting}\label{ssec:b5}

Let $V$ be a three dimensional positive definite Hermitian space and $V'\subset V$ a two dimensional subspace. 
We fix a basis of $V$ so that the embedding $\iot:\GL(V')\hookrightarrow\GL(V)$ is given by 
\[\iot:\begin{bmatrix} a & b \\ c & d \end{bmatrix}\mapsto\begin{bmatrix} a & 0 & b \\ 0 & 1 & 0 \\ c & 0 & d \end{bmatrix}. \]

Fix a triplet of integers $\lam_1>\lam_2>\lam_3$ and a pair $\mu_1>\mu_2$ of half integers. 
Let $\pi$ be an irreducible representation of $\U(V)$ with Harish-Chandra parameter $(\lam_1,\lam_2,\lam_3)$ and $\sig$ an irreducible representation of $\U(V')$ with Harish-Chandra parameter $(\mu_1,\mu_2)$. 
The highest weight of $\pi$ is $(-k_1,-k_2,-k_3):=(\lam_1-1,\lam_2,\lam_3+1)$ while the highest weight of $\sig$ is $(-k_1',-k_2'):=\bigl(\mu_1-\frac{1}{2},\mu_2+\frac{1}{2}\bigl)$. 

Recall that $\sig^\vee$ appears as a subrepresentation of $\pi|_{\U(V')}$ if and only if 
\beq
\lam_1>-\mu_2>\lam_2>-\mu_1>\lam_3\Leftrightarrow k_1^{}\leq -k_2'\leq k_2^{}\leq -k_1'\leq k_3^{}. \tag{$\bigstar$}
\eeq 
We consider the additive character $\addchar^\CC_{-2}(x)=e^{2\pi(\bar x-x)}$ for $x\in\CC$. 
Since 
\[\vep\left(\frac{1}{2},\bvep^{2\kap},\addchar^\CC_{-2}\right)
=\begin{cases}
-1 &\text{if $\kap>0$, }\\
1 &\text{if $\kap<0$ }
\end{cases}\]
for $\kap\in\frac{1}{2}\ZZ\setminus\ZZ$, if $(\bigstar)$ holds, then 
\beq
\vep\left(\frac{1}{2},\phi_\pi\otimes\phi_\sig,\addchar^\CC_{-2}\right)
=\prod_{i,j}\vep\left(\frac{1}{2},\bvep^{2(\lam_i+\mu_j)},\addchar^\CC_{-2}\right)=-1. \label{tag:b2}
\eeq

Put \begin{align*}a&=k_3-k_2, & b&=k_2-k_1, & n&=k_3^{}+k_1', & l&=-k_1^{}-k_2'. \end{align*}
If $(\bigstar)$ holds, then $\sig\simeq\rho_{(a-n,l-b)}\otimes\bvep^{-k_2}$ and 
\[\frac{\scrl(\pi\times\sig)}{(2\pi)^2n!l!}=\frac{\Gam(a+b-l+2)\Gam(a-n+b+2)\Gam(a-n+1)\Gam(b-l+1)}{2^{-1}\Gam(a+b+3)\Gam(a-n+b-l+2)\Gam(a+2)\Gam(b+2)}. \]

\subsection{The $\U(V')$-invariant vector}\label{ssec:b6}

We hereafter assume ($\bigstar$). 
Then the $\GL_2(\CC)$-invariant subspace of $\pi\otimes\sig^\vee$ is one-dimensional. 
We will construct a basis vector of this $\U(V')$-invariant line. 
Replacing $\pi$ by $\pi\otimes\bvep^{k_2}$ and $\sig$ by $\sig\otimes\bvep^{-k_2}$, we may assume that $k_2=\lam_2=0$. 
Then 
\begin{align*}
\pi&\simeq\frkH_{b,a}, &
\pi^\vee&\simeq\frkH_{b,a}^\vee, &
\sig&\simeq\rho_{(a-n,l-b)}, &
\sig^\vee&\simeq\rho_{(b-l,n-a)}. 
\end{align*}
Put $\vPi=\pi\otimes\sig$ and $\vPi^\vth=\pi^\vth\otimes\sig^\vth$. 
The maps constructed in \S \ref{ssec:b2} and \S \ref{ssec:b4} give the equivariant isomorphism $^\vth:\vPi^\vth\simeq\vPi^\vee$. 

We define $\bfW_\vPi^H\in\vPi$ by 
\[\bfW_\vPi^H\equiv\det\begin{bmatrix} x_1 & x_3 \\ X_2 & Y_2 \end{bmatrix}^{b-l}\det\begin{bmatrix} X_2 & Y_2 \\ y_3 & -y_1 \end{bmatrix}^{a-n}x_2^ly_2^n \pmod{\calt_{b,a}(\QQ)}\]
and define a polynomial $\bfP_{\vPi^\vee}$ in $x_1,x_2,x_3,y_1,y_2,y_3,X_2,Y_2$ by
\[\bfP_{\vPi^\vee}^{}=\sum_{m=0}^{\min\{n,l\}}(-1)^ma_{\vPi^\vee}^{(m)}\bfP_{\vPi^\vee}^{(m)}\cdot x_2^{n-m}y_2^{l-m} \]
for $0\leq n\leq a$ and $0\leq l\leq b$, where 
\begin{align*}
\bfP_{\vPi^\vee}^{(m)}&=(x_1y_1+x_3y_3)^m(x_1Y_2-x_3X_2)^{a-n}(y_1X_2+y_3Y_2)^{b-l}, \\
a_{\vPi^\vee}^{(m)}&=\binom{n}{m}\binom{l}{m}\frac{\Gam(m+1)\Gam(a-n+b+2)\Gam(a+b-l+2)}{\Gam(a+b+2)\Gam(a-n+b-l+m+2)}. \end{align*}

\begin{lemma}\label{lem:b4}
\[\sum_{m=0}^{\min\{n,l\}}a_{\vPi^\vee}^{(m)}=1. \]
\end{lemma}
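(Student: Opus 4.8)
Lemma \ref{lem:b4} asserts the identity
\[
\sum_{m=0}^{\min\{n,l\}}\binom{n}{m}\binom{l}{m}\frac{\Gam(m+1)\Gam(a-n+b+2)\Gam(a+b-l+2)}{\Gam(a+b+2)\Gam(a-n+b-l+m+2)}=1.
\]
The plan is to recognize the left-hand side as an instance of the Chu--Vandermonde summation (equivalently, Gauss's ${}_2F_1$ evaluation at argument $1$ with one numerator parameter a non-positive integer). First I would clear the $m$-independent factors $\Gam(a-n+b+2)\Gam(a+b-l+2)/\Gam(a+b+2)$ out front and rewrite the summand. Writing $N=a-n+b$, $L=a+b-l$, $S=a+b$, so that $N,L\le S$ and $S-N=n$, $S-L=l$, the identity becomes
\[
\sum_{m\ge 0}\binom{S-N}{m}\binom{S-L}{m}\frac{m!}{(N+L-S+m+1)!}=\frac{S!}{N!\,L!},
\]
where the sum is finite since $\binom{S-N}{m}=0$ for $m>n$.

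Next I would convert each binomial and factorial into Pochhammer symbols to exhibit the sum as a hypergeometric series. Using $\binom{S-N}{m}=(-1)^m(N-S)_m/m!$ and $\binom{S-L}{m}=(-1)^m(L-S)_m/m!$ and $1/(N+L-S+m+1)!=1/\big((N+L-S+1)!\,(N+L-S+2)_m\big)$, one finds the summand equals, up to the constant $1/(N+L-S+1)!$,
\[
\frac{(N-S)_m\,(L-S)_m}{(N+L-S+2)_m}\cdot\frac{1}{m!},
\]
so the sum is ${}_2F_1\!\big(N-S,\,L-S;\,N+L-S+2;\,1\big)$ divided by $(N+L-S+1)!$. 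Since $N-S=-n$ is a non-positive integer, Gauss's theorem applies: ${}_2F_1(-n,\beta;\gamma;1)=(\gamma-\beta)_n/(\gamma)_n$ with $\beta=L-S$, $\gamma=N+L-S+2$. Then $(\gamma-\beta)_n=(N-S+2)_n=(2-n-?)$-type product which telescopes to $(N+1)!\,\cdots$; carrying out the two Pochhammer-to-factorial conversions $(\gamma-\beta)_n=(N+2)_n=\tfrac{(N+n)!}{(N+1)!}=\tfrac{S!}{(N+1)!}$ and $(\gamma)_n=(N+L-S+2)_n=\tfrac{(N+L-S+1+n)!}{(N+L-S+1)!}=\tfrac{(L+1)!}{(N+L-S+1)!}$ gives the value $\tfrac{S!\,(N+L-S+1)!}{(N+1)!\,(L+1)!}$, and multiplying by the prefactor $1/(N+L-S+1)!$ recovers $S!/\big((N+1)!(L+1)!\big)$. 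Reinstating the front factor $\Gam(a-n+b+2)\Gam(a+b-l+2)/\Gam(a+b+2)=\tfrac{(N+1)!\,(L+1)!}{(S+1)!}\cdot(S+1)$ — here one must track the $+2$ shifts carefully — yields exactly $1$.

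The only real obstacle is bookkeeping: making sure the shifts by $+1$ and $+2$ in all the arguments of $\Gam$ match up so that the Chu--Vandermonde output cancels the prefactor precisely, rather than leaving a stray ratio of small factorials. I would double-check by testing a low case, e.g. $n=l=1$ (so $\min\{n,l\}=1$), where the sum is $1+\tfrac{(a-1+b+2)^{-1}\text{-type}}{\cdots}$ directly computable, and $n=0$ or $l=0$ (the sum trivially reduces to the single term $m=0$, forcing $\Gam(a+b-l+2)=\Gam(a+b+2)$ type simplifications). An alternative, if one prefers to avoid invoking Gauss's theorem as a black box, is a direct induction on $\min\{n,l\}$ using Pascal's rule $\binom{n}{m}=\binom{n-1}{m}+\binom{n-1}{m-1}$ to set up a recursion in $n$; this is more elementary but messier, so I would present the hypergeometric argument as the main line.
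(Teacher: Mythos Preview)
Your approach is correct in spirit and essentially the same as the paper's: both reduce to the Vandermonde identity. However, your execution takes a detour through hypergeometric notation and contains bookkeeping slips you yourself flag (for instance, $(N+2)_n=(S+1)!/(N+1)!$, not $S!/(N+1)!$, and your intermediate target should be $(S+1)!/\bigl((N+1)!(L+1)!\bigr)$ rather than $S!/(N!L!)$). With those off-by-one errors fixed, your computation closes correctly.

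The paper's argument is considerably shorter. Assuming without loss of generality $n\ge l$, it observes directly that
\[
a_{\vPi^\vee}^{(m)}=\binom{a+b+1}{l}^{-1}\binom{n}{m}\binom{a-n+b+1}{l-m},
\]
which you can check from $\binom{l}{m}\dfrac{m!}{(a-n+b-l+m+1)!}=\dfrac{l!}{(a-n+b+1)!}\binom{a-n+b+1}{l-m}$. Then Vandermonde's convolution $\sum_{m=0}^l\binom{n}{m}\binom{a-n+b+1}{l-m}=\binom{a+b+1}{l}$ gives the result in one line. This bypasses all Pochhammer bookkeeping; I would recommend presenting it this way rather than invoking Gauss's ${}_2F_1$ summation.
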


\begin{proof}
If $n\geq l$, then Vandermonde's convolution gives 
\[\sum_{m=0}^{\min\{n,l\}}a_{\vPi^\vee}^{(m)}=\binom{a+b+1}{l}^{-1}\sum_{m=0}^l\binom{n}{m}\binom{a-n+b+1}{l-m}=1 \]
as claimed. 
The case $n\leq l$ can be proved in the same way. 
\end{proof}

\begin{proposition}\label{prop:b1}
\begin{enumerate}
\item\label{prop:b11} $\bfP_{\vPi^\vee}\in\frkH_{a,b}\otimes L_{a-n+b-l}(\QQ)$. 
\item\label{prop:b12} $\vPi^\vee(\iot(h),h)\bfP_{\vPi^\vee}=\bfP_{\vPi^\vee}$ for $h\in\GL(V')$. 
\item\label{prop:b13} $\bfW_\vPi^{H\vth}=\bfP_{\vPi^\vee}^{}$. 
\item\label{prop:b14} $\vPi(\iot(h),h)\bfW_\vPi^H=\bfW_\vPi^H$ for $h\in\GL(V')$. 
\end{enumerate}
\end{proposition}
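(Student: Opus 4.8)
\textbf{Proof proposal for Proposition \ref{prop:b1}.} The four assertions are tightly linked: \eqref{prop:b13} says that $\bfW_\vPi^H$ and $\bfP_{\vPi^\vee}$ correspond under the equivariant isomorphism ${}^\vth:\vPi^\vth\simeq\vPi^\vee$ built in \S\ref{ssec:b2}--\S\ref{ssec:b4}, and granting this, \eqref{prop:b12} and \eqref{prop:b14} are equivalent (apply ${}^\vth$, which intertwines $\rho^\vth(g)$ with $\rho^\vee(g)$ on the $\U(3)$-factor and the analogous map on the $\U(2)$-factor), while \eqref{prop:b11} is the well-definedness statement needed for $\bfP_{\vPi^\vee}$ to represent a vector of $\frkH_{a,b}(\QQ)\otimes L_{a-n+b-l}(\QQ)$ rather than just of $\calp_{a,b}(\QQ)\otimes L_{a-n+b-l}(\QQ)$. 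So the plan is: first prove \eqref{prop:b11}, then \eqref{prop:b12}, then deduce \eqref{prop:b13}, and finally read off \eqref{prop:b14}.

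For \eqref{prop:b11} I would show $\Del\bfP_{\vPi^\vee}=0$ where $\Del=\partial^2/\partial x_1\partial y_1+\partial^2/\partial x_2\partial y_2+\partial^2/\partial x_3\partial y_3$, using the characterization $\frkH_{a,b}^\vee(\QQ)=\{P\in\calp_{b,a}^{}(\QQ)\mid\Del P=0\}$ from \S\ref{ssec:b4} (here the roles of $a,b$ and the variable names must be tracked carefully against the statement). The point is that each summand $\bfP_{\vPi^\vee}^{(m)}x_2^{n-m}y_2^{l-m}$ is \emph{not} individually harmonic, but the alternating combination with the coefficients $a_{\vPi^\vee}^{(m)}$ is: applying $\Del$ to $(x_1y_1+x_3y_3)^m(x_1Y_2-x_3X_2)^{a-n}(y_1X_2+y_3Y_2)^{b-l}x_2^{n-m}y_2^{l-m}$ produces, after the product rule, terms with $(x_1y_1+x_3y_3)^{m-1}$ and $(x_1y_1+x_3y_3)^m$ (the cross terms in the first two factors vanish because $\partial/\partial x_1$ of $(x_1Y_2-x_3X_2)$ pairs with $\partial/\partial y_1$ of $(y_1X_2+y_3Y_2)$ to give $Y_2X_2$, and the $x_3,y_3$ contribution gives $-X_2Y_2$, cancelling; similarly the $x_2y_2$-piece only touches the monomial factor). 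Writing out the recursion between the $(x_1y_1+x_3y_3)^{m-1}$-coefficient coming from the $m$-th summand and the $(x_1y_1+x_3y_3)^{m-1}$-coefficient coming from the $(m-1)$-st summand, the vanishing reduces to a one-line identity among ratios of the $a_{\vPi^\vee}^{(m)}$, i.e.\ to the defining recursion $a_{\vPi^\vee}^{(m)}/a_{\vPi^\vee}^{(m-1)}=\frac{(n-m+1)(l-m+1)\,m}{(\text{linear in }a,b,l,n,m)}$, which is immediate from the binomial-times-Gamma formula for $a_{\vPi^\vee}^{(m)}$.

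For \eqref{prop:b12} I would use the fact (from \S\ref{ssec:b4} and Lemma \ref{lem:b2}) that the $\GL_2$-invariant line in $\pi\otimes\sig^\vee=\frkH_{a,b}\otimes L_{b-l}(L_{a-n}\otimes\cdots)$ is one-dimensional under the diagonal $\U(V')$-action via $\iot$, so it suffices to check invariance under the standard generators of $\GL_2$: the diagonal torus, the lower and upper unipotents. Invariance under the torus is the statement that $\bfP_{\vPi^\vee}$ is isobaric of the correct weight, visible monomial by monomial; invariance under a unipotent $\begin{bmatrix}1&t\\0&1\end{bmatrix}$ — which acts on $x=(x_1,x_2,x_3)$, $y=(y_1,y_2,y_3)$ through $\iot$ and simultaneously on $(X_2,Y_2)$ — is checked by differentiating in $t$ at $t=0$ and showing the derivation annihilates $\bfP_{\vPi^\vee}$; here again the individual summands are not killed but the alternating sum is, by the same recursion among the $a_{\vPi^\vee}^{(m)}$, and Lemma \ref{lem:b4} ($\sum_m a_{\vPi^\vee}^{(m)}=1$) fixes the overall normalization so that the $m=0$ leading term is $\det[\,{}^{x_1\;x_3}_{X_2\,Y_2}]^{b-l}\det[\,{}^{X_2\;Y_2}_{y_3\,-y_1}]^{a-n}x_2^ly_2^n$, matching $\bfW_\vPi^H$. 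This last matching, together with the uniqueness of the representative in $\frkH_{a,b}$ modulo $\calt_{a,b}$ guaranteed by Lemma \ref{lem:b3}, gives \eqref{prop:b13}: both $\bfW_\vPi^{H\vth}$ and $\bfP_{\vPi^\vee}$ lie in $\frkH_{a,b}\otimes L_{a-n+b-l}(\QQ)$, are $\GL_2$-invariant hence proportional, and have the same leading term. Then \eqref{prop:b14} follows by applying ${}^\vth$ to \eqref{prop:b12}. The main obstacle I anticipate is purely bookkeeping: getting the action of $\iot(h)$ on the three "kinds" of variables ($x_i$ transforming by $g$, $y_i$ by ${}^t g^{-1}$, and $(X_2,Y_2)$ by the $\sig^\vee$-action) consistent across \S\ref{ssec:b2}--\S\ref{ssec:b6}, and verifying the harmonicity/unipotent-invariance recursion without sign errors; the combinatorics itself collapses to Vandermonde, as in Lemma \ref{lem:b4}.
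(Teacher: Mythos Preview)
Your treatment of \eqref{prop:b11} is exactly the paper's: compute $\Del$ term by term and verify the recursion
\[
m(m+1+a-n+b-l)\,a_{\vPi^\vee}^{(m)}=(n-m+1)(l-m+1)\,a_{\vPi^\vee}^{(m-1)},
\]
which follows at once from the closed formula for $a_{\vPi^\vee}^{(m)}$.

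For \eqref{prop:b12} you have a misconception that complicates things. Each summand $\bfP_{\vPi^\vee}^{(m)}\cdot x_2^{n-m}y_2^{l-m}$ is \emph{individually} $\GL(V')$-invariant, so no recursion is needed here. The reason is that, after the substitutions $(y_3,-y_1)$ for the $y$-row, all three rows of the $3\times 2$ matrix $\bigl[\begin{smallmatrix}x_1&x_3\\ y_3&-y_1\\ X_2&Y_2\end{smallmatrix}\bigr]$ transform by right multiplication by $h$ (up to a global power of $\det h$), and $\bfP_{\vPi^\vee}^{(m)}$ is a product of $2\times 2$ minors of this matrix: $x_1y_1+x_3y_3=-\det\bigl[\begin{smallmatrix}x_1&x_3\\y_3&-y_1\end{smallmatrix}\bigr]$, $x_1Y_2-x_3X_2=\det\bigl[\begin{smallmatrix}x_1&x_3\\X_2&Y_2\end{smallmatrix}\bigr]$, $y_1X_2+y_3Y_2=\det\bigl[\begin{smallmatrix}y_3&-y_1\\X_2&Y_2\end{smallmatrix}\bigr]$. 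Each minor picks up $\det h$, and the total degree $m+(a-n)+(b-l)$ exactly cancels the determinant twist coming from the ${}^th^{-1}$-action on $y$ and the $\bvep^{n-a}$-twist in $\sig^\vee$. This is what the paper does in one line.

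For \eqref{prop:b13} your argument is circular as written: you invoke ``both $\bfW_\vPi^{H\vth}$ and $\bfP_{\vPi^\vee}$ \dots\ are $\GL_2$-invariant hence proportional'', but $\GL_2$-invariance of $\bfW_\vPi^{H\vth}$ is equivalent (via the equivariance of ${}^\vth$) to \eqref{prop:b14}, which you then want to deduce from \eqref{prop:b13}. The paper avoids this entirely with a direct computation modulo $\calt_{a,b}$: since $x_1y_1+x_2y_2+x_3y_3\in\calt_{a,b}$, one has $x_1y_1+x_3y_3\equiv -x_2y_2$, hence
\[
\bfP_{\vPi^\vee}^{(m)}\cdot x_2^{n-m}y_2^{l-m}\equiv (-x_2y_2)^m(x_1Y_2-x_3X_2)^{a-n}(y_1X_2+y_3Y_2)^{b-l}x_2^{n-m}y_2^{l-m},
\]
and the signs cancel with the $(-1)^m$ in the definition of $\bfP_{\vPi^\vee}$, so all summands become the \emph{same} polynomial times $a_{\vPi^\vee}^{(m)}$; Lemma~\ref{lem:b4} then gives $\bfP_{\vPi^\vee}\equiv\bfW_\vPi^{H\vth}\pmod{\calt_{a,b}}$. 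Combined with \eqref{prop:b11} and Lemma~\ref{lem:b3} this yields \eqref{prop:b13} with no appeal to invariance, and \eqref{prop:b14} then follows from \eqref{prop:b12} via ${}^\vth$ as you say.
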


\begin{proof}
The polynomial $P_{n,l}$ is homogeneous of degree $a$ in $x_1,x_2,x_3$ and of degree $b$ in $y_1,y_2,y_3$ and of degree $a-n+b-l$ in $X_2,Y_2$. 
Since
\[\biggl(\frac{\partial^2}{\partial x_1\partial y_1}+\frac{\partial^2}{\partial x_3\partial y_3}\biggl)\bfP_{\vPi^\vee}^{(m)}=m(m+1+a-n+b-l)\bfP_{\vPi^\vee}^{(m-1)} \]
and since 
\[m(m+1+a-n+b-l)a_{\vPi^\vee}^{(m)}=(n-m+1)(l-m+1)a_{\vPi^\vee}^{(m-1)} \]
for $m=1,2,\dots,\min\{l,n\}$, we prove (\ref{prop:b11}). 
Recall that $\GL(V')$ acts on 
\[P=\sum_{i,j} P_{i,j}\cdot x_2^{n-i}y_2^{l-j}\in\frkH_{b,a}^\vee\otimes L_{c-d}^{}(\CC)\] 
by
\[\pi^\vee(\iot(h))\otimes\rho_{(c,d)}(h)P_{i,j}\left(\begin{bmatrix} x_1 & x_3 \\ y_3 & -y_1 \\ X_2 & Y_2 \end{bmatrix}\right)=(\det h)^{l-b-j+d}P_{i,j}\left(\begin{bmatrix} x_1 & x_3 \\ y_3 & -y_1 \\ X_2 & Y_2 \end{bmatrix}h\right) \]
for $h\in\GL(V')$.  
It follows that 
\[\pi^\vee(\iot(h))\otimes\sig^\vee(h)\bfP_{\vPi^\vee}^{(m)}
=\bfP_{\vPi^\vee}^{(m)}, \]
which proves (\ref{prop:b12}). 
Since 
\[\bfP_{\vPi^\vee}^{(m)}=(-x_2y_2)^m(x_1Y_2-x_3X_2)^{a-n}(y_1X_2+y_3Y_2)^{b-l}\pmod{\calt_{a,b}(\QQ)}, \]
we see that 
\begin{align*}
\bfP_{\vPi^\vee}
&\equiv\sum_ma_{\vPi^\vee}^{(m)}(x_1Y_2-x_3X_2)^{a-n}(y_1X_2+y_3Y_2)^{b-l}x_2^ny_2^l \\
&\equiv\Big(\sum_ma_{\vPi^\vee}^{(m)}\Big)\bfW_\vPi^{H\vth}\pmod{\calt_{a,b}(\QQ)}.
\end{align*}
Now (\ref{prop:b13}) is a consequence of Lemma \ref{lem:b4}. 
One sees (\ref{prop:b14}) from (\ref{prop:b13}). 
\end{proof}

\subsection{The restriction to $\U(V')$}\label{ssec:b7}

The branching law for the restriction of $\pi^\vee$ to $\U(V')$ is well-known:  
\[\pi^\vee|_{\U(V')}\simeq\oplus_{n=0}^a\oplus_{l=0}^b\rho_{(a-n,-b+l)}^{}. \]
Hara and Namikawa \cite{HN} explicitly give a $\GL(V')$-equivariant map 
\[\nabla_{n,l}:\frkH_{b,a}^\vee\to\rho_{(a-n,-b+l)}^{}\] 
by 
\[(\nabla_{n,l}P)(X_1,Y_1)=\frac{1}{n!l!}\cdot\frac{\partial^{n+l}P}{\partial x_2^n\partial y_2^l}\left(\begin{bmatrix} X_1 & 0 & Y_1 \\ -Y_1 & 0 & X_1 \end{bmatrix}\right). \]
We define the constants $c_{n,l}$ by 
\[l_{b,a}(Q\otimes P)=\sum_{n=0}^a\sum_{l=0}^bc_{n,l}\ell_{a-n+b-l}(\nabla_{n,l}Q\otimes\nabla_{l,n}P). \]

\subsection{The Ichino-Ikeda integral}\label{ssec:b8}

We will consider the integral
\[J(\bfP\otimes \bfQ)=\int_{\U(V')}\ell_\vPi((\vPi(\iot(h),h)\bfP)\otimes \bfQ)\,\d h \]
for $\bfP\in\vPi$ and $\bfQ\in\vPi^\vee$, where we set $\ell_\vPi=\ell_\pi\otimes\ell_\sig$. 

Since $\nabla_{n,l}\bfP_{\vPi^\vee}=a_{\vPi^\vee}^{(0)}\bfP_{a-n+b-l}$ and since $\nabla_{n',l'}\bfP_{\vPi^\vee}=0$ unless $n'=n$ and $l'=l$, Proposition \ref{prop:b1} and Lemma \ref{lem:b1} show that  
\begin{align*}
J(\bfW_\vPi^H\otimes \bfW_\vPi^{H\vth})
&=J(\bfP_{\vPi^\vee}^{\vth^{-1}}\otimes \bfP_{\vPi^\vee}^{})\\
&=\ell_\vPi(\bfP_{\vPi^\vee}^{\vth^{-1}}\otimes \bfP_{\vPi^\vee})\\
&=c_{n,l}^{}\ell_{\sig\otimes\sig^\vee}(a_{\vPi^\vee}^{(0)}\bfP_{a-n+b-l}\otimes a_{\vPi^\vee}^{(0)}\bfP_{a-n+b-l})\\
&=a_{\vPi^\vee}^{(0)}c_{n,l}^{}a_{\vPi^\vee}^{(0)}(a-n+b-l+1).  
\end{align*}
We denote the dimensions of $\pi$ and $\sig$ by $\d(\pi)$ and $\d(\sig)$. 
Since 
\begin{align*}
\d(\pi)&=\frac{1}{2}(a+b+2)(a+1)(b+1), & 
\d(\sig)&=a-n+b-l+1, 
\end{align*}
we have 
\begin{align*}
&\frac{J(\bfW_\vPi^H\otimes \bfW_\vPi^{H\vth})}{\scrl(\pi\times\sig^\vee)}\\
=&\frac{\d(\pi)\d(\sig)a_{\vPi^\vee}^{(0)}c_{n,l}^{}a_{\vPi^\vee}^{(0)}\Gam(a+b+2)\Gam(a-n+b-l+2)\Gam(a+1)\Gam(b+1)}{n!l!(2\pi)^2\Gam(a+b-l+2)\Gam(a-n+b+2)\Gam(a-n+1)\Gam(b-l+1)}\\
=&\frac{\d(\pi)\d(\sig)}{(2\pi)^2}\binom{a}{n}\binom{b}{l}a_{\vPi^\vee}^{(0)}c_{n,l}^{}a_{\vPi^\vee}^{(0)}\frac{\Gam(a+b+2)\Gam(a-n+b-l+2)}{\Gam(a+b-l+2)\Gam(a-n+b+2)}\\
=&\frac{\d(\pi)\d(\sig)}{(2\pi)^2}\binom{a}{n}\binom{b}{l}a_{\vPi^\vee}^{(0)}c_{n,l}^{}. 
\end{align*}

Let $W_\sig=X_1^{a-n+b-l}$ and $W_\pi=x_1^by_3^a$ be highest weight vectors. 
Since 
\begin{align*}
W_{\sig^\vee}&:=W_\sig^\vth=(-Y_1')^{a-n+b-l}, & 
W_{\pi^\vee}&:=W_\pi^\vth=y_1^{\prime b}x_3^{\prime a} 
\end{align*}
(cf. (\ref{tag:61}) and (\ref{tag:62})), we have 
\[\ell_\vPi((W_\pi\otimes W_\sig)\otimes(W_{\pi^\vee}\otimes W_{\sig^\vee}))=1. \]
Now the following formula is a consequence of Proposition \ref{prop:b3} below. 

\begin{proposition}\label{prop:b2}
\[\frac{\scrl(\pi\times\sig)^{-1}J(\bfW_\vPi^H\otimes \bfW_\vPi^{H\vth})}{\ell_\vPi((W_\pi\otimes W_\sig)\otimes(W_{\pi^\vee}\otimes W_{\sig^\vee}))}
=\frac{\d(\pi)\d(\sig)}{(2\pi)^2}. \]
\end{proposition}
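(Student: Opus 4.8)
The plan is to funnel everything into one scalar identity about branching constants. The bulk of the reduction is already assembled in \S\ref{ssec:b8}: since $\bfW_\vPi^H$ is $\U(V')$-fixed (Proposition \ref{prop:b1}(\ref{prop:b14})) and $\d h$ normalizes $\U(V')$ to mass one, the integral $J(\bfW_\vPi^H\otimes\bfW_\vPi^{H\vth})$ collapses to the single pairing value $\ell_\vPi(\bfW_\vPi^H\otimes\bfW_\vPi^{H\vth})$; the displayed computation there evaluates this — via $\bfW_\vPi^{H\vth}=\bfP_{\vPi^\vee}$ (Proposition \ref{prop:b1}(\ref{prop:b13})), the vanishing of $\nabla_{n',l'}\bfP_{\vPi^\vee}$ for $(n',l')\neq(n,l)$, the identity $\nabla_{n,l}\bfP_{\vPi^\vee}=a_{\vPi^\vee}^{(0)}\bfP_{a-n+b-l}$, the definition of $c_{n,l}$ from \S\ref{ssec:b7}, and Lemma \ref{lem:b1} — to $c_{n,l}(a_{\vPi^\vee}^{(0)})^2(a-n+b-l+1)$, and then, after inserting $\d(\pi)=\tfrac12(a+b+2)(a+1)(b+1)$, $\d(\sig)=a-n+b-l+1$, the explicit value of $\scrl(\pi\times\sig)$ from \S\ref{ssec:b5}, and the telescoping $a_{\vPi^\vee}^{(0)}\Gam(a+b+2)\Gam(a-n+b-l+2)=\Gam(a+b-l+2)\Gam(a-n+b+2)$ read off from the formula for $a_{\vPi^\vee}^{(m)}$, to $\tfrac{\d(\pi)\d(\sig)}{(2\pi)^2}\binom an\binom bl a_{\vPi^\vee}^{(0)}c_{n,l}$. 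Since $\ell_\vPi((W_\pi\otimes W_\sig)\otimes(W_{\pi^\vee}\otimes W_{\sig^\vee}))=1$, Proposition \ref{prop:b2} is \emph{equivalent} to the single identity
\[\binom{a}{n}\binom{b}{l}\,a_{\vPi^\vee}^{(0)}\,c_{n,l}=1,\]
with $c_{n,l}$ the branching constants of \S\ref{ssec:b7}. This identity is stated below as Proposition \ref{prop:b3}, so it is the only remaining step.

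To establish it, I would compute $c_{n,l}$ explicitly by evaluating the defining relation
\[l_{b,a}(Q\otimes P)=\sum_{n',l'}c_{n',l'}\,\ell_{a-n'+b-l'}(\nabla_{n',l'}Q\otimes\nabla_{l',n'}P)\]
on one pair $(P,Q)$ that isolates the $(n,l)$-isotypic component. The vectors $\bfP_{\vPi^\vee}\in\frkH_{a,b}\otimes L_{a-n+b-l}(\QQ)$ and its $\vth$-partner $\bfW_\vPi^H$ already have this property, since only $\nabla_{n,l}$ survives on $\bfP_{\vPi^\vee}$; thus one expands the closed form of $\bfP_{\vPi^\vee}$ from \S\ref{ssec:b6} and pairs it term by term against its $\vth$-image, using the differential-operator formula for $l_{b,a}$ on the $\frkH$-factor and $(\ref{tag:b1})$ on the $L_{a-n+b-l}$-factor, then compares the result with the already-known value $c_{n,l}(a_{\vPi^\vee}^{(0)})^2(a-n+b-l+1)$ and solves for $c_{n,l}$. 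Equivalently one may take $P$ to be the monomial $x_2^{l}y_2^{n}$ times a highest-weight vector of the summand $\rho_{(a-n,-b+l)}^\vee$ of $\pi^\vee|_{\U(V')}$, for which both sides of the defining relation collapse to bare products of factorials.

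The main obstacle is precisely the hypergeometric bookkeeping in this last calculation. Because $\bfP_{\vPi^\vee}$ is a triple sum (the index $m$ carrying $(x_1y_1+x_3y_3)^m$ together with the coefficients $a_{\vPi^\vee}^{(m)}$), pairing it against its $\vth$-image produces a multiple sum of multinomial coefficients that must be resummed to the compact shape $\bigl(\binom an\binom bl a_{\vPi^\vee}^{(0)}\bigr)^{-1}$; this rests on Vandermonde/Chu-type identities applied one level deeper than in the proof of Lemma \ref{lem:b4}, whose collapse $\sum_m a_{\vPi^\vee}^{(m)}=1$ is the same mechanism in miniature. Every representation-theoretic ingredient — the collapse of the $\U(V')$-integral, the vanishing of the off-diagonal intertwiners, the dimension and $L$-value substitutions — is already available in the excerpt or is formal; the only care required is in tracking the unit shifts among the Gamma factors and the single surviving power of $(2\pi)$ so that the combinatorial output matches the archimedean $L$-factor of \S\ref{ssec:b5} verbatim.
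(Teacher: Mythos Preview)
Your reduction is correct and matches the paper exactly: \S\ref{ssec:b8} already carries out every step you describe, and the statement of Proposition~\ref{prop:b2} is declared there to be ``a consequence of Proposition~\ref{prop:b3} below,'' i.e.\ of the identity $\binom{a}{n}\binom{b}{l}a_{\vPi^\vee}^{(0)}c_{n,l}=1$.

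The only difference is in how Proposition~\ref{prop:b3} itself is proved. Your primary plan---pairing $\bfP_{\vPi^\vee}$ against its $\vth$-image and resumming the resulting double $m,m'$ sum by a Chu--Vandermonde identity ``one level deeper'' than Lemma~\ref{lem:b4}---would work but is heavier than necessary. The paper instead chooses a \emph{single} test vector $\bar w_{n,l}=\pi^\vee(-\trs E_2)^l\pi^\vee(-\trs E_1)^n\bar w_{0,0}$ built from the lowest weight vector $\bar w_{0,0}=y_1^{\prime a}x_3^{\prime b}$ by Lie-algebra lowering operators (not the highest-weight-times-monomial vector you suggest as an alternative). With this choice, evaluating $\ell_\vPi(\bfP_{\vPi^\vee}\otimes\bar w_{n,l}X_2^{\prime a-n+b-l})$ two ways---once via the defining relation for $c_{n,l}$, once by direct polynomial pairing---yields a \emph{single} sum over $m$ whose summand is a constant multiple of $a_{\vPi^\vee}^{(m)}$; the identity then collapses to $c_{n,l}a_{\vPi^\vee}^{(0)}=\binom{a}{n}^{-1}\binom{b}{l}^{-1}\sum_m a_{\vPi^\vee}^{(m)}$, and Lemma~\ref{lem:b4} is reused verbatim rather than deepened. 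So your approach and the paper's both land on the same identity, but the paper's test-vector trick avoids the extra layer of hypergeometric bookkeeping you anticipated.
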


\subsection{Computation of $c_{n,l}$}\label{ssec:b9}

We will prove the following formula:

\begin{proposition}\label{prop:b3}
\[\binom{a}{n}\binom{b}{l}a_{\vPi^\vee}^{(0)}c_{n,l}=1. \]
\end{proposition}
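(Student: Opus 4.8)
The plan is to pin down each constant $c_{n,l}$ by evaluating both sides of its defining relation on a single, carefully chosen pair of vectors, exploiting that the branching $\frkH_{b,a}^\vee|_{\U(V')}=\bigoplus_{n,l}\rho_{(a-n,-b+l)}$ is multiplicity-free. First I would record the Schur-lemma reduction: since this decomposition is multiplicity-free and $\frkH_{b,a}|_{\U(V')}$ is the contragredient one, the $\U(V')$-invariant form $l_{b,a}$ is block-diagonal — it annihilates $\frkH_{b,a}[n',l']\otimes\frkH_{b,a}^\vee[n,l]$ for $(n',l')\neq(n,l)$, and on the $(n,l)$-block it is a scalar multiple of the invariant pairing $(Q,P)\mapsto\ell_{a-n+b-l}(\nabla_{n,l}Q\otimes\nabla_{l,n}P)$ of $\rho_{(a-n,-b+l)}$ with its dual. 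That scalar is exactly $c_{n,l}$, and it can be read off from any pair of vectors lying in the $(n,l)$-block on which the right-hand side does not vanish.

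Next I would choose explicit test vectors. Take $P_{n,l}\in\frkH_{b,a}^\vee$ to be the $\U(V')$-highest weight vector of the $(n,l)$-component, written (as in the definition of $\bfP_{\vPi^\vee}$) in the form
\[P_{n,l}=\sum_{m=0}^{\min\{n,l\}}(-1)^m\alpha_m\,(x_1y_1+x_3y_3)^m\,x_1^{a-n}y_3^{b-l}\,x_2^{n-m}y_2^{l-m},\]
with leading monomial $x_1^{a-n}x_2^ny_2^ly_3^{b-l}$ of coefficient $\alpha_0$, the remaining $\alpha_m$ being forced by the harmonicity condition $\Del P_{n,l}=0$; this recursion is of the same Vandermonde/Gauss type already solved in Lemma \ref{lem:b4}, and it is solved by the ratios of $\Gam$-values occurring in $a_{\vPi^\vee}^{(m)}$. (Equivalently, $P_{n,l}$ and the intertwiners are available explicitly from Hara--Namikawa \cite{HN}.) The crucial observation is that the intertwiner annihilates every correction term with $m\geq1$, because $\partial_{x_2}^n$ kills $x_2^{n-m}$; hence $\nabla_{l,n}P_{n,l}=\alpha_0 X_1^{N}$ with $N=a-n+b-l$. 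For the other slot I would take $Q_{n,l}\in\frkH_{b,a}$ the lowest weight vector of the dual component — it is the $x\leftrightarrow y$ mirror of $P_{n,l}$, so it carries the same coefficients $\alpha_m$ and the same $\alpha_0$, and by the same annihilation $\nabla_{n,l}Q_{n,l}=\alpha_0 Y_1^{N}$. Thus $\ell_{N}(\nabla_{n,l}Q_{n,l}\otimes\nabla_{l,n}P_{n,l})=\alpha_0^2\,\ell_{N}(Y_1^{N}\otimes X_1^{N})=\alpha_0^2$.

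It then remains to compute the two evaluations. On the right the double sum collapses to $c_{n,l}\alpha_0^2$. On the left, $l_{b,a}(Q_{n,l}\otimes P_{n,l})$ is, by the formula for $l_{b,a}$ on monomials used in the proof of Lemma \ref{lem:b3}, a weighted sum of squares of the coefficients of $P_{n,l}$; expanding $(x_1y_1+x_3y_3)^m$ binomially and collecting, this is an explicit double sum in $m$ and an inner binomial index, and solving the harmonicity recursion for the $\alpha_m$ and summing (a hypergeometric identity of the same Vandermonde flavour as Lemma \ref{lem:b4}) yields
\[l_{b,a}(Q_{n,l}\otimes P_{n,l})=\alpha_0^2\,\binom{a}{n}^{-1}\binom{b}{l}^{-1}\frac{\Gam(a+b+2)\Gam(a-n+b-l+2)}{\Gam(a-n+b+2)\Gam(a+b-l+2)}.\]
Equating the two expressions gives $c_{n,l}=\binom{a}{n}^{-1}\binom{b}{l}^{-1}\big/a_{\vPi^\vee}^{(0)}$, i.e. $\binom{a}{n}\binom{b}{l}a_{\vPi^\vee}^{(0)}c_{n,l}=1$.

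The main obstacle is the last step: determining the coefficients $\alpha_m$ from the harmonicity recursion and then carrying out the closed-form summation that produces the ratio of $\Gam$-values $1/a_{\vPi^\vee}^{(0)}$ — this is pure hypergeometric bookkeeping. One may shortcut it either by quoting the explicit intertwiner formulas of \cite{HN}, or by proving instead a one-step recursion relating $c_{n,l}$ to $c_{n-1,l}$ and $c_{n,l-1}$, started from the trivial value $c_{0,0}=1$. The Schur reduction and the vanishing of the intertwiner on the correction terms are entirely formal.
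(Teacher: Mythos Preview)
Your overall strategy --- isolate the $(n,l)$-block via Schur and evaluate both sides of the defining relation of $c_{n,l}$ on a chosen pair of test vectors --- is the same as the paper's. The difference is in the choice of test vectors, and that difference is exactly why your argument ends at an unfinished ``main obstacle'' while the paper's closes cleanly.

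The paper does not build new harmonic highest/lowest-weight vectors. On one side it reuses $\bfP_{\vPi^\vee}$, which is already known to sit purely in the $(n,l)$-block with $\nabla_{n,l}\bfP_{\vPi^\vee}=a_{\vPi^\vee}^{(0)}\bfP_{a-n+b-l}$. On the other side it takes the vector $\bar w_{n,l}=\pi^\vee(-{}^tE_2)^l\pi^\vee(-{}^tE_1)^n\bar w_{0,0}$ obtained from the lowest weight vector $\bar w_{0,0}={y_1'}^a{x_3'}^b$ by raising operators; this $\bar w_{n,l}$ is \emph{not} required to lie in a single isotypic component (only one of the two test vectors need do so), and its explicit expansion is a two-line Leibniz computation. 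When one pairs $\bfP_{\vPi^\vee}$ against $\bar w_{n,l}\cdot X_2'^{a-n+b-l}$, the monomial structure of $\bar w_{n,l}$ picks out one monomial from each $\bfP_{\vPi^\vee}^{(m)}$, the combinatorial weights collapse to the same constant for every $m$, and the direct evaluation becomes $\binom{a}{n}^{-1}\binom{b}{l}^{-1}\sum_m a_{\vPi^\vee}^{(m)}$. That sum is $1$ by Lemma~\ref{lem:b4}, while the evaluation via the definition of $c_{n,l}$ gives $c_{n,l}a_{\vPi^\vee}^{(0)}$; equating finishes the proof.

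Your route --- harmonic vectors on both sides --- forces you first to solve the harmonicity recursion for the $\alpha_m$, and then to evaluate $l_{b,a}(Q_{n,l}\otimes P_{n,l})$ as a weighted sum of $\alpha_m^2$'s. The closed form you write down for that sum is correct (it is equivalent to the proposition), but you have not proved it, and that is where all the content sits in your version. The paper's test-vector choice is engineered so that the only identity needed is the Vandermonde sum already established in Lemma~\ref{lem:b4}. Your suggested fallback of a recursion in $n$ or $l$ would also work, but again at the cost of an extra induction; the paper avoids both.
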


\begin{proof}
Assume that $n\geq l$. 
Let $\frkg\frkl(V)$ denote the Lie algebra of $\GL(V)$. 
Put 
\begin{align*}
E_1&=\begin{bmatrix} 0 & 0 & 0 \\ 1 & 0 & 0 \\ 0 & 0 & 0 \end{bmatrix}, &
E_2&=\begin{bmatrix} 0 & 0 & 0 \\ 0 & 0 & 0 \\ 0 & -1 & 0 \end{bmatrix}. 
\end{align*}
Then 
\begin{align*}
\pi^\vee(-\trs E_1)&=y_2'\frac{\partial}{\partial y_1'}-x_1'\frac{\partial}{\partial x_2'}, &
\pi^\vee(-\trs E_2)&=x_2'\frac{\partial}{\partial x_3'}-y_3'\frac{\partial}{\partial y_2'}. 
\end{align*}
Let $\bar w_{0,0}={y_1'}^a{x_3'}^b\in\frkH_{a,b}^\vee$ be the lowest weight vector. 
Put 
\[\bar w_{n,0}^{}=\frac{(a-n)!}{a!}\pi^\vee(-\trs E_1)^n\bar w^{}_{0,0}={y_1'}^{a-n}{y_2'}^n{x_3'}^b. \]
We define a vector $\bar w^{}_{n,l}\in \frkH_{a,b}^\vee$ by $\bar w_{n,l}=\pi^\vee(-\trs E_2)^l\bar w^{}_{n,0}$. 
Observe that 
\begin{align*}
\bar w_{n,l}&=\biggl(x_2'\frac{\partial}{\partial x_3'}-y_3'\frac{\partial}{\partial y_2'}\biggl)^l{y_1'}^{a-n}{y_2'}^n{x_3'}^b\\
&=\sum_{m=0}^{\min\{n,l\}}\binom{l}{m}\frac{n!b!}{(n-m)!(b-l+m)!}{y_1'}^{a-n}{y_2'}^{n-m}{x_2'}^{l-m}(-y_3')^m{x_3'}^{b-l+m}. 
\end{align*}
By the definition of $c_{n,l}$ we have 
\begin{align*}
&\ell_\vPi(\bfP_{\vPi^\vee}\otimes\bar w_{n,l}X_2^{\prime a-n+b-l})\\
=&c_{n,l}\ell_{\sig\otimes\sig^\vee}(\nabla_{n,l}\bfP_{\vPi^\vee}\otimes\bar\nabla_{n,l}\bar w_{n,l}X_2^{\prime a-n+b-l})\\
=&c_{n,l}\ell_{\sig\otimes\sig^\vee}\biggl(a_{\vPi^\vee}^{(0)}\bfP_{a-n+b-l}\otimes \frac{b!(-Y_1')^{a-n+b-l}X_2^{\prime a-n+b-l}}{(b-l)!}\biggl)
=\frac{c_{n,l}a_{\vPi^\vee}^{(0)}b!}{(b-l)!}. 
\end{align*}

On the other hand, we have  
\begin{align*}
&\ell_\vPi(\bfP_{\vPi^\vee}\otimes\bar w_{n,l}X_2^{\prime a-n+b-l})\\
=&\sum_{m=0}^{\min\{n,l\}}(-1)^ma_{\vPi^\vee}^{(m)}\ell_\vPi(\bfP_{\vPi^\vee}^{(m)}\cdot x_2^{n-m}y_2^{l-m}\otimes\bar w_{n,l}X_2^{\prime a-n+b-l})
\end{align*}
is the sum of the product of $(-1)^ma_{\vPi^\vee}^{(m)}\binom{l}{m}\frac{n!b!}{(n-m)!(b-l+m)!}$ and 
\begin{multline*}
\ell_\vPi(\bfP_{\vPi^\vee}^{(m)}\cdot x_2^{n-m}y_2^{l-m}\otimes{y_1'}^{a-n}{y_2'}^{n-m}{x_2'}^{l-m}(-y_3')^m{x_3'}^{b-l+m}X_2^{\prime a-n+b-l})\\
=(-1)^m(n-m)!(l-m)!\frac{(a-n)!}{a!}m!\frac{(b-l+m)!}{b!}
\end{multline*}
over $m=0,1,2,\dots,\min\{n,l\}$. 
It follows that  
\begin{align*}
c_{n,l}a_{\vPi^\vee}^{(0)}&=\frac{(b-l)!}{b!}\sum_{m=0}^{\min\{n,l\}}(-1)^ma_{\vPi^\vee}^{(m)}\binom{l}{m}n!(-1)^m(l-m)!\frac{(a-n)!}{a!}m!\\
&=\binom{a}{n}^{-1}\binom{b}{l}^{-1}\sum_{m=0}^{\min\{n,l\}}a_{\vPi^\vee}^{(m)}=1 
\end{align*}
by Lemma \ref{lem:b4}. 
\end{proof}


\section{Local integrals at non-split primes}\label{sec:d}

Let $E/F$ be a quadratic extension of non-archimedean local fields. 
We denote by $e$ the ramification index of $E/F$. 
Fix unramified characters $\chi,\rho$ of $E^\times$. 
Put 
\begin{align*}
\bet&=\chi(\vpi_E), & 
\gam&=\rho(\vpi_E), & 
\bfm'(\alp)&=\begin{bmatrix} \alp & 0\\ 0 & \bar\alp^{-1} \end{bmatrix}, & 
\bfm'(\alp,g_0)&=\begin{bmatrix} \alp & 0 & 0 \\ 0 & g_0 & 0 \\ 0 &  0 & \bar\alp^{-1} \end{bmatrix} 
\end{align*}
for $\alp\in E^\times$ and $g_0\in E^1$, where $\vpi_E$ is a generator of the maximal ideal of $\frkr$. 
The space $V'_\rho$ of the principal series $\sig=I'(\rho)$ consists of functions $f':H\to\CC$ which satisfy
\[f'(\bfm'(\alp)\bfu(b)h)=\rho(\alp)|\alp|_E^{1/2}f'(h)\]
for $\alp\in E^\times$, $b\in F$ and $h\in H$. 
The space $V_\chi$ of the principal series $\pi=I(\chi)$ consists of functions $f:G\to\CC$ which satisfy
\[f(\bfm'(\alp,g_0)ug)=\chi(\alp)|\alp|_Ef(g)\]
for $\alp\in E^\times$, $g_0\in G_0$, $u\in N$ and $g\in G$. 

Let $\calk'$ and $\calk$ be special maximal compact subgroups of $H$ and $G$, respectively. 
Define $f'_{\chi'}\in V'$ and $f_{\chi'}\in V$ by $f'_{\chi'}(k')=f_\chi(k)=1$ for $k'\in \calk'$ and $k\in \calk$. 
We consider the zonal spherical functions
\begin{align*}
L'(h)&=\La \sig(h)f'_\rho,f'_{\rho^{-1}}\Ra', &
L(g)&=\La\pi(g)f_\chi,f_{\chi^{-1}}\Ra 
\end{align*}
for $h\in H$ and $g\in G$. 
We here normalize the local perfect pairings $\La\;,\;\Ra'$ and $\La\;,\;\Ra$ so that $L'(k')=L(k)=1$ for $k'\in\calk'$ and $k\in\calk$. 
Put 
\begin{align*}
C'(\rho)&=\frac{1-q^{-1}\gam^{-e}}{1-\gam^{-e}}, & 
C(\chi)&=\frac{(1-q^{-2/e}\bet^{-1})(1+q^{e-2}\bet^{-1})}{1-\bet^{-2}}. 
\end{align*}

\begin{proposition}\label{prop:d1}
\begin{align*}
L'(\bfm'(\vpi_E^m))&=(1+q^{-1})^{-1}q^{-m/e}(C(\rho)\gam^m+C(\rho^{-1})\gam^{-m}), \\
L(\bfm'(\vpi_E^m,1))&=(1+q^{2e-5})^{-1}q^{-2m/e}(C(\chi)\bet^m+C(\chi^{-1})\bet^{-m}). 
\end{align*}
\end{proposition}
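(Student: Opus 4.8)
The plan is to compute the two zonal spherical functions by the standard technique of Macdonald, adapted to the rank-one unitary groups $H=\U(2)$ and the rank-two unitary group $G=\U(3)$ in three variables, working place by place and treating the ramified ($e=2$) and inert ($e=1$) cases uniformly through the parameter $e$. First I would write down the Cartan decompositions of $H$ and $G$ with respect to $\calk'$ and $\calk$: for $H$ one has $H=\coprod_{m\geq 0}\calk'\bfm'(\vpi_E^m)\calk'$, and for $G$ one has a two-parameter family of double cosets, but since we only evaluate $L$ at $\bfm'(\vpi_E^m,1)$ the relevant torus element lies in a rank-one subtorus and the computation reduces to essentially the same shape. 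In both cases the zonal spherical function on the relevant coset is a linear combination $c_+(\chi)\bet^m + c_-(\chi)\bet^{-m}$ (resp.\ $\gam^{\pm m}$) with a leading $q$-power, and the Weyl-group symmetry $\chi\leftrightarrow\chi^{-1}$ forces $c_-$ to be $c_+$ with $\bet$ replaced by $\bet^{-1}$; this explains the symmetric form of the answer.

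The key steps, in order: (i) normalize the Haar measures on $N$ and on the maximal compact so that the total mass conventions match $L(k)=L'(k')=1$, and record the modulus characters $\delta_B$ appearing in the induced models $V_\chi$, $V'_\rho$ as written in the excerpt; (ii) use the Gindikin--Karpelevich / Casselman--Shalika-type integral, i.e.\ compute $\int_{N}\ f_\chi(w\,n\,\bfm'(\vpi_E^m,1))\,dn$ by reducing to Tate integrals over $E$ and $F$, keeping careful track of the ramification index $e$ in the volumes of $\frkr_E\cap(\text{stuff})$; (iii) assemble the two terms of the spherical function and identify the $c$-function: the factor $C'(\rho)=\frac{1-q^{-1}\gam^{-e}}{1-\gam^{-e}}$ is the unitary-group $c$-function for the rank-one case (the $q^{-1}$ in the numerator being the residue-field-size correction for the non-split torus), and $C(\chi)=\frac{(1-q^{-2/e}\bet^{-1})(1+q^{e-2}\bet^{-1})}{1-\bet^{-2}}$ is its $\U(3)$ analogue, the extra quadratic factor coming from the two roots of $\U(3)$; (iv) finally, normalize by the value at the identity to pull out the $(1+q^{-1})^{-1}$ and $(1+q^{2e-5})^{-1}$ prefactors—these are just $L(1)=L'(1)=1$ forcing the overall constant, and they equal $\zeta$-factors evaluated at the relevant shift. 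I would verify consistency at $m=0$: the bracketed expression must then equal $(1+q^{-1})$ (resp.\ $1+q^{2e-5}$), which pins down $C(\chi)+C(\chi^{-1})$ and serves as a useful internal check.

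I expect the main obstacle to be the bookkeeping of the ramification index $e$ and the half-integral powers of $q$ it produces: in the ramified case $|\vpi_E|_E = q^{-1}$ but $\vpi_E^2$ is associate to $\vpi_F$, so $|\alp|_E$ for $\alp\in F^\times$ and the various $\frkp_E$-versus-$\frkp_F$ volume factors must be tracked with the correct power $q^{-1/e}$ or $q^{-2/e}$; getting the exponents $q^{-m/e}$, $q^{-2m/e}$ and the $q^{e-2}$, $q^{2e-5}$ exactly right is where sign and exponent errors creep in. The cleanest route is to do the inert case $e=1$ first (where everything is integral and the $\U(3)$ Macdonald formula is cleanest), check it against known tables for unramified principal series of $\U(3)$ and $\U(2)$, and then redo the computation keeping $e$ formal, using $q_E = q^{2/e}$ consistently as the size of the residue field of $E$. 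The rank-two Gindikin--Karpelevich integral for $\U(3)$ is the only genuinely multi-step calculation; since we evaluate only along a one-parameter subtorus, I would organize it as an iterated integral over the two root subgroups, the outer one contributing the $(1-q^{-2/e}\bet^{-1})$ factor (long root, isomorphic to $E^1$) and the inner one the $(1+q^{e-2}\bet^{-1})$ factor (short root, isomorphic to the additive group of a one-dimensional Hermitian space), matching the displayed $C(\chi)$.
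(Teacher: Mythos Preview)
Your instinct to invoke Macdonald is right and is exactly what the paper does, though far more briefly: the entire proof in the paper is a citation of Macdonald's formula in Casselman's formulation (Theorem~4.2 of \cite{Cas}). The only work is to identify the structural parameters attached to the single positive relative root of each group --- $q_{\alpha'}=q$, $q_{\alpha'/2}=1$ for $H$, and $q_\alpha=q$, $q_{\alpha/2}=q^{4-2e}$ for $G$, read off from (15) of \cite{Cas} --- and to observe that Casselman's $c$-function then specializes to $C'(\rho)$ and $C(\chi)$ respectively. No Gindikin--Karpelevich integration is redone by hand.

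One point to correct before you attempt the explicit route: $G=\U(3)$ has \emph{split rank one}, not two. Its relative root system is the non-reduced system $BC_1$, with positive roots $\alpha$ and $2\alpha$, so the Cartan decomposition $G=\coprod_{m\ge 0}\calk\,\bfm'(\vpi_E^m,1)\,\calk$ is already a one-parameter family, not a restriction of a two-parameter one. The two linear factors in the numerator of $C(\chi)$ arise not from two independent roots but from the pair $(q_\alpha,q_{\alpha/2})$ in the rank-one $c$-function for a non-reduced root; and the root subgroup $N_{2\alpha}$ is the additive group of trace-zero elements of $E$ (a copy of $F$), not the compact group $E^1$. With this corrected, your iterated integral over $N_\alpha$ and $N_{2\alpha}$ would indeed reproduce Casselman's computation, but it is strictly more labor than citing it.
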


\begin{proof}
Let $\alp'$ be the positive root of $H$ and $\alp$ the positive root of $G$. 
In view of (15) of \cite{Cas} we have  
\begin{align*}
q_{\alp'}&=q_\alp^{}=q, &
q_{\alp'/2}&=1, & 
q_{\alp/2}&=q^{4-2e}, \\
a'_\alp&=\begin{bmatrix} \vpi & 0 \\ 0 & \vpi^{-1}\end{bmatrix}, &  
a_\alp&=\begin{bmatrix} \vpi_E & 0 & 0 \\ 0 & 1 & 0 \\ 0 & 0 & (\vpi_E^{-1})^\vth\end{bmatrix}. 
\end{align*}
Then the $c$-function for $V'_\rho$ (resp. $V_\chi$) is given by $C'(\rho)$ (resp. $C(\chi)$). 
The stated formulas are special cases of the Macdonald formula (see Theorem 4.2 of \cite{Cas}).  
\end{proof}

We will compute the following integral  
\[J(f_\chi^{},f'_\rho)=\int_HL(\iot(h))L'(h)\,\d h. \]

\begin{proposition}\label{prop:d2}
\[J(f_\chi^{},f'_\rho)=\scrl(\pi\times\sig). \]
\end{proposition}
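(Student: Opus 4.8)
The plan is to combine the explicit Macdonald formulae of Proposition~\ref{prop:d1} with the Cartan decomposition of $H$, and then to match the resulting rational function term-by-term against the Euler factors making up $\scrl(\pi\times\sig)$.

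First I would reduce the integral to a one-parameter sum. Choosing the maximal compact subgroups compatibly so that $\iot(\calk')\subset\calk$, the function $h\mapsto L(\iot(h))L'(h)$ on $H$ is bi-$\calk'$-invariant, since $L$ is bi-$\calk$-invariant on $G$ and $L'$ is bi-$\calk'$-invariant on $H$; moreover the local pairings have been normalized so $L(k)=L'(k')=1$. As $H=\U(2)(F)$ has $F$-rank one, the Cartan decomposition reads $H=\bigsqcup_{m\geq 0}\calk'\bfm'(\vpi_E^m)\calk'$, and with $\d h$ normalized by $\mathrm{vol}(\calk')=1$ we get
\[
J(f_\chi^{},f'_\rho)=\sum_{m\geq 0}\mathrm{vol}\bigl(\calk'\bfm'(\vpi_E^m)\calk'\bigr)\,L\bigl(\bfm'(\vpi_E^m,1)\bigr)\,L'\bigl(\bfm'(\vpi_E^m)\bigr),
\]
using $\iot(\bfm'(\vpi_E^m))=\bfm'(\vpi_E^m,1)$. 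The double-coset volumes are read off from the structure constants $q_{\alp'},q_{\alp'/2}$ recorded in the proof of Proposition~\ref{prop:d1}, and grow like a fixed power of $q$ in $m$.

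Next I would sum. Substituting the two formulae of Proposition~\ref{prop:d1}, the summand becomes a linear combination of monomials $q^{\kappa m}\bet^{\pm m}\gam^{\pm m}$ (explicit exponent $\kappa$), with coefficients built from $C(\chi)^{\pm1}$, $C'(\rho)^{\pm1}$, $(1+q^{-1})^{-1}$, $(1+q^{2e-5})^{-1}$ and the volume constant. Each monomial gives a geometric series, convergent for the Satake parameters in general position, so $J$ is the sum of four closed forms, an explicit rational function $R(\bet,\gam,q)$. On the other side I would expand $\scrl(\pi\times\sig)=\dfrac{L(\tfrac12,\pi\times\sig)}{L(1,\pi,\Ad)L(1,\sig,\Ad)}L(1,\eps_{E/F})\zet_F(2)L(3,\eps_{E/F})$: by base change $L(s,\pi\times\sig)=L^\GL(s,\BC(I(\chi))\times\BC(I'(\rho)))$ is a product of six Euler factors in the Satake parameters of $\BC(\pi)$ (determined by $\chi$ and the modulus shift) and of $\BC(\sig)$; $L(1,\sig,\Ad)$ is the Asai factor of $\BC(\sig)$ (case $n=2$, Remark~\ref{rem:21}) and $L(1,\pi,\Ad)$ the twisted Asai factor of $\BC(\pi)$ (case $n=3$), both elementary products in $\bet,\gam,q$; and $L(i,\eps_{E/F})=(1\pm q^{-i})^{-1}$ according to the parity of $i$ and whether $E/F$ is split or unramified. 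Clearing denominators turns $R(\bet,\gam,q)=\scrl(\pi\times\sig)$ into a polynomial identity to be verified directly.

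The main obstacle is precisely this final bookkeeping: getting every power of $q$ right — in particular the $e$-dependence of both the double-coset volumes and of $C(\chi)$, so that the identity holds uniformly for $E/F$ unramified and ramified — together with the correct modulus-character twists in the Satake parameters of $\BC(\pi)$ and $\BC(\sig)$, and finally the algebraic check that the four summed geometric series reconstitute the product of the six Rankin--Selberg factors divided by the two adjoint factors and the $\eps$-factors. A shortcut worth noting is that, in the present normalization, $J(f_\chi,f'_\rho)$ is exactly the local Ichino--Ikeda integral $I(\bfW_q^{},\bfW_q^\tht)$ for spherical data, so Proposition~\ref{prop:d2} also follows from the unramified case of the Ichino--Ikeda conjecture (Theorem 2.12 of \cite{NHarris}); but I would still carry out the explicit computation above to keep the appendix self-contained and to pin down the normalization.
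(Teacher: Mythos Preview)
Your approach is essentially the same as the paper's: Cartan decomposition of $H$, substitution of the Macdonald formulae from Proposition~\ref{prop:d1}, summation of the four resulting geometric series, and a brute-force match against the explicit rational function $\scrl(\pi\times\sig)$. The only organizational difference is that the paper invokes \cite[Theorem~2.12]{NHarris} to dispose of the unramified case $e=1$ outright and carries out the explicit computation only for $e=2$, whereas you propose a uniform computation for both values of $e$ with the \cite{NHarris} reference as a side remark; be aware that the cited result of Harris is stated for unramified extensions, so it does not by itself cover $e=2$.
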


\begin{proof}
When $e=1$, the formula is proved in \cite[Theorem 2.12]{NHarris}. 
Recall that 
\begin{align*}
L(s,\pi\times\sig)&=L(s,\chi\rho)L(s,\chi\rho^{-1})L(s,\chi^{-1}\rho)L(s,\chi^{-1}\rho^{-1})L(s,\rho)L(s,\rho^{-1}), \\
L(s,\sig,\mathrm{Ad})&=\zet_F(s)L(s,\eps_{E/F})L(s,\rho|_{F^\times})L(s,\rho^{-1}|_{F^\times}), \\
L(s,\pi,\mathrm{Ad})&=\zet_E(s)L(s,\eps_{E/F})L(s,\chi)L(s,\chi^{-1})L(s,\eps_{E/F}\chi_0)L(s,\eps_{E/F}\chi_0^{-1}) 
\end{align*}
(cf. Remark \ref{rem:21}), where $\chi_0$ stands for the restriction of $\chi$ to $F^\times$.  
By the Cartan decomposition we have 
\[J(f_\chi^{},f'_\rho)
=\sum_{m=0}^\infty L(\bfm'(\vpi_E^m,1))L'(\bfm'(\vpi_E^m))[\calk'\bfm'(\vpi_E^m)\calk':\calk']. \]
Since $[\calk'\bfm'(\vpi_E^m)\calk':\calk']=q^{2m/e}(1+q^{-1})$ if $m>0$, Proposition \ref{prop:d1} gives  
\[J(f_\chi^{},f'_\rho)
=1+\frac{1}{1+q^{2e-5}}\sum_{m=1}^\infty \frac{\bigl(C(\chi)\bet^m+\frac{C(\chi^{-1})}{\bet^m}\bigl)\bigl(C(\rho)\gam^m+\frac{C(\rho^{-1})}{\gam^m}\bigl)}{q^{m/e}}. \]

Put $x=q^{-1/2}$. 
If $e=2$, then 
\[\scrl(\pi\times\sig)=\frac{(1-x^2)(1+\gam x)(1+\gam^{-1}x)(1-\bet x^2)(1-\bet^{-1}x^2)}{(1+x^2)(1-\bet\gam x)(1-\bet\gam^{-1}x)(1-\bet^{-1}\gam x)(1-\bet^{-1}\gam^{-1}x)} \]
and $(1+x^2)(J(f_\chi^{},f'_\rho)-1)$ equals
\begin{align*}
&\frac{1-x^2\gam^{-2}}{1-\gam^{-2}}\cdot\frac{1-x^2\bet^{-1}}{1-\bet^{-1}}\cdot\frac{\bet\gam x}{1-\bet\gam x}
+\frac{1-x^2\gam^2}{1-\gam^2}\cdot\frac{1-x^2\bet^{-1}}{1-\bet^{-1}}\cdot\frac{\bet\gam^{-1}x}{1-\bet\gam^{-1} x}\\
+&\frac{1-x^2\gam^{-2}}{1-\gam^{-2}}\cdot\frac{1-x^2\bet}{1-\bet}\cdot\frac{\bet^{-1}\gam x}{1-\bet^{-1}\gam x}
+\frac{1-x^2\gam^2}{1-\gam^2}\cdot\frac{1-x^2\bet}{1-\bet}\cdot\frac{\bet^{-1}\gam^{-1}x}{1-\bet^{-1}\gam^{-1} x}. 
\end{align*}
We can prove the wanted identity by a brute force calculation. 
\end{proof}


\section{Ramified computations: the minus sign case}\label{sec:e}

\subsection{Maximal compact subgroups}\label{ssec:e1}
In this section $E$ is a quadratic extension of a local field $F$ of odd residual characteristic. 
We write $\frkr$ for the maximal compact subring of $E$ and denote by $\frkq$ the maximal ideal of $\frkr$. 
Given a Hermitian matrix $T$, we define the Hermitian form on $W=E^3$ by $(u,v)_T=\trs u^\tht Tv$, where 
\[T=\begin{bmatrix} 
t_1 & 0 & 0 \\ 
0 & 1 & 0 \\ 
0 & 0 & t_2 
\end{bmatrix}. \]
We denote its unitary group by $G$. 
Put 
\begin{align*}
e_1&=\trs(1,0,0), & 
e_2&=\trs(0,1,0), & 
e_3&=\trs(0,0,1), & 
W'&=Ee_1\oplus Ee_3. 
\end{align*} 
Let $H=\{h\in G\;|\;he_2=e_2\}$ be the unitary group of $W'$. 
 
Fix a generator $\vpi$ of $\frkp$.
We choose $t_1,t_2$ in the following way: 
\begin{enumerate}
\item[(i)] if $E/F$ is unramified, then $t_1=-\frac{1}{2\vpi}$ and $t_2=-1$;
\item[(ii)] if $E/F$ is ramified, then $t_1,t_2\in\frko^\times$ and $-t_1t_2\notin\Nr_{E/F}(E^\times)$. 
\end{enumerate}
Note that $T'=\begin{bmatrix} 
t_1 & 0 \\
0 & t_2 \end{bmatrix}$ is not split. 
The group $H$ is compact. 

We call an $\frkr$-lattice $L$ in $W$ integral if $(x,x)\in\frko$ for every $x\in L$ and call $L$ maximal if it is
maximal among the integral $\frkr$-lattices. 
Take the maximal $\frkr$-integral lattice $\call=\frkr e_1^{}\oplus\frkr e_2^{}\oplus\frkr e_3^{}$ in Case (ii). 
Clearly, 
\begin{align*}
(e_2,e_2)_T&=1, &
e_2&\in\call, & 
\{(x,e_2)_T\;|\;x\in\call\}&=\frkr. 
\end{align*}

Put 
\begin{align*}
\eta&=\begin{bmatrix} 
0 & 1 & 1 \\ 
1 & 0 & 0 \\ 
0 & -\vpi & \vpi 
\end{bmatrix}, & 
\call&=\left\{\eta^{-1}x\;\left|\;x=\begin{bmatrix} x_1 \\ x_2 \\ x_3\end{bmatrix},\;x_1,x_2,x_3\in\frkr\right.\right\} 
\end{align*}
in Case (i). 
Then 
\[\trs \eta^\tht\begin{bmatrix} 0 & 0 & 1 \\ 0 & 1 & 0 \\ 1 & 0 & 0 \end{bmatrix}\eta=-2\vpi T=:\scrt. \]
Thus $\call$ is a maximal $\frkr$-integral lattice with respect to $\scrt$. 
Note that 
\begin{align*}
(e_2,e_2)_\scrt&=-2\vpi, &
e_2&\in\call, & 
\{(x,e_2)_\scrt\;|\;x\in\call\}&=\frkr. 
\end{align*}
Let $\calk=\{g\in G\;|\;g\call=\call\}$ be a maximal compact subgroup of $G$.  


\subsection{The Ichino-Ikeda integral}\label{ssec:e4}

Let $\chi$ be an unramified character of $E^\times$. 
We retain the notation in Appendix \ref{sec:d}. 
We will compute the integral  
\[J(f_\chi^{},1_H)=\int_HL(\iot(h))\,\d h. \]

\begin{proposition}\label{prop:e2}
\[\scrl(\pi\times\sig^\vee)^{-1}J(f_\chi^{},1_H)=L(1,\eps_{E/F})^2. \]
\end{proposition}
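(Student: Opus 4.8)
## Proof proposal for Proposition~\ref{prop:e2}

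The plan is to compute $J(f_\chi^{},1_H)=\int_H L(\iot(h))\,\d h$ by the Cartan decomposition of the compact group $H$, exactly as in the proof of Proposition~\ref{prop:d2}, but now with the trivial representation $\sig=1_H$ in place of the principal series $I'(\rho)$. Since $H$ is compact in the minus-sign case, its Cartan decomposition involves a \emph{finite} set of double cosets $\calk'\bfm'(\vpi_E^m)\calk'$, and one must first determine precisely which powers $m$ occur; I expect only $m=0$ and possibly $m=1$ (or a similarly short range), reflecting the fact that the non-split Hermitian plane $(W',T')$ has a one-step lattice chain up to scaling. For each occurring $m$ I would write down the index $[\calk'\bfm'(\vpi_E^m)\calk':\calk']$ and evaluate the zonal spherical function $L(\bfm'(\vpi_E^m,1))$ of $\pi=I(\chi)$ on $G$ via Proposition~\ref{prop:d1} (the Macdonald formula applied to $G$, with the local parameters $q_{\alp/2}=q^{4-2e}$ etc.\ recorded there). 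Summing these finitely many terms gives a closed rational expression in $\bet=\chi(\vpi_E)$ and $x=q^{-1/2}$.

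Next I would assemble the target quantity $\scrl(\pi\times\sig^\vee)$ in the case $\sig=1_H$. By definition
\[
\scrl(\pi\times\sig)=\frac{L\bigl(\tfrac12,\pi\times\sig\bigl)}{L(1,\pi,\Ad)L(1,\sig,\Ad)}\prod_{i=1}^3 L(i,\eps_{E_v/\QQ_v}^i),
\]
and for $\sig=1_H$ the base-change $\BC(1_H)$ is (the relevant twist of) $\bvep\oplus\bvep^{-1}$-type data on $\GL_2$, so $L(s,1_H,\Ad)$ and $L(s,\pi\times 1_H)$ reduce to products of Tate $L$-factors $L(s,\chi^{\pm1}),\;L(s,\eps_{E/F}),\;\zet_E(s),\zet_F(s)$ — all of which I can read off from the $L$-factor formulas displayed in the proof of Proposition~\ref{prop:d2}. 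Dividing the Cartan-sum expression for $J(f_\chi,1_H)$ by this $\scrl(\pi\times\sig^\vee)$, I expect a massive cancellation of all factors depending on $\bet$, leaving exactly $L(1,\eps_{E/F})^2$. The two factors of $L(1,\eps_{E/F})=(1-q^{-1})^{-1}$ in the unramified case (and $=1$ in the ramified case, where $\eps_{E/F}$ is ramified) should emerge as the ``volume'' discrepancy between $H$ and its maximal compact, i.e.\ from the normalization $\int_H \d h$ and the indices $[\calk'\bfm'(\vpi_E^m)\calk':\calk']$ being off by exactly these factors relative to the split case of Proposition~\ref{prop:d2}.

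Concretely the key steps, in order, are: (1) describe the Cartan decomposition of the compact group $H$ attached to the anisotropic plane $(W',T')$ for both $E/F$ unramified (Case~(i), where $T'$ has the specific form forced by $t_1=-1/2\vpi$, $t_2=-1$) and ramified (Case~(ii), where $-t_1t_2\notin\Nr_{E/F}(E^\times)$), identifying the finitely many relevant $m$ and the index of each double coset in $\calk'$; (2) invoke Proposition~\ref{prop:d1} to evaluate $L(\bfm'(\vpi_E^m,1))$ and form the finite sum $J(f_\chi,1_H)$; (3) compute $\scrl(\pi\times 1_H^\vee)$ by specializing the $L$-factor identities from the proof of Proposition~\ref{prop:d2}; (4) take the ratio and verify by direct algebraic manipulation (a brute-force calculation of the same flavor as the $e=2$ case of Proposition~\ref{prop:d2}) that it equals $L(1,\eps_{E/F})^2$.

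The main obstacle will be step~(1): getting the Cartan decomposition of the \emph{anisotropic} unitary group $H$ correct, including the exact index $[\calk'\bfm'(\vpi_E^m)\calk':\calk']$ for each $m$, and doing so uniformly (or in two cases) for $E/F$ unramified and ramified. In the split situation of Proposition~\ref{prop:d2} one had an explicit infinite Cartan decomposition with known indices $q^{2m/e}(1+q^{-1})$; here $H$ being compact changes both the range of $m$ and these indices, and it is precisely the deviation of these indices from their ``naive'' split-case values that is responsible for the factor $L(1,\eps_{E/F})^2$, so this bookkeeping must be done carefully. Once the lattice-theoretic input in step~(1) is pinned down, steps~(2)--(4) are mechanical, using only Proposition~\ref{prop:d1}, the $L$-factor formulas already displayed, and elementary identities for Tate factors of unramified and ramified characters.
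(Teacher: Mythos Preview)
Your overall strategy matches the paper's, and your intuition that only $m\in\{0,1\}$ contribute is exactly right. But step~(1) is misframed in a way that would make it fail as written. Since $H$ is anisotropic here, $H$ itself is compact and $\calk'=H$; there is no nontrivial Cartan decomposition of $H$, and the elements $\bfm'(\vpi_E^m)$ do not even lie in $H$ for $m\neq 0$, so the double cosets $\calk'\bfm'(\vpi_E^m)\calk'$ you propose to enumerate are not available. What you actually need is the decomposition of $H$ into cosets of $H\cap\calk$, where $\calk$ is the special maximal compact subgroup of $G$ built from the lattice $\call$ in \S\ref{ssec:e1}: the zonal spherical function $L$ is $\calk$-bi-invariant on $G$, so $L\circ\iota$ is constant on each such coset, and the question becomes which Cartan double coset $\calk\bfm'(\vpi_E^m,1)\calk$ of $G$ each coset hits.

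The paper's key lattice-theoretic input for this is Lemma~3.14 of \cite{Shimura}, applied to the pair $(\call,e_2)$. In Case~(ii) ($E/F$ ramified) it gives $H\subset\calk$, so $J=L(\ono_3)=1$; since $\eps_{E/F}$ is ramified, both $L(1,\eps_{E/F})^2$ and $\scrl(\pi\times\1_H)$ equal $1$ and there is nothing further to check. In Case~(i) ($E/F$ unramified) the same lemma gives $[H:H\cap\calk]=q+1$, with one coset landing in $m=0$ and the remaining $q$ in $m=1$, whence $J=(1+q\,L(\bfm'(\vpi,1)))/(q+1)$; combining this with Proposition~\ref{prop:d1} and the explicit formula $\scrl(\pi\times\1_H)=L(3,\eps_{E/F})\big/\bigl(L(1,\eps_{E/F})L(1,\eps_{E/F}\chi_0)L(1,\eps_{E/F}\chi_0^{-1})\bigr)$ finishes the computation as you outline. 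One further correction for step~(3): the base change of $\1_H$ is the Steinberg representation $\St_E$, not a sum of characters (see Remark~\ref{rem:e3}), so the $L$-factors you assemble must be those of $\St_E\otimes\chi^{\pm1}$ and $\St_E$.
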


\begin{remark}\label{rem:e3}
Let $\St_E$ denote the Steinberg representation of $\GL_2(E)$. 
We write $\1_H$ for the trivial representation of $H$. 
Fix a non-trivial additive character ${\boldsymbol\vPs}$ on $E$. 
If $\chi$ is unramified and unitary, then
\begin{align*}
L(s,I(\chi)\times\1_H)&=L(s,\St_E\otimes\chi)L(s,\St_E\otimes\chi^{-1})L(s,\St_E), \\
\vep(s,I(\chi)\times\1_H,{\boldsymbol\vPs})&=\vep(s,\St_E\otimes\chi,{\boldsymbol\vPs})\vep(s,\St_E\otimes\chi^{-1},{\boldsymbol\vPs})\vep(s,\St_E,{\boldsymbol\vPs}). 
\end{align*}
In particular, 
\[\vep(1/2,I(\chi)\times\1_H,{\boldsymbol\vPs})=-1. \]
\end{remark}

\begin{proof}
Since $L(s,\1_H,\mathrm{Ad})=L(s,\eps_{E/F})\zet_F(s+1)$, we have 
\[\scrl(\pi\times\1_H)=\frac{L(3,\eps_{E/F})}{L(1,\eps_{E/F})L(1,\eps_{E/F}\chi_0)L(1,\eps_{E/F}\chi_0^{-1})} \]
(see Remark \ref{rem:e3} and the proof of Proposition \ref{prop:d2}). 
In particular, we have $\scrl(\pi\times\1_H)=1$ in Case (ii). 
Since $H\subset\calk$ by Lemma 3.14 of \cite{Shimura} applied with $q=1$, $\vph_0=T$, $L=\call$, $\frkb=\frkr$ and $h=e_2$, there is nothing to compute in Case (ii). 

Finally, we consider Case (i). 
Lemma 3.14 of \cite{Shimura} applied with $q=-2\vpi$, $\vph_0=\scrt$, $L=\call$, $\frkb=\frkr$ and $h=e_2$ shows that $[H:H\cap\calk]=1+q$. 
Moreover, its proof shows that 
\[J(f_\chi^{},1_H)
=\frac{1}{q+1}\sum_{h\in H/H\cap\calk}L(\iot(h))
=\frac{1}{q+1}(1+qL(\bfm'(\vpi,1))). 
\]
Put $\bet=\chi(\vpi)$. 
Proposition \ref{prop:d1} gives 
\[L(\bfm'(\vpi,1))
=\frac{C(\chi)\bet+C(\chi^{-1})\bet^{-1}}{q^2(1+q^{-3})}
=\frac{L(3,\eps_{E/F})}{q^2}(\bet+\bet^{-1}+q^{-1}-q^{-2}),\]
which completes our proof. 
\end{proof}



\end{document}